\xpatchcmd{\@todo}{\setkeys{todonotes}{#1}}{\setkeys{todonotes}{inline,#1}}{}{}
\newcommand*\linenomathpatch[1]{%
  \cspreto{#1}{\linenomath}%
  \cspreto{#1*}{\linenomath}%
  \csappto{end#1}{\endlinenomath}%
  \csappto{end#1*}{\endlinenomath}%
}
\newcommand*\linenomathpatchAMS[1]{%
  \cspreto{#1}{\linenomathAMS}%
  \cspreto{#1*}{\linenomathAMS}%
  \csappto{end#1}{\endlinenomath}%
  \csappto{end#1*}{\endlinenomath}%
}
  \let\linenomathAMS\linenomathWithnumbers
  \patchcmd\linenomathAMS{\advance\postdisplaypenalty\linenopenalty}{}{}{}
  \let\linenomathAMS\linenomathNonumbers
\patchcmd{\mmeasure@}{\measuring@true}{
  \measuring@true
  \ifnum-\linenopenaltypar>\interdisplaylinepenalty
    \advance\interdisplaylinepenalty-\linenopenalty
  \fi
  }{}{}
\newenvironment{enumeratea}{\begin{enumerate}[\upshape (a)]}{\end{enumerate}}
\newtheorem{thm}{Theorem}[section]
\newtheorem{lem}[thm]{Lemma}
\newtheorem{cor}[thm]{Corollary}
\newtheorem{rem}[thm]{Remark}
\newtheorem{ass}[thm]{Assumption}
\renewcommand{\le}{\leqslant} 
\renewcommand{\ge}{\geqslant}
\newcommand{\ra}{\rangle}
\newcommand{\la}{\langle}
\newcommand{\eps}{\varepsilon}
\newcommand{\norm}[1]{\left\Vert#1\right\Vert}
\newcommand{\abs}[1]{\left\vert#1\right\vert}
\newcommand{\ie}{\emph{i.e.,}}
\def\qed{ \hfill $\blacksquare$}  
\let\ga=\alpha \let\gb=\beta \let\gc=\gamma  
 \let\gh=\eta    \let\gl=\lambda        \let\go=\omega   \let\gs=\sigma \let\gt=\tau 
  \let\gz=\zeta
\let\gC=\Gamma \let\gD=\Delta  \let\gL=\Lambda 
         \let\gS=\Sigma  
\newcommand{\cE}{\mathcal{E}}
\newcommand{\cG}{\mathcal{G}}
\newcommand{\cO}{\mathcal{O}}
\newcommand{\cQ}{\mathcal{Q}}
\newcommand{\vone}{\mathbf{1}}
\newcommand{\vR}{\mathbf{R}}
\newcommand{\vZ}{\mathbf{Z}}
\newcommand{\vq}{\mathbf{q}}
\newcommand{\mvW}{\boldsymbol{W}}
\newcommand{\mvZ}{\boldsymbol{Z}}
\newcommand{\mvq}{\boldsymbol{q}}
\newcommand{\mvx}{\boldsymbol{x}}\newcommand{\mvy}{\boldsymbol{y}}
\newcommand{\mvgs}{\boldsymbol{\sigma}}\newcommand{\mvgt}{\boldsymbol{\tau}}
\newcommand{\mveta}{\boldsymbol{\eta}}
\newcommand{\dC}{\mathds{C}}
\newcommand{\dN}{\mathds{N}}
\newcommand{\dR}{\mathds{R}}
\newcommand{\sP}{\mathscr{P}}
\newcommand{\subc}{\emph{sub-critical}}
\newcommand{\crit}{\emph{critical}}
\newcommand{\supc}{\emph{super-critical}}
\DeclareMathOperator{\E}{\mathds{E}}
\DeclareMathOperator{\pr}{\mathds{P}}
\DeclareMathOperator{\var}{Var}
\DeclareMathOperator{\cov}{Cov}
\DeclareMathOperator{\tr}{Tr} 
\DeclareMathOperator{\diag}{diag}
\DeclareMathOperator{\N}{N}
\newcommand{\wmvgs}{\widetilde{\mvgs}}
\newcommand{\hh}{\hat{h}}
\begin{document}

\title[Spin Glass models under Weak External Field]{Mean Field Spin Glass Models under Weak External Field}
\author[Dey]{Partha S.~Dey}
\author[Wu]{Qiang Wu}
\address{Department of Mathematics, University of Illinois at Urbana-Champaign, 1409 W Green Street, Urbana, Illinois 61801}
\email{$\{$psdey,qiangwu2$\}$@illinois.edu}
\date{\today}
\subjclass[2020]{Primary: 82B26, 82B44, 60F05.}
\keywords{Spin glass, Phase transition, Central limit theorem, Cluster expansion, Stein's method.}
\begin{abstract}
	We study the fluctuation and limiting distribution of free energy in mean-field Ising spin glass models under weak external fields. We prove that at high temperature, there are three sub-regimes concerning the strength of external field $h \approx \rho N^{-\alpha}$ with $\rho,\alpha\in (0,\infty)$. In the super-critical regime $\alpha < 1/4$, the variance of the log-partition function is $\approx N^{1-4\alpha}$. In the critical regime $\alpha = 1/4$, the fluctuation is of constant order but depends on $\rho$. Whereas, in the sub-critical regime $\alpha>1/4$, the variance is $\Theta(1)$ and does not depend on $\rho$. We explicitly express the asymptotic mean and variance in all three regimes and prove Gaussian central limit theorems. 
	
	Our proofs mainly follow two approaches. One generalizes quadratic coupling and Guerra's interpolation scheme for Gaussian disorder, extending to other spin glass models. This approach can establish CLT at high temperature by proving a limiting variance result for the overlap. The other is a combination of cluster expansion for general symmetric disorders and multivariate Stein's method for exchangeable pairs. For the zero external field case,  cluster expansion was first used in the seminal work of Aizenman, Lebowitz, and Ruelle~(Comm.~Math.~Phys.~112 (1987), no.~1, 3--20). It was believed that this approach does not work if the external field is present. We show that if the external field is present but not too strong, it still works with a new cluster structure. In particular, we prove the CLT up to the critical temperature in the Sherrington-Kirkpatrick (SK) model when $\alpha \ge 1/4$. 
	
	We further address the generality of this cluster-based approach. Specifically, we give limiting results for the multi-species and diluted SK models. We believe that this approach will shed new light on how the external field affects the general spin glass system. We also obtain explicit convergence rates when applying Stein's method to establish the CLTs.
\end{abstract}

\maketitle
\setcounter{tocdepth}{1}\tableofcontents

\section{Introduction and Main Results}\label{sec:intro}
Spin glass model was initially introduced to study the strange magnetic alloy behaviors of disordered materials. One of the most notable models is the Sherrington-Kirkpatrick (SK) mean-field model~\cite{SK72}. Over the last few decades, a large amount of research has been conducted to understand the SK model in the mathematics and physics community. The fundamental question is to compute the limiting free energy. In the '80s, Parisi~\cites{Par79, Par80} proposed his famous replica symmetry breaking theory, suggesting a beautiful variational formula for the limiting free energy at any temperature. Until 2006, Talagrand gave the first rigorous proof of Parisi's prediction in his tour de force work~\cite{Tal06}. However, Parisi's formula only gives the first-order asymptotic of free energy. The second-order asymptotic picture, such as fluctuation order, and limit theorems of free energy, is only partially known. This work mainly focuses on studying the fluctuation problems in a broad class of mean-field spin glass models. 

In the seminal work~\cite{ALR87}, Aizenman, Lebowitz, and Ruelle first established fluctuation of free energy for the SK model~\cite{SK72} in the whole high-temperature regime at zero external field. They showed that the log-partition function has a constant order variance and admits a Gaussian distribution limit after proper centering. The key technique was based on the cluster expansion approach. This approach works for a general symmetric disorder. However, it has been believed that it does not work when the external field is present; see the discussion in~\cite{Tal11a}*{Section 1.14} and the related literature~\cite{Kos06,KL05}. Instead, by using the Gaussian interpolation scheme,  Guerra and Toninelli~\cite{GT02} later proved that the fluctuation order becomes linear when a positive external field is present in the model and derived a central limit theorem after appropriate centering and scaling. Nevertheless, this result can only hold at very high temperatures. It is folklore that understanding spin glass models with a non-zero external field is more challenging, especially when one proves results up to the critical temperature. Afterward, there were different approaches developed to prove the same fluctuation results, see~\cites{Tin05, CN95}. Although much progress has been made on the fluctuation problems in spin glass models, there is still a gap in understanding the transitional behavior between no external field and a positive external field case.
Parisi's formula gives the thermodynamic limit in a variational form and is Lipschitz w.r.t.~the external field. It neither suggests fluctuation results nor gives transitional behavior under external fields. 
More importantly, it has never been clear why the cluster expansion approach does not work in the presence of an external field. 

This paper complements the gap by proving Gaussian central limit theorem for the free energy with explicit mean and variance when the external field is weak. The weak external field will decay to zero at various rates in the infinite volume limit but not exactly zero for the finite system. We summarize the detailed contributions as follows. First, we present several approaches to establish the fluctuation results for a broad class of spin glass models. When the external field is weak enough but still present, the results hold up to the critical temperature. Second, one of the approaches successfully extends the cluster expansion method used in the seminal work~\cite{ALR87} to models with an external field. In particular, we found that the cluster expansion approach still works, and a new cluster structure emerges. It provides a strong evidence against the common belief in~\cite{Tal11a}*{Section 1.14} and~\cite{Kos06,KL05}. More importantly, this gives a new combinatorial perspective on how the external field affects the spin-glass system. We believe that this tool will also be useful in analyzing many other statistical physics models. In particular, we applied this analysis in the diluted SK model and multi-species SK model to highlight its generality. Besides that, when establishing the central limit theorem, we creatively adapt the multivariate exchangeable pair Stein's method to the cluster structures, enabling us to obtain an explicit convergence rate for the normal approximation. This is also of independent interest. In other existing works~\cite{ALR87,Tin05,CN95,GT02}, the moment method, a classical tool for establishing CLT, can not give explicit convergence rates. 
Finally, using the analytical approach based on the characteristic framework, we prove the CLT at high enough temperature with Gaussian disorder. The major difficulty here is establishing a limiting variance result for the overlap, which is of independent interest. We achieve this by extending the classical quadratic coupling argument of Guerra-Toninelli~\cite{GT02} along with a second moment computation. 

Let us first recall the definition of spin glass models and introduce the problem of interest.

\subsection{Definitions}\label{ssec:model}
First, we define the celebrated Sherrington--Kirkpatrick model and some generalizations.

\subsubsection{SK Model}

Fix a large integer $N>0$ and consider a configuration $\mvgs :=(\gs_1, \gs_2,\cdots, \gs_N) \in \gS_N:=\{-1,+1\}^N$. The Hamiltonian for the SK model is given by
\begin{align}\label{def:H_N}
	H_N(\mvgs) = \frac{\gb}{\sqrt{N}} \sum_{i<j }J_{ij} \gs_i\gs_j + h\sum_{i=1}^N \gs_i
\end{align}
where $(J_{ij})$'s are i.i.d.~random variables with mean zero and variance one. Here, $h\ge0$ is the external field, and $\gb>0$ is the inverse temperature. The structure function $\xi(x):\dR \to \dR$ is defined as $\xi(x)=\frac12\gb^2x^2$ so that
\begin{align}\label{eq:structure}
	\frac1N\cov \left (H_{N}(\mvgs),H_{N}(\mvgt) \right) = \frac12\gb^2R_{1,2}^2-\frac{\gb^2}{2N}\approx \xi(R_{1,2})
\end{align}
where $R_{1,2}:=R(\mvgs,\mvgt)=\frac{1}{N} \sum_{i=1}^N \gs_i \gt_i$ is known as the \emph{overlap} in SK model. Equality in~\eqref{eq:structure} can be obtained by adding an independent $\N(0,\gb^2/2)$ term to the Hamiltonian, which does not affect the Gibbs measure. Classically one takes $J_{12}\sim\N(0,1)$. The other generalized mean field spin glass models, such as pure $p$-spin or mixed models can be defined by specifying the structure function $\xi(x)$ as $\frac12\sum_{p\ge 2}\gb_p^2 x^p$ for appropriate choices of $(\gb_p)_{p\ge 2}$.

\subsubsection{Multi-Species SK model}
This Multi-species SK model (MSK) is an inhomogeneous extension of the SK model introduced by Barra~\emph{et.~al.}~\cite{BCPMT15}. Compared to the SK model, the given $N$ spins are partitioned into $m>1$ species of size $N_s,s=1,2,\ldots,m$, respectively. Assume that the number of species $m$ is fixed and finite. Specifically,
$
	I:= \{1,2,\ldots, N\} = \bigcup_{s=1}^m I_s, \text{ where } I_s \cap I_t = \emptyset \text{ for } s\neq t.
$
Given a spin configuration $\mvgs=(\gs_1,\gs_2,\ldots, \gs_N) \in \gS_N$, the Hamiltonian is defined as
\begin{align*}
	H_N(\mvgs):=\frac{\gb}{\sqrt{N}}\sum_{s,t=1}^m\sum_{i<j,i\in I_s, j\in I_t} J_{ij}\gs_i\gs_j + h \sum_{i=1}^N \gs_i,
\end{align*}
where $J_{ij}$ are independent centered random variables, and $J_{ij} \sim \N(0,\gD_{st}^2)$ for $i \in I_s, j\in I_t$. We also denote the relative size of species $s$ by $\gl_s: = {\abs{I_s}}/{N}, s=1,2,\ldots,m$. Define the interaction matrix $\gD:=((\gD_{st}^{2}))_{s,t=1}^{m}$ and the species density matrix $\gL:=\diag(\gl_{1},\gl_{2},\ldots,\gl_{m})$.
One can easily compute the structure function for the MSK model as
\begin{align*}
	\xi(\mvx) := \frac12\gb^2\cQ(\mvx) = \frac12\gb^2 \mvx^T\gL \gD^2 \gL \mvx,\quad \mvx\in\dR^m;
\end{align*}
with covariance structure for the Gaussian field $(H_N(\cdot))$ given by $\xi(\vR_{1,2})$. Here
\begin{align*}
	\vR_{1,2} := (R_{1,2}^{(1)}, R_{1,2}^{(2)}, \ldots, R_{1,2}^{(m)})\text{ with }R_{1,2}^{(s)}:= \frac{1}{\abs{I_s}} \sum_{i \in I_s} \gs_i \gt_i
\end{align*}
being the overlap parameter in species $s$ for $s=1,2,\ldots,m$.

For the above models, the associated free energy is defined by
\begin{align}\label{eq:fren}
	F_N(\gb,h) := \frac{1}{N}\log Z_N(\gb,h) \text{ where } Z_N(\gb,h):= \sum_{\mvgs \in \gS_N} \exp(H_N(\mvgs))
\end{align}
and the corresponding Gibbs measure by
\begin{align*}
	d G_{N,\gb,h}(\mvgs) = Z_N(\gb,h)^{-1}\exp(H_N(\mvgs)) d\mvgs,
\end{align*}
where $d \mvgs$ denotes the uniform measure on $\gS_N$. For convenience, we will use $\la f \ra_{\gb,h}$ to denote the average of $f: \gS_N \to \dR$ with respect to the Gibbs measure $d G_{N,\gb,h}(\mvgs)$. For the rest of this article, the weak external field $h$ will be of the form stated below in Assumption~\ref{ass:h}.

\begin{ass}\label{ass:h}
	We have $h:=h_N= \rho N^{-\ga}, \text{ where } \rho, \ga \in (0,\infty)$ are positive constants. We will use $\hh:=\tanh(h)$.
\end{ass}

We also drop the dependence on $N,\gb,\ga, \rho$ for notational convenience. The problem of interest is the fluctuation of the free energy $F_N(\gb,h)$ in the above spin glass models under Assumption~\ref{ass:h}. In particular, $\ga=0$ corresponds to the classical spin glass models with a positive external field, while $\ga=\infty$ reduces to the zero external field case. Before stating our main results, let us briefly overview the related results in the $\ga\in\{0,\infty\}$ cases and the spherical SK model with a weak external field.

\subsection{Related Results}\label{ssec:relate}
For the Ising SK model, the fluctuation of free energy without an external field has long been known to be Gaussian in the high-temperature regime. This result was first proved by Aizenman, Ruelle, and Lebowitz~\cite{ALR87} using the cluster expansion technique in 1987. Later there were other different proofs of the same result: stochastic analysis approach~\cite{CN95}, and moment method in~\cite{Tal11b}. The fluctuation problem in low temperature and critical temperature with no external field is still wide open. The fluctuation order is not even clear; we refer to the most recent progress in~\cites{Ch19, CL19}. When the external field is present, \ie\ $h>0$, Guerra and Toninelli proved that the fluctuation is Gaussian at a very high temperature. The most general result is due to the Chen, Dey, and Panchenko~\cite{CDP17}, who proved a Gaussian central limit theorem for the free energy in the setting of mixed $p$-spin models at all non-zero temperatures.

For the spherical (soft spin) SK model, where the spins can take continuous values on a high dimensional sphere, Baik and Lee~\cite{BL16} proved that the free energy converges to a Gaussian distribution in the high-temperature regime, and Gaussian orthogonal Ensemble (GOE) Tracy-Widom distribution in the low-temperature regime. In particular, at zero temperature ($\gb=\infty$), the free energy, known as the ground state energy, corresponds to the largest eigenvalue of a GOE random matrix. The fluctuation is GOE Tracy-Widom by classical random matrix literature. The key ingredient in their proof is a contour integral representation of the partition function. Most recently, Landon and Sosoe extended these methods to the critical and low temperatures; we invite interested readers to the following works~\cites{LS19, NS19, Lan20} and references therein for the progress in spherical models.

For the fluctuation problem under a weak external field, in the setting of spherical models, there is a physics rigorous level paper~\cite{BCLW21}, where the authors study the fluctuation of various objects, including the free energy and overlap, among others. A contemporary work by Landon and Sosoe~\cite{LS20} also proved part of these results in a mathematically rigorous way. These results successfully detect the correct threshold of the external field strength and the corresponding fluctuation order. The essential technique can be traced back to~\cite{BL16}, where a contour integral representation of the log-partition function is essential. However, using the steepest descent method to determine the asymptotics of the integral is still highly technical. As far as we know, this technique is restricted to the spherical SK model. There is another work of a similar fashion due to Belius~\emph{et.~al.}~\cite{BCNS21}, who uses a geometric approach to find a threshold for the positive external field. More specifically, they utilized the Kac-Rice formula to compute the number of critical points of the Hamiltonian function. They found that the energy landscape is trivial if the external field is strong enough, \ie~there are only two critical points. Below the threshold, the Hamiltonian has exponentially many critical points.
In summary, all of the above results are for spherical spin glass models where many tools from random matrix theory can be utilized.

The hypercube configuration space has less symmetry than the sphere and is discrete in nature. Thus the fluctuation problem under external fields becomes more challenging for the Ising spin case. Many powerful tools such as contour integral representation and complexity-based approaches can not be applied directly in the discrete setting. In this work, we develop several different approaches that are more robust in the sense that they can not just work in the Ising spin case but also in the spherical and many other spin glass models.

\subsection{Main Results}\label{ssec:results}
Before stating the main theorem, we discuss some heuristic ideas on how to find the critical threshold of $\ga$ in $h\approx N^{-\ga}$. For Gaussian disorder, it is well-known that at high temperatures the overlap $R_{12}$ concentrates on some point $q\in[0,1]$, satisfying the following fundamental equation
\begin{align}\label{eq:funda}
	q=\E\tanh^2(\gb\sqrt{q}\eta+h) \text{ where }\eta \sim \N(0,1).
\end{align}
The uniqueness of solution to~\eqref{eq:funda} is guaranteed by Latala-Guerra Lemma~\cite{Tal11a}*{Proposition 1.3.8}. Using the above equation, we have the following lemma characterizing the relation of $q,\gb,h$. The proof is given in Section~\ref{pf:auxlem}.

\begin{lem}\label{lem:1}
	Let $q$ satisfy the fixed point equation~\eqref{eq:funda} with $\gb<1$. Then
	\begin{align*}
		\frac{h^2}{1-\gb^2} -\frac{2h^4}{(1-\gb^2)^3}\le q \le \frac{h^2}{1-\gb^2}.
	\end{align*}
\end{lem}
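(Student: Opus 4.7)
The plan is to exploit the fixed-point equation $q=\E\tanh^2(\gb\sqrt{q}\eta+h)$ together with two sharp pointwise inequalities for $\tanh^2$, reducing everything to explicit Gaussian moment computations. Throughout, let $Y:=\gb\sqrt{q}\eta+h$, so that $\E Y^2=\gb^2 q+h^2$ and, expanding and using $\E\eta=\E\eta^3=0,\,\E\eta^2=1,\,\E\eta^4=3$, $\E Y^4=3\gb^4q^2+6\gb^2 q h^2+h^4$.

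The upper bound is immediate: from $|\tanh(y)|\le|y|$ we get $\tanh^2(y)\le y^2$, so
\begin{align*}
  q=\E\tanh^2(Y)\le \E Y^2=\gb^2 q+h^2,
\end{align*}
which rearranges to $q\le h^2/(1-\gb^2)$.

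For the lower bound I will first establish the elementary inequality
\begin{align*}
  \tanh^2(y)\ge y^2-\tfrac{2}{3}y^4\qquad\text{for all }y\in\dR.
\end{align*}
By symmetry it suffices to take $y\ge 0$. The standard identity $(\tanh(y)-y+y^3/3)'=\text{sech}^2(y)-1+y^2=y^2-\tanh^2(y)\ge 0$ gives $\tanh(y)\ge y-y^3/3$ for $y\ge 0$, and squaring yields the claimed bound on $[0,\sqrt{3}]$ (where the lower Taylor polynomial is nonnegative); on $[\sqrt{3},\infty)$ the right-hand side is negative while $\tanh^2$ is positive, so the inequality holds trivially. Taking expectations in the fixed-point equation and plugging in this bound gives
\begin{align*}
  q\ge\gb^2 q+h^2-\tfrac{2}{3}\E Y^4,
\end{align*}
i.e. $q(1-\gb^2)\ge h^2-\tfrac{2}{3}(3\gb^4q^2+6\gb^2 q h^2+h^4)$. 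Substituting the already-established upper bound $q\le h^2/(1-\gb^2)$ into the error term and collecting the $h^4$ coefficients produces
\begin{align*}
  q(1-\gb^2)\ge h^2-\frac{2h^4}{3}\cdot\frac{1+4\gb^2-2\gb^4}{(1-\gb^2)^2}.
\end{align*}
Finally, the algebraic observation $1+4\gb^2-2\gb^4\le 3$ for $\gb\in[0,1]$, equivalent to $(1-\gb^2)^2\ge 0$, reduces the right-hand side to $h^2-2h^4/(1-\gb^2)^2$; dividing by $1-\gb^2$ gives the stated lower bound.

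The only mildly delicate step is verifying the pointwise inequality $\tanh^2(y)\ge y^2-\tfrac{2}{3}y^4$ uniformly in $y$ (the naive Taylor expansion only gives it for small $y$); the rest is bookkeeping with the fourth-moment formula and the neat cancellation $(1-\gb^2)^2\ge 0$ that absorbs all subleading $\gb$-dependent factors into the claimed constant $2$.
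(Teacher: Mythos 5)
Your argument is correct and follows essentially the same route as the paper: both obtain the upper bound from $\tanh^2 y\le y^2$, and the lower bound from the pointwise estimate $\tanh^2 y\ge y^2-\tfrac23 y^4$ (the paper records this as a standalone lemma, derived from $1-x^2/3\le x^{-1}\tanh x\le 1$, while you derive it inline), followed by the identical fourth-moment computation $\E Y^4 = h^4+6\gb^2h^2q+3\gb^4 q^2$, the substitution $q\le h^2/(1-\gb^2)$, and the observation $1+4\gb^2-2\gb^4 = 3-2(1-\gb^2)^2\le 3$.
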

From this lemma, one can see that $q \approx {h^2}/(1-\gb^2)$ for $h$ small. On the other hand, in the smart path interpolation~\eqref{eq:interpo} of Section~\ref{sec:char}, the SK model is supposed to behave similarly as the decoupled model ($H_{N,0}$ in~\eqref{eq:interpo}) in the infinite volume limit. Lemma~\ref{lem:2} suggests that the variance order of log-partition function in the decoupled model is $Nq^2\approx Nh^4\approx N^{1-4\ga}$. This heuristic tells that the critical threshold of $\ga$ should be $1/4$. In later sections, we will see that this intuition is indeed correct from several rigorous proofs.

Now let us state the main theorem for the SK spin glass model as follows.
\begin{thm}[SK model under weak external field]\label{thm:main}
	Let $h$ satisfy Assumption~\ref{ass:h}, and $J_{ij}, 1\le i<j\le N$ have i.i.d.~symmetric distribution with variance one and finite sixth moment. We have the following results for the SK model with disorder $(J_{ij})_{i<j}$, inverse temperature $\gb$, and external field $h$.
	\begin{enumeratea}
		\item \textsc{\bfseries Sub-critical regime}: If $\ga>1/4$ and $\gb<1$, then
		 \begin{align*}
			 & \log Z_N(\gb,h) - N\log(2\cosh h) - \frac14(N-1)\gb^2 \\
			 & \qquad\implies \N\left(-\frac12v_1^2 - \frac{\gb^4}{24}\E (J_{12}^4),\; v_1^2+\frac{\gb^4}{8}\E(J_{12}^4-1)\right)
		 \end{align*}
		 in distribution as $N\to \infty$ where
		 \begin{align*}
			 v_1^2 := -\frac12\log(1-\gb^2)- \frac{\gb^2}{2} - \frac{\gb^4}4.
		 \end{align*}
		\item \textsc{\bfseries Critical regime}: If $\ga = 1/4$ and $\gb <1 $, then
		 \begin{align*}
			 & \log Z_N(\gb,h) - N\log(2\cosh h) - \frac14(N-1)\gb^2 \\
			 & \qquad\implies \N\left(-\frac12(v_1^2 + \rho^4 v_2^2) - \frac{\gb^4}{24}\E (J_{12}^4),\; v_1^2 + \rho^4 v_2^2 +\frac{\gb^4}{8}\E( J_{12}^4-1)\right)
		 \end{align*}
		 in distribution as $N\to \infty$ where
		 \begin{align*}
			 v_2^2 := \frac{\gb^2}{2(1-\gb^2)}.
		 \end{align*}

		\item \textsc{\bfseries Super-critical regime}: Assuming Gaussian disorder, there exists $\gb_0\le 1$ such that if $\ga<1/4$ and $\gb < \gb_0$, then
		 \begin{align*}
			 & \frac1{\sqrt{Nh^4}} \biggl(\log Z_N(\gb,h) - N\E\log (2 \cosh(\gb\sqrt{q}\eta+h)) - \frac14N \gb^2(1-q)^2 \biggr) \\
			 & \qquad\implies \N(0,v_2^2)
		 \end{align*}
		 in distribution as $N\to \infty$.
	\end{enumeratea}
\end{thm}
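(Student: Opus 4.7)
The plan is to handle the three regimes with two different strategies. Parts (a) and (b) (sub-critical and critical) will use a cluster expansion of $Z_N$ combined with multivariate Stein's method, while part (c) (super-critical) will use Guerra's smart-path interpolation together with a sharp variance estimate for the two-replica overlap. The threshold $\ga=1/4$ is already foreshadowed by Lemma~\ref{lem:1}: since $q\approx h^2/(1-\gb^2)$, a decoupled model with an effective random field $\gb\sqrt{q}\,\eta_i + h$ has log-partition variance of order $Nq^2\approx N^{1-4\ga}$, which matches the constant-order base fluctuation exactly at $\ga=1/4$.

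For parts (a) and (b), I will first factorize each exponential using the symmetry of the disorder and $\gs_i^2=1$, writing
\begin{align*}
e^{\gb J_{ij}\gs_i\gs_j/\sqrt{N}}=\cosh(\gb J_{ij}/\sqrt{N})\bigl(1+\tanh(\gb J_{ij}/\sqrt{N})\,\gs_i\gs_j\bigr),\qquad e^{h\gs_i}=\cosh(h)(1+\hh\,\gs_i).
\end{align*}
Multiplying over edges and sites and expanding the resulting products labels each monomial by a pair $(E,V)$ with $E\subseteq\binom{[N]}{2}$ and $V\subseteq [N]$. Uniform averaging over $\mvgs\in\gS_N$ kills any $(E,V)$ for which some vertex has odd total degree in $E\cup V$, where membership in $V$ counts as degree one. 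In the $h=0$ case of~\cite{ALR87} this forces $E$ to be Eulerian and the leading contribution comes from disjoint simple cycles; with $V\neq\eset$, the surviving connected components additionally include vertex-disjoint \emph{paths} whose two endpoints lie in $V$. I then organize the expansion by connected cluster and power-count each cluster type in $N$ and $\hh$. Cycles of length $k\ge 2$ recover the ALR contribution $v_1^2$, together with the $\E J_{12}^4$ correction coming from Taylor-expanding $\log\cosh(\gb J_{ij}/\sqrt{N})$. The new path clusters carry weights governed by powers of $\hh$; summing over path length and attachment positions produces the $\rho^4 v_2^2$ piece in the critical case and a vanishing contribution in the sub-critical case. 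The CLT with explicit rate will then follow by writing the centered log-partition function as a sum indexed by clusters, building an exchangeable pair by resampling a single $J_{ij}$ at a time, and invoking the multivariate exchangeable-pair Stein bound; the sixth-moment hypothesis is used to control remainders from truncating $\log(1+x)$ and from heavy-tailed clusters.

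For part (c), the plan is to interpolate between the SK Hamiltonian and a fully decoupled random-field Hamiltonian along
\begin{align*}
H_{N,t}(\mvgs)=\sqrt{t}\,\frac{\gb}{\sqrt{N}}\sum_{i<j}J_{ij}\gs_i\gs_j+\sqrt{1-t}\,\gb\sqrt{q}\sum_{i=1}^{N}\eta_i\gs_i+h\sum_{i=1}^{N}\gs_i,
\end{align*}
with $(\eta_i)$ i.i.d.~$\N(0,1)$ independent of $(J_{ij})$ and $q$ the unique solution of~\eqref{eq:funda}. Gaussian integration by parts in $t$ turns $\partial_t\log Z_{N,t}$ into a quadratic form in $R_{1,2}-q$, so the gap between $\log Z_N(\gb,h)$ and the decoupled log-partition function is controlled by the two-replica overlap fluctuation. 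The decoupled model contributes $\sum_{i=1}^{N}\log(2\cosh(\gb\sqrt{q}\eta_i+h))$, an i.i.d.~sum whose CLT is standard and whose variance, after Taylor-expanding in $h$ and using $q\approx h^2/(1-\gb^2)$, is asymptotic to $Nh^4 v_2^2$. It remains to show that the interpolation error is $o(\sqrt{Nh^4})$ in $L^2$, which reduces to a sharp upper bound on $\E\la(R_{1,2}-q)^2\ra_t$ that is uniform in $t\in[0,1]$.

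The main obstacle is precisely this sharp overlap estimate, and it is what forces the restriction $\gb<\gb_0$ in (c). The classical quadratic coupling of Guerra and Toninelli~\cite{GT02} yields $\E\la(R_{1,2}-q)^2\ra=O(1/N)$ at high temperature, but extracting the exact constant $v_2^2=\gb^2/(2(1-\gb^2))$ requires a second-moment expansion of $\la R_{1,2}\ra$ around $q$ together with cavity/replica estimates on cross-terms such as $\E\la R_{1,2}R_{3,4}\ra-q^2$. In the cluster-expansion route for (a) and (b) the parallel technical challenge is combinatorial: enumerating the new path-type clusters so that the telescoping sum over path length closes up to $\rho^4 v_2^2$, and keeping the exchangeable-pair Stein error uniform over the exponentially many clusters while only assuming a sixth moment on the disorder.
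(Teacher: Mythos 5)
Your overall architecture matches the paper's: parts (a) and (b) via cluster expansion plus multivariate exchangeable-pair Stein's method, and part (c) via Guerra's smart-path interpolation with overlap control. The $(E,V)$ expansion you propose for (a) and (b) is equivalent to the paper's biased-measure decomposition (the constraint that every vertex has even total degree forces $V=\partial E$, so the weight is $\hh^{|\partial E|}\go(E)$), and identifying cycle and path clusters and applying Stein's method is exactly Sections~\ref{sec:cluster}--\ref{sec:steincomp} of the paper.

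However, your part (c) contains a genuine quantitative gap. You assert that the limiting variance in the super-critical regime is carried entirely by the decoupled model, i.e.\ that $N\var\bigl(\log\cosh(\gb\sqrt{q}\eta+h)\bigr)\sim Nh^4\,v_2^2$. This is false: by Lemma~\ref{lem:2} the decoupled variance satisfies
\begin{align*}
N\var\bigl(\log\cosh(\gb\sqrt{q}\eta+h)\bigr)\sim \tfrac12\gb^2(2-\gb^2)\,Nq^2 \sim \frac{\gb^2(2-\gb^2)}{2(1-\gb^2)^2}\,Nh^4,
\end{align*}
which does not equal $v_2^2\,Nh^4=\frac{\gb^2}{2(1-\gb^2)}Nh^4$. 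In the paper's characteristic-function argument (Section~\ref{ssec:g-clt}) the drift in the ODE for $m(t,x)$ contributes the deterministic function $V(t)=\frac{\gb^2}{2}\int_0^t\eta_N^{-2}(1+Nq^2-c_s)\,ds$; with $c\equiv0$ and $\eta_N^2=Nh^4$ the $Nq^2$ part of $V(1)$ is of the \emph{same} order as the decoupled variance and subtracts to give exactly $a-\lim V(1)=v_2^2$. Your outline treats the interpolation contribution as a pure error term to be shown $o(\sqrt{Nh^4})$, and this misses the $O(1)$-scale correction $V(1)$; proceeding as written would yield the wrong constant. Relatedly, you attribute the restriction $\gb<\gb_0$ in (c) to ``extracting the exact constant $v_2^2$,'' but in the paper $v_2^2$ follows from an algebraic cancellation once the ODE is set up correctly; what actually forces $\gb_0<1$ is the need for a uniform-in-$t$ bound on $N\langle(R_{12}-q)^2\rangle_t$, obtained via Latala's exponential-moment argument (Theorem~\ref{thm:latala}), not the sharpness of the constant.
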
 

\begin{rem}[Optimality of moment assumptions]
	We need finite sixth moment for $J_{ij}$ to use Stein's method for $J_{ij}^{2}$. Using standard arguments and H\"older inequality, one can reduce the moment condition from sixth moment to $(4+\eps)$--th moment for any $\eps>0$. However, finite fourth moment is required to get a Gaussian CLT for $\sum_{i<j} \log\cosh(\gb J_{ij}/\sqrt{N})$.
\end{rem}

\begin{rem}[Optimality of the bounds on $\beta$]
	The cases $\ga=0$ and $\ga=\infty$ correspond to the SK model with a positive external field and no external field. The fluctuation results are already known in the classical literature (See~\cite{Tal03}) and should hold for all $\gb<\gb_{c}=1$. We expect $\gb_0=1$ for all symmetric i.i.d.~disorders with variance $1$ in the \supc~regime, but to rigorously prove this is a challenging question. This also relates to the famous Almeida-Thouless transition conjecture; see the recent progress in~\cite{Chen21AT} and references therein.
\end{rem} 

\begin{rem}[Disparity in the means]
	In all three different regimes, the asymptotic mean is of the same form, even if they look different. In the critical regime, using $h=\rho N^{-1/4}$, the relation $q \approx {h^2}/(1-\gb^2)$ and $\log\cosh x \approx x^{2}/2 +O(x^{4})$, we get
	\begin{align*}
		 & \left(N\E\log\cosh(\gb\sqrt{q}\eta+h) + \frac12N \gb^2(1-q)^2\right) -\left( N\log\cosh h+\frac14(N-1)\gb^{2}\right) \\
		 & \qquad = \frac12N (\gb^{2}q+h^{2}) - \frac12(N \gb^2q+Nh^{2}) +O(1) = O(1).
	\end{align*}
\end{rem} 

Now we state a similar result for the MSK model on $N$ spins with $m$ species of size $N_{s}:=N(\gl_{s}+o(1)),s=1,2,\ldots,m$, respectively and interaction matrix $\gD^{2}=((\gD_{st}^{2}))_{s,t=1}^{m}$. Recall that $\gL$ is the diagonal matrix with $s$--th diagonal entry $\gl_{s}$ for $s=1,2\ldots,m$. In the zero external field case the critical $\gb$ is given by $\gb_c:=\norm{\gL^{1/2}\gD^{2}\gL^{1/2}}^{-1/2}$ (see~\cite{DW20}).

Similar to the classical SK model, we now consider the MSK model with external field $h=\rho N^{-\ga}$ for some $\rho,\ga\in(0,\infty)$.
Define the matrix $$\gC_\gb:=I-\gb^2 \gD^2\gL,$$ which is invertible in the high temperature regime $\gb<\gb_{c}$. In the Gaussian disorder case, it is easy to see that the $\mvq$ vector satisfies
\begin{align*}
	\norm{\mvq - h^2\cdot \gC_\gb^{-1}\vone }= O(h^4).
\end{align*}
Let $s(i)$ denote the species containing vertex $i$. We assume that the disorder $(J_{ij})_{i<j}$ satisfies the following assumption.
\begin{ass}\label{ass:J}
	We have $J_{ij}:= \gD_{s(i)s(j)}\cdot \hat{J}_{ij}$, where $(\hat{J}_{ij})_{i<j}$ are i.i.d.~from a symmetric distribution with mean zero, variance one and finite sixth moment.
\end{ass}

\begin{thm}[MSK model under weak external field]\label{thm:main2}
	Suppose that Assumptions~\ref{ass:h} and \ref{ass:J} hold.
	For the MSK model, we have the following results.
	\begin{enumeratea}
		\item \textsc{\bfseries Sub-critical regime}: If $\ga>1/4$ and $\gb<\gb_{c}$, then
		 \begin{align*}
			 & \log Z_N(\gb,h) - N\log(2\cosh h) - \frac14N\gb^2\cQ(\vone) + \frac{\gb^2}{4}\tr(\gD^2\gL) \\
			 & \qquad\implies \N\left(-\frac12v_1^2 - \frac{\gb^4}{24}\E (\hat{J}_{12}^4)\cQ_2(\vone),\; v_{1}^{2}+ \frac{\gb^4}{8}\E(\hat{J}_{12}^4-1) \cQ_2(\vone)\right)
		 \end{align*}
		 in distribution as $N\to \infty$ where
		 \begin{align*}
			 v_1^2
			 & := -\frac12\log\det(\gC_\gb) -\frac{\gb^2}2\tr(\gD^2\gL) - \frac{\gb^4}4 \cQ_{2}(\vone), \\
			 \cQ(\vone) & :=\vone^T\gL\gD^2\gL\vone,\qquad
			 \cQ_2(\vone) :=\vone^T\gL(\gD^2\circ \gD^2)\gL\vone,
		 \end{align*}
		 and $\circ$ is the Hadamard product.
		\item \textsc{\bfseries Critical regime}: If $\ga = 1/4$ and $\gb <\gb_{c}$, then
		 \begin{align*}
			 & \log Z_N(\gb,h) - N\log(2\cosh h) - \frac14N\gb^2 \cQ(\vone) + \frac{\gb^2}{4}\tr(\gD^2\gL) \\
			 & \quad\implies \N\left(-\frac12(v_1^2 + \rho^4 v_2^2) - \frac{\gb^4}{24}\E (\hat{J}_{12}^4)\cQ_2(\vone),\; v_{1}^{2}+ \rho^4 v_2^2+\frac{\gb^4}{8}\E(\hat{J}_{12}^4-1) \cQ_2(\vone)\right)
		 \end{align*}
		 in distribution as $N\to \infty$ where
		 \begin{align*}
			 v_2^2 & := \frac{\gb^2}2 \vone^T\gL\gD^2\gL\gC_\gb^{-1}\vone.
		 \end{align*}

		\item \textsc{\bfseries Super-critical regime}: Assuming Gaussian disorder, there exists $\gb_0\le \gb_{c}$ such that if $\ga<1/4$ and $\gb < \gb_0$, then
		 \begin{align*}
			 & \frac1{\sqrt{Nh^4}} \biggl(\log Z_N(\gb,h) -
			 \sum_{s=1}^m N_s \E \log 2\cosh(\gb \eta \sqrt{(\gD^2\gL \vq)_s} +h) - \frac14N\gb^2\cQ(\vone -\vq)
			 \biggr) \\
			 & \qquad\implies \N(0,v_2^2)
		 \end{align*}
		 in distribution as $N\to \infty$.
	\end{enumeratea}
\end{thm}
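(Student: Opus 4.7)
The proof splits into the high-temperature regimes (a)--(b) and the super-critical regime (c), each following the template of Theorem~\ref{thm:main} with additional species bookkeeping through the matrices $\gD^{2}$, $\gL$ and $\gC_{\gb}=I-\gb^{2}\gD^{2}\gL$.

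\emph{Parts (a) and (b): cluster expansion for $\ga\ge 1/4$.} First factor out the external field,
\begin{align*}
Z_{N}(\gb,h)=(2\cosh h)^{N}\,\dE_{\mvgs}\prod_{i<j}\exp\!\left(\tfrac{\gb}{\sqrt N}J_{ij}\gs_{i}\gs_{j}\right),
\end{align*}
where under $\dE_{\mvgs}$ the coordinates of $\mvgs$ are i.i.d.\ $\pm 1$ with mean $\hh$. Using $e^{x\gs}=\cosh x+\sinh x\cdot\gs$ for $\gs\in\{\pm 1\}$, peel off $\prod_{i<j}\cosh(\gb J_{ij}/\sqrt N)$ and expand the rest over subgraphs to obtain the cluster expansion
\begin{align*}
\sum_{G}\prod_{(i,j)\in G}t_{ij}\cdot \hh^{|V_{\mathrm{odd}}(G)|},\qquad t_{ij}:=\tanh(\gb J_{ij}/\sqrt N),
\end{align*}
which is the ``new cluster structure'' advertised in the introduction: the $\hh$--factors attached to odd-degree vertices recover the Eulerian-subgraph expansion of~\cite{ALR87} when $\hh=0$. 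I would classify the clusters surviving in the logarithm as closed even cycles of length $k\ge 3$ and open paths of length $k\ge 1$ whose two endpoints each contribute an $\hh$--factor. The species enter through $t_{ij}\approx\gb\gD_{s(i)s(j)}\hat J_{ij}/\sqrt N$; summing a cycle of length $k$ over species-labelled vertices produces $\tfrac{1}{2k}\tr((\gb^{2}\gD^{2}\gL)^{k})$, and summing a path of length $k$ with $\hh^{2}$ endpoints yields $\tfrac12 \hh^{2}\vone^{T}\gL\gD^{2}\gL(\gb^{2}\gD^{2}\gL)^{k-1}\vone$. Summing cycles over $k\ge 3$ rebuilds $-\tfrac12\log\det\gC_{\gb}$ minus the $k=1,2$ terms absorbed into the centering, giving $v_{1}^{2}$; summing paths over $k\ge 1$ produces $\hh^{4}\cdot v_{2}^{2}$, which contributes $\rho^{4}v_{2}^{2}$ precisely at $\ga=1/4$ and is negligible when $\ga>1/4$. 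The quartic Taylor terms of $\log\cosh(\gb J_{ij}/\sqrt N)$ together with the $J_{ij}^{2}$--fluctuation supply the $\E(\hat J_{12}^{4}-1)\cQ_{2}(\vone)$ variance correction and the matching mean shift. I would then package cycles, paths and the quartic sum into a single exchangeable pair (resampling a random edge $(i,j)$ and its disorder $\hat J_{ij}$) and invoke the multivariate Stein bound to conclude joint Gaussian convergence with explicit rate. The main obstacle is a uniform tail bound on long cycles and paths under species weights: one needs $\norm{\gb^{2}\gD^{2}\gL}<1$ (i.e.~$\gb<\gb_{c}$) so that the geometric sums $\sum_{k}\tr((\gb^{2}\gD^{2}\gL)^{k})$ converge, and careful accounting of cross-species edge weights to keep combinatorics manageable.

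\emph{Part (c): Guerra interpolation for $\ga<1/4$.} With Gaussian disorder, introduce the decoupled Hamiltonian
\begin{align*}
H_{N,0}(\mvgs):=\sum_{s=1}^{m}\sum_{i\in I_{s}}\bigl(\gb\eta_{i}\sqrt{(\gD^{2}\gL\mvq)_{s}}+h\bigr)\gs_{i},
\end{align*}
with $(\eta_{i})$ i.i.d.\ $\N(0,1)$ and $\mvq$ the MSK fixed-point vector, and interpolate along $\sqrt{t}\,H_{N}+\sqrt{1-t}\,H_{N,0}$ for $t\in[0,1]$. Gaussian integration by parts expresses the $t$--derivative of the interpolating free energy as a Gibbs average of $\tfrac12\gb^{2}\cQ(\vR_{1,2}-\mvq)$. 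The crux is a multi-species quadratic-coupling estimate
\begin{align*}
\dE\bigl\la\cQ(\vR_{1,2}-\mvq)\bigr\ra = O(h^{2}/N)\quad\text{uniformly in }t\in[0,1],
\end{align*}
which is the chief obstacle: one must spectrally dominate the Hessian of the associated Parisi-type functional by a positive multiple of $\gC_{\gb}$, and the threshold $\gb_{0}\le\gb_{c}$ is precisely where this contraction breaks. Given this overlap bound, integrating in $t$ yields $\log Z_{N}=\log Z_{N,0}+\tfrac14 N\gb^{2}\cQ(\vone-\mvq)+o(\sqrt{Nh^{4}})$. The decoupled term $\log Z_{N,0}$ is a sum of independent $\log(2\cosh)$'s; expanding through the vector version of Lemma~\ref{lem:1}, namely $\mvq=h^{2}\gC_{\gb}^{-1}\vone+O(h^{4})$, identifies its mean as $\sum_{s}N_{s}\dE\log 2\cosh(\gb\eta\sqrt{(\gD^{2}\gL\mvq)_{s}}+h)$ and its variance as $Nh^{4}\cdot v_{2}^{2}(1+o(1))$ with $v_{2}^{2}=\tfrac12\gb^{2}\vone^{T}\gL\gD^{2}\gL\gC_{\gb}^{-1}\vone$. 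A Lyapunov CLT for this independent sum closes the argument.
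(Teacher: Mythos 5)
Parts (a) and (b) follow the paper's cluster-expansion-plus-Stein approach, and your species-weighted cluster variances are correct: a cycle of length $k$ contributes $\frac{1}{2k}\tr((\gb^2\gD^2\gL)^k)$ and a path of length $k$ contributes $\frac12\rho^4\gb^{2k}\vone^T\gL\gD^2\gL(\gD^2\gL)^{k-1}\vone$ (you wrote $\hh^2$ where the prefactor should be $\rho^4\approx N\hh^4$, a typo), and these resum to $v_1^2$ and $\rho^4 v_2^2$ respectively. The paper itself only executes the bipartite two-species case in Section~\ref{sec:bsk}, so your general-$m$ outline is a reasonable version of what the authors merely assert; the combinatorial bookkeeping through the exchangeable-pair estimates would still need to be carried out on the weighted complete graph.

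Part (c), however, has a genuine gap. Your claimed overlap bound $\E\la\cQ(\vR_{1,2}-\vq)\ra_t=O(h^2/N)$ is false: $N\E\la\cQ(\vR_{1,2}-\vq)\ra_t$ tends to a strictly positive constant (the vector analogue of $c_t=1/(1-\gb^2 t)$), and what is actually needed and provable via Latala's estimate is only uniform boundedness $O(1)$, which is already $o(\sqrt{Nh^4})$ because $Nh^4\to\infty$. The more serious problem is the final step: writing $\log Z_N=\log Z_{N,0}+\text{deterministic}+o_P(\sqrt{Nh^4})$ and invoking a Lyapunov CLT on $\log Z_{N,0}$ cannot work, because $\log Z_{N,0}$ depends only on the cavity Gaussians $(g_i)$ while $\log Z_N=\log Z_{N,1}$ depends only on $(g_{ij})$, so they are independent and no such distributional approximation can hold. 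Moreover $\var(\log Z_{N,0})=Nh^4\cdot a(1+o(1))$ where (in the $m=1$ case) $a=\frac12\gb^2(2-\gb^2)/(1-\gb^2)^2$ by Lemma~\ref{lem:2} and the fixed-point relation, and this strictly exceeds $v_2^2=\frac12\gb^2/(1-\gb^2)$. The missing ingredient is the deterministic variance drift $V(1)=\frac12\gb^2\eta_N^{-2}(1+Nq^2)\to\frac12\gb^2/(1-\gb^2)^2$ accumulated along the interpolation path, which the characteristic-function argument of Section~\ref{ssec:g-clt} tracks; the correct limit is $v_2^2=a-\lim V(1)$, whereas a Lyapunov CLT on the decoupled sum alone would produce the larger, incorrect variance $a$.
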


This matches the SK results in the $m=1$ species case.
Note that, under Assumption~\ref{ass:J} one can easily check that
\begin{align*}
	\sum_{i<j}\log\cosh(\gb J_{ij}/\sqrt{N}) - &
	\frac14N\gb^2\cQ(\vone) + \frac{\gb^2}{4}\tr(\gD^2\gL) \\
	 & \implies \N\left( - \frac{\gb^4}{24}\E (\hat{J}_{12}^4)\cQ_2(\vone),\; \frac{\gb^4}{8}\E(\hat{J}_{12}^4-1) \cQ_2(\vone)\right)
\end{align*}
in distribution as $N\to\infty$.

The proof of Theorem~\ref{thm:main2} is precisely the same as the proof for the classical SK model. However, the cluster expansion approach for the general MSK model is quite involved since the underlying graph is a complete weighted graph.

Instead of proving the the theorem for general MSK model, we focus on the proof for the bipartite SK model with two species with
\begin{align*}
	\gD^{2}=\begin{pmatrix}0& 1\\1&0\end{pmatrix}\text{ and }\gL=\begin{pmatrix}p_{1}& 0\\0&p_{2}\end{pmatrix}.
\end{align*}
The phase transition happens at $\gb_{c}:=(p_1p_2)^{-1/4}$ in the zero external field case. In particular, we have, for Theorem~\ref{thm:main2},
\begin{align*}
	v_{1}^{2} & =-\frac12\biggl(\log(1-\gb^{4}p_1p_2)+\gb^{4}p_1p_2 \biggr), \\
	v_{2}^{2} & =\frac12\cdot \frac{\gb^4 + 2\gb^{2}}{1-\gb^{4}p_1p_2}\cdot p_1p_2, \\
	\cQ(\vone) & =\cQ_{2}(\vone)=2p_1p_2 \text{ and } \tr(\gD^{2}\gL)=0.
\end{align*}
The proof for the bipartite case is given in Section~\ref{sec:extension}, where we also state the similar results for the diluted SK model and sketch the proof idea.  

For the above theorems, depending on whether the disorder coupling $(J_{ij})_{1\le i<j\le N}$ are Gaussian or not, we have several different approaches to prove Theorem~\ref{thm:main}. In the following subsection~\ref{ssec:proof}, we will give a summary of the features of different approaches and some heuristic details. Here, we mention that the Gaussian approach can also be extended to the spherical models giving a soft proof for the Gaussian CLT in a high enough temperature regime.  

\subsection{A word on the proof}\label{ssec:proof}

In the $\ga \ge 1/4$ regime, Theorem~\ref{thm:main} is proved by using two different approaches. One is more analytical, and the disorder coupling needs to be i.i.d.~\emph{Gaussian} random variables. The other is more combinatorial, and the Gaussianity of disorder coupling is relaxed to general symmetric distribution with appropriate moment assumptions. In the \supc~regime $\ga < 1/4$, we can only use the analytic idea to prove the central limit theorem. The critical step is based on controlling the quenched fluctuation of overlap. The strength of the analytical approach is that it can be easily extended to other variants of spin glass models, such as the pure even $p$-spin model and multi-species SK model. However, the weakness is that the central limit theorem can only be proved at very high temperatures. The combinatorial technique can work up to the critical temperature, but in the \supc~regime, the combinatorial structures become quite involved, and it does not work anymore. However, for $\ga \ge 1/4$, we believe that this approach may also be adapted to other spin glass models, but it could be highly technical. Using this approach in the diluted SK model with a weak external field, there will be some extra contributions to the partition function; in particular, if the external field is strong enough, some non-Gaussian fluctuation is expected.

\subsubsection{Gaussian Disorder Case}
In this proof, the framework is based on the smart path interpolation and characteristic approach. The idea was also used in~\cite{DW20} for proving the central limit theorem of free energy in the multi-species SK model. Gaussianity of the disorder is essential to the proof as Gaussian integration by parts was utilized multiple times. The critical step is to control the quenched fluctuation of the centered overlap (c.f.~\eqref{eq:deriv}), \ie\
$
	N \la (R_{12}-q)^2 \ra_t,
$
where $\la \cdot \ra_t$ is average under the Gibbs measure with the interpolated \emph{Hamiltonian} $H_{N,t}(\cdot)$ in~\eqref{eq:interpo}. It turns out that in the \supc~regime, it is sufficient to prove that
\begin{align*}
	N \la (R_{12}-q)^2 \ra_t \ll \sqrt{Nh^4},
\end{align*}
which can be done by an easy adaptation of Latala's argument~\cite{La02}. In the \subc~and \crit~regime, we need a stronger result of the form, for $t\in(0,1)$
\begin{align*}
	N \la (R_{12}-q)^2 \ra_t \to c_t:= 1/(1-\gb^2t)\text{ in } L^1 \text{ as } N\to\infty.
\end{align*}
Here the value of $c_t$ is obtained from the $h=0$ case. To achieve this, we need to control the following objects,
\begin{align*}
	U_{N,t} & := \frac{1}{(\E Z_{N,t}(\gb,h))^2} \sum_{\mvgs,\mvgt} (N(R_{12}-q)^2 -c_t)\cdot e^{H_{N,t}(\mvgs)+H_{N,t}(\mvgt)} \\
	\text{ and }\qquad
	 & |\E Z_{N,t}(\gb,h)|^2/Z_{N,t}(\gb,h)^2.
\end{align*}

The behavior of $U_{N,t}$ can be analyzed via the asymptotic of a Gaussian integral. The second object is controlled by deriving upper tail estimates of the log-partition function. Along with that, we also generalize the quadratic coupling method originally used in~\cite{GT04} for $h=0$.

Beyond the above characteristic approach framework, we give another short proof for the \subc~regime under Gaussian disorders. This proof works up to the critical temperature. The idea is simple: we prove that the difference between the centered log-partition function with a weak enough external field and with zero external field converges to $0$ in probability. The central limit theorem immediately follows from the classical result for zero external fields. This explains why in the~\subc~regime, the variance is the same as the zero external field case. The proof is based on the second-moment method. Another important ingredient is the Gaussian trick, also known in the literature as Hubbard--Stratonovich transformation~\cite{Hub59},
\begin{align}\label{eq:HS-trans}
	\exp\left(a/2\right) = \E\exp(\sqrt{a}\cdot \eta),
\end{align}
for any complex number $a\in \dC$ and $\eta\sim\N(0,1)$. It is also used in the Gaussian characteristic framework.

\subsubsection{General Disorder Case}
For general symmetric disorders, the combinatorial approach known as cluster expansion is a useful tool in the physics community. It was first used in the SK model without external field by Aizenman, Lebowitz, and Ruelle~\cite{ALR87}. Since then, it has been believed that this approach could not work when the external field is present; see the discussion in~\cite{Tal11a}*{Section 1.14}. This work shows that it can work if the external field is not too strong and how it affects the free energy. The first step is the following decomposition
\begin{align}\label{def:Zdecomp}
	Z_N(\gb,h) = (2\cosh h)^N\prod_{i<j} \cosh(\gb J_{ij}/\sqrt{N}) \cdot \E_{\mvgs} \prod_{i<j} \bigl(1+ \gs_i\gs_j \tanh(\gb J_{ij}/\sqrt{N})\bigr)
\end{align}
where $\mvgs=(\gs_1,\gs_2,\ldots,\gs_N)$ are i.i.d.~$\pm1$--valued random variables with mean $\hh=\tanh h$. Expanding the $\E_{\mvgs}$ part, we obtain
\begin{align}\label{eq:intro-expan}
	\E_{\mvgs} \prod_{i<j} \bigl(1+ \gs_i\gs_j \tanh({\gb J_{ij}}/{\sqrt{N}})\bigr)
	= \sum_{\gC \subseteq \cE_N} \hh^{|\partial\gC|}\cdot \go(\gC),
\end{align}
where $\cE_N$ is edge set for the complete graph with $N$ vertices, and $\abs{\partial \gC}$ is the number of odd degree vertices in $\gC$ and
\begin{align*}
	\go(\gC):=\prod_{e\in\gC} \tanh(\gb J_{e}/\sqrt{N}).
\end{align*}
An important fact in~\eqref{eq:intro-expan} is that, in the regimes $\ga\ge 1/4$, the contribution of large graphs to the partition function decays exponentially fast in $\abs{\gC}$. This makes it possible to study the asymptotics of~\eqref{eq:intro-expan} combinatorically. In contrast, in the \supc~regime, very large graphs are important and counting the cycles and paths becomes intractable. Restriced to the finite graph case in~\eqref{eq:intro-expan}, a further reduction gives
\begin{align*}
	\E_{\mvgs} \prod_{i<j} \bigl(1+ \gs_i\gs_j \tanh({\gb J_{ij}}/{\sqrt{N}})\bigr)
	\approx
	\prod_{\gc: |\gc|\le m} (1+\go(\gc)) \prod_{p: |p|\le m } (1+\hh^{2}\cdot \go(p)),
\end{align*}
for $m$ large where $\gc, p$ denotes the loops and paths in the complete graph $K_{N}$. Then the following picture is immediate,
\begin{align}\label{eq:cluster-decomp}
	\log Z_N - & N\log(2\cosh h)
	\approx \sum_{e \in \cE_N} \log \cosh(\gb J_e/\sqrt{N}) \notag \\
	 & \qquad + \sum_{m\ge 3}\sum_{\gc\,:\,\abs{\gc}=m} \log (1+\go(\gc)) + \sum_{k\ge 2}\sum_{p\,:\,\abs{p}=k} \log (1+\hh^2 \cdot \go(p)).
\end{align}
Compared to~\cite{ALR87} and other works~\cites{ALS21,Ban20,BD21} built on it, the path cluster is a new structure that precisely suggests how the external field changes the behavior of the system from a combinatorial perspective. As we will prove later in Section~\ref{sec:stein}, the first sum and the other sums for finitely many values of $m,k$ in~\eqref{eq:cluster-decomp} are asymptotically independent Gaussian after appropriate centering; in particular
\begin{align*}
	\sum_{e \in \cE_N} \log \cosh(\gb J_e/\sqrt{N})
	 & \approx \frac{\gb^2}{4}(N-1) + \N\left( - \frac{\gb^4}{24}\E (J_{12}^4), \frac{\gb^4}{8}\E(J_{12}^4-1)\right), \\
	\sum_{m\ge 3}\sum_{\gc\,:\,\abs{\gc}=m} \log (1+\go(\gc))
	 & \approx \N\left( -\frac12\sum_{m\ge 3} \frac{\gb^{2m}}{2m},\sum_{m\ge 3} \frac{\gb^{2m}}{2m}\right)
	=\N\left(-\frac12v_1^2, v_1^2\right), \\
	\sum_{k\ge 2}\sum_{p\,:\,\abs{p}=k} \log (1+\hh^2 \cdot \go(p))
	 & \approx
	\N\left( -\frac12\sum_{k\ge 2} \frac{\rho^4\gb^{2k}}{2},\sum_{k\ge 2} \frac{\rho^4\gb^{2k}}{2}\right)
	=\N\left(-\frac12\rho^4v_2^2, \rho^4v_2^2\right)
\end{align*}
in distribution and are asymptotically independent. This explain the result in Theorem~\ref{thm:main}.

The final step is to prove these multivariate central limit theorems. In the zero external field case, the classical approach in~\cite{ALR87} is based on the moment method. However, a new path cluster is present in the current setting, and the moment method becomes quite involved to control all the mixed moments. Instead of carrying out those  complicated moment computations, we apply the following version of multivariate Stein's method. 

\begin{thm}[{\cite{RR09}*{Theorem 2.1}}]\label{thm:rr-mvstein}
	Assume that $(\mvW,\mvW')$ is an exchangeable pair of $\dR^d$ valued random vectors such that
	$
		\E \mvW = 0, \E \mvW\mvW^T = \gS
	$
	where $\gS \in \dR^{d \times d}$ is a symmetric positive definite matrix and $\mvW$ has finite third moment in each coordinate. Suppose further that
	\begin{align}\label{eq:stein-linear}
		\E(\mvW'-\mvW\mid \mvW) = -\gL \mvW\quad \text{a.s.},
	\end{align}
	for an invertible matrix $\gL$. If $\vZ$ has $d$-dimensional standard normal distribution, we have for every three times differentiable function $f$,
	\begin{align*}
		\abs{\E f(\mvW) - \E f(\gS^{1/2}\mvZ)} \le \frac{1}{4}\abs{f}_2A + \frac{1}{12}\abs{f}_3B
	\end{align*}
	where $\abs{f}_2:=\sup_{i,j}\norm{\partial_{x_ix_j} f}_\infty, \abs{f}_3:=\sup_{i,j,k}\norm{\partial_{x_ix_jx_k} f}_\infty$, $\gl^{(i)}: = \sum_{m=1}^d \abs{(\gL^{-1})_{m,i}}$,
	\begin{align*}
		A & := \sum_{i,j=1}^d \gl^{(i)} \sqrt{\var \E((W_i'-W_i)(W_j'-W_j)\mid \mvW)} \\
		\text{ and }
		B & := \sum_{i,j,k=1}^d \gl^{(i)} \E \abs{(W_i'-W_i)(W_j'-W_j)(W_k'-W_k)}.
	\end{align*}
\end{thm}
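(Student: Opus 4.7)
The plan is to apply the multivariate generator form of Stein's method, combining the Ornstein--Uhlenbeck Stein solution with the antisymmetric identity arising from exchangeability. Let $\mathcal{A}$ denote the infinitesimal generator of the OU semigroup with invariant law $\N(\vzero,\gS)$, namely $\mathcal{A} g(\mvw) = \sum_{i,j} \gS_{ij}\,\partial_{x_ix_j} g(\mvw) - \sum_i w_i\,\partial_{x_i} g(\mvw)$. For any three times differentiable $f$, I would introduce the Stein solution
\begin{align*}
\varphi_f(\mvw) := -\int_{0}^{\infty} \bigl[\E f(e^{-t}\mvw + \sqrt{1-e^{-2t}}\,\gS^{1/2}\vZ) - \E f(\gS^{1/2}\vZ)\bigr]\,dt,
\end{align*}
which satisfies $\mathcal{A}\varphi_f = f-\E f(\gS^{1/2}\vZ)$. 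Differentiating under the integral and using $\int_0^\infty e^{-kt}\,dt = 1/k$ yields the clean derivative bounds $\norm{\partial_{ij}\varphi_f}_{\infty}\le \tfrac12|f|_2$ and $\norm{\partial_{ijk}\varphi_f}_{\infty}\le \tfrac13|f|_3$.

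The next step is to rewrite the drift in the Stein identity by inverting the linear regression: from $\E[\mvW'-\mvW\mid\mvW]=-\gL\mvW$ one has $W_i = -\sum_{j}(\gL^{-1})_{ij}\E[W_j'-W_j\mid\mvW]$. Combining this with the exchangeability identity
\begin{align*}
\E[(W_j'-W_j)\,\partial_i\varphi_f(\mvW)] = -\tfrac12\,\E\bigl[(W_j'-W_j)\bigl(\partial_i\varphi_f(\mvW') - \partial_i\varphi_f(\mvW)\bigr)\bigr],
\end{align*}
and Taylor expanding $\partial_i\varphi_f(\mvW')$ about $\mvW$ in $\gD:=\mvW'-\mvW$ to second order with Lagrange remainder, I arrive at
\begin{align*}
\E f(\mvW) - \E f(\gS^{1/2}\vZ) &= \E\sum_{i,k}\Bigl[\gS_{ik} - \tfrac12\sum_{j}(\gL^{-1})_{ij}\,\E[\gD_j\gD_k\mid\mvW]\Bigr]\partial_{ik}\varphi_f(\mvW) \\
&\qquad -\tfrac14\sum_{i,j,k,\ell}(\gL^{-1})_{ij}\,\E\bigl[\gD_j\gD_k\gD_\ell\,\partial_{ik\ell}\varphi_f(\widetilde{\mvW})\bigr],
\end{align*}
where $\widetilde{\mvW}$ lies on the segment between $\mvW$ and $\mvW'$.

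Because $\sum_j(\gL^{-1})_{ij}\E[\gD_j\gD_k] = 2\gS_{ik}$ (a consequence of exchangeability together with $\E W_iW_k=\gS_{ik}$ and the identity $\E[(W_j'-W_j)W_k] = -\tfrac12\E[\gD_j\gD_k]$), the quadratic bracket has mean zero, and its absolute expectation is controlled by $\tfrac12\sum_j|(\gL^{-1})_{ij}|\sqrt{\var\E[\gD_j\gD_k\mid\mvW]}$ using $\E|X-\E X|\le\sqrt{\var X}$. Multiplying by $\norm{\partial_{ik}\varphi_f}_\infty\le \tfrac12|f|_2$ and reorganizing the triple sum through $\gl^{(i)} = \sum_m|(\gL^{-1})_{m,i}|$, together with the symmetry $\var\E[\gD_j\gD_k\mid\mvW]=\var\E[\gD_k\gD_j\mid\mvW]$, the quadratic contribution collapses into $\tfrac14|f|_2\, A$. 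The cubic remainder is bounded in absolute value, and the bound $\norm{\partial_{ik\ell}\varphi_f}_\infty \le \tfrac13|f|_3$ together with the same reindexing against $\gl^{(i)}$ gives $\tfrac1{12}|f|_3\, B$.

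The only nontrivial hurdle in this plan is the index bookkeeping: one must verify that the triple and quadruple sums against $|(\gL^{-1})_{m,i}|$ reassemble into the stated row-sums $\gl^{(i)}$, and that the three factors of $\tfrac12$ produced by the exchangeability identity, the semigroup-derived second derivative bound, and the second-order Taylor coefficient combine to give exactly $\tfrac14$, while the analogous count yields $\tfrac1{12}$ for the third-order remainder. Once the accounting is in place, the argument reduces to classical ingredients: OU semigroup smoothing, linear regression inversion, and Taylor expansion with remainder.
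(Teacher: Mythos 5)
The paper does not prove this statement at all; it is imported verbatim from Reinert and R\"ollin (Theorem~2.1 of~\cite{RR09}), and the authors explicitly defer to the reference and to~\cite{GCS11}. So there is no in-paper proof to compare against, and the question is simply whether your reconstruction is sound. It is: you have correctly reproduced the Reinert--R\"ollin argument. The Ornstein--Uhlenbeck Stein solution with invariant law $\N(\vzero,\gS)$ gives the bounds $\norm{\partial_{ij}\varphi_f}_\infty\le\tfrac12|f|_2$ and $\norm{\partial_{ijk}\varphi_f}_\infty\le\tfrac13|f|_3$ from $\int_0^\infty e^{-kt}\,dt=1/k$; the linearity condition inverts to $W_i=-\sum_j(\gL^{-1})_{ij}\E[\gD_j\mid\mvW]$; the antisymmetry identity from exchangeability supplies the factor $-\tfrac12$; and a second-order Taylor expansion of $\partial_i\varphi_f$ along the segment $[\mvW,\mvW']$ splits the error into a quadratic and a cubic piece. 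The centering identity $\sum_j(\gL^{-1})_{ij}\E[\gD_j\gD_k]=2\gS_{ik}$, which you correctly derive from $\E[\gD_j\gD_k]=-2\E[\gD_jW_k]=2(\gL\gS)_{jk}$, makes the quadratic bracket mean-zero so that $\E|X-\E X|\le\sqrt{\var X}$ applies, and the triple/quadruple sums against $|(\gL^{-1})_{m,i}|$ reassemble into the row-sums $\gl^{(i)}$. The constants $\tfrac12\cdot\tfrac12=\tfrac14$ and $\tfrac12\cdot\tfrac12\cdot\tfrac13=\tfrac1{12}$ fall out exactly as stated. One small technical remark: the intermediate point $\widetilde{\mvW}$ in the Lagrange remainder depends on the index $i$ (it comes from the scalar function $t\mapsto\partial_i\varphi_f(\mvW+t\gD)$), but this is harmless since $\partial_{ik\ell}\varphi_f$ is bounded uniformly; the integral form of the remainder avoids even this caveat and is what~\cite{RR09} actually uses.
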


Stein's method is a state-of-the-art technique to prove normal convergence (and many others in general) for sums of weakly dependent random variables with an explicit convergence rate. Here we will not delve into the philosophy behind Stein's method or the proof of~Theorem~\ref{thm:rr-mvstein}. We refer the interested readers to the book~\cite{GCS11}.

In our case, compared to the moment method, Stein's method not only gives a more straightforward proof, but the analysis also presents a more transparent picture of the combinatorial structures. It suggests why the critical threshold of $\ga$ is $1/4$. Heuristically, the extra degree of freedom in the path clusters needs to be killed by the weak external field, whose strength has to be $\ga=1/4$. 

\subsection{Organization of the Paper}
This paper is structured as follows. In Section~\ref{sec:direct-pf}, based on the second moment method, we include a short proof for the \subc~regime in Theorem~\ref{thm:main} with \emph{Gaussian} disorder. It essentially shows that asymptotically the log-partition function under very weak external fields has no difference with the zero external field case. In Section~\ref{sec:char}, we use the smart path interpolation and characteristic approach to establish the framework to prove central limit theorems. Using the quadratic coupling method, we obtain the precise limit of $N\la (R_{12}-q)^2\ra_t$ and, using Latala's argument, establish the tightness of $N\la (R_{12}-q)^2\ra_t$. Combining, in Section~\ref{ssec:pf-gaussian} we present a proof of Theorem~\ref{thm:main} with \emph{Gaussian} disorder. After that, the cluster-based approach was expanded in Section~\ref{sec:cluster}, where we prove that there are three major sources of contribution to the free energy: independent sum part, loop clusters, and path clusters. In Section~\ref{sec:stein}, we use Stein's method to prove the multivariate central limit theorem; this gives proof for the \subc~and \crit~regime in Theorem~\ref{thm:main} under general symmetric distributed disorder having a finite sixth moment. The proof details are given in Section~\ref{sec:steincomp}. In Section~\ref{sec:extension}, we illustrate how the cluster-based approach in Section~\ref{sec:cluster} can easily be extended to other models establishing fluctuation results under weak external fields, such as the bipartite SK model (Section~\ref{sec:bsk}) and diluted SK model (Section~\ref{sec:dsk}). In Section~\ref{sec:open}, some open questions are addressed for future research. In particular, we discuss the possibility of applying the path and cycle counting technique to several models having very different structures than the SK case, such as the Ising Perceptron model and diluted $p$-spin model.

\section{Direct Proof in the Sub-Critical Case for Gaussian Disorder}\label{sec:direct-pf}
This section presents a direct yet straightforward proof in the \subc~regime. We show that the model has no difference with the $h=0$ case after centering in this regime. The variance in the central limit theorems do not change, and the result holds all the way up to the critical temperature $\gb_c$.
\begin{thm}\label{thm-subcr}
	Let $\gb<1$ and $Nh^4 \to 0$. We have
	\begin{align}\label{eq:subc}
		\log Z_N(\gb,h)-\frac12{Nh^2} - \log Z_N(\gb,0) \to 0
	\end{align}
	in probability.
\end{thm}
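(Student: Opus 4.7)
The plan is a direct second-moment comparison between $Z_{N}(\gb,h)$ and $Z_{N}(\gb,0)$. Starting from the tilting identity $e^{h\gs_{i}}=\cosh h\,(1+\gs_{i}\hh)$, define
\begin{align*}
W_{N}(h):=\frac{Z_{N}(\gb,h)}{(2\cosh h)^{N}}=\E^{(\hh)}\exp\!\left(\tfrac{\gb}{\sqrt{N}}\sum_{i<j}J_{ij}\gs_{i}\gs_{j}\right),
\end{align*}
where the expectation is over $(\gs_{i})$ i.i.d.\ in $\{\pm 1\}$ with common mean $\hh$. The deterministic gap $N\log\cosh h-\tfrac{1}{2}Nh^{2}=-\tfrac{Nh^{4}}{12}+O(Nh^{6})$ vanishes under the hypothesis $Nh^{4}\to 0$, so it suffices to prove that $\log(W_{N}(h)/W_{N}(0))\to 0$ in probability.

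Two Gaussian computations drive the argument. First, $\E_{J}W_{N}(h)=e^{\gb^{2}(N-1)/4}$ is independent of $h$. Second, because $(\gs_{i}\gt_{i})_{i}$ is uniform on $\{\pm 1\}^{N}$ whenever $\gt_{i}$ is symmetric and independent of $\gs_{i}$, the cross moment satisfies $\E_{J}W_{N}(h)W_{N}(0)=\E_{J}W_{N}(0)^{2}$. These identities collapse the cross term, so $\E_{J}(W_{N}(h)-W_{N}(0))^{2}=\E_{J}W_{N}(h)^{2}-\E_{J}W_{N}(0)^{2}$. Linearizing the resulting quadratic overlap via the Hubbard--Stratonovich identity~\eqref{eq:HS-trans} and factorizing over coordinates using $P(\gs_{1}\gt_{1}=\pm 1)=(1\pm\hh^{2})/2$ gives
\begin{align*}
\frac{\E_{J}(W_{N}(h)-W_{N}(0))^{2}}{(\E_{J}W_{N}(0))^{2}}=e^{-\gb^{2}/2}\,\E\!\left[\cosh(\gb\gh/\sqrt{N})^{N}\bigl((1+\hh^{2}\tanh(\gb\gh/\sqrt{N}))^{N}-1\bigr)\right]
\end{align*}
for $\gh\sim\N(0,1)$. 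The parity of $\cosh^{N}$ and $\tanh$ eliminates all odd-power contributions in the expansion of $(1+y)^{N}-1$; using $\cosh(\gb\gh/\sqrt{N})^{N}\to e^{\gb^{2}\gh^{2}/2}$, which is uniformly integrable against polynomial weights for $\gb<1$, the series closes in the form
\begin{align*}
e^{-\gb^{2}/2}(1-\gb^{2})^{-1/2}\Bigl[\exp\!\bigl(\tfrac{Nh^{4}\gb^{2}}{2(1-\gb^{2})}\bigr)-1\Bigr]+o(1)\,\longrightarrow\,0
\end{align*}
by the assumption $Nh^{4}\to 0$.

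Chebyshev then yields $W_{N}(h)-W_{N}(0)=o(\E_{J}W_{N}(0))$ in probability, and to promote this difference estimate to a ratio estimate I would invoke the Aizenman--Lebowitz--Ruelle central limit theorem for the zero-field SK model~\cite{ALR87} (equivalently the $\rho=0$ specialization of Theorem~\ref{thm:main}(a)), which guarantees that $W_{N}(0)/\E_{J}W_{N}(0)$ is tight and bounded away from $0$ in probability. Dividing then gives $\log(W_{N}(h)/W_{N}(0))\to 0$ in probability, and the stated convergence follows. The one genuinely delicate step is the uniform integrability that legitimizes the termwise series expansion of the Hubbard--Stratonovich integral; this is handled by splitting the $\gh$-integral into a bulk region $|\gh|\le N^{1/4}$, on which $N\log\cosh(\gb\gh/\sqrt{N})=\gb^{2}\gh^{2}/2+O(N^{-1/2})$, and an exponentially suppressed tail controlled by $\cosh(\gb\gh/\sqrt{N})^{N}\le e^{|\gb\gh|\sqrt{N}}$.
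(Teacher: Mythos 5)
Your proof is correct and follows essentially the same route as the paper's: both reduce the statement (after absorbing the deterministic gap $N\log\cosh h-\tfrac12 Nh^{2}=O(Nh^{4})$) to a second-moment comparison of $W_{N}(h)=Z_{N}(\gb,h)/(2\cosh h)^{N}$ and $W_{N}(0)$, carry out the same Gaussian/Hubbard--Stratonovich computation to arrive at the integral $e^{-\gb^{2}/2}\,\E\!\left[\cosh(\gb\gh/\sqrt N)^{N}\bigl((1+\hh^{2}\tanh(\gb\gh/\sqrt N))^{N}-1\bigr)\right]$, and then argue this vanishes by dominated convergence. The only genuine difference is the denominator control: the paper combines Cauchy--Schwarz with a bound on $\E\bigl((\E Z_{N}(\gb,0)/Z_{N}(\gb,0))^{2}\bigr)$ (Talagrand's negative-moment argument), yielding $L^{1}$ convergence, while you apply Chebyshev to the numerator and invoke the ALR CLT to get that $W_{N}(0)/\E W_{N}(0)$ converges in distribution to an a.s.\ positive (log-normal) limit and is therefore tight and bounded away from zero in probability, yielding convergence in probability. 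Both are valid; the paper's route gives the marginally stronger $L^{1}$ statement, while yours avoids having to quote the negative-moment lemma. Your explicit identities $\E_{J}W_{N}(h)=e^{\gb^{2}(N-1)/4}$ (independent of $h$) and $\E_{J}W_{N}(h)W_{N}(0)=\E_{J}W_{N}(0)^{2}$ (via $\gs_{i}\gt_{i}$ being uniform when $\gt_{i}$ is) are correct and make the cancellation in $\E_{J}(W_{N}(h)-W_{N}(0))^{2}$ transparent; the paper performs the same cancellation but without isolating it as a structural identity.
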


\begin{proof}
	Note that in the \subc~regime $Nh^4 \to 0$ implies that $(\cosh h)^N = e^{Nh^2/2 +o(1)}$, which can be seen by the Taylor expansion of $\cosh x$. Thus to prove~\eqref{eq:subc}, it is enough to prove that
	\begin{align*}
		\cE_N=\frac{Z_N(\gb,h)}{Z_N(\gb,0)\cosh(h)^N} \to 1\text{ in } L^1.
	\end{align*}
	We define $D_{N}:=\cosh(h)^{-N}Z_N(\gb,h) - Z_N(\gb,0)$ and
	\begin{align*}
		a_N:=\E Z_N(\gb,0) = 2^N\exp\left({(N-1)\gb^2}/4\right).
	\end{align*}
	Note that, here we use the fact that $g_{ij}$'s are i.i.d.~$\N(0,1)$. Using Cauchy-Schwarz inequality, we have
	\begin{align}
		(\E\abs{\cE_N -1})^2 = \left( \E \abs{D_{N}}/Z_N(\gb,0) \right)^2
		 & \le \E ({a_N}^2/Z_N(\gb,0)^2) \cdot a_N^{-2}\E D_{N}^2.\label{eq:sub-err}
	\end{align}
	Now,
	\begin{align}\label{eq:DN}
		\begin{split}
			\E D_{N}^{2}
			&= (\cosh h)^{-2N}\E Z_N(\gb,h)^2 + \E Z_N(\gb,0)^2\\
			&\qquad\qquad- 2(\cosh h)^{-N}\cdot \E Z_N(\gb,0)Z_N(\gb,h).
		\end{split}
	\end{align}
	Recall the definition of $Z_N(\gb,h)$. We compute the following terms:
	\begin{align*}
		\E Z_N(\gb,0)^2,\ \E Z_N(\gb,h)^2 \text{ and } \E Z_N(\gb,0)Z_N(\gb,h).
	\end{align*}
	We take $\mvgs,\mvgt$ as random vectors with each coordinate being i.i.d.~$\pm1$--valued random variable with mean $\hh$. Define $\theta :={\gb}/{\sqrt{N}}$. Thus, we get
	\begin{align*}
		(\cosh h)^{-2N}\E Z_N(\gb,h)^2
		 & = 2^{2N}\E_{\mvgs, \mvgt}\E\exp\bigl(\theta \sum_{i<j} g_{ij}(\gs_i\gs_j +\gt_i\gt_j) \bigr) \\
		 & = 2^{2N}\exp((N-2)\gb^2/2) \E_{\mvgs,\mvgt} \exp\biggl(\frac12{\theta^2}\bigl(\sum_{i=1}^N\gs_i\gt_i \bigr)^2 \biggr).
	\end{align*}
	Using the Gaussian trick, we can write
	\begin{align}\label{eq:subcr-direct-key}
		(\cosh h)^{-2N}\E Z_N(\gb,h)^2
		 & = a_N^2 \exp(-\gb^2/2) \E_{\eta} \bigl(\E_{\gs,\gt}\exp\bigl(\theta\eta \gs\gt\bigr)\bigr)^N
	\end{align}
	where $\eta\sim\N(0,1)$ and independent of everything else. Next, we have
	\begin{align*}
		\E_{\gs,\gt} \exp(x \gs\gt) & =
		\cosh x \cdot \E_{\gs,\gt} (1+\gs\gt\tanh x)
		=\cosh(x)(1+ \hh^2\tanh(x)).
	\end{align*}
	Thus
	\begin{align*}
		(\cosh h)^{-2N}\E Z_N(\gb,h)^2
		 & = a_N^2 \exp(-\gb^2/2) \E(\cosh(\theta \eta))^N(1+\hh^2\tanh(\theta\eta))^N.
	\end{align*}
	Similarly, we have
	\begin{align*}
		(\cosh h)^{-N}\E Z_N(\gb,h)Z_N(\gb,0) = a_N^2 \exp(-\gb^2/2) \E(\cosh(\theta\eta))^N.
	\end{align*}
	Then collecting all terms back into~\eqref{eq:DN}, we get
	\begin{align*}
		a_{N}^{-2} \E\abs{D_N}^2
		 & = \exp(-\gb^2/2)\cdot
		\E\bigl( (\cosh\theta \eta)^N\cdot ((1+\hh^2\tanh(\theta \eta))^N -1)\bigr).
	\end{align*}
	Note that for $\gb<1$, the quantity $\E(\cosh(\theta \eta))^N \to (1-\gb^2)^{-1/2}$ as $N \to \infty$. Moreover, in the sub-critical regime $Nh^4 \to 0$ and thus
	\begin{align*}
		(1+\gh^2\tanh(\theta \eta))^N -1
		= N \hh^2\tanh(\theta \eta) + \frac{N(N-1)}2 \hh^4\tanh^2(\theta \eta)\approx 0.
	\end{align*}
	By dominated convergence theorem, we get that
	\begin{align*}
		\E\bigl( (\cosh\theta \eta)^N\cdot ((1+\hh^2\tanh(\theta \eta))^N -1)\bigr)\to 0 \text{ as } N\to\infty.
	\end{align*}
	Using the argument in the proof of~\cite{Tal03}*{Theorem 2.2.7}, one can easily obtain the boundedness of $\E(a_N^2/Z_N(\gb,0)^2)$. This completes the proof.
\end{proof}

\begin{proof}[Proof of Theorem~\ref{thm:main} Part~1]
	By the classical CLT for $h=0$ proved in~\cites{ALR87} combining with Theorem~\ref{thm-subcr}, the proof of Theorem~\ref{thm:main} in the \subc~regime is immediate.
\end{proof}

\begin{rem}[Generality of the Gaussian trick]
	We believe that this simple approach can be extended to more general spin glass models, like the MSK model and pure even $p$-spin models. Since there is a key step based on the Hubbard-Stratonovich transformation to rewrite $\exp\left( \gb^2/(2N)\cdot (\sum_{i=1}^N \gs_i\gt_i)^2\right)$ as $\E \exp\left( \gb \eta/\sqrt{N}\cdot \sum_{i}\gs_i\gt_{i}\right)$ in~\eqref{eq:subcr-direct-key}, in the MSK model, we will have a quadratic form of overlap, and in the pure even $p$-spin model, the overlap squared will be replaced by a even power of overlaps. The Gaussian trick can still be applied in those settings.
\end{rem} 

\section{Characteristic Approach for Gaussian Disorder}\label{sec:char}
In this section, we establish the framework for the characteristic approach to prove the central limit theorems in different regimes w.r.t.~the external field $h$. We will use $g_{ij}$ instead of $J_{ij}$ to emphasize the $\N(0,1)$ disorder. One of the key tools is the smart path interpolation method due to Guerra.

\subsection{Smart Path Interpolation}\label{ssec:SPI}
For $t\in[0,1]$, we define the interpolating Hamiltonian
\begin{align}\label{eq:interpo}
	H_{N,t}(\mvgs):= \sqrt{t} \cdot\frac{\gb}{\sqrt{N}}\sum_{1\le i<j\le N} g_{ij}\gs_{i}\gs_{j} + \sqrt{1-t} \cdot \gb\sqrt{q}\sum_{i=1}^{N}g_{i}\gs_{i}+ h\sum_{i=1}^{N}\gs_{i}
\end{align}
where $g_{i}$'s are i.i.d.~$\N(0,1)$ random variables independent of the Gaussian disorder $g_{ij}$'s. Define the corresponding partition function
\begin{align*}
	Z_{N,t}=Z_{N,t}(\gb,h)= \sum_{\mvgs\in\gS_{N}} \exp(H_{N,t}(\mvgs)).
\end{align*}
Note that, $Z_{N,1}=Z_{N}$ and
\begin{align*}
	Z_{N,0}=\sum_{\mvgs\in\gS_{N}} \prod_{i=1}^{N} \exp((\gb\sqrt{q}g_{i}+h)\gs_{i})
	=2^{N}\prod_{i=1}^{N}\cosh(\gb\sqrt{q}g_{i}+h)
\end{align*}
satisfies $\var(\log Z_{N,0}) = N\var(\log\cosh(\gb\sqrt{q}g+h))$. We will write $\la\cdot\ra_{t}$ to denote expectation w.r.t.~the Gibbs measure $Z_{N,t}^{-1}\exp(H_{N,t}(\mvgs))$.

The next lemma gives asymptotic behavior for the variance. The proof is given in Section~\ref{pf:auxlem}.
\begin{lem}\label{lem:2}
	We have $\var(\log\cosh(\gb\sqrt{q}g+h)) \le \gb^2q^2$. Moreover,
	\begin{align*}
		\var(\log\cosh(\gb\sqrt{q}g+h)) = \frac12\gb^2 (2-\gb^2)q^2 + O(\gb^2q^3)
	\end{align*}
	for $h\ll 1$.
\end{lem}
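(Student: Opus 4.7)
My plan rests on two ideas: Gaussian Poincar\'e for the uniform bound, and Taylor expansion combined with Lemma~\ref{lem:1} for the asymptotic.

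For the first claim, apply the Gaussian Poincar\'e inequality to $f(x)=\log\cosh(\gb\sqrt{q}\,x + h)$, whose derivative is $f'(x)=\gb\sqrt{q}\,\tanh(\gb\sqrt{q}\,x + h)$:
\begin{align*}
\var\bigl(\log\cosh(\gb\sqrt{q}\,g+h)\bigr)\le \E f'(g)^{2}=\gb^{2}q\cdot \E\tanh^{2}(\gb\sqrt{q}\,g+h)=\gb^{2}q^{2},
\end{align*}
where the last equality is precisely the fixed point relation~\eqref{eq:funda}. The bound follows with no room lost, and the constant $\gb^{2}q^{2}$ is in effect forced by the fixed point equation itself.

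For the expansion, I would Taylor expand $\log\cosh$ at the origin. In the regime $h\ll 1$, Lemma~\ref{lem:1} gives $q = h^{2}/(1-\gb^{2})+O(h^{4})$, so both $h$ and $\gb\sqrt{q}$ are of order $\sqrt{q}$, and the random variable $Y:=h+\gb\sqrt{q}\,g\sim\N(h,\gb^{2}q)$ is typically small. Writing $\log\cosh(Y)=Y^{2}/2-Y^{4}/12+R(Y)$ with $|R(Y)|\le C Y^{6}$ on $\{|Y|\le 1\}$, the leading variance contribution is
\begin{align*}
\var(Y^{2}/2)=\tfrac14\bigl(\E Y^{4}-(\E Y^{2})^{2}\bigr)=h^{2}\gb^{2}q+\tfrac12\gb^{4}q^{2}.
\end{align*}
Substituting $h^{2}=(1-\gb^{2})q+O(q^{2})$ from Lemma~\ref{lem:1} turns this into $\tfrac12\gb^{2}(2-\gb^{2})q^{2}+O(\gb^{2}q^{3})$, which is exactly the stated main term.

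It remains to verify that every other contribution from the Taylor expansion is absorbed by the $O(\gb^{2}q^{3})$ error. The key cross term, computed from the Gaussian moments of $Y$ and using $h^{2}\asymp q$, is
\begin{align*}
\cov(Y^{2},Y^{4})=8h^{4}\gb^{2}q+36h^{2}\gb^{4}q^{2}+12\gb^{6}q^{3}=O(\gb^{2}q^{3}),
\end{align*}
while $\var(Y^{4}/12)=O(q^{4})$ is of smaller order. The remainder $R(Y)$ is $O(Y^{6})$ on $\{|Y|\le 1\}$ and at most polynomial in $Y$ on its Gaussian-small complement, so Cauchy--Schwarz bounds $\var(R)$ and the cross covariances $\cov(Y^{2},R)$, $\cov(Y^{4},R)$ comfortably by $O(\gb^{2}q^{3})$. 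The only (mild) obstacle is organizing this covariance bookkeeping so that no single term exceeds the target error; once $Y$ is treated as a small Gaussian perturbation of $h$ and Lemma~\ref{lem:1} is used to trade $h^{2}$ for $(1-\gb^{2})q$ to leading order, the remaining estimates are routine Gaussian moment computations.
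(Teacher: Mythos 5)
Your first part is identical to the paper's: both invoke the Gaussian Poincar\'e inequality and then close the argument with the fixed-point relation~\eqref{eq:funda}. For the asymptotic expansion, however, you take a genuinely different route. The paper uses the Gaussian covariance interpolation representation $\var(f(g)) = \int_0^{\pi/2}\E\bigl(f'(g)f'(g_\theta)\bigr)\cos\theta\, d\theta$ with $g_\theta = g'\cos\theta + g\sin\theta$, which pushes the analysis onto $f' = \gb\sqrt{q}\tanh(\gb\sqrt{q}\cdot + h)$. Replacing $\tanh$ by the identity inside that integral is a single well-localized approximation and immediately produces $\gb^2 q(h^2 + \tfrac12\gb^2 q)$ with an $O(\gb^2 q^3)$ error; the integral over $\theta$ then does the rest. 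You instead Taylor-expand $\log\cosh$ itself to fourth order and compute $\var(Y^2/2)$, $\cov(Y^2,Y^4)$, $\var(Y^4)$, and the remainder by hand. Both paths land on the same intermediate identity $\gb^2 q h^2 + \tfrac12\gb^4 q^2$ before trading $h^2$ for $(1-\gb^2)q$ via Lemma~\ref{lem:1}, and your Gaussian-moment arithmetic checks out (in particular $\var(Y^2/2)=h^2\gb^2 q+\tfrac12\gb^4 q^2$ and $\cov(Y^2,Y^4)=8h^4\gb^2 q+36h^2\gb^4 q^2+12\gb^6 q^3$ are both correct and of the claimed orders). The trade-off: the paper's representation confines the error estimate to one substitution, whereas yours requires a few extra covariance and remainder bounds; you state these correctly but gloss over the $\{|Y|>1\}$ tail, which should be dispatched in one line by noting $\log\cosh Y\le|Y|$ and that $Y\sim\N(h,\gb^2q)$ has exponentially small tail at scale $1$. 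Otherwise the argument is complete.
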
 

\subsection{Central Limit Theorem}\label{ssec:g-clt}
We fix a sequence $\eta_{N}\ge 1$ and a function $c:[0,1]\to\dR$ that will be specified later. We will write $c_{t}$ for $c(t)$. Define
\begin{align}\label{eq:interpo-free}
	X_{t} :=X_{N,t}(\gb,h)
	 & = \eta_{N}^{-1}\biggl( \log Z_{N,t}- N\E \log 2\cosh(\gb\sqrt{q}g+h)\notag \\
	 & \qquad\qquad\qquad - \frac{Nt}2\gb^2(1-q)^2 - \int_{0}^{t}c_sds\biggr).
\end{align}
For any twice continuously differentiable function $g$ we have
\begin{align}\label{eq:deriv}
	\frac{d}{dt}\E g(X_{t})
	 & = - \frac{\gb^2}4 \eta_{N}^{-2}(1+Nq^2 -c_t) \E g''(X_{t}) \notag \\
	 & \qquad - \frac{\gb^2}{4\eta_{N}}\E \biggl(\bigl(g'(X_{t}) - \eta_{N}^{-1}g''(X_{t})\bigr)\bigl(\la N(R_{12}-q)^2\ra_{t} - c_t\bigr)\biggr).
\end{align}

We define
\begin{align*}
	m(t,x):=\E e^{\text{\i} x X_{t}},\quad x\in\dR
\end{align*}
where $\text{\i}=\sqrt{-1}$ is the complex root of $-1$. Assuming
\begin{align*}
	\eta_N^{-1}\abs{\la N(R_{12}-q)^2\ra_{t} - c_t}\to 0 \text{ in } L^1.
\end{align*}
by taking $g(y)=e^{\text{\i} x y}$, we get from~\eqref{eq:deriv}
\begin{align}\label{eq:dbd}
	 & \abs{\frac{\partial}{\partial t}m(t,x) - \frac{\gb^2}4\cdot \eta_{N}^{-2}(1+Nq^2 -c_t)\cdot x^2m(t,x)}\notag \\
	 & \qquad\qquad\qquad \le
	\frac{\gb^2(|x|+x^2)}{4\eta_{N}}\E \bigl|\la N(R_{12}-q)^2\ra_{t} - c_t\bigr|.
\end{align}
Define, the function
\begin{align}\label{eq:vt}
	V(t):= \frac12\gb^2\int_0^t \eta_{N}^{-2}(1+Nq^2 -c_s)\, ds,\qquad t\in[0,1].
\end{align}
Multiplying both sides of~\eqref{eq:dbd} by $\exp(-V(t)x^2/2)$ and integrating, we get
\begin{align}\label{eq:charerr}
	\begin{split}
		&\abs{\exp(-V(t)x^2/2)m(t,x) - m(0,x)}\\
		&\qquad\qquad\le
		\frac{\gb^2(|x|+x^2)}{4\eta_{N}}
		\int_0^t\E \bigl|\la N(R_{12}-q)^2\ra_{s} - c_s\bigr|\, ds.
	\end{split}
\end{align}
Finally, we note that, $m(0,x)\to \exp(-x^2a/2)$ where
\begin{align*}
	a=\lim \eta_{N}^{-2}N\var(\cosh(\gb\sqrt{q}g+h))
	=\begin{cases}
		0 & \text{ if } \ga > 1/4 \\
		\frac12\gb^2 (2-\gb^2)\cdot \frac{\rho^4}{(1-\gb^2)^2} & \text{ if } \ga = 1/4 \\
		\frac12\gb^2 (2-\gb^2)\cdot \frac{1}{(1-\gb^2)^2} & \text{ if } \ga < 1/4
	\end{cases}
\end{align*}
when we take $\eta_N^2 = 1$ for $\ga \ge 1/4$ and $\eta_N^2 = Nh^4$ for $\ga < 1/4$. 

Thus to prove central limit theorems, notice that in the expression~\eqref{eq:charerr}, we need to show that
\begin{align}\label{eq:l1-conver}
	\frac{1}{\eta_{N}}\cdot \int_0^t\E \bigl|\la N(R_{12}-q)^2\ra_{s} - c_s\bigr|\, ds \to 0 \text{ as } N \to \infty.
\end{align}
In the \supc~regime, since $\eta_N \gg 1$, with $c\equiv0$, we only need that $N\la (R_{1,2}-q)^2\ra_t$
is uniformly bounded in $t$. This can be achieved by adapting Latala's argument at very high temperature, see~\cite{Tal11a}*{Section 1.4}.

For the \subc~and \crit~regimes, we will take $\eta_N=1$ and $c_t=1/(1-\gb^2 t), t\in [0,1]$ motivated by the behaviour at $h=0$ case. Note that
\begin{align*}
	\int_0^1c_s\,ds = -\gb^{-2}\log(1-\gb^2).
\end{align*}

Combining everything, we get that
\begin{align*}
	m(1,x)\to \exp(-x^2 v^2/2) \text{ for all } x\in\dR
\end{align*}
where $v^2=a-\lim V(1)$. Simplifying, we get
\begin{align*}
	v^2
	= \frac12\gb^2 (2-\gb^2)\cdot \frac{1}{(1-\gb^2)^2} - \frac{\gb^2}{2} \cdot \frac{1}{(1-\gb^2)^2}
	= \frac{\gb^2}{2(1-\gb^2)}
\end{align*}
when $\ga<1/4$ and
\begin{align*}
	v^2 & := \frac12\gb^2 (2-\gb^2)\cdot \frac{\rho^4}{(1-\gb^2)^2} - \frac{\gb^2}{2}\left(1+ \frac{\rho^4}{(1-\gb^2)^2}\right) -\frac12\log(1-\gb^2) \\
	 & = \frac{\gb^2\rho^4}{2(1-\gb^2)} - \frac{\gb^2}{2} -\frac12\log(1-\gb^2)
\end{align*}
when $\ga\ge 1/4$. Here we used the fact that $q^2\approx h^4/(1-\gb^2)^2$ from Lemma~\ref{lem:1}.  

Going back to proving~\eqref{eq:l1-conver}, we will first prove a uniform bound on $N\E \la (R_{12} -q)^2 \ra_t$ and then use Dominated Convergence Theorem by showing that, for all $t\in (0,1]$, we have
\begin{align}\label{eq:conv0}
	\E \abs{ \la N (R_{12}-q)^2 \ra_t -c_t} \to 0 \text{ as } N\to\infty
\end{align}
In the positive external field case, there is a beautiful argument originally due to Latala, which gives the exponential tightness of the overlap at very high temperature. It turns out that for weak external field, this argument can be easily adapted.

\begin{thm}[\cite{Tal11a}*{Theorem 1.4.1}]\label{thm:latala}
	Assume that $s+2\gb^2<1/2$, for all $0\le t \le 1$ we have
	\begin{align*}
		\E \la \exp(sN(R_{12}-q)^2)\ra_{t} \le \frac{1}{\sqrt{1-2s-4t\gb^2}},
	\end{align*}
	where $q$ is the unique solution to $q= \E \tanh^2(\gb \eta \sqrt{q}+h)$.
\end{thm}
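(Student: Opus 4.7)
The plan is a Gaussian interpolation argument in $t$: compute $\phi(s,t):=\E\la \exp(sN(R_{12}-q)^{2})\ra_{t}$ explicitly at $t=0$, bound $\partial_{t}\phi$ via Gaussian integration by parts for the smart-path Hamiltonian, and integrate to obtain the stated closed-form bound.

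First I would handle the base case $t=0$, where $H_{N,0}(\mvgs)=\sum_{i}(\gb\sqrt{q}g_{i}+h)\gs_{i}$ decouples over coordinates. Conditionally on $(g_{i})$, under $\la\cdot\ra_{0}^{\otimes 2}$ the products $\gs_{i}\gt_{i}$ are independent with mean $m_{i}:=\tanh^{2}(\gb\sqrt{q}g_{i}+h)\in[0,1]$, and the fixed-point equation~\eqref{eq:funda} gives $\E m_{i}=q$. Hence $Y:=\sqrt{N}(R_{12}-q)$ is a centered sum of independent variables with total variance bounded by $1-q^{2}\le 1$; combined with a Hubbard--Stratonovich representation $e^{sY^{2}}=\E_{\eta}\,e^{\sqrt{2s}\,\eta Y}$ for $\eta\sim\N(0,1)$ (valid for $2s<1$), this yields $\phi(s,0)\le(1-2s)^{-1/2}$, which matches the claim at $t=0$.

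Next, set $f:=\exp(sN(R_{12}-q)^{2})$, viewed as a symmetric function of two replicas, and compute $\partial_{t}\phi$ via the smart-path Gaussian integration by parts (cf.~Section~\ref{ssec:SPI}). The standard calculation gives, after the algebraic cancellation $1-4+3=0$ that eliminates the $q^{2}$ constant contributions,
\begin{align*}
\partial_{t}\phi(s,t)=\frac{\gb^{2}N}{2}\Bigl[\E\la f(R_{12}-q)^{2}\ra_{t}-4\,\E\la f(R_{13}-q)^{2}\ra_{t}+3\,\E\la f(R_{34}-q)^{2}\ra_{t}\Bigr]+O(N^{-1}).
\end{align*}
The middle term has a non-positive sign and can be dropped; the first term equals $\frac{\gb^{2}}{2s}\partial_{s}\phi$ via the identity $N(R_{12}-q)^{2}f=s^{-1}\partial_{s}f$; and the third term factors under the product-Gibbs structure as $\la f(R_{34}-q)^{2}\ra_{t}=\la f\ra_{t}\,\la(R_{12}-q)^{2}\ra_{t}$ after relabeling of replicas. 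Packaging everything should produce the differential inequality $\partial_{t}\phi\le\frac{2\gb^{2}}{1-2s-4t\gb^{2}}\,\phi$. Introducing $\psi(t):=\phi(s,t)\sqrt{1-2s-4t\gb^{2}}$, this becomes $\psi'(t)\le 0$, and combining with the base case $\psi(0)\le 1$ delivers the bound for all $t\in[0,1]$.

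The main obstacle is closing the differential inequality. The cross-replica term $\E\la f(R_{34}-q)^{2}\ra_{t}$ does not factor across the disorder expectation, so it cannot be reduced directly to $\phi$ and $\partial_{s}\phi$ alone. I expect to need either a joint self-consistent bootstrap (simultaneously controlling $\phi(s,t)$ and $\E\la N(R_{12}-q)^{2}\ra_{t}$), or an inductive control of the polynomial moments $\E\la N^{k}(R_{12}-q)^{2k}\ra_{t}$ for each $k$, reassembled into the exponential series at the end. The assumption $s+2\gb^{2}<1/2$ is precisely what keeps $1-2s-4t\gb^{2}>0$ throughout $t\in[0,1]$ and makes the induction close at every scale.
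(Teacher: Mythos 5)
Your skeleton is the right one (and matches the ``smart path'' argument the paper alludes to when it cites Talagrand), but the closing step as you have stated it has a real gap, which you have honestly flagged. The differential inequality $\partial_t\phi\le \frac{2\gb^2}{1-2s-4t\gb^2}\phi$ cannot be read off from the derivative formula as written, because the cross-replica term $\E\la f(R_{34}-q)^2\ra_t=\E\bigl[\la f\ra_t\,\la (R_{12}-q)^2\ra_t\bigr]$ does not reduce to a multiple of $\phi$ (you would need a pointwise bound on $\la(R_{12}-q)^2\ra_t$, which is exactly what you are trying to prove), and trying to rewrite it in terms of $\partial_s\phi$ instead makes your target inequality circular, since $\partial_s\phi\le\frac{\phi}{1-2s-4t\gb^2}$ is a consequence of the final bound, not something available a priori. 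Two smaller points: the identity should read $\partial_s f = N(R_{12}-q)^2 f$, so the first term is $\frac{\gb^2}{2}\partial_s\phi$ without the spurious $s^{-1}$; and at $t=0$ the sum $\sqrt N(R_{12}-q)$ is not conditionally centered (each $\gs_i\gt_i$ has conditional mean $\tanh^2(\gb\sqrt q\,g_i+h)\ne q$), so the bound $\phi(s,0)\le(1-2s)^{-1/2}$ should be run through Hoeffding's lemma on the bounded range $[-1-q,\,1-q]$ rather than through a variance count.

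The missing idea is a monotone correlation (Chebyshev/Harris) inequality applied under the Gibbs measure: both $f=e^{sN(R_{12}-q)^2}$ and $(R_{12}-q)^2$ are increasing functions of the same scalar $(R_{12}-q)^2$, so $\la f\ra_t\,\la(R_{12}-q)^2\ra_t\le\la f\,(R_{12}-q)^2\ra_t$ pointwise in the disorder. This gives $\E\la f(R_{34}-q)^2\ra_t\le\E\la f(R_{12}-q)^2\ra_t$, i.e.\ the third term is dominated by the first; dropping the nonpositive middle term then yields the \emph{closed} inequality $\partial_t\phi\le 2\gb^2\,\partial_s\phi$ (up to the exact numerical coefficient). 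This is precisely the ``monotone property along the interpolation path'' the paper refers to. One then integrates along the characteristics $s(u)=s+2\gb^2(t-u)$, $u\in[0,t]$, obtaining $\phi(s,t)\le\phi(s+2\gb^2 t,\,0)\le(1-2s-4t\gb^2)^{-1/2}$, and the hypothesis $s+2\gb^2<1/2$ is exactly what keeps the characteristic's starting point inside the admissible range. Your alternative suggestion --- control $\nu_t\bigl((R_{12}-q)^{2k}\bigr)$ inductively in $k$ and re-sum the series --- is indeed the route Talagrand takes and would also work, but it is not carried out here; as it stands your draft names the obstruction and two possible fixes without executing either.
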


The proof is based on the smart path interpolation method, where the overlap in the decoupled model ($t=0$) can be controlled easily. The desired result can be obtained by the monotone property along the interpolation path. In the proof, the external field term is essentially auxiliary. Therefore, the same proof will go through for the weak external field case. 

By Cauchy-Schwarz inequality, we have
\begin{align}\label{eq:cs-ineq}
	\E \abs{ \la N (R_{12}-q)^2 \ra_t -c_t} & = \E \left( U_{N,t} \cdot \frac{(\E Z_{N,t}(\gb,h))^2}{Z_{N,t}(\gb,h)^2} \right)\notag \\
	 & \le \left( \E U_{N,t}^2 \right)^{1/2} \left( \E\left( \frac{(\E Z_{N,t}(\gb,h))^4}{Z_{N,t}(\gb,h)^4}\right) \right)^{1/2}
\end{align}
where
\begin{align*}
	U_{N,t}:= \frac{\sum_{\mvgs,\mvgt} \exp\left(H_{N,t}(\mvgs)+H_{N,t}(\mvgt)\right)\cdot (N(R_{12}-q)^2 -c_t)}{(\E Z_{N,t}(\gb,h))^2}.
\end{align*}
In the following subsections, we develop an extended quadratic coupling method to argue that
\begin{align}\label{eq:subcrit-res}
	\E U_{N,t}^2 \to 0 \ \text{ as} \ N \to \infty
	\text{ and }
	\E\left( |\E Z_{N,t}(\gb,h)|^4/{Z_{N,t}(\gb,h)^4} \right) < \infty.
\end{align}

\subsection{Quadratic Coupling Method to Control Overlap}
The quadratic coupling method was originally used to study the SK model without external field by Guerra and Toninelli~\cite{GT02}. This Section extends it to the case when an external field is present in the finite volume system. To be precise, we show that the Gibbs average of overlap is exponentially tight when $Nh^4 \approx O(1)$. This result could be of independent interest and used to study other questions.

We introduce the following lemma to control the overlap.
\begin{lem}\label{lem:quad-coup}
	Assume that $\gc +\gb^2 <1$ and $\ga\ge 1/4$. We have
	\begin{align*}
		\E \sum_{\mvgs,\mvgt} \exp \biggl( H_N(\mvgs) + H_N(\mvgt) & + h \sum_{i=1}^N (\gs_i + \gt_i) + \frac12{\gc N }(R_{1,2}-q)^2\biggr) \\
		 & \qquad \le K\cdot (\E Z_N(\gb,h))^2,
	\end{align*}
	where $K$ is a finite constant depending on $\gb,\rho,\ga,\gc$. Moreover, same result holds if we replace $q$ by any $r$ such that $Nr^2$ is bounded by a constant.
\end{lem}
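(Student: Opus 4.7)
The plan is to exploit the Gaussianity of the disorder: integrate the $g_{ij}$'s out exactly, linearize the two quadratic forms in $R_{12}$ by a Hubbard--Stratonovich transformation, use the site-by-site factorization that results, and compare the remaining one-dimensional Gaussian integral against $(\E Z_N(\gb,h))^2 = e^{\gb^2(N-1)/2}(2\cosh h)^{2N}$.

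First I would take expectation over the disorder. Using $\E e^{ag}=e^{a^2/2}$ together with the identity $(\gs_i\gs_j+\gt_i\gt_j)^2=2+2\gs_i\gt_i\gs_j\gt_j$, one obtains
\begin{align*}
\E\exp(H_N(\mvgs)+H_N(\mvgt)) = \exp\Bigl(\tfrac{\gb^2(N-2)}{2} + \tfrac{\gb^2 N R_{12}^2}{2} + h\sum_i(\gs_i+\gt_i)\Bigr),
\end{align*}
so the left-hand side becomes $e^{\gb^2(N-2)/2}$ times a deterministic sum over $(\mvgs,\mvgt)$ of the exponential of $\tfrac{\gb^2 N}{2}R_{12}^2 + \tfrac{\gc N}{2}(R_{12}-q)^2$ plus a term linear in the spins of order $h$.

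Next I would combine the two quadratic forms by completing the square,
\begin{align*}
\tfrac{\gb^2 N}{2}R_{12}^2 + \tfrac{\gc N}{2}(R_{12}-q)^2 = \tfrac{aN}{2}(R_{12}-\tilde q)^2 + \tfrac{\gb^2\gc N q^2}{2a},
\end{align*}
with $a:=\gb^2+\gc$ and $\tilde q:=\gc q/a$, and then linearize via Hubbard--Stratonovich: $\exp(aN(R_{12}-\tilde q)^2/2) = \E_\eta\exp(\sqrt a\,\eta\sum_i\gs_i\gt_i/\sqrt N - \sqrt{aN}\tilde q\,\eta)$ for $\eta\sim\N(0,1)$. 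Once this identity is inserted, the sum over $(\mvgs,\mvgt)\in\gS_N^2$ factorizes across sites and produces $\phi(\mu)^N$, where $\phi(\mu):=\sum_{\gs,\gt\in\{\pm 1\}}\exp(\mu\gs\gt + (\text{linear coefficient of order }h)(\gs+\gt))$ and $\mu := \sqrt a\,\eta/\sqrt N$. Expanding $\log\phi(\mu) = \log\phi(0) + \mu^2/2 + O(\mu^4+h^2\mu^2)$ uniformly for $\mu=O(N^{-1/2})$, one checks that $\phi(0)^N$ matches the field-dependent normalization $(2\cosh h)^{2N}$ of $(\E Z_N)^2$ up to a factor $\exp(O(Nh^4)+O(1/N))$.

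After dividing by $(\E Z_N(\gb,h))^2$, one is left with the one-dimensional Gaussian integral
\begin{align*}
\E_\eta\exp\bigl(a\eta^2/2 - \sqrt{aN}\tilde q\,\eta + O(1)\bigr) = (1-a)^{-1/2}\exp\bigl(aN\tilde q^2/(2(1-a))+O(1)\bigr),
\end{align*}
which is finite precisely under the hypothesis $a=\gb^2+\gc<1$. Finally, by Lemma~\ref{lem:1}, $q=O(h^2)$, so under $\ga\ge 1/4$ we get $Nq^2=O(Nh^4)=O(\rho^4 N^{1-4\ga})=O(1)$ and hence $N\tilde q^2=O(1)$, which keeps the exponent $\gb^2\gc Nq^2/(2a)$ from Step~2 and the exponent $aN\tilde q^2/(2(1-a))$ above both bounded. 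Combining, LHS$/(\E Z_N(\gb,h))^2 = O(1)$, which is the claim; the extension to an arbitrary $r$ with $Nr^2\le C$ requires no change, since only the boundedness of $Nr^2$ is used. The main obstacle is the careful bookkeeping in the third step: the subleading corrections in the Taylor expansion of $\log\phi(\mu)$ and of $2\log(2\cosh h)$, multiplied by $N$, must be shown to cancel up to an $O(1)$ remainder, and this is exactly where $\ga\ge 1/4$ (equivalently $Nh^4=O(1)$) enters essentially.
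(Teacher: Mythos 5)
Your overall route is the same as the paper's: integrate out the Gaussian disorder, linearize $R_{12}^{2}$ via Hubbard--Stratonovich, factorize over sites, and compare the resulting one-dimensional $\eta$-integral against $(\E Z_N(\gb,h))^2$, using $Nh^4=O(1)$ to keep the drift bounded. However, the third step --- the ``careful bookkeeping'' you yourself flag as the obstacle --- is carried out incorrectly, and this is a genuine gap.

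Concretely, with $\phi(\mu)=\sum_{\gs,\gt}e^{\mu\gs\gt+h(\gs+\gt)}$ you have $\phi(\mu)/\phi(0)=\cosh(\mu)\bigl(1+\hh^{2}\tanh\mu\bigr)$, so
\begin{align*}
\log\phi(\mu)-\log\phi(0)=\tfrac12\mu^{2}+\hh^{2}\mu+O\!\bigl(\mu^{4}+h^{2}\mu^{3}+h^{4}\mu^{2}\bigr)\qquad(\mu\to 0).
\end{align*}
Your expansion $\log\phi(\mu)=\log\phi(0)+\mu^{2}/2+O(\mu^{4}+h^{2}\mu^{2})$ drops the linear term $\hh^{2}\mu$. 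After multiplying by $N$ and setting $\mu=\sqrt{a}\,\eta/\sqrt N$, this missing term is $N\hh^{2}\mu=\sqrt{aNh^{4}}\,\eta$, which is a nontrivial drift in the final Gaussian integral over $\eta$; it cannot be absorbed into your $O(1)$ and in fact is precisely the term that makes the hypothesis $\ga\ge 1/4$ (i.e.\ $Nh^{4}=O(1)$) bite. Independently, your remainder bound is claimed only ``uniformly for $\mu=O(N^{-1/2})$,'' but you then integrate over all $\eta\in\dR$, where $\mu$ is unbounded, so the Taylor control does not apply to the full integral. (Also, $\phi(0)^{N}=(2\cosh h)^{2N}$ exactly; there is no $\exp(O(Nh^4)+O(1/N))$ discrepancy in that comparison.)

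Both issues are fixed by replacing the local Taylor expansion with the global inequalities used in the paper: $\log\cosh\mu\le\mu^{2}/2$ for all $\mu$, and $\log(1+\hh^{2}\tanh\mu)\le\hh^{2}\tanh\mu\le\hh^{2}|\mu|$. These give the valid upper bound $N\log\bigl(\phi(\mu)/\phi(0)\bigr)\le a\eta^{2}/2+\sqrt{aNh^{4}}\,|\eta|$, and the resulting Gaussian integral is finite under $a=\gb^{2}+\gc<1$ with bounded linear coefficients since $Nq^{2}$ and $Nh^{4}$ are $O(1)$. With this substitution your argument matches the paper's proof.
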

\begin{proof}[Proof of Lemma~\ref{lem:quad-coup}]
	Notice the following basic facts,
	\begin{align*}
		\E Z_N(\gb,h)
		 & = \sum_{\mvgs} \exp\bigl({(N-1)\gb^2}/4 + h \sum_i \gs_i \bigr)
		= (2\cosh h)^N \exp((N-1)\gb^2/4) \\
		\text{ and }
		\E Z_N(\gb,h)^2
		 & = \sum_{\mvgs,\mvgt} \exp\bigl( (N-2)\gb^2/4+ \gb^2 R_{1,2}^2/4 + h \sum_i (\gs_i+ \gt_i) \bigr).
	\end{align*}
	Thus, we have,
	\begin{align*}
		A_{\gc} & :=\E \bigl( Z_N(\gb,h)^2 \bigl\la \exp\bigl({\gc N} (R_{1,2}-q)^2/2\bigr)\bigr\ra \bigr)	/ \bigl(\E Z_N(\gb,h)\bigr)^2 \\
		 & \;= \E_{\mvgs,\mvgt} \exp \left( \gb^2(NR_{1,2}^2-1)/2 + {\gc N}(R_{1,2}-q)^2/2 \right).
	\end{align*}
	Simplifying and using the Gaussian trick we get
	\begin{align*}
		A_{\gc}= e^{(\gc N q^2 -\gb^2)/2} \cdot \E_{\eta,\mvgs,\mvgt} \exp \left( \sum_{i=1}^N \left[\eta\cdot \sqrt{(\gb^2 +\gc)/{N}} - q \gc\right] \gs_i\gt_i\right)
	\end{align*}
	where $\eta\sim\N(0,1)$. Next, we denote $\xi:= \eta\cdot\sqrt{\gb^2+\gc} - \gc \sqrt{Nq^2}$, to simplify the expression as
	\begin{align*}
		A_{\gc}
		 & = e^{(\gc N q^2 -\gb^2)/2} \cdot \E_{\xi} \left(\cosh (\xi/\sqrt{N}))^N \left(1+ \hh^2 \tanh(\xi/\sqrt{N})\right)^N\right).
	\end{align*}
	The last step can be further upper bounded by
	\begin{align}\label{eq:ub1}
		A_{\gc} & = e^{(\gc N q^2 -\gb^2)/2}\cdot \E_{\xi} \exp\left({\xi^2}/2 + N\hh^2\tanh\left({\xi}/{\sqrt{N}}\right)\right) \notag \\
		 & \le e^{(\gc N q^2 -\gb^2)/2} \cdot\E_{\xi} \exp\left({\xi^2}/2 + \sqrt{Nh^4}\cdot \abs{\xi}\right).
	\end{align}
	Now we use the fact that $\xi:= \eta\cdot\sqrt{\gb^2+\gc} - \gc \sqrt{Nq^2}$ to simplify the right hand side expression in~\eqref{eq:ub1}, to get the upper bound
	\begin{align}\label{eq:ub2}
		 & A_{\gc}\le e^{(\gc N q^2 -\gb^2)/2} \cdot\theta^{-1} \\
		 & \cdot
		\E_{\eta}
		\exp\left( \gc^2 Nq^2 - \eta/(2\theta)\cdot \sqrt{\gc^2Nq^2} \cdot\sqrt{\gb^2+\gc}
		+
		\sqrt{Nh^4}\cdot \abs{\eta/\theta\cdot\sqrt{\gb^2+\gc} - \gc \sqrt{Nq^2}}
		\right) \notag
	\end{align}
	with $\theta^2=1-\gb^2-\gc>0$.
	It is now clear that under the conditions $\gb^2 + \gc <1$ and the fact that $Nh^4 \approx Nq^2(1-\gb^2)^{-1}$ is bounded by a constant, the right hand side of~\eqref{eq:ub2} is bounded by a finite constant depending on $\gb,\rho, \ga,\gc$.
\end{proof}   

\subsection{Tail Control of the log-partition function}\label{ssec:tail-logp}
In this section, we derive the following tail behavior of the log-partition function. We essentially follow the same steps as in the $h=0$ case in~\cite{Tal11b}*{Section~11.2}.

\begin{thm}\label{thm:tail-logp}
	Let $\gb<1$, and $\ga\ge 1/4$. There exists a constant $K=K(\gb,\rho,\ga)$ such that for all large $N$, and all $t>0$, we have
	\begin{align*}
		\pr \left( \log Z_N(\gb,h) \le N \left({\gb^2}/4 + \log (2\cosh h)\right) -t \right) \le K\cdot \exp\left(-t^2/K\right).
	\end{align*}
\end{thm}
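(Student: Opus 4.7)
The plan is to follow the two-step strategy that Talagrand uses in the zero-field case (\cite{Tal11b}*{Section~11.2}), adapted to the weak-field setting of Assumption~\ref{ass:h}. The first step is a deterministic lower bound
\begin{align*}
\E \log Z_N \ge N\bigl(\gb^{2}/4 + \log(2\cosh h)\bigr) - C
\end{align*}
for a constant $C=C(\gb,\rho,\ga)$. The second step is a sub-Gaussian concentration of $\log Z_N$ around $\E\log Z_N$ at scale $O(1)$. Markov then combines the two into the stated tail bound.

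For the first step, I start from the smart-path interpolation in Section~\ref{ssec:SPI}. Gaussian integration by parts on $\frac{d}{dt}\E \log Z_{N,t}$, followed by the two-replica identities $\la \gs_i\gs_j\ra_t^{2}=\la\gs_i\gs_j\gt_i\gt_j\ra_t$ and $\la\gs_i\ra_t^{2}=\la\gs_i\gt_i\ra_t$, yields
\begin{align*}
\frac{d}{dt}\E\log Z_{N,t} = \frac{\gb^{2}N(1-q)^{2}}{4} - \frac{\gb^{2}N}{4}\,\E\la (R_{12}-q)^{2}\ra_t.
\end{align*}
Theorem~\ref{thm:latala} gives $\E\la(R_{12}-q)^{2}\ra_t = O(1/N)$ uniformly in $t\in[0,1]$ when $\gb<1$, so integrating from $0$ to $1$ gives $\E\log Z_N \ge \E\log Z_{N,0} + \gb^{2}N(1-q)^{2}/4 - C_1$. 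A second-order Taylor expansion of $x\mapsto\log\cosh x$, together with Lemma~\ref{lem:1} giving $q\approx h^{2}/(1-\gb^{2})$, produces $\E\log Z_{N,0}=N\log(2\cosh h)+\gb^{2}Nq/2+O(1)$. The $\gb^{2}Nq/2$ term cancels against the corresponding term in $\gb^{2}N(1-q)^{2}/4=\gb^{2}N/4-\gb^{2}Nq/2+\gb^{2}Nq^{2}/4$, and using $\gb^{2}Nq^{2}=O(1)$ under $\ga\ge 1/4$, I obtain the claimed lower bound on $\E\log Z_N$.

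For the second step, Borell--TIS applied to the Gaussian disorder $(g_{ij})$, whose $\ell^{2}$-Lipschitz constant for $\log Z_N$ is at most $\gb\sqrt{N/2}$, only delivers $\pr(|\log Z_N-\E\log Z_N|\ge s)\le 2\exp(-s^{2}/(N\gb^{2}))$, which is at the wrong scale $\sqrt N$. To sharpen this I would exploit the identity
\begin{align*}
\norm{\nabla \log Z_N}^{2} = \frac{\gb^{2}N}{2}\la R_{12}^{2}\ra - \frac{\gb^{2}}{2},
\end{align*}
together with Lemma~\ref{lem:quad-coup}, which after a Markov step shows that $\la R_{12}^{2}\ra = O(1/N)$ holds on an event $A$ of probability at least $1-O(e^{-cN^{1-2\ga}})$ (noting $q^{2}=O(1/N)$ when $\ga\ge 1/4$). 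On $A$ the effective Lipschitz constant of $\log Z_N$ in the disorder is $O(1)$, so a truncated Herbst / Gaussian log-Sobolev argument on $A$, combined with the trivial Borell--TIS bound on $A^{c}$, produces the MGF estimate $\E e^{-\gl(\log Z_N-\E\log Z_N)}\le e^{K\gl^{2}}$ valid for $\gl$ in a fixed constant interval; Markov then gives the $O(1)$-scale sub-Gaussian lower tail.

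The main obstacle is exactly this upgrade from scale $\sqrt N$ to scale $O(1)$ in the concentration step. The worst-case Lipschitz constant of $\log Z_N$ in $(g_{ij})$ is genuinely of order $\sqrt N$, so a naive Borell--TIS bound misses the target by a factor of $\sqrt N$; the localization produced by Lemma~\ref{lem:quad-coup} is only in high probability, and the delicate point is to combine the effective $O(1)$ Lipschitz bound on $A$ with the tail on $A^{c}$ without losing the sharp scale in the resulting MGF estimate. This is exactly the step carried out for $h=0$ in~\cite{Tal11b}*{Section~11.2}, and the plan is to port that machinery to the weak-field setting, with Lemma~\ref{lem:quad-coup} playing the role of the zero-field quadratic-coupling bound.
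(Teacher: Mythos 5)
Your Step 1 (a lower bound on $\E\log Z_N$ via the smart-path interpolation and Latala's estimate) is correct but superfluous: the paper bypasses it entirely by building the reference value $N(\gb^2/4+\log(2\cosh h))$ directly into the definition of the ``good set.'' The more serious issue is in Step 2, which as written has a genuine gap. You propose to combine Lemma~\ref{lem:quad-coup} with a Markov step to obtain $\la R_{12}^2\ra = O(1/N)$ on an event $A$ of probability $1-O(e^{-cN^{1-2\ga}})$, and then run a truncated Herbst/log-Sobolev argument on $A$. But Lemma~\ref{lem:quad-coup} is a \emph{first-moment} bound, $\E\bigl[Z_N^2\la e^{\gc N(R_{12}-q)^2/2}\ra\bigr]\le K(\E Z_N)^2$; Markov applied to it yields only a \emph{polynomial} tail $K/u$, and combined with Paley--Zygmund (which is what the paper's Lemma~\ref{lem:suit-k} does) it produces a good event of probability merely $\ge 1/K$ for some constant $K$, not $1-o(1)$. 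With only constant probability for $A$, the truncated MGF estimate $\E e^{-\gl(\log Z_N-\E\log Z_N)}\le e^{K\gl^2}$ cannot be obtained, because the contribution from $A^c$ is uncontrolled; and the claimed high-probability bound would in fact be essentially circular, since establishing it amounts to exactly the $O(1)$-scale concentration you are trying to prove.

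The way around this---and it is what both the paper and Talagrand, Section~11.2, actually do---is to avoid Herbst/log-Sobolev entirely and use the Gaussian distance-to-a-set concentration, recorded in the paper as Lemma~\ref{lemm-talagrand}: if $\pr(\mveta\in B)\ge 1/K$, then $d(\mveta,B)$ has a sub-Gaussian tail at scale $O(1)$. One defines $B$ to be the event $\{Z_N\ge \frac12\E Z_N,\ N\la R_{12}^2\ra\le K_1\}$, whose probability is bounded below by a constant via Lemma~\ref{lem:suit-k}. The crucial point your plan misses is \emph{where} the overlap bound is evaluated: writing $Z_N(\mveta)=Z_N(\mveta')\bigl\la \exp\bigl(\frac{\gb}{\sqrt N}\sum_{i<j}(\eta_{ij}-\eta'_{ij})\gs_i\gs_j\bigr)\bigr\ra'$ and applying Jensen plus Cauchy--Schwarz gives
\[
\log Z_N(\mveta)\ \ge\ \log Z_N(\mveta')\ -\ \gb\, d(\mveta,\mveta')\,\bigl(N\la R_{12}^2\ra'\bigr)^{1/2},
\]
where $\la\cdot\ra'$ is the Gibbs average at the \emph{reference} disorder $\mveta'\in B$. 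Thus the effective Lipschitz constant is controlled by the overlap at the good point $\mveta'$, which is $O(1)$ by definition of $B$---no high-probability control of $\la R_{12}^2\ra$ at the actual sample $\mveta$ is needed. This removes the need for any truncation and gives directly the deterministic bound $\log Z_N(\mveta)\ge N(\gb^2/4+\log(2\cosh h))-K_1(1+d(\mveta,B))$, from which Lemma~\ref{lemm-talagrand} yields the stated tail. So the ingredients you identified (quadratic coupling, Lipschitz structure in the Gaussian disorder) are the right ones, but the concentration mechanism you chose (truncated log-Sobolev at a high-probability gradient bound) does not close; replace it with the distance-to-a-set argument and evaluate the Lipschitz constant at the reference point in $B$.
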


The above tail control of the log-partition function is crucial for the moment bounds on the partition function, which will be used to prove the central limit theorems in the \crit~and \subc~regimes using Gaussian interpolation argument. We formulate it as the following corollary.

\begin{cor}\label{cor:moment-part}
	Let $\gb<1$, and $\ga\ge 1/4$. For any $p>0$, there exists a constant $m_p<\infty$, such that
	\begin{align*}
		\E \left( \left({\E Z_N(\gb,h)}/{Z_N(\gb,h)}\right)^p\right) \le m_p.
	\end{align*}
\end{cor}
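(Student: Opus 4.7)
The plan is to write $(\E Z_N(\gb,h)/Z_N(\gb,h))^p = \exp(pY_N)$ with
\[ Y_N := \log\E Z_N(\gb,h) - \log Z_N(\gb,h), \]
and derive a uniform moment bound on $\exp(pY_N)$ directly from the one-sided sub-Gaussian tail bound provided by Theorem~\ref{thm:tail-logp}.

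First I would compute the annealed partition function exactly. Because the $(g_{ij})$ are i.i.d.~$\N(0,1)$, a quick Gaussian moment calculation gives
\[ \E Z_N(\gb,h) = (2\cosh h)^N \exp\bigl((N-1)\gb^2/4\bigr), \]
so $\log\E Z_N = N\log(2\cosh h)+(N-1)\gb^2/4$. Substituting this into Theorem~\ref{thm:tail-logp} and rearranging, the tail bound translates into
\[ \pr\bigl(Y_N \ge s - \gb^2/4\bigr) \le K \exp(-s^2/K) \qquad \text{for all } s\ge 0, \]
with a constant $K=K(\gb,\rho,\ga)$ independent of $N$. Equivalently, $\pr(Y_N\ge u) \le K\exp\bigl(-(u+\gb^2/4)^2/K\bigr)$ for every $u\ge -\gb^2/4$.

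Next I would integrate this tail via the layer-cake formula,
\[ \E \exp(pY_N) = \int_{-\infty}^{\infty} p\,e^{pu}\,\pr(Y_N \ge u)\,du, \]
and split at $u=0$. On $(-\infty,0]$ the trivial estimate $\pr(Y_N\ge u)\le 1$ bounds the contribution by $1$, while on $(0,\infty)$ the sub-Gaussian tail produces the integral $\int_0^\infty pe^{pu - (u+\gb^2/4)^2/K}\,du$, which is finite after completing the square and depends only on $p,\gb,\rho,\ga$. Adding these two pieces yields the desired constant $m_p$, uniform in $N$.

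This argument is essentially routine once Theorem~\ref{thm:tail-logp} is in hand, as the sub-Gaussian decay of the lower tail of $\log Z_N$ translates term-for-term into an upper tail for $Y_N$, and sub-Gaussian upper tails automatically give finite Laplace transforms of every order. The only point requiring care is ensuring that the constants extracted from Theorem~\ref{thm:tail-logp} (and hence $m_p$) do not depend on $N$; this is built into the statement, so no further work is needed there.
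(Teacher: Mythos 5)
Your proposal is correct and follows precisely the route the paper intends: the paper's proof of this corollary simply invokes Theorem~\ref{thm:tail-logp} together with a ``standard moment inequality,'' and you have spelled that out by converting the sub-Gaussian lower-tail bound on $\log Z_N$ into an upper-tail bound on $Y_N=\log\E Z_N-\log Z_N$ and integrating via the layer-cake formula. The exact Gaussian computation of $\E Z_N(\gb,h)$, the translation $t=u+\gb^2/4$, and the $N$-uniformity of the constants are all handled correctly, so no gaps remain.
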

\begin{proof}
	The proof is immediate by Theorem~\ref{thm:tail-logp} and standard moment inequality.
\end{proof}

To prove Theorem~\ref{thm:tail-logp}, we need the following Lemma.
\begin{lem}\label{lem:suit-k}
	Let $\gb<1$, and $\ga\ge 1/4$. We have
	\begin{align*}
		\pr\left( Z_N(\gb,h) \ge \frac12\E Z_N(\gb,h), N \la R_{1,2}^2\ra \le K \right) \ge \frac 1 K.
	\end{align*}
\end{lem}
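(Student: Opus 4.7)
The plan is to establish the two events separately using Lemma~\ref{lem:quad-coup} as the main input, and then intersect them. Throughout, we exploit that under the assumption $\ga\ge 1/4$, the quantity $Nq^{2}$ is bounded by a constant depending on $\gb,\rho,\ga$, since Lemma~\ref{lem:1} gives $q\approx h^{2}/(1-\gb^{2})$ and hence $Nq^{2}\lesssim Nh^{4}\le \rho^{4}$.

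First, I would bound $\pr(Z_N\ge \tfrac12\E Z_N)$ from below by the Paley--Zygmund inequality. For this I need the second moment $\E Z_N^{2}$ to be comparable to $(\E Z_N)^{2}$. Applying Lemma~\ref{lem:quad-coup} with any small $\gc>0$ satisfying $\gc+\gb^{2}<1$ and using $\la 1\ra\le \la \exp(\gc N(R_{1,2}-q)^{2}/2)\ra$, one obtains
\begin{align*}
	\E Z_N^{2} \;\le\; \E\Bigl( Z_N^{2}\,\bigl\la \exp(\gc N(R_{1,2}-q)^{2}/2)\bigr\ra \Bigr) \;\le\; K_{1}\cdot (\E Z_N)^{2},
\end{align*}
so Paley--Zygmund yields $\pr(Z_N\ge \tfrac12\E Z_N)\ge 1/(4K_{1})$.

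Second, to control $\la NR_{1,2}^{2}\ra$ I would use a weighted first moment and Markov. Observe the elementary inequalities $x\le e^{x}$ for $x\ge 0$ and $(a+b)^{2}\le 2a^{2}+2b^{2}$, which together give
\begin{align*}
	NR_{1,2}^{2} \;\le\; \frac{2}{\gc}\exp\!\bigl(\gc NR_{1,2}^{2}/2\bigr)
	\;\le\; \frac{2}{\gc}\, e^{\gc Nq^{2}}\,\exp\!\bigl(\gc N(R_{1,2}-q)^{2}\bigr).
\end{align*}
Applying Lemma~\ref{lem:quad-coup} with parameter $2\gc$ (requiring $2\gc+\gb^{2}<1$) and using boundedness of $Nq^{2}$, one gets
\begin{align*}
	\E\bigl[Z_N^{2}\,\la NR_{1,2}^{2}\ra\bigr] \;=\; \E\!\sum_{\mvgs,\mvgt} NR_{1,2}^{2}\,e^{H_{N}(\mvgs)+H_{N}(\mvgt)} \;\le\; K_{2}\cdot (\E Z_N)^{2}.
\end{align*}
Markov's inequality then gives, for any $M>0$,
\begin{align*}
	\pr\bigl( Z_N^{2}\la NR_{1,2}^{2}\ra \ge M(\E Z_N)^{2}\bigr) \;\le\; K_{2}/M.
\end{align*}

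Finally, I would intersect the two events. On $A:=\{Z_N\ge \tfrac12\E Z_N\}$ we have $Z_N^{2}\ge \tfrac14(\E Z_N)^{2}$, so the complement of the Markov event forces $\la NR_{1,2}^{2}\ra\le 4M$ on $A$. Choosing $M=8K_{1}K_{2}$ (say) gives
\begin{align*}
	\pr\bigl( A\cap \{\la NR_{1,2}^{2}\ra\le 4M\}\bigr) \;\ge\; \frac{1}{4K_{1}}-\frac{K_{2}}{M} \;\ge\; \frac{1}{8K_{1}},
\end{align*}
and one concludes by taking $K=\max(4M,8K_{1})$. I do not anticipate a serious obstacle: the only delicate point is checking that the two applications of Lemma~\ref{lem:quad-coup} can be done simultaneously with a common admissible $\gc$ (take $\gc$ small enough that $2\gc+\gb^{2}<1$), and that $Nq^{2}$ being $O(1)$ under $\ga\ge 1/4$ suffices to absorb the $e^{\gc Nq^{2}}$ factor into the constant $K_{2}$.
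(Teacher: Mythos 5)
Your proposal is correct and takes essentially the same route as the paper's own proof: Lemma~\ref{lem:quad-coup} provides the second-moment control, Paley--Zygmund gives the lower bound on $\pr(Z_N\ge\tfrac12\E Z_N)$, Markov and an intersection inequality combine the two events. The only stylistic difference is that the paper invokes the ``moreover'' clause of Lemma~\ref{lem:quad-coup} with $r=0$ to work directly with $\la\exp(\gc N R_{1,2}^2/2)\ra$ and then applies $e^x\ge x$ at the end, whereas you apply the lemma at $q$, shift from $(R_{1,2}-q)^2$ to $R_{1,2}^2$ by hand via $(a+b)^2\le 2a^2+2b^2$ and the boundedness of $Nq^2$, and apply Markov to the linear quantity $Z_N^2\la NR_{1,2}^2\ra$; these reorganizations are equivalent in content.
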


\begin{proof}
	Take $r=0$ in the extended quadratic coupling bound introduced in Lemma~\ref{lem:quad-coup}, we have
	\begin{align*}
		\E \left( Z_N(\gb,h)^2 \la \exp(\gc N R_{1,2}^2/2)\ra_{\gb,h} \right) \le C\cdot (\E Z_N(\gb,h))^2
	\end{align*}
	for some finite constant $C>0$.
	Using Markov's inequality,
	\begin{align*}
		\pr \biggl( Z_N(\gb,h)^2 \la \exp(\gc N R_{12}^2/2) \ra_{\gb,h} \le t (\E Z_N(\gb,h))^2 \biggr) \ge 1 -C/t.
	\end{align*}
	By Paley--Zygmund inequality, we have
	\begin{align*}
		\pr \left( Z_N(\gb,h) \ge \frac12\E Z_N(\gb,h)\right) & \ge \frac 1 4 \frac{(\E Z_N(\gb,h))^2}{\E Z_N(\gb,h)^2} \ge C',
	\end{align*}
	where the second inequality is due to Lemma~\ref{lem:quad-coup} with $r=\gc=0$. Using the elementary inequality $\pr(A \cap B) \ge \pr(A) + \pr(B) -1$, by taking $t=K=(1+C)/C'$, we have
	\begin{align*}
		\pr \left( Z_N(\gb,h)^2 \la \exp(\gc N R_{12}^2/2) \ra \le K (\E Z_N(\gb,h))^2,\ Z_N(\gb,h) \ge \frac12{\E Z_N(\gb,h)}\right) \ge \frac1K.
	\end{align*}
	Using $Z_N(\gb,h) \ge \frac12\E Z_N(\gb,h)$, and combining the fact $e^x \ge x$, finally we have
	\begin{align*}
		Z_N(\gb,h)^2 \la \exp(\gc N R_{12}^2 /2 ) \ra \le K (\E Z_N(\gb,h))^2 \ \text{ implies} \ N \la R_{12}^2 \ra \le K.
	\end{align*}
	This completes the proof.
\end{proof} 

\begin{lem}
	[{\cite{Tal03}*{Lemma~2.2.11}}]\label{lemm-talagrand} Consider a closed subset $B$ of $\dR^M$, and set
	\begin{align*} d(\mvx,B) = \inf\{d(\mvx,\mvy);\mvy \in B\}, \end{align*}
	as the Euclidean distance from $\mvx$ to $B$. Then for $t>0$, we have
	\begin{align}\label{eq0-h0}
		\pr\left(d(\mveta,B) \ge t+ 2 \sqrt{\log(2/\pr(\mveta\in B)}) \right) \le 2 \exp(-t^2/4)
	\end{align}
	where $\mveta = (\eta_1,\eta_2,\ldots, \eta_m)$ and $\eta_i \sim \N(0,1)$ are i.i.d.
\end{lem}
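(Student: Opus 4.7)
The plan is to deduce the statement from the standard Gaussian concentration inequality for Lipschitz functions (Borell--TIS), applied to the distance function $f(\mvx):=d(\mvx,B)$. The key observation is that, by the triangle inequality, $f$ is $1$-Lipschitz on $\dR^M$, so its fluctuations are governed by the dimension-free Gaussian concentration bound. I would then translate concentration \emph{around the median} of $f(\mveta)$ into concentration \emph{around zero} by controlling the median in terms of $p:=\pr(\mveta\in B)$.

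\textbf{Step 1: Lipschitz concentration.} Apply Borell's inequality to conclude that if $m_f$ denotes a median of $f(\mveta)$, then
\begin{equation*}
	\pr\bigl(\abs{f(\mveta)-m_f}\ge u\bigr)\le 2\exp(-u^2/2)\qquad\text{for all }u>0.
\end{equation*}
This follows from Gaussian isoperimetry (or equivalently from the logarithmic Sobolev inequality via the Herbst argument) and requires only the $1$-Lipschitz property of $f$.

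\textbf{Step 2: Bounding the median.} If $p\ge 1/2$, then $f(\mveta)=0$ with probability $\ge 1/2$, so $m_f=0$. If $p<1/2$, use Step~1 in the reverse direction: $\pr(f(\mveta)\le m_f-u)\le 2\exp(-u^2/2)$. Taking $u=m_f$ gives $\pr(\mveta\in B)=\pr(f(\mveta)=0)\le 2\exp(-m_f^{2}/2)$, hence $m_f\le\sqrt{2\log(2/p)}\le 2\sqrt{\log(2/p)}$. In either case,
\begin{equation*}
	m_f\le 2\sqrt{\log(2/p)}.
\end{equation*}

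\textbf{Step 3: Combining.} For any $t>0$,
\begin{equation*}
	\pr\Bigl(d(\mveta,B)\ge t+2\sqrt{\log(2/p)}\Bigr)\le \pr\bigl(f(\mveta)\ge m_f+t\bigr)\le 2\exp(-t^2/2)\le 2\exp(-t^2/4),
\end{equation*}
which is the desired inequality. (The looser constant $t^2/4$ in the statement gives room to absorb the mismatch between $\sqrt{2\log(2/p)}$ and $2\sqrt{\log(2/p)}$ without optimizing.)

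\textbf{Main obstacle.} The only nontrivial ingredient is Borell's concentration inequality itself; once invoked, everything else is bookkeeping. The one subtle point is the bound on the median in the $p<1/2$ regime, where one must be careful to apply the concentration in both directions and to verify $\sqrt{2}\le 2$ so that the two numerical constants in the statement ($2\sqrt{\log(2/p)}$ and the exponent $t^2/4$) are consistent.
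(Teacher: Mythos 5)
Your argument is correct: the paper does not prove this lemma at all but imports it verbatim from Talagrand's book (Lemma~2.2.11 of \cite{Tal03}), and your derivation — $1$-Lipschitzness of $\mvx\mapsto d(\mvx,B)$, Borell's concentration around the median, then bounding the median by $\sqrt{2\log(2/p)}\le 2\sqrt{\log(2/p)}$ via the lower tail — is essentially the standard proof given in that source, which rests on the same Gaussian isoperimetric/concentration input. The bookkeeping (the case split on $p\ge 1/2$, the use of closedness of $B$ only to identify $\{f=0\}\supseteq B$, and the slack $2e^{-t^2/2}\le 2e^{-t^2/4}$) all checks out.
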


Now we prove Theorem~\ref{thm:tail-logp}.
\begin{proof}[Proof of Theorem~\ref{thm:tail-logp}]

	Recall that the Hamiltonian is
	\begin{align*}
		H_N(\mvgs) = \frac{\gb}{\sqrt{N}} \sum_{i<j} g_{ij}\gs_i\gs_j + h \sum_{i=1}\gs_i.
	\end{align*}
	Let $M=N(N-1)/2$, and consider Gaussian $\mveta = (\eta_{ij})_{i<j}$, in this case, $\mveta \in \dR^M$. We understand $H_N(\mvgs), Z_N(\gb,h)$ as functions of $\mveta$. For convenience, we drop the dependence of $Z_N(\gb,h)$ on $\gb,h$ temporarily in the following proof. By Lemma~\ref{lem:suit-k}, for some suitably large $K=K_1$, there is a subset $B \subset \dR^M$ with
	\begin{align*} \{\mveta \in B \} = \{ Z_N(\mveta) \ge \E Z_N/2; N\la R_{12}^2\ra \le K_1 \} \end{align*}
	i.e. the set $B$ characterizes the event on RHS, and also $\pr(\mveta \in B) \ge \frac 1 {K_1}$. Next we will prove
	\begin{align}\label{eq1-h0}
		\log Z_N(\mveta) \ge N\left(\log 2 + {\gb^2}/4 + \log \cosh(h)\right)- K_1(1+d(\mveta,B))
	\end{align}
	For $\mveta' \in B$, we have
	\begin{align*}
		\log Z_N(\mveta') \ge \log \frac 1 2 \E Z_N = (N-1) \left({\gb^2}/4 + \log 2 \right) + N \log \cosh(h).
	\end{align*} To prove~\eqref{eq1-h0}, it is enough to show that
	\begin{align}\label{eq2-h0}
		\log Z_N(\mveta) \ge \log Z_N(\mveta') - K_1 d(\mveta,\mveta')
	\end{align}
	for all $\mveta'\in B$. Here $K_{1}$ must be bigger than $\log 2 +{\gb^2}/4$. Notice that
	\begin{align*}
		Z_N(\mveta) = Z_N(\mveta') \left\la \exp \biggl( \frac{\gb}{\sqrt{N}} \sum_{i<j}(\mveta_{ij}-\mveta_{ij}')\gs_i\gs_j \biggr) \right \ra'
	\end{align*}
	where $\la \cdot \ra'$ denotes the Gibbs average with disorders $\mveta'$. Since there is an exponential part, it is natural to apply Jensen's inequality,
	\begin{align*}
		\left\la \exp \biggl( \frac{\gb}{\sqrt{N}} \sum_{i<j}(\mveta_{ij}-\mveta_{ij}')\gs_i\gs_j \biggr) \right \ra' \ge \exp \biggl( \frac{\gb}{\sqrt{N}} \sum_{i<j}(\mveta_{ij}-\mveta_{ij}')\la \gs_i\gs_j \ra' \biggr).
	\end{align*}
	Applying Cauchy-Schwarz inequality, we have
	\begin{align*}
		\sum_{i<j}(\mveta_{ij}-\mveta_{ij}')\la \gs_i\gs_j \ra' \ge -d(\mveta,\mveta')(N^2\la R_{12}^2 \ra')^{1/2} \ge -K_1\sqrt{N}d(\mveta,\mveta')
	\end{align*}
	where $d(\mveta,\mveta') = \sum_{i<j}(\eta_{ij}-\eta_{ij}')^2$. The last inequality is based on $\mveta' \in B$, then this proves~\eqref{eq2-h0}, and hence~\eqref{eq1-h0}. From~\eqref{eq1-h0}, we can easily see that
	\begin{align*}
		 & \log Z_N(\mveta) \le N\left({\gb^2}/4 + \log 2 + \log \cosh(h) \right) -t \\
		 & \qquad\qquad \Longrightarrow d(\mveta,B) \ge \frac{t-K_1}{K_1} \ge 2 \sqrt{\log(2/\pr(\mveta \in B))} +\frac{t-K_2}{K_1}.
	\end{align*}
	By~\eqref{eq0-h0} in Lemma~\ref{lemm-talagrand}, it follows that
	\begin{align*}
		\pr \left( \log Z_N(\mveta) \le N \left({\gb^2}/4 +\log 2 + \log \cosh(h) \right)-t\right) \le 2 \exp \left( -(t-K_2)^2/(4K_1^2) \right)
	\end{align*}
	for $t \ge K_2$. Now for the RHS, if we take $K$ large enough, when $t\ge K_2$,
	\begin{align*}
		2 \exp \left( -{(t-K_2)^2}/{4K_1^2} \right) \le K \exp \left(-t^2/{K} \right).
	\end{align*}
	On the other hand, when $0 \le t \le K_2$, we have $ K \exp \left(-{t^2}/{K} \right) \ge 1$. Therefore, in any case, we proved
	\begin{align*}
		\pr \left( \log Z_N(\mveta) \le N \left( {\gb^2}/4 +\log 2 + \log \cosh(h) \right)-t\right) \le K \exp \left(-t^2/{K} \right).
	\end{align*}
	Note that, in this proof we used $K$ to denote a generic constant depending only on $\gb,\ga,\rho$. This completes the proof.
\end{proof}   

\subsection{Second Moment Control of $U_{N,t}$}\label{sec:new-pf}
In this section, we are devoted to prove that $\E U_{N,t}^2 \to 0 $ as $N \to \infty$. Recall from Section~\ref{ssec:SPI} that,
\begin{align*}
	Z_{N,t}(\gb,h) := \sum_{\mvgs} \exp\left(\frac{\gb\sqrt{t}}{\sqrt{N}} \sum_{i<j} g_{ij} \gs_i\gs_j + \sum_{i=1}^N \left(\sqrt{1-t} \cdot \gb g_{i} \sqrt{q} + h\right)\gs_i\right).
\end{align*}

\begin{thm}\label{thm:2nd-mom}
	There exists $\gb_{0}< 1$ such that when $\gb < \gb_0, \ga\ge 1/4$ and $t>0$, we have
	\begin{align*}
		\E U_{N,t}^2 \to 0 \text{ as } N \to \infty.
	\end{align*}
\end{thm}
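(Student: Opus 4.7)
The plan is to expand $\E U_{N,t}^{2}$ as a four-replica sum and compute the disorder expectation explicitly. Writing $\la N(R_{12}-q)^{2}\ra_{t}$ as a two-replica Gibbs average and squaring gives
\begin{equation*}
\E U_{N,t}^{2} = \frac{1}{(\E Z_{N,t})^{4}}\E\sum_{\mvgs^{1},\mvgs^{2},\mvgs^{3},\mvgs^{4}}\exp\Bigl(\sum_{\ell=1}^{4}H_{N,t}(\mvgs^{\ell})\Bigr)\prod_{k=1,2}\bigl(N(R_{2k-1,2k}-q)^{2}-c_{t}\bigr).
\end{equation*}
Gaussianity of $(g_{ij}),(g_{i})$ lets us integrate out the disorder, using $\E Z_{N,t}=(2\cosh h)^{N}\exp(\tfrac12\var H_{N,t})$ together with $\cov(H_{N,t}(\mvgs),H_{N,t}(\mvgt))=\tfrac12 Nt\gb^{2}R_{12}^{2}+(1-t)\gb^{2}qNR_{12}+O(1)$. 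After the cancellations, the computation reduces to
\begin{equation*}
\E U_{N,t}^{2} = e^{-3t\gb^{2}}\,\E^{\hh}\Bigl[\prod_{\ell<\ell'}\exp\bigl(\tfrac12 Nt\gb^{2}R_{\ell\ell'}^{2}+(1-t)\gb^{2}qNR_{\ell\ell'}\bigr)\prod_{k=1,2}\bigl(N(R_{2k-1,2k}-q)^{2}-c_{t}\bigr)\Bigr],
\end{equation*}
where $\E^{\hh}$ denotes expectation over four independent replicas with i.i.d.\ $\pm 1$ coordinates having common mean $\hh=\tanh h$, in the spirit of Section~\ref{sec:direct-pf}.

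To decouple the non-factorizing weights $N(R_{ab}-q)^{2}-c_{t}$, introduce the analytic generating function
\begin{equation*}
\widetilde\phi_{N}(a,b):=e^{-3t\gb^{2}}\,\E^{\hh}\Bigl[\prod_{\ell<\ell'}\exp\bigl(\tfrac12 Nt\gb^{2}R_{\ell\ell'}^{2}+(1-t)\gb^{2}qNR_{\ell\ell'}\bigr)\,e^{a\sqrt{N}(R_{12}-q)+b\sqrt{N}(R_{34}-q)}\Bigr],
\end{equation*}
so that
\begin{equation*}
\E U_{N,t}^{2}=\partial_{a}^{2}\partial_{b}^{2}\widetilde\phi_{N}(a,b)\big|_{a=b=0}-c_{t}\bigl(\partial_{a}^{2}\widetilde\phi_{N}(a,0)\big|_{0}+\partial_{b}^{2}\widetilde\phi_{N}(0,b)\big|_{0}\bigr)+c_{t}^{2}\widetilde\phi_{N}(0,0).
\end{equation*}
It therefore suffices to show $\widetilde\phi_{N}(a,b)\to\exp\bigl(\tfrac12 c_{t}(a^{2}+b^{2})\bigr)$ uniformly on compact subsets of $\dC^{2}$, since substituting this limit yields $c_{t}^{2}-2c_{t}\cdot c_{t}+c_{t}^{2}=0$. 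To compute the limit of $\widetilde\phi_{N}$, apply the Hubbard-Stratonovich identity~\eqref{eq:HS-trans} to each of the six quadratic factors $\exp(\tfrac12 Nt\gb^{2}R_{\ell\ell'}^{2})$ via i.i.d.\ standard Gaussians $(\eta_{\ell\ell'})_{\ell<\ell'}$; the interaction becomes linear in $\mvgs$ and the spin expectation factorizes over sites as $[G_{N}((\eta_{\ell\ell'}),a,b)]^{N}$ for an explicit single-site factor $G_{N}$. Since $h=\rho N^{-\ga}$ with $\ga\ge 1/4$, every parameter entering $G_{N}$ is $O(N^{-1/2})$, so a cumulant expansion of $\log G_{N}$ to second order is valid (third and higher cumulants contribute $o(1)$ after multiplication by $N$). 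The crucial combinatorial input is the site-level identity $\cov^{\hh}(\gs^{1}\gs^{2},\gs^{3}\gs^{4})=\hh^{4}-\hh^{4}=0$, which forces $\sqrt{N}(R_{12}-q)$ and $\sqrt{N}(R_{34}-q)$ to be asymptotically independent Gaussians under the tilted measure, each with the variance $c_{t}=1/(1-\gb^{2}t)$ motivated by the $h=0$ case.

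The main obstacle is to justify the interchange of $\lim_{N\to\infty}$ with the fourth-order partial derivatives at $(a,b)=(0,0)$. This is handled via Cauchy's integral formula, which requires $\widetilde\phi_{N}(a,b)$ to be uniformly bounded on a complex polydisc around the origin. Such a bound follows from a four-replica version of the extended quadratic-coupling bound in Lemma~\ref{lem:quad-coup}, combined with the negative-tail estimate Theorem~\ref{thm:tail-logp} to control the inverse moments $\E(\E Z_{N,t}/Z_{N,t})^{p}$ in the spirit of Corollary~\ref{cor:moment-part}. The four-replica coupling introduces six quadratic forms in the overlaps $R_{\ell\ell'}$, and absorbing all of them against the baseline Gaussian factor while leaving room for the complex perturbations in $(a,b)$ forces $\gb^{2}$ to be strictly smaller than some explicit $\gb_{0}^{2}<1$, which is the source of the restriction $\gb<\gb_{0}$ in the statement of the theorem.
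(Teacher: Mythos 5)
Your proposal follows the same four-replica expansion and Hubbard--Stratonovich decoupling as the paper, but from there takes a genuinely different route. You introduce a generating function $\widetilde\phi_N(a,b)$ with linear source terms and recover $\E U_{N,t}^2$ as a fourth-order derivative combination at $(a,b)=(0,0)$, with the interchange of $N\to\infty$ and differentiation justified by Cauchy's integral formula on a complex polydisc. The paper instead exploits the second-order Gaussian integration-by-parts identity $a^2 e^{a^2/2}=\E\left[(\eta^2-1)e^{a\eta}\right]$ together with the algebraic fact $1+\gb^2 t c_t = c_t$: this converts the polynomial prefactors $(N\bar R_{kl}^2 - c_t)$ directly into $(\eta_{kl}^2 - c_t)$ inside the auxiliary Gaussian integral, giving a closed formula in terms of the $\eta_{kl}$'s with no source terms, no differentiation, and no analytic continuation. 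One is then left to show, by dominated convergence, that the limiting integrand has zero Gaussian mean (Lemma~\ref{lem:tanh-inte}). Your approach is conceptually systematic; the paper's is a one-line trick that bypasses the complex-analytic machinery entirely.

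The gap in your proposal is precisely the uniform polydisc bound. You attribute it to ``a four-replica version of the extended quadratic-coupling bound in Lemma~\ref{lem:quad-coup}, combined with the negative-tail estimate Theorem~\ref{thm:tail-logp}.'' Neither tool applies here: both are designed to control quantities involving $Z_N(\gb,h)^{-1}$, but your $\widetilde\phi_N(a,b)$ is already normalized by $(\E Z_{N,t})^4$ and contains no partition function in the denominator --- it is a pure tilted-replica expectation $\E^{\hh}[\cdots]$. After Hubbard--Stratonovich, the object one needs to bound is $\E_{\mveta}\left[\abs{G_N(\mveta,a,b)}^N\right]$ for a single-site factor $G_N$. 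This is exactly the role of Lemma~\ref{lem:key}, which supplies the explicit estimate $N\log\E_{\gs}\prod_{k<l}(1+\gs_k\gs_l\tanh(y_{kl}/\sqrt N)) \le (5/2+C'h^2)\sum y_{kl}^2 + \sqrt{Nh^4}\sum\abs{y_{kl}}$, whose constant $5/2$, after folding in the $\gb^2 t$ factor from $N\log\cosh$, forces $\gb^2 t < 1/6 =: \gb_0^2$ for integrability. You acknowledge that ``absorbing all of them against the baseline Gaussian factor\ldots forces $\gb^2$ to be strictly smaller than some explicit $\gb_0^2<1$'' but never produce the estimate that makes this quantitative; and your complex sources $a,b$ make the needed variant of Lemma~\ref{lem:key} strictly harder, not easier. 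Likewise, your claim that ``a cumulant expansion of $\log G_N$ to second order is valid (third and higher cumulants contribute $o(1)$ after multiplication by $N$)'' is not a soft observation: it is equivalent to the dominated-convergence step and must be proved, since the arguments entering $G_N$ are $\eta_{kl}/\sqrt N$ with $\eta_{kl}$ unbounded Gaussian, not uniformly small. As written, the decisive analytic input is asserted, not established, and is attributed to the wrong lemmas.
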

\begin{proof}[Proof of Theorem~\ref{thm:2nd-mom}]

	We carry out the computations for $\E U_{N,t}^2$ as follows. For convenience, we use the following notations
	\begin{align*}
		\wmvgs & := (\mvgs^1,\mvgs^2,\mvgs^3,\mvgs^4),
	\end{align*}
	where for each $i$, $\mvgs^i:=(\gs_1^i,\gs_2^i,\ldots,\gs_N^i)$ is a vector of $N$ many i.i.d.~$\pm 1$ valued random variables with mean $\hh$. We also let
	\begin{align*}
		H_{N,t}(\wmvgs)
		:= \sum_{k=1}^4 H_{N,t}(\mvgs^k)
		\text{ and }
		\bar{R}_{12} := R_{12}-q.
	\end{align*}
	Define
	\begin{align*}
		a_{N,t}:=\E Z_{N,t}(\gb,h) = (2\cosh h)^N \exp\bigl((N-1)\gb^2t/4 +\gb^{2}(1-t)qN/2\bigr).
	\end{align*}
	Taking expectation w.r.t.~the Gaussian disorder, we have
	\begin{align}\label{eq:s2}
		\begin{split}
			\E U_{N,t}^2
			& = a_{N}^{-4}\sum_{\wmvgs} \bigl( N(R_{12} -q)^2 - c_t\bigr)\cdot\bigl( N(R_{34} -q)^2 - c_t\bigr)\cdot \E\exp \left( H_{N,t}(\wmvgs) \right)\\
			& = \exp(-3\gb^{2}t) \cdot \E_{\wmvgs} \biggl((N\bar{R}_{12}^2 - c_t) (N\bar{R}_{34}^2 - c_t)\\
			&\qquad\qquad\qquad\qquad\qquad \cdot \exp \bigl( \frac12\sum_{1\le k<l\le 4} {\gb^2t}N R_{kl}^2 + 2(1-t) \gb^2 qN R_{kl}\bigr) \biggr).
		\end{split}
	\end{align}
	Using $\bar{R}_{kl}=R_{kl}-q$ and simplifying we get from~\eqref{eq:s2}
	\begin{align}\label{eq:s3}
		\begin{split}
			\E U_{N,t}^2 &= \exp( -3\gb^{2}t+\gb^2(2-t) Nq^2/2) \\
			&\quad\cdot \E_{\wmvgs} \biggl((N\bar{R}_{12}^2 - c_t) (N\bar{R}_{34}^2 - c_t)\cdot \exp \bigl( \frac12\sum_{1\le k<l\le 4} {\gb^2t}N \bar{R}_{kl}^2 + 2\gb^2 qN\bar{R}_{kl}\bigr) \biggr).
		\end{split}
	\end{align}
	Note that, $K:=\gb^{-4}\exp( -3\gb^{2}t+{\gb^2(2-t)} Nq^2/2)$ is converging to a finite positive constant. We can use second order Gaussian integration by parts
	$$
		a^2e^{a^{2}/2} = a^2\E e^{a\eta}=\E(\eta^{2}-1)e^{a\eta}
	$$
	for $\eta\sim\N(0,1)$ and $1+\gb^{2}tc_{t}=c_{t}$, to get
	\begin{align}\label{eq:s4}
		\begin{split}
			\E U_{N,t}^2 &= Kt^{-2}\cdot \E_{\wmvgs, \mveta} \biggl( (\eta_{12}^2 -c_t)(\eta_{34}^2-c_t)\\
			&\qquad\qquad\qquad \cdot \exp\biggl( \sum_{1\le k<l \le 4} \gb \sqrt{t}\cdot \eta_{kl} \sqrt{N} \bar{R}_{kl} + \gb^2 \sqrt{Nq^2} \cdot \sqrt{N}\bar{R}_{kl}\biggr)\biggr),
		\end{split}
	\end{align}
	where $\eta_{kl},k<l$ are i.i.d.~$\N(0,1)$. Now rearranging, we get
	\begin{align*}
		 & \E_{\wmvgs, \mveta} \biggl( (\eta_{12}^2 -c_t)(\eta_{34}^2-c_t)\cdot \exp\biggl( \sum_{1\le k<l \le 4} \gb \sqrt{t}\cdot \eta_{kl} \sqrt{N} \bar{R}_{kl} + \gb^2 \sqrt{Nq^2} \cdot \sqrt{N}\bar{R}_{kl}\biggr)\biggr) \\
		 & = \E_{\mveta} \biggl( (\eta_{12}^2 -c_t)(\eta_{34}^2-c_t) \\
		 & \qquad\qquad\qquad \cdot \E_{\wmvgs}\exp\biggl( \sum_{1\le k<l \le 4} \left(\sqrt{t} \cdot \eta_{kl} + \gb \sqrt{Nq^2}\right)\left(\gb\sqrt{N}R_{kl} -\gb\sqrt{Nq^{2}}\right)\biggr)\biggr) \\
		 & = \E_{\mveta} \biggl( (\eta_{12}^2 -c_t)(\eta_{34}^2-c_t)\prod_{k<l}\exp(\theta(\sqrt{t} \eta_{kl} -\theta)) \\
		 & \qquad\qquad\qquad\cdot \E_{\wmvgs}\exp\biggl({\gb}N^{-1/2}\sum_{k<l}(\sqrt{t} \eta_{kl} -\theta) \sum_{i=1}^N \gs_i^k\gs_i^l \biggr)\biggr)
	\end{align*}
	where $\theta:= -\gb\sqrt{Nq^2}\to \theta_{\infty}:= -\gb\rho^{2}/(1-\gb^{2})$. Define
	\begin{align*}
		\hat\eta_{kl}:=\sqrt{t} \eta_{kl} -\theta, 1\le k<l\le 4.
	\end{align*}
	We notice that
	\begin{align*}
		 & \exp\biggl({\gb}N^{-1/2}\sum_{k<l}\hat\eta_{kl} \sum_{i=1}^N \gs_i^k\gs_i^l \biggr) \\
		 & \qquad		= \prod_{i=1}^N \prod_{k<l}\cosh\left(\gb\hat\eta_{kl}/\sqrt{N}\right) \left( 1+ \gs_i^k \gs_i^l \tanh\left( \gb\hat\eta_{kl}/\sqrt{N} \right)\right)
	\end{align*}
	to write
	\begin{align}\label{eq:unt-sqr}
		 & \E U_{N,t}^2 \\
		 & = Kt^{-2}\E_{\mveta}\biggl( (\eta_{12}^2 -c_t)(\eta_{34}^2-c_t)\prod_{k<l}\exp(\theta \hat\eta_{kl} + N\log \cosh(\gb\hat\eta_{kl}/\sqrt{N})) \cdot\exp(g_{N}(\hat\mveta))\biggr)\notag
	\end{align}
	where
	\begin{align}
		g_{N}(\mvy) & := N\log\E_{\gs} \prod_{k <l} \left( 1+ \gs_{k}\gs_{l} \tanh(\gb y_{kl}/\sqrt{N}) \right).
		\label{eq:int-eq2}
	\end{align}

	Using the fact that $$\sqrt{N}\E\gs_{k}\gs_{l}=\sqrt{N}\hh^2\approx \sqrt{Nh^4}\to \rho^{2} =-(1-\gb^{2})\theta/\gb,$$ we note that the term inside $\E_{\mveta}$ in~\eqref{eq:unt-sqr} is converging pointwise to
	\begin{align*}
		 & (\eta_{12}^2 -c_t)(\eta_{34}^2-c_t)\prod_{k<l}\exp\left((\theta_{\infty}+\gb\rho^{2})(\sqrt{t}\eta_{kl} -\theta_{\infty}) + \frac12\gb^{2}(\sqrt{t}\eta_{kl} -\theta_{\infty})^2\right) \\
		 & = (\eta_{12}^2 -c_t)(\eta_{34}^2-c_t)\prod_{k<l}\exp\left(\frac12\gb^{2}t\eta_{kl}^{2} - \gb^{2}\theta_{\infty}^2/2\right);
	\end{align*}
	in the last equality we used the fact that $(1-\gb^{2})\theta_{\infty}+\gb\rho^{2}=0$.
	Applying Lemma~\ref{lem:key}, $g_{N}$ is upper bounded by
	\begin{align*}
		g_{N}(\mvy)\le (5/2+C'h^2)\gb^2 \sum_{k<l} y_{kl}^2 +\gb\rho^{2} \sum_{k<l}\abs{\hat\eta_{kl}}.
	\end{align*}
	In particular, the expression~\eqref{eq:unt-sqr} is upper bounded by
	\begin{align*}
		\E_{\mveta}\biggl( (\eta_{12}^2 -c_t)(\eta_{34}^2-c_t)\prod_{k<l}\exp\bigl((|\theta| +\gb\rho^{2}) \abs{\hat\eta_{kl}} + (3+C'h^2)\gb^2\hat\eta_{kl}^2\bigr)\biggr).
	\end{align*}
	To guarantee the above integral is finite, we need $
		1/2-(3+C'h^2)\gb^2t > 0$, or equivalently, $\gb^2 t < 1/6$. Thus if $\gb^2< \gb_0^2:=1/6$, by dominated convergence theorem and Lemma~\ref{lem:tanh-inte}, it is clear that the expectation in~\eqref{eq:unt-sqr} tends to 0 as $N \to \infty$.
\end{proof}

\begin{lem}\label{lem:tanh-inte}
	Let $(\eta_{kl})_{1\le k < l \le 4}$ be i.i.d.~$\N(0,1)$. Then
	\begin{align*}
		\E_{\mveta} \biggl((\eta_{12}^2-c_t)(\eta_{34}^2-c_t)\cdot \exp\biggl( \frac12\gb^{2}t\sum_{k <l} \eta_{kl}^2\biggr) \biggr) = 0.
	\end{align*}
\end{lem}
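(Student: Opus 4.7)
The plan is to exploit independence and factor the expectation. The six random variables $(\eta_{kl})_{1\le k<l\le 4}$ are i.i.d.~$\N(0,1)$, and the exponential factorizes as $\exp(\tfrac12\gb^2 t\sum_{k<l}\eta_{kl}^2) = \prod_{k<l}\exp(\tfrac12\gb^2 t\eta_{kl}^2)$. Since $\eta_{12}$ and $\eta_{34}$ are independent of each other and of all the remaining $\eta_{kl}$, I would split the expectation as a product
\begin{align*}
\E_{\mveta}\biggl((\eta_{12}^2-c_t)(\eta_{34}^2-c_t)\prod_{k<l}e^{\gb^2 t \eta_{kl}^2/2}\biggr)
= A_{12}\cdot A_{34}\cdot \prod_{(k,l)\notin\{(1,2),(3,4)\}} B_{kl},
\end{align*}
where $A_{kl}:=\E[(\eta_{kl}^2-c_t)e^{\gb^2 t \eta_{kl}^2/2}]$ and $B_{kl}:=\E[e^{\gb^2 t \eta_{kl}^2/2}]$. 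Since $\gb^2 t<1$, all these integrals converge.

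Next I would compute $A_{12}$ explicitly using the standard Gaussian identity $\E[e^{a\eta^2}]=(1-2a)^{-1/2}$ valid for $a<1/2$. Differentiating in $a$ gives $\E[\eta^2 e^{a\eta^2}]=(1-2a)^{-3/2}$. Setting $a=\gb^2 t/2$ (so that $1-2a = 1-\gb^2 t$), we obtain
\begin{align*}
A_{12} = \frac{1}{(1-\gb^2 t)^{3/2}} - \frac{c_t}{(1-\gb^2 t)^{1/2}} = \frac{1-c_t(1-\gb^2 t)}{(1-\gb^2 t)^{3/2}}.
\end{align*}
With the choice $c_t=1/(1-\gb^2 t)$ from the characteristic framework, the numerator vanishes identically: $1-c_t(1-\gb^2 t) = 1-1 = 0$. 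Hence $A_{12}=0$, and the whole product is zero.

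There is no genuine obstacle here; the lemma is essentially a verification that the constant $c_t=1/(1-\gb^2 t)$ was selected precisely so that this single-variable moment cancellation holds. The only care needed is to ensure convergence of $B_{kl}$, which requires $\gb^2 t<1$, and this is guaranteed by the hypothesis $\gb^2<1/6$ used in the proof of Theorem~\ref{thm:2nd-mom} together with $t\le 1$. In fact, the same computation shows that the cancellation is what forces the choice of $c_t$ in the first place: $c_t$ is the unique constant that makes the leading-order Gaussian integral vanish, reflecting the fact that $c_t$ equals the $h=0$ limit of $N\la(R_{12}-q)^2\ra_t$.
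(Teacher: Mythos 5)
Your proof is correct and follows essentially the same route as the paper: the paper writes the expectation as a six-dimensional integral against a tilted Gaussian of variance $c_t$ and observes that $\E[(y^2-c_t)]$ vanishes under that measure, which is exactly your single-variable computation of $A_{12}=0$ after factoring by independence. The extra remark about integrability ($\gb^2 t<1$) is a harmless clarification that the paper leaves implicit.
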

\begin{proof}[Proof of Lemma~\ref{lem:tanh-inte}]
	The proof of this lemma is elementary by integrating with respect to a tilted Gaussian measure,
	\begin{align*}
		 & \frac{1}{(2\pi)^{3}}\int_{\dR^6}(y_{12}^2-c_t)(y_{34}^2-c_t) \exp\biggl( -\frac12(1-\gb^2t)\sum_{k <l} y_{kl}^2\biggr) d\mvy \\
		 & \qquad\qquad = \frac{1}{(2\pi)^{3}}\int_{\dR^6}(y_{12}^2-c_t)(y_{34}^2-c_t) \exp\biggl( -\frac1{2c_t}\sum_{k <l} y_{kl}^2\biggr) d\mvy =0.
	\end{align*}
	The proof is complete.
\end{proof}

The following lemma is used to control the expectation in~\eqref{eq:int-eq2}.
\begin{lem}\label{lem:key}
	For any real numbers $(y_{kl})_{1\le k < l \le 4}$, we have
	\begin{align*}
		 & N\log\E_{\gs} \prod_{1\le k <l \le 4} \left( 1+ \gs_k\gs_l \tanh(y_{kl}/{\sqrt{N}}) \right) \\
		 & \qquad\qquad \le (5/2+C'h^2)\sum_{1\le k<l\le 4} y_{kl}^2+\sqrt{Nh^4}\sum_{1\le k<l\le 4} \abs{y_{kl}},
	\end{align*}
	where $C'$ is some finite constant independent of $N,h$.
\end{lem}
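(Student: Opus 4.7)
The plan is to expand $\E_\gs\prod_{k<l}(1+\gs_k\gs_l\tanh(y_{kl}/\sqrt{N}))$ as a sum indexed by subgraphs of the complete graph $K_4$, apply the elementary inequality $\log x \le x-1$, and then bound each term individually by exploiting the combinatorics of $K_4$ together with an AM-GM-type estimate.

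Write $t_e = \tanh(y_e/\sqrt N)$ for each edge $e=\{k,l\}$ of $K_4$. Since $\gs_1,\ldots,\gs_4$ are independent with $\E\gs_i = \hh$, one has $\E\prod_{\{k,l\}\in S}\gs_k\gs_l = \hh^{|\partial S|}$, where $|\partial S|$ denotes the number of odd-degree vertices of $S\subseteq E(K_4)$. This yields
\[
\E_\gs\prod_{k<l}(1+\gs_k\gs_l t_{kl}) = \sum_{S\subseteq E(K_4)}\hh^{|\partial S|}\prod_{e\in S}t_e,
\]
and combined with $\log x \le x-1$ and the triangle inequality gives
\[
\log\E_\gs\prod_{k<l}(1+\gs_k\gs_l t_{kl}) \le \sum_{S\ne\emptyset}\hh^{|\partial S|}\prod_{e\in S}|t_e|.
\]

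I would then split the right-hand side according to $|\partial S|\in\{0,2,4\}$. The linear-in-$|y|$ term on the right-hand side of the lemma comes from the $6$ single-edge subgraphs ($|S|=1$, $|\partial S|=2$): using $|t_e|\le|y_e|/\sqrt N$ and $\hh\le h$, one gets $N\hh^2\sum_e|t_e|\le\sqrt{Nh^4}\sum|y_e|$. For every other non-empty $S$ there are at least two edges; picking any two of them and bounding the remaining factors by $|t_e|\le 1$, the key estimate is $\prod_{e\in S}|t_e|\le (y_{e_1}^2+y_{e_2}^2)/(2N)$. Minimizing over pairs (hence at most the pair-average) and summing with the symmetries of $K_4$ (each edge lies in $2$ triangles, $2$ four-cycles, $4$ two-paths, etc.), every subgraph type contributes at most a constant multiple of $\sum y^2/N$, which becomes a constant multiple of $\sum y^2$ after multiplying through by $N$. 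The Eulerian case (the $4$ triangles plus $3$ four-cycles, carrying no $\hh$-factor) produces the dominant $\sum y^2$ coefficient, bounded by roughly $2/3+1/2 = 7/6 < 5/2$; the Type-$2$ subgraphs with $|S|\ge 2$ and the Type-$4$ subgraphs each carry at least one extra factor $\hh^2\le h^2$ and are absorbed into the $C'h^2\sum y^2$ term.

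The hard part will be ensuring the estimate is uniform in $\mvy$: the replacement $|t_e|\le|y_e|/\sqrt N$ is very lossy when $|y_e|\gg\sqrt N$, so it is essential to keep the bound $|t_e|\le 1$ on all but two edges per subgraph, which makes the inequality valid for all real $(y_{kl})$. A secondary nuisance is the bookkeeping: of the $64$ subgraphs of $K_4$, there are $8$ Eulerian, $48$ with $|\partial S|=2$, and $8$ with $|\partial S|=4$, each further split by $|S|$, and one has to verify that the constants accumulated across all non-single-edge cases fit comfortably inside $5/2+C'h^2$.
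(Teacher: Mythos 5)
Your proof follows essentially the same route as the paper's: expand $\E_\gs\prod(1+\gs_k\gs_l t_{kl})$ over subgraphs of $K_4$ with weight $\hh^{|\partial S|}\prod_{e\in S}t_e$, apply $\log(1+a)\le a$ and the triangle inequality, peel off the six single-edge terms to get $\sqrt{Nh^4}\sum|y_{kl}|$, and bound the remaining subgraphs via an AM--GM-type estimate with the crucial observation that $\tanh|x|\le\min(1,|x|)$ and that every non-Eulerian subgraph carries at least a factor $\hh^2\le h^2$. The paper's version of the AM--GM step is $\prod_{e\in\gC}\tanh|x_e|\le\frac1{|\gC|}\sum_{e\in\gC}\tanh^{|\gC|}|x_e|\le\frac1{|\gC|}\sum_{e\in\gC}x_e^2$, then it simply over-counts by bounding $\frac1{|\gC|}\sum_{e\in\gC}y_e^2\le\frac1{|\gC|}\sum_{e}y_e^2$ for each Eulerian $\gC$, giving the coefficient $4/3+3/4=25/12<5/2$; your pair-picking plus averaging-over-pairs gives the same $\frac1{|S|}\sum_{e\in S}y_e^2$ bound and, if you track edges instead of subgraphs, the slightly sharper $2/3+1/2=7/6$. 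Either way the constant fits comfortably under $5/2$, so both arguments are correct.
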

\begin{proof}[Proof of the Lemma~\ref{lem:key}]
	Let $x_{kl}:={y_{kl}}/{\sqrt{N}}$. We first expand the left hand side by constructing a correspondence to a collection of graph structures,
	\begin{align}\label{eq:exp-tanh}
		f_{N}:=\E_{\gs_i} \prod_{1 \le k < l \le 4}( 1 + \gs_k \gs_l \tanh x_{kl})
		= 1+ \sum_{r,c=1}^3 \sum_{\gC \in \cG_{r,c}} \hh^{\abs{\partial \gC}} \prod_{e \in \gC}\tanh x_e,
	\end{align}
	where $\cG_{r,c}$ denotes the collection of graphs that are topologically same as the structure in $r$--th row and $c$--th column in Figure~\ref{fig:graphs}.

	\begin{figure}[htbp]
		\centering
		\includegraphics[scale=0.6]{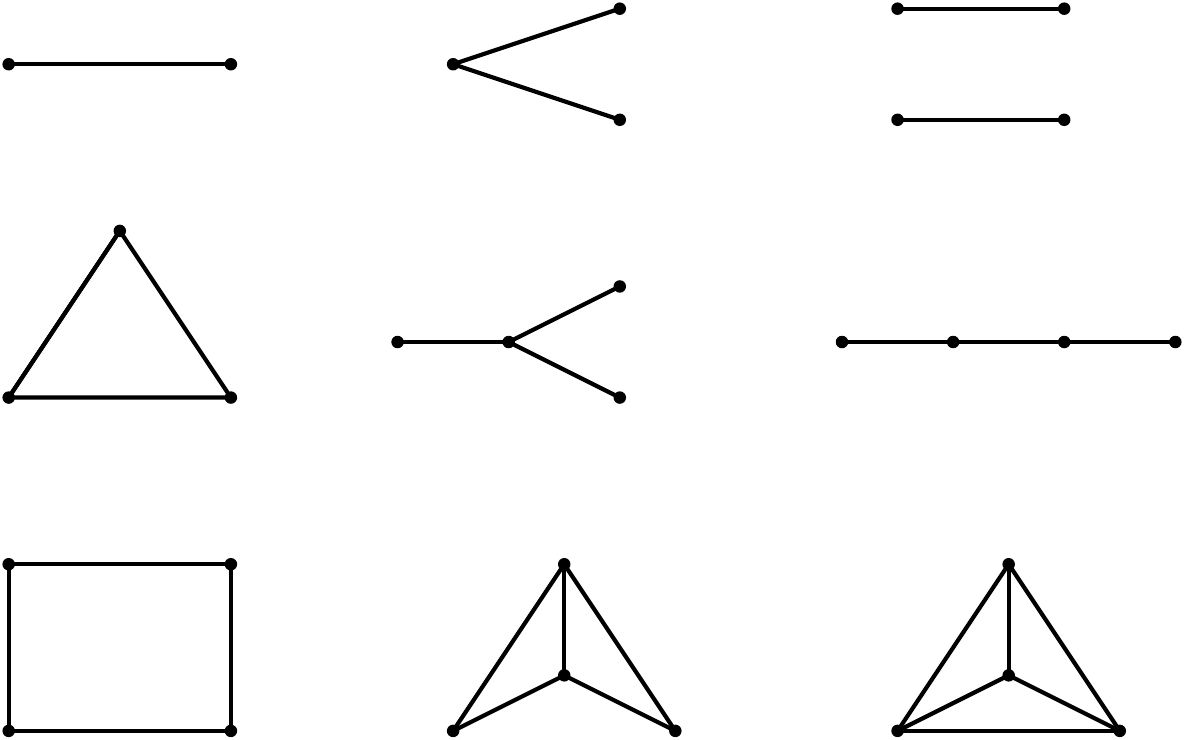}
		\caption{The graph structures correspond to the terms in the expansion of~\eqref{eq:exp-tanh}. There are 4 vertices in total and can have at most 6 edges. Let $\cG_{r,c} \ \text{ for} \ r,c=1,2,3$ denote the collection of graphs with the same topological structure in $r$--th row and $c$--th column. Each edge corresponds to a term $\gs_i^k\gs_i^l \tanh(x_{kl})$. }
		\label{fig:graphs}
	\end{figure}
	For a graph $\gC$ in the collection $\cG_{r,c}$, let $\abs{\partial \gC}$ be the number of odd degree vertices in $\gC$. Let $f_N$ be the right handside of~\eqref{eq:exp-tanh}, one has
	\begin{align}\label{eq:grdecomp}
		\log f_N
		 & = N \log\bigl(1+ \sum_{r,c} \sum_{\gC \in \cG_{r,c}} \hh^{\abs{\partial \gC}} \prod_{e \in \gC}\tanh x_e\bigr)
		\le N \sum_{r,c} \sum_{\gC \in \cG_{r,c}} \hh^{\abs{\partial \gC}} \prod_{e \in \gC}\tanh \abs{x_e}.
	\end{align}
	For the sum over $\cG_{1,1}$, the structure is relatively simple with a single bond and two odd degree vertices, using $\tanh\abs{x}\le x$, we get
	\begin{align}\label{eq:leading}
		N \sum_{\gC \in \cG_{1,1}} \hh^2 \prod_{e \in \gC} \tanh\abs{x_e} \le \sum_{1\le k<l \le 4} \sqrt{N} h^2 \abs{y_{kl}}.
	\end{align}
	Using AM--GM inequality and the fact that $\tanh\abs{x}\le \min(1,\abs{x})$ for all $x$, we get from~\eqref{eq:grdecomp}
	\begin{align*}
		\log f_{N}\le \sqrt{Nh^4}\sum_{1\le k<l \le 4} \abs{y_{kl}} +\sum_{r+c>2} \sum_{\gC \in \cG_{r,c}} \hh^{\abs{\partial \gC}}\cdot \frac1{|\gC|}\sum_{e \in \gC}y_e^2.
	\end{align*}
	Note that, $\abs{\partial \gC}$ is always even and it is zero only for the cycle graphs in $\cG_{2,1},\cG_{3,1}$. Moreover, $\abs{\cG_{2,1}}/3+\abs{\cG_{3,1}}/4 = 4/3+3/4<5/2$. The desired result follows immediately.
\end{proof}

\subsection{Proof of Theorem~\ref{thm:main} for the Gaussian Disorder}\label{ssec:pf-gaussian}
By collecting all the lemmas in previous subsections, we give a formal proof of central limit theorem of free energy for the Gaussian disorder using the characteristic approach.

By Theorem~\ref{thm:latala}, we have the uniform boundedness of $N\E\la (R_{12}-q)^2\ra_t$ for all $t\in [0,1]$. Note that, what we need is that for an appropriate choice of $c_t$, we have, for $t>0$
\begin{align}\label{eq:dct}
	\eta_N^{-1}\E \bigl|\la N(R_{12}-q)^2\ra_{t} - c_t\bigr| \to 0
\end{align}
as $N\to\infty$.

By the discussions in Section~\ref{ssec:g-clt}, we need to prove the $L^1$ convergence in~\eqref{eq:l1-conver} for the \subc~and \crit~cases. The inequality~\eqref{eq:cs-ineq} suggests that it is enough to prove the results in~\eqref{eq:subcrit-res}. Those results were established rigorously in the Sections~\ref{ssec:tail-logp} and~\ref{sec:new-pf}, respectively. Therefore, the CLT in Theorem~\ref{thm:main} with Gaussian disorder immediately follows with the asymptotic variance
$$
	v^2 = \frac12{\gb^2\rho^4}/{(1-\gb^2)} -\frac12{\gb^2} -\frac12\log(1-\gb^2).
$$ 

The central limit theorem in the \supc~regime easily follows by taking $g(y):=\exp(\text{\i} xy)$ for $x\in\dR$ and $\eta_N=Nh^4\to\infty$. The scaled variance is given by $v_2^2 = \gb^2/(2(1-\gb^2)).$

\subsection{Proof of Lemmas~\ref{lem:1} and~\ref{lem:2}}\label{pf:auxlem}

In this subsection, we include the proof details for Lemma~\ref{lem:1} and~\ref{lem:2}. To do that, we first need the following elementary two-sided bound on $\tanh(x)$.

\begin{lem}
	We have
	\begin{align*}
		0\le x^2-\tanh^2x\le \frac23x^4 \text{ for all } x\in\dR.
	\end{align*}
\end{lem}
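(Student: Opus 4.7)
Both the proposed inequalities depend only on $x^2$, so without loss of generality I will restrict attention to $x\ge 0$. The plan is to handle the two bounds separately, in opposite ways: the left-hand inequality is the elementary one and feeds into the proof of the right-hand one.

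First I would dispense with the lower bound $x^2-\tanh^2 x\ge 0$ by noting that $\tanh 0=0$ and $(\tanh t)'=\sech^2 t\le 1$, so by the fundamental theorem of calculus $|\tanh x|\le |x|$, and squaring gives the claim.

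For the upper bound, set
\begin{align*}
	G(x):=\tfrac{2}{3}x^4-x^2+\tanh^2 x,
\end{align*}
so the goal becomes $G(x)\ge 0$ for $x\ge 0$. A direct Taylor expansion at $0$, using $\tanh x=x-x^3/3+O(x^5)$, shows $G(0)=0$ and $G'(0)=0$, so the natural route is to establish convexity on $[0,\infty)$. Using the identity $(\tanh x)'=1-\tanh^2 x$, I would compute
\begin{align*}
	G'(x) & =\tfrac{8}{3}x^3-2x+2\tanh x\,(1-\tanh^2 x), \\
	G''(x) & =8x^2-2+2(1-\tanh^2 x)-6\tanh^2 x\,(1-\tanh^2 x).
\end{align*}
The crucial algebraic simplification is
\begin{align*}
	G''(x)=8\bigl(x^2-\tanh^2 x\bigr)+6\tanh^4 x,
\end{align*}
which is manifestly nonnegative thanks to the already-established lower bound $x^2\ge \tanh^2 x$. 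Integrating $G''\ge 0$ twice against the initial data $G(0)=G'(0)=0$ yields $G(x)\ge 0$ on $[0,\infty)$, which is the required upper bound.

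There is no real obstacle here; the only thing to watch is the algebraic rearrangement of $G''$, where one must carefully use $\sech^2 x=1-\tanh^2 x$ and cancel the constants so that the nonnegative expression $8(x^2-\tanh^2 x)+6\tanh^4 x$ emerges. Once this identity is in hand, the lower bound and the convexity argument close the proof immediately, and the final double integration provides a clean, elementary derivation that fits naturally into the expository style of the paper.
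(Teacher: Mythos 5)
Your proof is correct, but it takes a genuinely different route for the upper bound than the paper. The paper proves $1 - x^2/3 \le x^{-1}\tanh x \le 1$ (cited as a known fact) and then factors
\begin{align*}
	x^2 - \tanh^2 x = x^2\bigl(1 - x^{-1}\tanh x\bigr)\bigl(1 + x^{-1}\tanh x\bigr) \le x^2\cdot \tfrac{x^2}{3}\cdot 2,
\end{align*}
reading both inequalities off the same two-sided bound on $\tanh x/x$. You instead set $G(x)=\tfrac23 x^4 - x^2 + \tanh^2 x$, verify $G(0)=G'(0)=0$, and show $G''(x)=8(x^2-\tanh^2 x)+6\tanh^4 x\ge 0$ using the already-established lower bound, then integrate twice. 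I checked the computation of $G''$ and the algebraic simplification, and they are right. Your version has the pleasant feature of being fully self-contained: it needs only $|\tanh x|\le|x|$, which you derive from $(\tanh)'=\sech^2\le 1$, rather than invoking the sharper lower bound $\tanh x/x\ge 1-x^2/3$. The trade-off is that your argument is longer and requires two careful differentiations, whereas the paper's factorization is a one-line reduction once the auxiliary inequality is granted. Both are valid; yours is the more elementary derivation, the paper's is the more compact one given the cited fact.
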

\begin{proof}
	We used the fact that $1-x^2/3\le x^{-1} \tanh x\le 1$ for all $x$. Thus $x^{-2}\tanh^2x\le 1$ or $x^2-\tanh^2x\ge 0$. Similarly,
	\begin{align*}
		x^2-\tanh^2x=x^2(1-x^{-1}\tanh x)(1+x^{-1}\tanh x)\le x^2\cdot x^2/3 \cdot 2 = \frac23x^4.
	\end{align*}
	This completes the proof.
\end{proof}

Now we prove Lemma~\ref{lem:1}.
\begin{proof}[Proof of Lemma~\ref{lem:1}]
	Define $Y=\gb\sqrt{q}g+h$. Note that $\E Y^2=\gb^2q+h^2$. Thus
	$h^2-(1-\gb^2)q = \E(Y^2 -\tanh^2Y) \ge 0$ or $(1-\gb^2)q \le h^2$. Moreover,
	\begin{align*}
		h^2-(1-\gb^2)q = \E(Y^2 -\tanh^2Y)
		 & \le \frac23\E Y^4.
	\end{align*}
	A direct computation yields that
	\begin{align*}
		\E Y^4 = h^4+6\gb^2h^2q +3\gb^4q^2
		 & \le \frac{h^4}{(1-\gb^2)^2}((1-\gb^2)^2+6\gb^2(1-\gb^2) +3\gb^4) \\
		 & =\frac{h^4}{(1-\gb^2)^2}(3-2(1-\gb^2)^2)
		\le \frac{3h^4}{(1-\gb^2)^2},
	\end{align*}
	completing the proof.
\end{proof}

\begin{proof}[Proof of Lemma~\ref{lem:2}]
	By, Poincar\'e inequality for Gaussian distribution we have
	\begin{align*}
		\var(\cosh(\gb\sqrt{q}g+h)) \le \gb^2q\E \tanh^2(\gb\sqrt{q}g+h) = \gb^2q^2.
	\end{align*}
	For any smooth function $f$ and $g,g'\sim \N(0,1)$ i.i.d., with $g_{\theta}=g'\cos\theta + g\sin\theta$ we have
	\begin{align*}
		\var(f(g)) & = \E f(g)(f(g_{1})-f(g_{0}))
		= \int_{0}^{\pi/2} \E\left(f'(g)f'(g_{\theta})\right) \cos\theta d\theta.
	\end{align*}
	For $f(x)=\log\cosh(\gb\sqrt{q}x+h)$, we get
	\begin{align*}
		\var(\cosh(\gb\sqrt{q}g+h)) & = \gb^2q \int_{0}^{\pi/2} \E\left( \tanh(\gb\sqrt{q}g+h) \tanh(\gb\sqrt{q}g_{\theta}+h)\right) \cos\theta d\theta \\
		 & = \gb^2q \int_{0}^{\pi/2} \E\left( (\gb\sqrt{q}g+h)(\gb\sqrt{q}g_{\theta}+h) \right) \cos\theta d\theta + O( \gb^2q^3) \\
		 & = \gb^2q \int_{0}^{\pi/2} \left( h^2+\gb^2q\sin\theta \right) \cos\theta d\theta + O( \gb^2q^3) \\
		 & =\gb^2q (h^2+ \frac12\gb^2q ) + O( \gb^2q^3).
	\end{align*}
	Simplifying we have the result.
\end{proof}   

\section{Cluster Based Approach for General Disorder}\label{sec:cluster}

From the analysis presented in Section~\ref{sec:direct-pf} and~\ref{sec:char}, we can see how the log-partition function behaves in all 3 different regimes. However, the deficiency is the dependence of Gaussian disorder and the characteristic approach in Section~\ref{sec:char} can not give the results up to the critical temperature $\gb_c$ in all regimes. This is partially due to the analytical nature of the approach. In order to overcome those issues, we use the cluster based approach which works by decomposing the configuration space as a cluster of simple structures contributing to the partition function. Besides that, the disorders can be relaxed to general symmetric distribution. This approach was first used by Aizenman, Lebowitz, and Ruelle~\cite{ALR87} to deal with the SK model with no external field. Basically, in the zero external field case, the main contribution to the partition function is from the clusters of simple loops of the underlying complete graph. When the external field is present, we found that a new path cluster beyond loop starts contributing to the partition function. In particular, in the \crit~regime, the path cluster's effects are strong enough to compete with the loop clusters. Nonetheless, in the \subc~regime, the loop cluster plays a dominate role, this gives a different explanation on why the log-partition function in the \subc~regime behaves nearly same as zero external field case as shown in Section~\ref{sec:direct-pf} and~\ref{sec:char}. Similar loop counting technique also appears in some other models, see the results in~\cites{BD21,Ban20,ALS21,KL05,Kos06}. We also present some open questions on how to extend the analysis in this article to those models in Section~\ref{sec:open}.

In the following part, we will rigorously establish the decomposition of the partition function and explain how the path cluster appears. Applying the Stein's method to prove multivariate CLT will be given the in the next Section.

Consider the external field $h=\rho N^{-\ga}$ with $\ga \ge 1/4$. Fix $\gb<1$. Recall that the disorder random variable $J_{ij}$'s are i.i.d.~mean zero and variance one. We define
\begin{align*}
	\bar{Z}_{N}(\gb) & := \prod_{i<j} \cosh(\gb J_{ij}/\sqrt{N}) \\
	\text{ and }
	\hat{Z}_{N}(\gb,h) & := \E_{\mvgs} \prod_{i<j} \bigl(1+ \gs_i\gs_j \tanh(\gb J_{ij}/\sqrt{N})\bigr)
\end{align*}
where $\mvgs$ is a vector of $N$ many i.i.d.~$\pm 1$ valued random variables with mean $\hh$.
From the decomposition in~\eqref{def:Zdecomp}, we have
\begin{align}\label{def:fdecomp}
	Z_{N}(\gb,h)=(2\cosh h)^{N}\cdot \bar{Z}_{N}(\gb) \cdot \hat{Z}_{N}(\gb,h).
\end{align}

Let $K_N$ be the complete graph on $N$ vertices with edge set $\cE_N:=\{e=(i,j)\mid 1\le i<j\le n\}$. For a subset $\gC\subseteq \cE_{N}$, we define the random variable
\begin{align}\label{def:wgC}
	\go(\gC):= \prod_{e\in\gC} \tanh(\gb J_{e}/\sqrt{N}).
\end{align}
We will write $\go_{e}$ for $\go(e)$.

Next, we note that
\begin{align*}
	\log \bar{Z}_{N}(\gb )
	= \sum_{i<j} \log \cosh(\gb J_{ij}/\sqrt{N})
	= -\frac12\sum_{e} \log (1-\go_{e}^2)
\end{align*}
and a standard argument shows that
\begin{align}\label{eq:ZNbar}
	\log \bar{Z}_{N}(\gb ) - \frac12\sum_{e} \go_{e}^2 \to \frac18\gb^{4}\E J_{12}^4
\end{align}
in probability as $N\to\infty$.
Moreover, we have
\begin{align}\label{eq:znhat}
	\hat{Z}_{N}(\gb,h)
	 & = \E_{\mvgs} \prod_{i<j} \left( 1+ \gs_{i}\gs_{j} \tanh(\gb J_{ij}/\sqrt{N}) \right)
	= \sum_{\gC \subseteq \cE_N} \hh^{|\partial\gC|}\cdot \go(\gC)
\end{align}
where the sum is over all subsets $\gC$ of $\cE_{N}$ and $\partial \gC =\{i\in[n]\mid \#\{j\mid (i,j)\in \gC\} \text{ is odd}\}$ is the set of odd degree vertices in $\gC$. 

First we will prove that, in~\eqref{eq:znhat} the sum over all subsets $\gC\subseteq \cE_{N}$, can be approximated by a sum over large finite $\gC$. We define, for any integer $m\ge 1$,
\begin{align*}
	\hat{Z}_{N,m}:= \sum_{\gC \subseteq \cE_N, |\gC| \le m} \hh^{|\partial\gC|}\cdot \go(\gC).
\end{align*}

\begin{lem}\label{lem:big-err}
	Assume that $
		\gb_N^2 = N\E \tanh(\gb J/\sqrt{N})^2 \in (0,1) \text{ and }
		\rho_N^4 =N\hh^4 \in[0,\infty).
	$
	For any $m\ge 2(1+\rho_{N}^4)/(1-\gb_N^2)^2$ we have
	\begin{align*}
		\bigl\Vert \hat{Z}_{N} - \hat{Z}_{N,m} \bigr\Vert_2
		 & \le 2m^{1/8}\gb_N^m\exp\left(\sqrt{{(1+\rho_N^4)m}/2}\right).
	\end{align*}
\end{lem}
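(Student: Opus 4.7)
The plan is to combine $L^2$ orthogonality of the edge weights $\go(\gC)$ with a Chebyshev-type tail bound on a generating function, evaluated via Hubbard--Stratonovich.

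Since the $J_{ij}$ are independent and symmetric, $\E[\go(\gC)\go(\gC')]=0$ whenever $\gC\ne\gC'$: any edge in the symmetric difference produces a factor $\tanh(\gb J_e/\sqrt N)$ of odd order that integrates to zero. Hence
\[
\|\hat Z_N-\hat Z_{N,m}\|_2^2=\sum_{|\gC|>m}\hh^{2|\partial\gC|}\,\E[\go(\gC)^2]=\sum_{|\gC|>m}\hh^{2|\partial\gC|}\,p^{|\gC|},\qquad p:=\gb_N^2/N.
\]
Introducing i.i.d.\ $\pm 1$-valued $\tau_1,\ldots,\tau_N$ with mean $\hh^2$, the identity $\E_\tau\prod_i\tau_i^{d_i(\gC)}=\hh^{2|\partial\gC|}$ yields the generating function
\[
G(x):=\sum_{\gC\subseteq\cE_N}x^{|\gC|}\hh^{2|\partial\gC|}=\E_\tau\prod_{i<j}(1+x\tau_i\tau_j),
\]
and a Chebyshev-style bound gives $\|\hat Z_N-\hat Z_{N,m}\|_2^2\le t^{-m}G(pt)$ for any $t>1$ with $pt<1$.

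The key analytical step is the sharp inequality $\cosh c+\hh^2\sinh c\le\exp(\hh^2 c+c^2/2)$, valid for every $c\in\dR$, which I would establish by observing that $f(c):=\log(\cosh c+\hh^2\sinh c)-\hh^2 c-c^2/2$ satisfies $f(0)=f'(0)=0$ and $f''(c)=-\bigl(\frac{\sinh c+\hh^2\cosh c}{\cosh c+\hh^2\sinh c}\bigr)^2\le 0$, hence is concave and non-positive. With $S=\sum_i\tau_i$, combining $\log(1+y)\le y$ with Hubbard--Stratonovich $e^{xS^2/2}=\E_\eta e^{\sqrt x\,\eta S}$ for $\eta\sim N(0,1)$ and the above bound yields
\[
G(x)\le e^{-xN/2}\,\E_\eta\exp\!\left(N\hh^2\sqrt x\,\eta+\tfrac{Nx\,\eta^2}{2}\right)=\frac{1}{\sqrt{1-Nx}}\exp\!\left(-\frac{Nx}{2}+\frac{N^2\hh^4 x}{2(1-Nx)}\right)
\]
for $Nx<1$ by completing the square.

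Parametrizing $u:=1-\gb_N^2 t\in(0,1-\gb_N^2)$, so that $x=pt=\gb_N^2 t/N$ and $t^{-m}=\gb_N^{2m}(1-u)^{-m}$, the right-hand side becomes
\[
\|\hat Z_N-\hat Z_{N,m}\|_2^2\le\gb_N^{2m}\,u^{-1/2}(1-u)^{-m}\exp\!\left(\frac{\rho_N^4(1-u)}{2u}-\frac{1-u}{2}\right).
\]
The optimizing choice $u=\sqrt{(1+\rho_N^4)/(2m)}$ balances $mu$ against $\rho_N^4/(2u)$, each being at most $\sqrt{m(1+\rho_N^4)/2}$. The hypothesis $m\ge 2(1+\rho_N^4)/(1-\gb_N^2)^2$ guarantees $u\le(1-\gb_N^2)/2$, so the higher-order Taylor terms in $-m\log(1-u)=mu+O(mu^2)$, where $mu^2=(1+\rho_N^4)/2\le\sqrt{m(1+\rho_N^4)/2}$ by a second application of the hypothesis, are absorbed into the leading term. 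Taking square roots and using $u^{-1/4}=(2m/(1+\rho_N^4))^{1/8}\le 2m^{1/8}$ produces the stated bound.

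The main obstacle is the sharpness of the $\cosh+\hh^2\sinh$ estimate: the naive bound $\cosh c+\hh^2\sinh c\le(1+\hh^2)\cosh c$ would introduce a prefactor $(1+\hh^2)^N$ that scales as $\exp(\rho_N^2\sqrt N)$ and blows up with $N$. Only the sharp exponential bound produces a Gaussian drift of size $N\hh^2\sqrt x$, whose contribution after completing the square scales as $N^2\hh^4 x=\rho_N^4\gb_N^2 t$ rather than $(N\hh^2)^2=N\rho_N^4$, yielding the correct $\sqrt{m(1+\rho_N^4)}$ scaling after the optimization in $t$.
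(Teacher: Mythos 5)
Your approach tracks the paper's proof very closely: the orthogonality of the $\go(\gC)$ gives the exact $L^2$ identity, your generating function $G$ is precisely the paper's $\psi_N$, the tilt $t^{-m}G(pt)$ is the paper's $e^{-2my}\psi_N(e^y\gb_N,\rho_N)$, and the optimizing $u=\sqrt{(1+\rho_N^4)/(2m)}$ matches the paper's choice of $t$. The one genuinely cleaner ingredient is your pointwise bound $\cosh c+\hh^2\sinh c\le\exp(\hh^2c+c^2/2)$, proved via concavity of $f(c)=\log(\cosh c+\hh^2\sinh c)-\hh^2c-c^2/2$; this lets you complete the Gaussian square exactly, whereas the paper bounds $\cosh^Nc\,(1+\hh^2\tanh c)^N\le\exp(Nc^2/2+N\hh^2|c|)$ and pays a factor of $2$ from the resulting one-sided Gaussian integral. (One small slip: the convergence requirement on the tilt is $Nx=\gb_N^2t<1$, not $pt<1$; your later parametrization $u=1-\gb_N^2t\in(0,1-\gb_N^2)$ implicitly uses the correct one.)

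The final optimization, as written, does not close. Set $a:=\sqrt{m(1+\rho_N^4)/2}$. You correctly record $mu=a$, $\rho_N^4/(2u)\le a$, and $mu^2=(1+\rho_N^4)/2\le a$, but asserting that the $mu^2$ correction is ``absorbed into the leading term'' can at best bound the exponent of $\Vert\cdot\Vert_2^2$ by $3a$, which after the square root gives $\exp(3a/2)$ --- off from the stated $\exp(a)$ by the unbounded factor $\exp(a/2)$, so the lemma would not follow. What makes the bound come out is the exact identity $mu^2=(1+\rho_N^4)/2$, which cancels against the $-\rho_N^4/2-1/2$ arising from $\rho_N^4(1-u)/(2u)-(1-u)/2$, leaving the exponent $\le 2a+u/2\le 2a+1/4$; the $e^{1/4}$ is then absorbed into the $2m^{1/8}$ prefactor. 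Relatedly, the choice $u=\sqrt{(1+\rho_N^4)/(2m)}$ does not ``balance $mu$ against $\rho_N^4/(2u)$'' (that would be $u=\sqrt{\rho_N^4/(2m)}$); it is exactly the choice that makes the cancellation above exact, and the argument needs to carry out that bookkeeping rather than appeal to an order-of-magnitude absorption.
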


\begin{rem}
	From the above $L^2$ bound, if $\rho_N = \infty$, \ie~h belongs to the \supc~regime, the right handside error will not enjoy the exponential decay any more. In other words, in the \supc~regime, very large $\gC$ matters, and counting very large loops and paths in $\gC$ becomes intractable. Another issue is that the Stein's method in Section~\ref{sec:stein} also breaks down.
\end{rem} 

The proof of Lemma~\ref{lem:big-err} is given in Section~\ref{pf:big-err}. Next we prove that $\hat{Z}_{N,m}$ can be approximated by products over simple loops and simple paths of length $\le m$. We make the key approximation step rigorous in Lemma~\ref{lem:approx} below. We define
\begin{align*}
	\tilde{Z}_{N,m}:= \prod_{\gc\,:\,|\gc|\le m} (1+\go(\gc)) \prod_{p\,:\, |p|\le m } (1+\hh^2\cdot \go(p)).
\end{align*}

\begin{lem}\label{lem:approx}
	As $N \to \infty$, for any fixed $m\in\dN$, we have
	\begin{align*}
		\bigl\Vert \hat{Z}_{N,m} - \tilde{Z}_{N,m} \bigr\Vert_2\to 0
	\end{align*}
\end{lem}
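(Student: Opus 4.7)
The plan is to exhibit a common ``dominant core'' $Z^*_{N,m}$ and show both $\hat{Z}_{N,m}$ and $\tilde{Z}_{N,m}$ are $L^2$-close to it, then conclude by triangle inequality. Define
\[
Z^*_{N,m} := \sum_{\gC \in \cS^*_{N,m}} \hh^{\abs{\partial\gC}} \go(\gC),
\]
where $\cS^*_{N,m}$ consists of $\gC\subseteq\cE_N$ with $\abs{\gC}\le m$ whose connected components are each a simple loop or a simple path of length $\le m$. Via the bijection $\gC \mapsto (S_1,S_2)$ sending $\gC$ to its decomposition into loop components $S_1$ and path components $S_2$, and using $\abs{\partial\gC}=2\abs{S_2}$, one gets the dual expression
\[
Z^*_{N,m} = \sum_{(S_1,S_2) \in \cT^*_{N,m}} \prod_{\gc\in S_1} \go(\gc) \prod_{p\in S_2} \hh^{2} \go(p),
\]
which exhibits $Z^*_{N,m}$ simultaneously as a natural truncation of $\hat{Z}_{N,m}$ (subset sum) and of $\tilde{Z}_{N,m}$ (product expansion restricted to vertex-disjoint collections with total edges $\le m$).

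For $\|\hat{Z}_{N,m} - Z^*_{N,m}\|_2 \to 0$, I would use that the symmetry of $J_{e}$ gives $\E\go_e = 0$ and hence orthogonality of monomials: $\E[\go(\gC_1)\go(\gC_2)] = \delta_{\gC_1,\gC_2}(\gb_N^2/N)^{\abs{\gC_1}}$. Thus
\[
\|\hat{Z}_{N,m} - Z^*_{N,m}\|_2^2 = \sum_{\gC \notin \cS^*_{N,m},\,\abs{\gC}\le m} \hh^{2\abs{\partial\gC}} (\gb_N^2/N)^{\abs{\gC}}.
\]
Each ``bad'' $\gC$ contains a connected component $C$ which is either a non-path tree ($\ge 3$ leaves, so $\abs{\partial C} \ge 4$ by parity), a non-cycle unicyclic graph (containing pendants, so $\abs{\partial C}\ge 2$), or has excess $s(C) = \abs{E(C)}-\abs{V(C)}+1\ge 2$. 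Since the vertex placements contribute $N^{\abs{V(C)}}$ and edges contribute $(\gb_N^2/N)^{\abs{E(C)}}$, the total factor per component is $N^{1-s(C)}\hh^{2\abs{\partial C}}$. Combining with $\alpha \ge 1/4$ (so $h^4 = O(N^{-1})$), each bad-component factor is at worst $\max(N\hh^8,\hh^4,N^{-1}) = O(N^{-1})$, while ``good'' components contribute $O(1)$; summing the finitely many shapes for fixed $m$ yields $o(1)$.

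For $\|\tilde{Z}_{N,m} - Z^*_{N,m}\|_2 \to 0$, the idea is dual. Expanding the product gives a sum over all $(S_1,S_2)$, and the residual collects pairs where (a) two elements share at least one vertex, or (b) the elements are vertex-disjoint but total edge count exceeds $m$. Case~(a) creates compound edge sets whose connected components in the union are necessarily ``bad'' in the sense of Step~1---either higher excess or extra pendants---so the same excess-based estimate (after collapsing repeated edges via $\E\go_e^{2k}=O(N^{-k})$) gives $o(1)$. Case~(b) is handled by an orthogonality argument analogous to Step~1, reducing to a tail sum $\sum_{k>m} O(\gb^{2k})$-type expression that is controlled as in Lemma~\ref{lem:big-err}. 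The triangle inequality then yields the claim.

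The principal obstacle is the second step: carefully organizing the product-expansion cross-terms and matching them to the subset-sum representation. In particular, one must track edge multiplicities arising from elements of $(S_1,S_2)$ sharing edges (yielding $\go_e^2$ factors that collapse to deterministic constants at order $N^{-1}$) and verify that the $\hh^{2\abs{S_2}}$ weighting combines with the $\alpha\ge 1/4$ assumption to suppress any residual involving paths that cannot be absorbed into $Z^*_{N,m}$. Throughout, the high-temperature condition $\gb_N^2<1$ keeps the tail sums over loop lengths absolutely convergent.
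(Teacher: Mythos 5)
Your plan---introduce a common core $Z^*_{N,m}$ and bound both $\norm{\hat{Z}_{N,m}-Z^*_{N,m}}_2$ and $\norm{\tilde{Z}_{N,m}-Z^*_{N,m}}_2$---is a genuinely different organization from the paper's. The paper instead inserts a truncated product expansion with a second, \emph{growing} parameter $k_N\to\infty$ (a sum over at most $k_N$ distinct, not necessarily disjoint, loops and paths), and controls the remaining polynomial tail via the Cauchy remainder formula applied to $\phi(z)=\prod_\gc(1+z\go(\gc))$ and $\psi(z)=\prod_p(1+z\hh^2\go(p))$, using tightness of $\sum_\gc\go(\gc)$ and $\sum_\gc\go(\gc)^2$. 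Your Step~1 is sound: symmetry of $J$ gives pairwise orthogonality of the $\go(\gC)$'s, and the excess/parity dichotomy (a non-cycle, non-path component is either a tree with $\abs{\partial C}\ge 4$, unicyclic with $\abs{\partial C}\ge 2$, or has excess $\ge 2$) makes the per-component factor $N^{\abs{V(C)}-\abs{E(C)}}\hh^{2\abs{\partial C}}=O(N^{-1})$ once $\ga\ge 1/4$; this is the same combinatorial content the paper packages into the multigraph discussion around~\eqref{eq:multi-err}.

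Step~2, however, has a concrete gap that your sketch does not close. Case~(b)---vertex-disjoint collections with total edge count $>m$---does \emph{not} tend to $0$ as $N\to\infty$ for fixed $m$. By the same orthogonality as in Step~1, its squared $L^2$ norm equals $\sum_{\gC}\hh^{2\abs{\partial\gC}}\gz_N^{2\abs{\gC}}$ over pairwise vertex-disjoint unions of simple cycles and paths of length $\le m$ with $\abs{\gC}>m$, and each fixed shape contributes $\Theta(1)$: the $N^{\abs{V(\gC)}}$ placements exactly offset $\gz_N^{2\abs{\gC}}\hh^{2\abs{\partial\gC}}$ (for instance, two disjoint length-two paths give $\asymp \rho^8\gb^8$). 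Thus the tail is $\Theta(\gb^{2m})$---consistent with your own written bound $\sum_{k>m}O(\gb^{2k})$---which is a positive constant for fixed $m$, not $o(1)$; Lemma~\ref{lem:big-err} supplies an $m$-decaying bound, not an $N$-decaying one. This is precisely the contribution the paper handles with the $k_N$ truncation plus the complex-analytic maximum-modulus estimate, a mechanism your static core $Z^*_{N,m}$ cannot replicate. Separately, in case~(a) the repeated-edge weights $\go_e^k$ ($k\ge 2$) are non-centered, the cross terms in $\norm{\cdot}_2^2$ are no longer orthogonal, and the moment bound you quote, $\E\go_e^{2k}=O(N^{-k})$, fails for $k\ge 4$ under the sixth-moment hypothesis (one only has $\E\go_e^{2k}\le\E\go_e^{6}=O(N^{-3})$ via $\abs{\go_e}\le 1$); this is exactly the ALR multigraph bookkeeping, extended to paths, that the paper carries out, and your proposal asserts rather than proves it.
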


The proof is given in Section~\ref{pf:approx}.
Next, we introduce the following lemma to give the asymptotic distribution of the product over loops and paths if length bounded by $m$. First, we note that
\begin{align*}
	\log \tilde{Z}_{N,m} = \sum_{\gc\,:\,|\gc|\le m} \log(1+\go(\gc)) +\sum_{p\,:\, |p|\le m } \log(1+\hh^2\cdot \go(p)).
\end{align*}
Using the expansion $\log(1+x)\approx x-x^{2}/2$ for $x$ small, we get the following result.

\begin{lem}\label{lem:logapprox}
	For any fixed $m\ge 3$, we have
	\begin{align*}
		\log \tilde{Z}_{N,m}
		- \sum_{\gc\,:\,|\gc|\le m} \go(\gc)
		- \sum_{p\,:\, |p|\le m } \hh^2\cdot \go(p)
		\to
		-\frac12\sum_{\ell=3}^{m}\frac{\gb^{2\ell}}{2\ell} -\frac14\sum_{\ell'=2}^{m}\rho^4\gb^{2\ell'}
	\end{align*}
	in probability as $N\to\infty$.
\end{lem}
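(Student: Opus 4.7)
The plan is to expand each $\log(1+x)$ factor in $\log\tilde{Z}_{N,m}$ via $\log(1+x)=x-\tfrac12 x^2+r(x)$ with $|r(x)|\le C|x|^3$ for $|x|\le\tfrac12$, noting that the linear terms coincide exactly with the two sums being subtracted on the left-hand side of the claimed identity. After cancellation,
\[
\log\tilde{Z}_{N,m}-\sum_{\gc:|\gc|\le m}\go(\gc)-\sum_{p:|p|\le m}\hh^2\go(p)=-\tfrac12\sum_{\gc:|\gc|\le m}\go(\gc)^2-\tfrac12\sum_{p:|p|\le m}\hh^4\go(p)^2+R_N,
\]
where $R_N$ aggregates the cubic remainders. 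The statement then reduces to three sub-claims: (i) $R_N\to 0$ in probability; (ii) the expectations of the two quadratic sums converge to $\sum_{\ell=3}^m\gb^{2\ell}/(2\ell)$ and $\sum_{\ell'=2}^m\rho^4\gb^{2\ell'}/2$ respectively; (iii) each quadratic sum concentrates around its mean.

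For (i), the elementary estimate $|\tanh(\gb J_e/\sqrt{N})|\le \gb|J_e|/\sqrt{N}$ yields $|\go(\gc)|\le (\gb/\sqrt{N})^{|\gc|}\prod_{e\in\gc}|J_e|$, and summing $|\go(\gc)|^3$ over length-$\ell$ cycles gives an $L^1$-bound of order $N^\ell\cdot N^{-3\ell/2}=N^{-\ell/2}$. For the path remainder one picks up an additional factor $\hh^6=O(N^{-6\ga})$, and combined with the path count $\sim N^{\ell'+1}$ this produces $O(N^{1-6\ga-\ell'/2})=o(1)$ for $\ga\ge 1/4$ and $\ell'\ge 2$. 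Summing over the finitely many lengths $\ell,\ell'\le m$ and applying Markov's inequality establishes (i). The Taylor bound $|x|\le 1/2$ holds with probability tending to $1$, and on the complementary event the crude bound $|\tanh|\le 1$ together with $L^2$ control from Lemma~\ref{lem:big-err} handles the residual contribution.

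For (ii), independence of the $(J_e)$ gives $\E\go(\gc)^2=\tau_N^\ell$ with $\tau_N:=\E\tanh^2(\gb J_{12}/\sqrt{N})=\gb^2/N+O(N^{-2})$, and the cycle count $N(N-1)\cdots(N-\ell+1)/(2\ell)=N^\ell/(2\ell)+O(N^{\ell-1})$ produces $\E\sum_{|\gc|=\ell}\go(\gc)^2\to\gb^{2\ell}/(2\ell)$. For paths of length $\ell'$ the count is $N^{\ell'+1}/2+O(N^{\ell'})$, which combined with $\hh^4\sim\rho^4/N$ in the critical scaling $\ga=1/4$ produces $\E\sum_{|p|=\ell'}\hh^4\go(p)^2\to\rho^4\gb^{2\ell'}/2$; in the strictly subcritical regime $\ga>1/4$ the contribution vanishes, consistent with the regime-dependent interpretation of the stated limit. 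Multiplying each asymptotic mean by $-1/2$ and summing over $\ell=3,\ldots,m$ and $\ell'=2,\ldots,m$ recovers the claimed right-hand side.

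The main obstacle is (iii), the variance bound that upgrades mean-convergence to convergence in probability. By independence of the disorder, $\cov(\go(\gc_1)^2,\go(\gc_2)^2)=0$ whenever $\gc_1,\gc_2$ are edge-disjoint; if they share exactly $k\ge 1$ edges, factoring edge-by-edge gives $\cov=(\mu_{4,N}^k-\tau_N^{2k})\tau_N^{2\ell-2k}=O(N^{-2\ell})$ where $\mu_{4,N}:=\E\tanh^4(\gb J_{12}/\sqrt{N})=O(N^{-2})$. A shared edge forces at least two shared vertices, so the number of ordered pairs of length-$\ell$ cycles sharing at least one edge is $O(N^{2\ell-2})$, and the total variance is $O(N^{-2})\to 0$. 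The identical pair-counting for paths, combined with the extra $\hh^8=O(N^{-2})$ prefactor available under $\ga\ge 1/4$, gives a variance bound of the same order. Combining (i)-(iii) and invoking Chebyshev's inequality yields convergence in probability of the quadratic sum to the stated deterministic limit, completing the proof.
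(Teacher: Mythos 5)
Your proposal is correct and follows essentially the same route as the paper: Taylor-expand each $\log(1+\cdot)$ factor so the linear terms cancel, then show the quadratic sums concentrate on their means (via first- and second-moment bounds, with covariance vanishing for edge-disjoint clusters and pair-counting controlling the rest) while the cubic remainders vanish. One small imprecision: to justify the Taylor bound you only need $\pr\bigl(\max_{|\gc|\le m}|\go(\gc)|>1/2\bigr)\to 0$ (which follows from the same $L^1$ estimates or from~\cite{ALR87}); invoking the $L^2$ control of Lemma~\ref{lem:big-err} for the complementary event is not the right tool, since that lemma bounds the tail $\hat Z_N-\hat Z_{N,m}$ rather than the behaviour of $\log\tilde Z_{N,m}$ on the bad event, but this is a cosmetic fix and does not affect the conclusion.
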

\begin{proof}[Proof of Lemma~\ref{lem:logapprox}]

	The key step is the approximation
	\[
		\log(1+x) = x - \frac12x^{2} + O(|x|^{3}/(1-|x|))\text{ for } |x|<1.
	\]
	For the loop part, it is easy to check that
	\[
		\var\bigg(\sum_{\gc\,:\,|\gc|\le m} \go(\gc)^{2}\bigg)\le mN^{-1}\E J_{12}^{4}/(1-\gb^{2})\to 0.
	\]
	Here, the weights of two loops will be independent if they are disjoint and for two loops sharing at least one edge the number of choices for vertices is $N^{\text{sum of the lengths}-1}$. Also,
	\[
		\sum_{\gc\,:\,|\gc|\le m} \E\go(\gc)^{2} \to \sum_{\ell=3}^{m}\frac{\gb^{2\ell}}{2\ell}
	\]
	and thus,
	\[
		\sum_{\gc\,:\,|\gc|\le m} \go(\gc)^2 \to \sum_{\ell=3}^{m}\frac{\gb^{2\ell}}{2\ell}
	\]
	in probability as $N\to \infty$. Similarly, for the path contribution we have
	\[\var\biggl(\sum_{p\,:\,|p|\le m} \hh^{4}\cdot\go(p)^{2}\biggr)\le mN^{-1}\rho^{8}\E J_{12}^{4}/(1-\gb^{2})\to 0\]
	and thus,
	\[
		\sum_{p\,:\,|p|\le m} \hh^{4}\cdot\go(p)^{2}
		\to \frac12\sum_{\ell'=2}^{m}\rho^{4}\gb^{2\ell'}
	\]
	in probability as $N\to \infty$. Also,
	\[
		\abs{\log(1+\hh^{2}\go(p)) - \hh^{2}\go(p) +\hh^{4}\go(p)^{2}/2}\le \hh^{6}\go(p)^{2}/(1-\hh^{2})
	\]
	implies that
	\[
		\abs{\sum_{p\,:\, |p|\le m } \log(1+\hh^2\cdot \go(p))-\hh^2\cdot \go(p)+ \frac12\hh^4\cdot \go(p)^{2}} \le
		\frac{\hh^2}{1-\hh^{2}}\cdot \sum_{p\,:\, |p|\le m } \hh^{4}\go(p)^{2}\to 0
	\]
	in probability. For the loops the approximation error goes to zero since $\max_{|\gc|\le m} |\go(\gc)|\to 0$ in probability, as shown in~\cite{ALR87}.
	Combining everything we have the result.
\end{proof}

Finally, using Stein's method for multivariate normal approximation, we prove in Theorem~\ref{thm:stein-mvn} in Section~\ref{sec:stein}, that the $(2m-1)$--dimensional random vector
\begin{align*}
	\left(\sum_{\gc\,:\,|\gc|= \ell} \go(\gc),\ell=3,4,\ldots,m;\;
	\sum_{p\,:\, |p|=\ell' } \hh^2\cdot \go(p), \ell'=1,2,\ldots,m;\;
	\sum_e (\go_e^2- \E\go_{12}^2)
	\right)
\end{align*}
converges in distribution, as $N\to\infty$, to jointly independent normal distribution with variance matrix
\begin{align*}
	\gS=\gS_m:=\diag\left(\gb^{2\ell}/2\ell,\ell=3,4,\ldots,m;\;
	\gb^{2\ell'}/2, \ell'=1,2,\ldots,m;\;
	\frac12\gb^4\var(J_{12}^2)
	\right).
\end{align*}
\subsection{Proof of Lemma~\ref{lem:big-err}}\label{pf:big-err}
Note that
\begin{align*}
	 & \hat{Z}_{N}(\gb,h)^2 \\
	 & \quad= \E_{\mvgs, \mvgt} \prod_{i<j} \left( 1+ (\gs_{i}\gs_{j}+\gt_{i}\gt_{j} ) \tanh(\gb J_{ij}/\sqrt{N}) + \gs_{i}\gs_{j}\gt_{i}\gt_{j} \tanh(\gb J_{ij}/\sqrt{N})^2 \right).
\end{align*}
and its expectation is given by
\begin{align*}
	\psi_{N}(\gb_{N},\rho_N) & := \E_{\mvgs, \mvgt} \prod_{i<j} \left( 1+ \gs_{i}\gs_{j}\gt_{i}\gt_{j} \gb^2_N/N \right) = \sum_{\gC \subseteq \cE_N} (\rho_N^4/N)^{|\partial\gC|/2}(\gb_N^2/N)^{|\gC|}.
\end{align*}
Using the Gaussian trick and simplifying we have
\begin{align*}
	\psi_{N}(\gb_{N},\rho_N)
	 & \le \E_{\mvgs, \mvgt}\exp\biggl( \frac{\gb^2_N}{2N}\biggl(\sum_{i} \gs_i\gt_j\biggr)^2\biggr) \\
	 & = \E\left( \E_{\gs,\gt}\exp(\gb_N/\sqrt{N}\cdot \gs\gt \eta)\right)^{N} \\
	 & = \E\left(\cosh(\eta\gb_N/\sqrt{N})(1+\hh^2\cdot \tanh(\eta \gb_N/\sqrt{N})\right)^{N}.
\end{align*}
Now, using $1+x\le e^{x}$ for all $x$, we have
\begin{align*}
	\psi_{N}(\gb_{N},\rho_N)
	 & \le \E\exp\left(\gb^2_N \eta^2 /2+ \rho_N^2\sqrt{N}\cdot \tanh(\eta \gb_N/\sqrt{N} ) \right) \\
	 & \le 2(1-\gb_{N}^2)^{-1/2}\exp\left(\rho_N^4\gb_{N}^2/2(1-\gb_{N}^2)\right).
\end{align*}
In fact, one can easily check that
\begin{align*}
	\lim_{\gb_{N}\to \gb, \rho_N\to\rho}\psi_{N}(\gb_{N},\rho_N)
	 & = (1-\gb^2)^{-1/2}\exp\left(-\gb^4/4-\gb^2/2 +\rho^4\gb^2/2(1-\gb^2)\right).
\end{align*}
We also have
\begin{align*}
	\sum_{|\gC|\ge m} (\rho_N^4/N)^{|\partial\gC|/2}(\gb_N^2/N)^{|\gC|}
	 & \le e^{-2my}\psi_{N}(e^{y}\gb_{N},\rho_N)
	\\
	 & \le 2\gb_N^{2m}t^{-1/2}\exp\left(-m\log(1-t)+\rho_N^4(1-t)/2t\right)
\end{align*}
for any $t = 1-e^{2y}\gb^2_{N}\in (0,1-\gb_{N}^2)$. Taking
\begin{align*}
	t=\sqrt{\frac{1+\rho_{N}^4}{2m}} \le \frac12 (1-\gb_N^2)<\frac12
\end{align*}
and using $-\log(1-t)\le t + t^2/2(1-t) <t+t^2$
we get
\begin{align*}
	\sum_{|\gC|\ge m} (\rho_N^4/N)^{|\partial\gC|/2}(\gb_N^2/N)^{|\gC|}
	 & \le 2\gb_{N}^{2m}\cdot \left( \frac{2e^2m}{1+\rho_N^4}\right)^{1/4}\exp\left(\sqrt{2(1+\rho_N^4)m}\right) \\
	 & \le 4m^{1/4}\gb_{N}^{2m}\exp\left(\sqrt{2(1+\rho_N^4)m}\right).
\end{align*}
Thus we have the result.\qed 

\subsection{Proof of Lemma~\ref{lem:approx}}\label{pf:approx}
We prove the lemma in two steps. First we break the LHS as follows
\begin{align}
	 & \textbf{LHS} \le \norm{\sum_{\gC \subseteq \cE_N, |\gC| < m} \hh^{|\partial\gC|}\cdot \go(\gC)
		- \sum_{n=0}^{k_N} \sum_{n_1+n_2=n} \sum\!'
	\prod_{i=1}^{n_1}\go(\gc_i)\prod_{j=1}^{n_2}\go(p_j)} \label{eq:decop1} \\
	 & + \norm{\sum_{n=0}^{k_N} \sum_{n_1+n_2=n} \sum\!'
	\prod_{i=1}^{n_1}\go(\gc_i)\prod_{j=1}^{n_2}\go(p_j)- \prod_{\gc: |\gc|\le m} (1+\go(\gc)) \prod_{p: |p|\le m } (1+\hh^2\cdot \go(p))}\label{eq:decop2}
\end{align}
where the sum $\sum'$ is over all collections of $n_{1}$ many distinct loops $\gc_1,\gc_{2},\ldots,\gc_{n_1}$ and $n_{2}$ many distinct paths $p_1,p_{2},\ldots,p_{n_2}$ of length $\le m$; and $k_{N}\to\infty$ as $N\to\infty$.

The first term on the RHS can be proved using combinatorial analysis (extending ALR proof), and the second difference part is proved using series expansion. We start with the second term.

Define the two polynomials,
\begin{align*}
	\phi(z) & := \prod_{\gc:\abs{\gc}\le m} (1+z \go(\gc)) \text{ and }
	\psi(z) := \prod_{p: \abs{p} \le m} (1+z \hh^2\go(p)).
\end{align*}
By the remainder formula, for fixed $R>1$, we have
\begin{align}
	 & \abs{\phi(1) - \sum_{n=0}^k \frac{1}{n!}\phi^{(n)}(0)} \le \frac{1}{(R-1)R^k} \sup_{|z|=R} \abs{\phi(z)}, \\
	 & \abs{\psi(1) - \sum_{n=0}^k \frac{1}{n!}\psi^{(n)}(0)} \le \frac{1}{(R-1)R^k} \sup_{|z|=R} \abs{\psi(z)}.
\end{align}

For $\sup_{|z|=R} \abs{\phi(z)}$, we bound it as follows,
\begin{align*}
	\sup_{\abs{z} =R} \abs{\phi(z)}^2 & = \sup_{\abs{z}=R} \prod (1+z\go(\gc)) (1+\bar{z} \go(\gc)) \\
	 & = \sup_{\abs{z} =R} \prod (1+2\Re(z) \go(\gc) + R^2 \go(\gc)^2) \\
	 & \le \exp \bigl(2R |\sum \go(\gc)| + R^2 \sum\go(\gc)^2\bigr).
\end{align*}
Since, we prove in Section~\ref{sec:stein} that $\sum_{\abs{\gc}\le m} \go(\gc)$ converges in distribution, hence tight and similarly, $\sum_{\abs{\gc}\le m} \go(\gc)^2$ is tight, we get that, when $k_{N}\to\infty$, we have
\begin{align}
	\prod_{\gc:\abs{\gc}\le m} (1+ \go(\gc)) - \sum_{n=0}^{k_N} \sum_{\substack{\gc_1,\gc_{2},\ldots,\gc_n \\ \gc_i\neq \gc_j,\,\abs{\gc_i}\le m}}\prod_{i=1}^n \go(\gc_i) \to 0 \text{ in distribution as } N \to \infty.
\end{align}
Similarly one can prove the convergence for the path cluster,
\begin{align}
	\prod_{p:\abs{p}\le m} (1+ \hh^2\go(p)) - \sum_{n=0}^{k_N} \sum_{\substack{p_1,p_{2},\ldots,p_n \\ p_i\neq p_j,\,\abs{p_i}\le m}}\hh^{2n}\prod_{i=1}^n \go(p_i) \to 0 \text{ in distribution as } N \to \infty.
\end{align}
Combining the above arguments together, the convergence of~\eqref{eq:decop2} to zero is immediate.

The errors in~\eqref{eq:decop1} have two sources, one of which is from the large graphs $\abs{\gC}\ge m$. It was shown in Lemma~\ref{lem:big-err} that this error decays exponentially. The other error is from the multigraphs contribution induced by loop and path clusters. In particular, we need prove the following term is small,
\begin{align}\label{eq:multi-err}
	\sum_{\substack{\abs{\gC'}\le m}} C(\gC') \go(\gC'), \ \text{ where} \ \gC' \ \text{ is multi-graph}.
\end{align}
The combinatorial factor
\begin{align*}
	C(\gC'):=\sum_{\substack{\gc_1 \neq \cdots \neq \gc_\ell, \abs{\gc_i}\le m \\ p_1 \neq \cdots \neq p_{\ell'}, \abs{p_j}\le m \\ \cup_i \gc_i \cup_j p_j = \gC' }} \hh^{2\ell'}.
\end{align*}
Recall that
\begin{align*}
	\go(\gC') = \prod_{e \in \gC'} \tanh(\gb J_{e}/\sqrt{N}) \approx N^{-\abs{\gC'}/2},
\end{align*}
where $\abs{\gC'}$ is the number of edges in $\gC'$ counting multiplicities, the extra factor $1/2$ is due to the fact that single bond gives 0 contribution. The argument for controlling the combinatorial factor $C(\gC')$ is nearly same as the proof of~\cite{ALR87}*{Lemma 3.1}. The only difference is now we have path clusters, the number of ways to choose a path of length $\ell$ is proportional to $N^{\ell+1}$, while for a loop of length $\ell$, there are $N^{\ell}$ ways. The extra $N$ factor will be killed by the weak external field $h^4$. Therefore,
\begin{align*}
	\sum_{\substack{\abs{\gC'}\le m}} C(\gC') \go(\gC') \lesssim \frac 1 N.
\end{align*}
This completes the proof.\qed

\section{Multivariate CLT using Stein's Method}\label{sec:stein}

With the approximation of $\hat{Z}_N$ by clusters of loops and paths in Section~\ref{sec:cluster}, we now apply Stein's
method to prove the multivariate central limit theorem for the log-partition function. Before we state the main theorem, let us define the following set of random variables. We define
\begin{align*}
	\go_{ij}:=\tanh(\gb J_{ij}/\sqrt{N})
\end{align*}
which are i.i.d.~with mean $0$, variance
\begin{align*}
	\gz_{N}^{2}:=\E\tanh(\gb J_{12}/\sqrt{N})^{2}\approx \gb^{2}/N
\end{align*}
and sixth moment bounded by
\begin{align*}
	\E\abs{\go_{12}}^{6}\le \gb^{6}\E J_{12}^{6}/N^{3}.
\end{align*}
As we remark before $(4+\eps)$--th moment bound, for some $\eps>0$, suffices here for the CLT to hold.

We define the following collections of random variables arising from the decomposition of $Z_{N}$ in Section~\ref{sec:cluster}.

\begin{itemize}\setlength{\itemsep}{1em}
	\item {\bf Independent Sum:} We define
	 \begin{align*}
		 Q_N:=\sum_{e} (\go_{e}^{2}-\gz_{N}^{2}),
	 \end{align*}
	 which is the dominating term~\eqref{eq:ZNbar} in $\bar{Z}_N$.

	\item {\bf Loop Cluster:} For $\ell\ge 3$, we define
	 \begin{align*}
		 L_N^\ell:=\sum_{|\gc|=\ell}\prod_{e\in\gC}\go(e)=\sum_{|\gc|=\ell}\go(\gc)
	 \end{align*}
	 as the sum of weights for all loops of length $\ell$ in the complete graph $K_N$.
	\item {\bf Path Cluster:} For $\ell\ge 1$, we define
	 \begin{align*}
		 P_N^\ell:=\hh^2\sum_{|p|=\ell}\prod_{e\in p}\go(e)=\hh^2\sum_{|p|=\ell}\go(p)
	 \end{align*}
	 as the sum of weights for all paths of length $\ell$ in the complete graph $K_N$.
\end{itemize}

It is easy to see that $\E Q_{N}=\E L_{N}^{\ell}=\E P_{N}^{\ell'}=0$ for all $\ell,\ell'$. Define
\begin{align*}
	N_{2}:= \binom{N}{2}.
\end{align*}
We have
\begin{align*}
	\var(Q_N) & = N_{2}\var(\go_{12}^{2}) \to \frac{\gb^4}2\var(J_{12}^{2}), \\
	\var(L_{N}^{\ell}) = \frac{(N)_{\ell}}{2\ell}\cdot \gz_{N}^{\ell}\to \frac{\gb^{2\ell}}{2\ell}
	\quad & \text{ and }\quad
	\var(P_{N}^{\ell'}) =\hh^4\cdot \frac12 (N)_{\ell'}\cdot \gz_{N}^{\ell}\to \rho^4\cdot \frac{\gb^{2\ell'}}{2}
\end{align*}
as $N\to\infty$. Moreover, these random variables are uncorrelated with each other by symmetry of $\go_{ij}$.

We will use multivariate Stein's method in Theorem~\ref{thm:stein-mvn} to prove the random vector formed by finitely many of the above random variables indeed has a asymptotic normal limit. Let us now state our theorem.

\begin{thm}\label{thm:stein-mvn}
	Let $\mvW:=(L_N^{\ell}, 3\le \ell\le m; P_N^{\ell'},1\le \ell'\le m; Q_N)$ be the $(2m-1)$--dimensional random vector with mean 0 and covariance matrix $\gS$ defined above. For any three times differentiable function $f$, we have
	\begin{align*}
		\abs{ \E f(\mvW) - \E f(\gS^{1/2}\mvZ)} \le C\left({\abs{f}_2}N^{-1/2} +{\abs{f}_3}N^{-1/4} \right),
	\end{align*}
	where $C$ is some constant depending only on $m,\gb$ and the distribution of $J_{12}$.
\end{thm}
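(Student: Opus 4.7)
The plan is to apply Theorem~\ref{thm:rr-mvstein} with an exchangeable pair built by one-edge resampling. Specifically, I pick an edge $e\in\cE_N$ uniformly at random and, independently, draw an independent copy $J'_e$ of $J_e$; the vector $\mvW'$ is obtained by recomputing each of $L_N^\ell, P_N^{\ell'}, Q_N$ with $J_e$ replaced by $J'_e$, so that $(\mvW,\mvW')$ is exchangeable. Each coordinate of $\mvW$ is a sum of monomials indexed by loops or paths of a fixed length, and only monomials using $e$ are affected by the resampling. Taking expectation against the law of $J'_e$ kills the loop and path contributions (since $\E\go'_e=0$) and returns the single-edge contribution to $Q_N$ to its mean (since $\E(\go'_e)^2 = \gz_N^2$). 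Since every loop (resp.\ path) of length $\ell$ uses exactly $\ell$ edges, averaging over the uniform choice of $e\in\cE_N$ yields
\[
\E(L_N^\ell{}'-L_N^\ell\mid \mvW)=-\frac{\ell}{N_2} L_N^\ell,\quad \E(P_N^{\ell'}{}'-P_N^{\ell'}\mid \mvW)=-\frac{\ell'}{N_2} P_N^{\ell'},\quad \E(Q_N'-Q_N\mid \mvW)=-\frac{1}{N_2} Q_N.
\]
Hence the linearity hypothesis~\eqref{eq:stein-linear} holds with a diagonal, invertible $\gL$ whose inverse has entries of order $N^2$, so that $\gl^{(i)} = O(N^2)$.

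What remains is to bound the Stein quantities $A$ and $B$. For $B$, each difference $W_i'-W_i$ is supported on the single resampled edge $e$; using $\abs{\go_e}\le \gb\abs{J_e}/\sqrt N$, the finite sixth moment of $J_{12}$, and the elementary count of loops and paths through a fixed edge, I expect $\E\abs{W_i'-W_i}^3 = O(N^{-9/4})$ uniformly in $i$, which after multiplication by $\gl^{(i)}$ contributes $O(N^{-1/4})$ to the Stein bound. For $A$, the key quantity is
\[
\E((W_i'-W_i)(W_j'-W_j)\mid \mvW) = \frac{1}{N_2}\sum_{e\in\cE_N} \E_{J'_e}(\gD_e W_i\cdot \gD_e W_j),
\]
which, after integrating out $J'_e$, becomes a sum over pairs of loops or paths of the prescribed lengths sharing a common edge $e$, weighted by the $\go_f$'s on their remaining edges. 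Its variance decomposes over pairs of such pairs, classified by their combined multigraph, and a careful counting in the spirit of Section~\ref{sec:cluster} should keep the total at order $O(N^{-5})$; square-rooting and multiplying by $\gl^{(i)}$ then yields $A=O(N^{-1/2})$.

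The main obstacle will be the variance bound that drives $A$. The loop--loop overlap terms are essentially those analyzed in~\cite{ALR87}, but the path--path and path--loop overlap terms generated by the external field are new: each free endpoint of a path adds an extra factor of $N$ to the combinatorial count, which must be absorbed by the prefactor $\hh^4 \approx \rho^4/N$ carried by the product of two path coordinates. Tracking this cancellation is precisely what forces the threshold $\ga=1/4$, and organizing it uniformly over all pairings of loops and paths of length at most $m$ is the principal bookkeeping task. Once the overlap sums are classified by multigraph and estimated, Theorem~\ref{thm:rr-mvstein} delivers the stated rate with constants depending only on $m$, $\gb$ and $\E J_{12}^6$.
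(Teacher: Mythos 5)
Your proposal follows essentially the same route as the paper: the same one-edge resampling exchangeable pair, the same diagonal $\gL$ with $\gL^{-1}$ of order $N^2$, and the same target orders $N^{-5}$ for $\var\,\E((W_i'-W_i)(W_j'-W_j)\mid\mvW)$ and $N^{-9/4}$ for the third moments, which after multiplication by $\gl^{(i)}=O(N^2)$ give $A=O(N^{-1/2})$ and $B=O(N^{-1/4})$. The paper carries out the combinatorial bookkeeping you defer --- splitting the conditional second moments into the pieces $A,B,C,D$ (diagonal vs.\ off-diagonal, centered vs.\ uncentered) and classifying pairs of loops/paths by how the fixed edges overlap --- and your diagnosis of where the work lies (the path--path and path--loop overlap sums, with the extra endpoint factor of $N$ killed by $\hh^4\approx\rho^4/N$) is precisely the content of Lemmas~\ref{lem:l2norm}, \ref{lem:l2-path}, and \ref{lem:l2-cross}.
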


We apply the multivariate Stein's method in Theorem~\ref{thm:rr-mvstein} to prove Theorem~\ref{thm:stein-mvn}. We first construct an exchangeable pair $(W,\tilde{W})$, where $\tilde{W}$ is obtained by selecting an edge $(I,J), I<J$ uniformly at random from $\cE_N$, and then replacing $\go_{IJ}$ by an i.i.d.~copy $\go'$. Denote $\tilde{W}$ be $W$ w.r.t.~this new disorder. We will use $\tilde{Q}_{N},\tilde{L}_{N}^{\ell},\tilde{P}_{N}^{\ell'}$ de denote the corresponding random variables w.r.t.~the new disorder.

We also define,
\begin{align*}
	\gD Q_{N}:= \tilde{Q}_{N} -Q_{N},\
	\gD {L}_{N}^{\ell}:= \tilde{L}_{N}^{\ell} - {L}_{N}^{\ell},\
	\gD {P}_{N}^{\ell'} := \tilde{P}_{N}^{\ell'} - {P}_{N}^{\ell'}.
\end{align*}
Next we verify the assumptions in Theorem~\ref{thm:rr-mvstein}. Starting at the loop cluster, we define
\begin{align*}
	\sP^{\ell}(i,j) \text{ as the collection of all paths starting at $i$ and ending at $j$ with total length } \ell.
\end{align*}

\subsection{Loop Cluster Analysis}\label{ssec:lca}
Fix $\ell\ge 3$. For the loop clusters, we check the linearity condition first:
\begin{align*}
	\E(\gD L_N^{\ell}\mid \mvW)
	 & = \E((\go'-\go_{IJ}) \sum_{p \in\sP^{\ell-1}(I,J)} \go(p)\mid \mvW) \\
	 & = -\frac{1}{N_2} \sum_{i<j} \go_{ij} \sum_{p \in\sP^{\ell-1}(i,j)} \go(p)	= -\frac{\ell}{ N_2 } L_N^{\ell}.
\end{align*}
Comparing with the linearity condition in~\eqref{eq:stein-linear}, it indicates that
\begin{align*}
	\gl_\gc^{\ell} = {\ell}/{ N_2 }.
\end{align*}

Notice that $\E (\gD L_N^\ell)^2 = 2\gl_{\gc}^{\ell}\var(L_{N}^{\ell}).$
Now we bound the variance of the conditional second moments of the change. The proof is given in Section~\ref{pf:loop2bd}.

\begin{lem}\label{lem:loop2bd}
	We have
	\begin{align*}
		\frac{1}{\gl_\gc^{\ell}} \sqrt{\var \E((\gD L_N^\ell)^2 \mid \mvW)} \lesssim N^{-1/2}.
	\end{align*}
\end{lem}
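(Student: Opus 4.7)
The plan is to reduce the claim to a concentration statement for an explicit quadratic statistic of the disorder, and then control that statistic by a loop-counting argument in the spirit of~\cite{ALR87}.

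First, let $\cF$ denote the $\sigma$-algebra generated by the full disorder $(\go_e)_{e\in\cE_N}$. Since $\mvW$ is $\cF$-measurable, $\var(\E[(\gD L_N^\ell)^2\mid \mvW])\le \var(\E[(\gD L_N^\ell)^2\mid \cF])$ by the conditional variance inequality, so it suffices to control the latter. Writing $\gD L_N^\ell=(\go'-\go_{IJ})X_{IJ}$ with $X_{ij}:=\sum_{p\in\sP^{\ell-1}(i,j)}\go(p)$, and using that $\go'$ is independent of $\cF$ with mean zero and variance $\gz_N^2$ while $X_{ij}$ does not depend on $\go_{ij}$ (paths of length $\ell-1\ge 2$ from $i$ to $j$ avoid the edge $(i,j)$), integration over $\go'$ and the uniform choice of $(I,J)$ yields
\[\E[(\gD L_N^\ell)^2\mid \cF]=\frac{1}{N_2}\sum_{i<j}(\gz_N^2+\go_{ij}^2)X_{ij}^2=:\frac{S}{N_2}.\]
Since $\gl_\gc^\ell=\ell/N_2$, the lemma becomes equivalent to the bound $\var(S)\lesssim N^{-1}$. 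A direct computation gives $\E S=\gb^{2\ell}+o(1)$, so this says that $S$ concentrates at the $L^2$-rate $N^{-1/2}$, as one expects for a sum of $N_2$ weakly correlated terms of expected order $N^{-2}$.

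To estimate $\var(S)=\E S^2-(\E S)^2$, I expand $S$ as a sum indexed by triples $(e,p,p')$ with $e\in\cE_N$ and $p,p'\in\sP^{\ell-1}(e)$ (carrying a deterministic weight $\gz_N^2$ or an extra $\go_e^2$), so that both $\E S^2$ and $(\E S)^2$ become explicit sums over ordered pairs of such triples. Because $\go_e\equald-\go_e$, the expectation of any monomial in the $\go_e$'s vanishes unless every edge of $\cE_N$ appears with even multiplicity in the multiset union. Crucially, the pairs whose edge-multisets are disjoint contribute exactly $(\E S)^2$ (the two halves factor in expectation), so $\var(S)$ is the sum over pairs that share at least one edge. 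For such a pair whose combined multigraph has $V$ distinct vertices and $E$ total edge-multiplicity, the number of vertex realizations is at most $C_\ell N^V$ while $|\E\prod_e\go_e^{m_e}|\lesssim N^{-E/2}$; every shared edge forces at least one vertex identification, strictly decreasing the exponent $V-E/2$ below the decoupled value $2\ell$. Enumerating the finitely many topological sharing types (their count depends only on $\ell$) then delivers $\var(S)\lesssim N^{-1}$.

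I expect the main obstacle to be the combinatorial bookkeeping in this last step: one must systematically list how two edge-rooted pairs of simple paths of length $\ell-1$ can share vertices and edges, and verify case by case that every shared edge drops $V-E/2$ by at least one. This is analogous to the loop-counting estimates of~\cite{ALR87}*{Lemma 3.1} and to the argument after~\eqref{eq:multi-err} above, so no new analytic idea is required—only careful accounting, which is bounded for each fixed $\ell$.
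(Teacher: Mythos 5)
Your proof is correct and, modulo presentation, takes essentially the same route as the paper. You integrate out the independent copy $\go'$ and the uniformly random edge $(I,J)$ to express the conditional second moment as $\frac{1}{N_2}S$ with $S=\sum_{i<j}(\gz_N^2+\go_{ij}^2)X_{ij}^2$, pass to the full disorder $\sigma$-algebra by conditional Jensen (the paper does the same step implicitly by bounding $\norm{\E(\cdot\mid\mvW)}_2\le\norm{\cdot}_2$), and then reduce the lemma to showing $\var(S)\lesssim N^{-1}$ by a loop-counting enumeration of edge-sharing pairs. The paper carries out that last step by explicitly decomposing $S-\E S=A_\gc^\ell+2B_\gc^\ell+C_\gc^\ell+2D_\gc^\ell$ and bounding each piece in Lemma~\ref{lem:l2norm}; this decomposition isolates the distinct sharing mechanisms (whether the path pair is equal or distinct, and whether the $\go_{ij}^2$ or the $\gz_N^2$ term is active), and in particular identifies $D_\gc^\ell$ as the $N^{-1}$-order dominant contribution. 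Your version defers exactly this case analysis to a final ``enumerate sharing types'' step, which is the same combinatorial content; to finish one would need to run the same inspection the paper does. One small imprecision worth flagging: the ``decoupled value'' of the exponent $V-E/2$ is not $2\ell$ (with your conventions it is $0$ once the $\gz_N^2$ prefactors are absorbed, or $2$ if they are not), but the argument only needs that every shared edge strictly lowers that exponent, which holds as you state since identifying an edge forces at least one vertex identification while leaving the total edge multiplicity unchanged.
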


Now we bound the third moment of the absolute change $|\gD L_{N}^{\ell}|$.
\begin{lem}\label{lem:loop3bd}
	We have $$
		\frac{1}{\gl_\gc^{\ell}}\E \abs{\gD L_N^{\ell}}^3 \lesssim N^{-1/4}.$$
\end{lem}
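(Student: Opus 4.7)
The starting observation is that a simple loop uses any given edge at most once, so the statistic $L_N^\ell$ is affine in the single variable $\go_{IJ}$ when all other edge weights are held fixed. Explicitly, $L_N^\ell=\go_{IJ}\cdot S_{IJ}+R_{IJ}$, where
\[
S_{IJ}:=\sum_{p\in\sP^{\ell-1}(I,J)}\go(p)
\]
is the sum of weights of the simple paths of length $\ell-1$ joining $I$ and $J$ (and does \emph{not} involve the edge $(I,J)$, since $\ell\ge 3$), and $R_{IJ}$ is independent of $\go_{IJ}$. The exchangeable swap therefore produces $\gD L_N^\ell=(\go'-\go_{IJ})\,S_{IJ}$, and conditionally on $(I,J)$ the three variables $\go_{IJ},\go',S_{IJ}$ are mutually independent. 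By symmetry the joint law of $(\go_{IJ},S_{IJ})$ does not depend on $(I,J)$, so
\[
\E|\gD L_N^\ell|^3=\E|\go'-\go_{12}|^3\cdot\E|S_{12}|^3.
\]

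I would bound the two factors separately. For the first, using $|\tanh x|\le|x|$ and the hypothesis that $J_{12}$ has finite sixth moment (which certainly gives finite third moment),
\[
\E|\go'-\go_{12}|^3\le 8\,\E|\go_{12}|^3\le 8\gb^3\,\E|J_{12}|^3\cdot N^{-3/2}.
\]
For the second factor, Cauchy--Schwarz gives $\E|S_{12}|^3\le (\E S_{12}^2)^{1/2}(\E S_{12}^4)^{1/2}$, so it suffices to control the second and fourth moments of $S_{12}$. The second moment is routine: distinct simple paths differ in at least one edge and the $\go_e$ are independent and centered, so all cross terms vanish and $\E S_{12}^2=|\sP^{\ell-1}(1,2)|\cdot\gz_N^{2(\ell-1)}\lesssim N^{\ell-2}\cdot N^{-(\ell-1)}=N^{-1}$.

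The fourth moment $\E S_{12}^4$ is where the real combinatorial bookkeeping sits; this is the main obstacle. Expanding $\E S_{12}^4$ as a sum over ordered $4$-tuples $(p_1,p_2,p_3,p_4)$ in $\sP^{\ell-1}(1,2)$, only those $4$-tuples for which every edge in the multiset $p_1\cup p_2\cup p_3\cup p_4$ appears an even number of times contribute. The leading term comes from the three ways of pairing the four paths into matching twins (so that each pair contributes $(\E S_{12}^2)^2$), yielding $3(\E S_{12}^2)^2\lesssim N^{-2}$. Any other Eulerian configuration forces vertex collisions and therefore loses at least one power of $N$ in the vertex count, so is $O(N^{-3})$. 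This is precisely the double-line/graph-collapse counting already performed in the proof of Lemma~\ref{lem:loop2bd} (and in~\cite{ALR87}), which I would import essentially verbatim. This yields $\E S_{12}^4\lesssim N^{-2}$ and hence $\E|S_{12}|^3\lesssim N^{-3/2}$.

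Assembling the two bounds, $\E|\gD L_N^\ell|^3\lesssim N^{-3}$, and dividing by $\gl_\gc^\ell=\ell/N_2\asymp N^{-2}$ gives $(\gl_\gc^\ell)^{-1}\E|\gD L_N^\ell|^3\lesssim N^{-1}$, which is comfortably stronger than the claimed $N^{-1/4}$. The slack presumably matches the weaker rate that one obtains in the path-cluster analogue of this bound; here on the loop side, even a crude Cauchy--Schwarz reduction to known second and fourth moment estimates suffices.
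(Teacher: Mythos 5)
Your proof is correct, and it takes a slightly different (and sharper) route than the paper. Both arguments reduce to the same representation $\gD L_N^\ell=(\go'-\go_{IJ})S_{IJ}$ with $S_{IJ}=\sum_{p\in\sP^{\ell-1}(I,J)}\go(p)$ and hinge on the fourth-moment estimate $\E S_{12}^4\lesssim N^{-2}$ established via the path-counting of Lemma~\ref{lem:loop2bd}. Where you differ from the paper is in how the two factors are separated. The paper does not invoke the independence of $\go_{IJ}$ and $S_{IJ}$ directly; instead it applies H\"older with exponents $(4,4/3)$, which requires controlling $\E|\go_{12}|^{12}$ and is the point where the paper uses $|\go|\le 1$ together with the finite sixth moment to reduce to $(\E\go_{12}^6)^{1/4}\sim N^{-3/4}$, giving the overall rate $N^{-9/4}/\gl\sim N^{-1/4}$. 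You instead observe that since a simple loop of length $\ell\ge 3$ never uses the edge $(I,J)$ in the remaining path, $S_{IJ}$ is independent of $\go_{IJ}$ (and of $\go'$), so the expectation factorizes exactly: $\E|\gD L_N^\ell|^3\le 8\,\E|\go_{12}|^3\cdot\E|S_{12}|^3$, followed by Cauchy--Schwarz on $\E|S_{12}|^3$. This avoids the crude $\go^{12}\le\go^6$ step, needs only a third moment of $J$ for the edge factor, and yields the stronger bound $N^{-1}$ rather than $N^{-1/4}$ --- which is more than enough, and your remark that the bottleneck rate in Theorem~\ref{thm:stein-mvn} comes from elsewhere is consistent with that.
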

\begin{proof}
	Recall that $\gD L_N^{\ell} = (\go'- \go_{ij})\sum_{p \in \sP_\gc^{\ell-1}(i,j)}\go(p)$. Thus, using $|\go|\le 1$, we get
	\begin{align*}
		\E \abs{\gD L_N^{\ell}}^3
		 & \le 8 \E \bigl( \abs{\go_{ij}}^3 \cdot \bigl|\sum_{p\in \sP_\gc^{\ell-1}(i,j)}\go(p)\bigr|^3\bigr) \\
		 & \le 8 (\E \go_{12}^6)^{1/4}\cdot \bigl( \E\bigl|\sum_{p\in \sP_\gc^{\ell-1}(i,j)}\go(p)\bigr|^4\bigr)^{3/4} \\
		 & \sim N^{-3/4} (N^{2\ell-4} \cdot N^{-2\ell+2})^{3/4} = \cO(N^{-9/4})
	\end{align*}
	where the second step is by H\"older's inequality. The quantitative control of the sum over $\sP_\gc^{\ell-1}(i,j)$ is based on similar ideas in Lemma~\ref{lem:l2norm}. Note that, here we upper bounded $\go^{12}$ by $\go^{6}$ and used the sixth moment control. If we have finite $(4+\eps)$--th moment, using $(4+\eps)$--th moment control, we will get a bound $N^{-(4+\eps)/8-3/2}=N^{-2-\eps/8}$, thus giving $N^{-\eps/8}$ instead of $N^{-1/4}$ in the Lemma statement.
\end{proof}  

\subsection{Path Cluster Analysis}\label{ssec:pca}

The analysis of the path cluster is quite similar to the loop clusters. For clarity, we will just state the corresponding results about variance and third absolute moment bound below. Proof details are given in Section~\ref{sec:steincomp}. Recall that, the path objects have the form $P_N^{\ell'} := \hh^2 \sum_{|p|=\ell'} \go(p)$ with $\ell'\ge 1$. We denote the difference between the exchangeable pairs as
$
	\gD P_N^{\ell'} = \tilde{P}_N^{\ell'} - P_N^{\ell'}.
$
We use $\sP_p^{\ell'-1}(i,j)$ to denote the collection of paths (may not be connected) of length $\ell'-1$, where each of them is formed by removing the edge $(i,j)$ from a connected path of length $\ell'$. The linearity condition can be verified similarly in this case,
\begin{align*}
	\E(\gD P_N^{\ell'}\mid \mvW)
	 & = \hh^{2}\E\bigl((\go'-\go_{IJ})\sum_{p' \in \sP_p^{\ell'-1}(I,J)} \go(p') \mid \mvW \bigr) \\
	 & = -\frac{\hh^2}{N_2}\sum_{i<j}\E\bigl(\go_{ij}\sum_{p' \in \sP_p^{\ell'-1}(i,j)} \go(p') \mid \mvW \bigr)
	= -\frac{\ell'}{ N_2 } P_N^{\ell'}.
\end{align*}
The $\ell'$ is due to different choices of edges from the path of length $\ell'$, as in the loop case, we have $\gl_p^{\ell'} = \frac{\ell'}{ N_2 }$ in the linearity condition~\eqref{eq:stein-linear}.

For the conditional second moment $\E((\gD P_N^{\ell'})^2 \mid \mvW)$, the following lemma gives the variance bound.
\begin{lem}\label{lem:path2bd}
	We have
	\begin{align*}
		\frac{1}{\gl_p^{\ell'}} \sqrt{ \var\E\left( (\gD P_N^{\ell'})^2\mid \mvW\right) } \lesssim N^{-1/2}.
	\end{align*}
\end{lem}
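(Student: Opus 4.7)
The argument mirrors the loop-cluster estimate in Lemma~\ref{lem:loop2bd}, with the only new ingredient being the external-field prefactor $\hh^4$ built into $P_N^{\ell'}$. As a first step I would compute the conditional second moment explicitly. Since the swapped edge $(I,J)$ is uniform over $\cE_N$ and $\go'$ is an independent copy with $\E\go'=0$ and $\E(\go')^2=\gz_N^2$, setting $S_{ij}:=\sum_{p'\in\sP_p^{\ell'-1}(i,j)}\go(p')$ gives
\begin{align*}
\E\bigl((\gD P_N^{\ell'})^2\mid\mvW\bigr)=\frac{\hh^4}{N_2}\sum_{i<j}\bigl(\gz_N^2+\go_{ij}^2\bigr)\,S_{ij}^2.
\end{align*}
Its mean agrees (to leading order) with $2\gl_p^{\ell'}\var(P_N^{\ell'})\sim N^{-2}$, so the goal is to show that the variance of the right hand side is at most $(\gl_p^{\ell'})^2\cdot N^{-1}\sim N^{-5}$.

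Next, I would expand
\begin{align*}
\var\E\bigl((\gD P_N^{\ell'})^2\mid\mvW\bigr)=\frac{\hh^8}{N_2^2}\sum_{\substack{i<j\\ k<l}}\cov\bigl((\gz_N^2+\go_{ij}^2)S_{ij}^2,\,(\gz_N^2+\go_{kl}^2)S_{kl}^2\bigr).
\end{align*}
By independence and symmetry of the $\go_e$, a covariance term survives only if the multiset of edges appearing in the first factor overlaps with that of the second; I would classify surviving configurations by the number $r$ of edges shared between the two path systems and the number $s$ of vertices in their union. Standard path counting bounds the configurations of type $(r,s)$ by $\lesssim N^{s}$, while the moment estimate $\E|\go_e|^{2k}\lesssim N^{-k}$ contributes $\lesssim(\gz_N^2)^{2(\ell'-1)-r}\lesssim N^{-2(\ell'-1)+r}$ per configuration. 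Because the two path systems together form a forest-like subgraph of $K_N$, one has the key inequality $s\le 2\ell'-r$, and summing over $(r,s)$ yields $\var\E((\gD P_N^{\ell'})^2\mid\mvW)\lesssim \hh^8\cdot N^{-4}$. Using $\hh^4\lesssim N^{-1}$, valid throughout $\ga\ge 1/4$ by Assumption~\ref{ass:h}, this gives $\lesssim N^{-6}$, and dividing by $(\gl_p^{\ell'})^2\sim N^{-4}$ produces the claimed $N^{-1/2}$ rate.

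The main technical obstacle is the combinatorial bookkeeping in the second step: unlike loops, paths carry ordered endpoints, so the "topological type" of a pair of overlapping path systems (together with their two distinguished endpoint edges $(i,j)$ and $(k,l)$) is richer than in the loop case of Lemma~\ref{lem:loop2bd}. I would handle this in the spirit of~\cite{ALR87}, viewing each realization as an edge-labelled multigraph and grouping the sum by isomorphism type, with the decisive saving coming from the fact that each additional shared edge reduces the free vertex count by at least one except possibly when it closes a short cycle, the latter contributing only boundedly many types which are absorbed into the implicit constant. The presence of the free factors $\gz_N^2$ and $\go_{ij}^2$ is inessential here, since both are uniformly bounded by $\gb^2/N$ in $L^\infty$ and $L^2$ respectively, so the same counting controls both contributions.
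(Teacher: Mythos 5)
The plan's starting point is correct and matches the paper's: write out the conditional second moment exactly, subtract its mean, and bound the variance of the resulting random variable by a combinatorial moment count. The paper does this by splitting the centered quantity into four pieces $A_p^{\ell'},B_p^{\ell'},C_p^{\ell'},D_p^{\ell'}$ (analogous to the loop case) and doing case analysis on $|\{i_1,j_1\}\cap\{i_2,j_2\}|$, whereas you expand the covariance directly and classify by shared edges $r$ and union vertex count $s$. These are compatible viewpoints. However, there is a genuine gap in your combinatorial step: the claim that ``the two path systems together form a forest-like subgraph'' and the resulting inequality $s\le 2\ell'-r$ are not correct, and the bound they produce, $\var\E((\gD P_N^{\ell'})^2\mid\mvW)\lesssim \hh^8 N^{-4}\sim N^{-6}$, is strictly smaller than what actually happens.

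To see this, note that for a nonzero contribution to the covariance every edge in the union must occur with even multiplicity, so the combined multigraph is Eulerian-like and full of doubled edges and cycles --- the opposite of a forest. Concretely, take $\ell'=2$. The dominant configurations have $(i_1,j_1)\ne(i_2,j_2)$ sharing exactly one vertex (say $i$), the two single-edge ``paths'' $p_1',p_2'$ both attached at $i$, and $\{p_3',p_4'\}=\{p_1',p_2'\}$. This gives $\sim N^3$ choices for the endpoint pair, $\sim N^2$ choices for the shared edges, the edge weight $\gz_N^4\sim N^{-2}$, and the prefactor $\gz_N^4\hh^8\sim N^{-2}\hh^8$; multiplied together this yields $\hh^8\cdot N\sim N^{-1}$, which is exactly the paper's bound $\|D_p^{\ell'}\|_2^2\lesssim N^{-1}$. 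Dividing by $N_2^2$ gives $\var\lesssim N^{-5}$, a factor of $N$ \emph{larger} than your claimed $N^{-6}$. (Your estimate implicitly assumes the inner sum $\sum_{i<j}(\gz_N^2+\go_{ij}^2)S_{ij}^2$ has $O(1)$ variance; it actually has variance of order $N$.) The lemma's conclusion is unharmed, since $N^{-5}$ still gives $\gl_p^{-1}\sqrt{\var}\lesssim N^2\cdot N^{-5/2}=N^{-1/2}$, but as written your argument does not establish the bound it claims, and any corrected version must account for the extra degree of freedom coming from the connected-component defect in $s\le 2\ell'-r$ (one should have $s\le 2\ell'-r+c$ with $c\ge 1$). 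You would essentially be re-deriving the paper's $D_p^{\ell'}$ estimate once this is fixed.
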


The proof of this part is similar to the proof of Lemma~\ref{lem:l2norm} for the loop cluster. The details of the combinatorial analysis are bit different, but it turns out that this difference is killed by the weak external field. The next lemma gives the bound for the absolute third moment for the path clusters.
\begin{lem}\label{lem:path3bd}
	We have
	\begin{align*}
		\frac{1}{\gl_p^{\ell'}} \E \abs{\gD P_N^{\ell'}}^3 \lesssim N^{-1/4}.
	\end{align*}
\end{lem}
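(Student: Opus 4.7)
The plan is to follow the same scheme as the proof of Lemma~\ref{lem:loop3bd} for the loop cluster, the key new feature being the extra factor $\hh^6$ arising from the $\hh^2$ weighting of $P_N^{\ell'}$. Under Assumption~\ref{ass:h} with $\ga \ge 1/4$, this factor is precisely what compensates the extra combinatorial degree of freedom carried by paths compared to loops, and is the combinatorial source of the critical threshold $\ga=1/4$. I would start from the representation
\begin{align*}
\gD P_N^{\ell'} = \hh^2 (\go' - \go_{IJ}) \sum_{p' \in \sP_p^{\ell'-1}(I,J)} \go(p'),
\end{align*}
observe that $\go_{IJ}$ and its independent copy $\go'$ are independent of $\{\go(p')\}_{p' \in \sP_p^{\ell'-1}(I,J)}$ since no such $p'$ contains the edge $(I,J)$, and apply H\"older's inequality together with the pointwise bound $|\go_{12}|^{12}\le |\go_{12}|^6$ exactly as in the loop case to obtain
\begin{align*}
\E |\gD P_N^{\ell'}|^3 \lesssim \hh^6 \cdot \bigl(\E \go_{12}^6\bigr)^{1/4} \cdot \Bigl(\E \bigl|\sum_{p' \in \sP_p^{\ell'-1}(i,j)} \go(p')\bigr|^4\Bigr)^{3/4}.
\end{align*}

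Next I would estimate each factor separately. The first is $\hh^6 \le h^6 \lesssim N^{-6\ga}$. The second satisfies $\E \go_{12}^6 \lesssim N^{-3}$, using $|\tanh(\gb J_{12}/\sqrt{N})|\le \gb|J_{12}|/\sqrt{N}$ together with the finite sixth moment of $J_{12}$. The crux is the fourth-moment bound
\begin{align*}
\E \bigl|\sum_{p' \in \sP_p^{\ell'-1}(i,j)} \go(p')\bigr|^4 = O(1),
\end{align*}
which I would derive by the same edge-counting strategy invoked in Lemma~\ref{lem:l2norm} and in the proof of Lemma~\ref{lem:path2bd}: the cardinality of $\sP_p^{\ell'-1}(i,j)$ is of order $N^{\ell'-1}$ since each of its elements arises from a length-$\ell'$ path in $K_N$ passing through $(i,j)$; each $\go(p')$ has variance $\gz_N^{2(\ell'-1)}=O(N^{-(\ell'-1)})$; cross terms vanish under symmetry of the disorder unless the two paths coincide, so the second moment of the sum is $O(1)$; and a four-path edge-overlap accounting shows that the fourth moment is dominated by perfect pairings of the four indices, again yielding $O(1)$.

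Combining the three estimates with $\gl_p^{\ell'} = \ell'/N_2 \asymp N^{-2}$ gives
\begin{align*}
\frac{1}{\gl_p^{\ell'}} \E |\gD P_N^{\ell'}|^3 \lesssim N^{2}\cdot N^{-6\ga}\cdot N^{-3/4} = N^{5/4-6\ga}\le N^{-1/4},
\end{align*}
where the last inequality uses $\ga \ge 1/4$. The main obstacle will be the rigorous bookkeeping in the fourth-moment bound: a path of length $\ell'-1$ admits one more free vertex than a loop of length $\ell$ constrained through the resampled edge, and I must check that the resulting extra factor of $N$ is precisely absorbed by the $\hh^4\lesssim N^{-1}$ that accompanies $P_N^{\ell'}$. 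This is exactly the mechanism that places the critical external-field strength at $\ga=1/4$.
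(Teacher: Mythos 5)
Your proof is correct and follows essentially the same route as the paper: write $\gD P_N^{\ell'}$ via the resampled edge, apply H\"older with exponents $(4,4/3)$ and the pointwise bound $\go_{12}^{12}\le\go_{12}^{6}$, bound $\hh^6\lesssim N^{-6\ga}$, $(\E\go_{12}^6)^{1/4}\lesssim N^{-3/4}$, and show $\E\bigl|\sum_{p'}\go(p')\bigr|^4=O(1)$ by the edge-counting of Lemma~\ref{lem:l2-path}, then divide by $\gl_p^{\ell'}\asymp N^{-2}$. The independence remark you add is true but not needed for the H\"older step; otherwise the two arguments coincide.
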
  

\subsection{Independent Sum analysis}\label{ssec:isa}
For the case of independent sum,
\begin{align*}
	Q_N= \sum_{i<j} (\go_{ij}^{2}-\gz_{N}^{2})
\end{align*}
is fairly easy to analyze using Stein's method. For completeness, we quickly check conditions without providing too much details. We temporarily denote $x_{ij} := \go_{ij}^{2}-\gz_{N}^{2}$. Recall that, $\gD Q_N = x' - x_{IJ} $. First the linearity condition is clear as
\begin{align*}
	\E(\gD Q_N \mid \mvW) = -\frac{1}{ N_2 }\sum_{i<j} x_{ij} = -\frac{1}{N_2}Q_N.
\end{align*}
Here, $\gl = \frac{1}{N_2}$. The variance bound is given in the following lemma.
\begin{lem}
	We have
	\begin{align}\label{eq:ind-var}
		\frac1\gl \sqrt{\var \E((\gD Q_N)^2\mid \mvW)}
		\lesssim N^{-1}
		\text{ and }
		\frac1\gl \E \abs{\gD Q_N}^3
		\lesssim N^{-1}.
	\end{align}
\end{lem}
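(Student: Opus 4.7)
Since $Q_N=\sum_{i<j}x_{ij}$ with $x_{ij}:=\go_{ij}^2-\gz_N^2$ is a sum of $N_2$ i.i.d., centered, uniformly bounded random variables, and the exchangeable perturbation replaces only a single, uniformly chosen summand by an independent copy $x':=(\go')^2-\gz_N^2$, both estimates in~\eqref{eq:ind-var} collapse to elementary one-variable moment bounds on $x_{12}$. No combinatorial enumeration over graphs is needed, in contrast with the loop and path clusters in Sections~\ref{ssec:lca}--\ref{ssec:pca}.

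\textbf{Conditional second moment.} First I would evaluate $\E((\gD Q_N)^2\mid \mvW)$ in closed form. Conditionally on $\mvW$, the index $(I,J)$ is uniform on $\cE_N$ and independent of $\mvW$, and $x'$ is independent of $\mvW$ with $\E x'=0$; hence
\begin{align*}
\E\bigl((\gD Q_N)^2\mid \mvW\bigr)
&=\frac{1}{N_2}\sum_{i<j}\E\bigl((x'-x_{ij})^2\mid \mvW\bigr)
=\var(x_{12})+\frac{1}{N_2}\sum_{i<j}x_{ij}^2.
\end{align*}
The first summand is deterministic and the $x_{ij}^2$ are i.i.d., so
\begin{align*}
\var\E\bigl((\gD Q_N)^2\mid \mvW\bigr)=\frac{\var(x_{12}^2)}{N_2}.
\end{align*}
For the third moment, the triangle inequality and symmetry give $\E|\gD Q_N|^3=\E|x'-x_{IJ}|^3\le 8\E|x_{12}|^3$.

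\textbf{Moment estimates and assembly.} Using $|x_{12}|\le \go_{12}^2+\gz_N^2$ together with $(a+b)^k\le 2^{k-1}(a^k+b^k)$, both $\var(x_{12}^2)\le \E x_{12}^4$ and $\E|x_{12}|^3$ are controlled by $\E\go_{12}^{2k}+\gz_N^{2k}$ for $k\in\{3,4\}$. The bound $|\tanh t|\le |t|\wedge 1$ gives $\gz_N^{2k}\lesssim N^{-k}$, and splitting the expectation at $|J_{12}|=\sqrt{N}/\gb$ (bounding $\go_{12}^{2k}$ by $(\gb J_{12}/\sqrt N)^{2k}$ on the small set and by $1$ on the tail) together with Markov's inequality for $\E J_{12}^6<\infty$ yields $\E\go_{12}^{2k}\lesssim N^{-3}$ for $k\in\{3,4\}$. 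Combining, with $1/\gl=N_2\asymp N^2$,
\begin{align*}
\frac{1}{\gl}\sqrt{\var\E\bigl((\gD Q_N)^2\mid \mvW\bigr)}=\sqrt{N_2\var(x_{12}^2)}\lesssim N^{-1/2},\qquad
\frac{1}{\gl}\E|\gD Q_N|^3\le 8N_2\,\E|x_{12}|^3\lesssim N^{-1}.
\end{align*}
The variance rate sharpens to the stated $N^{-1}$ under an eighth-moment hypothesis on $J_{12}$ (which improves $\E\go_{12}^{8}$ to $N^{-4}$); in any case the estimate is dominated by the third-moment contribution in the overall rate of Theorem~\ref{thm:stein-mvn}.

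\textbf{Obstacle.} There is no genuine obstacle here: the independence of the $x_{ij}$ collapses the conditional variance to a single-summand moment, and the only technical input is the truncation-based bound on $\E\go_{12}^{2k}$, which is the standard way to extract the sharpest possible rate from the sixth-moment assumption on $J_{12}$.
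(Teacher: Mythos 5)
Your approach is the same as the paper's: compute $\E((\gD Q_N)^2\mid\mvW)$ exactly, observe that its variance collapses to $\var(x_{12}^2)/N_2$, and bound $\E|\gD Q_N|^3$ by $8\E|x_{12}|^3/N_2$ via the triangle inequality. Your intermediate formula $\var(x_{12})+\frac{1}{N_2}\sum x_{ij}^2$ agrees with the paper's $\E x_{12}^2+\frac{1}{N_2}\sum x_{ij}^2$ since $\E x_{12}=0$, and the third-moment step is identical.

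The substantive point you raise — and it is a genuine one — concerns the exponent in the first bound. The paper asserts $\var(x_{12}^2)\approx N^{-4}$, but this rests on treating $x_{12}^4\approx \gb^8(J_{12}^2-1)^4/N^4$ as if $\E(J_{12}^2-1)^4<\infty$, i.e.~as if $\E J_{12}^8<\infty$. Under the stated sixth-moment hypothesis, the best one can extract is $\E\go_{12}^8\le\E\go_{12}^6\lesssim N^{-3}$ (by $|\go_{12}|\le 1$ and $|\tanh t|\le|t|$), giving $\var(x_{12}^2)\lesssim N^{-3}$ and hence $\gl^{-1}\sqrt{\var\E((\gD Q_N)^2\mid\mvW)}\lesssim N^{-1/2}$, not $N^{-1}$. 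Your truncation argument at $|J_{12}|=\sqrt N/\gb$ is the sharp way to see this, and your conclusion is correct: since the $A$-constant in Theorem~\ref{thm:stein-mvn} is already governed by the $N^{-1/2}$ rates coming from the loop and path clusters (Lemmas~\ref{lem:loop2bd} and~\ref{lem:path2bd}), the weaker $N^{-1/2}$ bound here causes no change in the final statement. So the lemma's stated $N^{-1}$ for the first quantity is slightly too strong under the paper's moment hypothesis (it would require $\E J_{12}^8<\infty$), but the error is harmless for the theorem it serves. Your third-moment bound $\lesssim N^{-1}$ is correct as stated, matching the paper.
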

\begin{proof}
	Note that,
	\begin{align*}
		\E((\gD Q_N)^2\mid \mvW) = \E x_{12}^2 + \frac{1}{ N_2 } \sum_{i<j} x_{ij}^2.
	\end{align*}
	Taking the variance, we get
	\begin{align*}
		\var \E((\gD Q_N)^2\mid \mvW) = \frac{1}{N_2} \var(x_{12}^2)\approx N^{-2-4}=N^{-6}.
	\end{align*}
	Similarly, we bound the third moment,
	\begin{align*}
		\E \abs{\gD Q_N}^3 = \frac{1}{ N_2 } \sum_{i<j} \E \abs{x' -x_{ij}}^3 \le \frac{8}{ N_2 } \sum_{i<j} \E \abs{x_{ij}}^3 \lesssim N^{-3}.
	\end{align*}
	The result stated in~\eqref{eq:ind-var} now follows easily.
\end{proof}

\subsection{Mixed Products}
In this subsection, we analyze the mixed product terms between loops, paths, and the independent sum. For clarity, we will state the results first and delay most of the proof details in Section~\ref{sec:steincomp}.

The first lemma is about the covariance bound.
\begin{lem}\label{lem:mix2bd}
	For any $\ell\ge 3,\ell'\ge 1$, we have
	\begin{align*}
		\frac 1 \gl \var \E(\gD P_N^{\ell'} \gD L_N^{\ell}\mid \mvW) \lesssim N^{-1}, \quad & \quad
		\frac 1 \gl \var \E(\gD P_N^{\ell'} \gD Q_N\mid \mvW) \lesssim N^{-1}, \\
		\text{ and } \frac 1 \gl \var \E(\gD L_N^{\ell'} \gD Q_N\mid \mvW) & \lesssim N^{-1}.
	\end{align*}
\end{lem}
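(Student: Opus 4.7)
The plan is to handle the three covariances by a common three-step template that parallels the single-cluster estimates of Lemmas~\ref{lem:loop2bd} and~\ref{lem:path2bd}. First, I would write out each difference explicitly: $\gD L_N^\ell = (\go' - \go_{IJ}) S_{IJ}^\ell$ with $S_{ij}^\ell := \sum_{p \in \sP^{\ell-1}(i,j)} \go(p)$; $\gD P_N^{\ell'} = \hh^2 (\go' - \go_{IJ}) T_{IJ}^{\ell'}$ with $T_{ij}^{\ell'} := \sum_{p' \in \sP_p^{\ell'-1}(i,j)} \go(p')$; and $\gD Q_N = (\go')^2 - \go_{IJ}^2$. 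Neither $S_{ij}^\ell$ nor $T_{ij}^{\ell'}$ depends on $\go_{ij}$, since by construction the edge $(i,j)$ is excluded from the corresponding paths.

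Next, I would compute $\E(\cdot \mid \mvW)$ by integrating out the fresh copy $\go'$ (using symmetry of the disorder: $\E \go' = \E (\go')^3 = 0$ and $\E (\go')^2 = \gz_N^2$) and then averaging over the uniform choice of $(I,J)$. This should give
\begin{align*}
\E(\gD P_N^{\ell'} \gD L_N^\ell \mid \mvW) &= \frac{\hh^2}{N_2} \sum_{i<j} (\gz_N^2 + \go_{ij}^2)\, S_{ij}^\ell T_{ij}^{\ell'}, \\
\E(\gD P_N^{\ell'} \gD Q_N \mid \mvW) &= \frac{\hh^2}{N_2} \sum_{i<j} (\go_{ij}^3 - \gz_N^2 \go_{ij})\, T_{ij}^{\ell'}, \\
\E(\gD L_N^\ell \gD Q_N \mid \mvW) &= \frac{1}{N_2} \sum_{i<j} (\go_{ij}^3 - \gz_N^2 \go_{ij})\, S_{ij}^\ell.
\end{align*}

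The third step is the variance estimate. I would expand the square of each display and use the independence and symmetry of the $\go_{ij}$'s: only pairs $((i,j),(i',j'))$ whose combined edge-support forms a multigraph in which every edge is covered an even number of times can survive the expectation. This is exactly the combinatorial constraint driving the $h=0$ analysis of~\cite{ALR87}: one enumerates the finite list of topological types of joined loop-path configurations on $K_N$ and, for each, balances free vertex choices against a factor $\gz_N \asymp N^{-1/2}$ per edge. The dominant contribution comes from the diagonal $(i,j) = (i',j')$ configuration with internally disjoint loop- and path-halves, and the key observation is that the $\hh^2 \asymp N^{-1/2}$ prefactor attached to each path-summand precisely cancels the one extra power of $N$ in vertex choices available to a path (with two free endpoints) relative to a loop (with one) of the same length. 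Thus $\var \E(\cdot \mid \mvW)$ is of the same order as in Lemmas~\ref{lem:loop2bd} and~\ref{lem:path2bd}, and after dividing by the linearity constant $\gl \asymp N^{-2}$ the stated bound follows.

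The hard part will be the full case analysis for the overlap configurations: the two $(i,j)$-indexed structures can coincide only at $(i,j)$, they can share additional internal edges, or we can have $(i,j) \ne (i',j')$ with the paths and loops themselves sharing edges. As in~\cite{ALR87}, each of these is a finite topological type that gives a subdominant correction; the $\hh^2$ factors are what keep the path-heavy configurations controlled at the critical threshold $\ga=1/4$. Once this bookkeeping is completed, Lemma~\ref{lem:mix2bd} reduces to the same arithmetic as the diagonal estimates in Lemmas~\ref{lem:loop2bd} and~\ref{lem:path2bd}, with the sixth-moment hypothesis on $J_{12}$ controlling the polynomial-in-$\go_{ij}$ prefactors appearing in the conditional expectations.
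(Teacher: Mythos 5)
Your plan is correct and mirrors the paper's proof in Section~\ref{pf:mix2bd}: the three conditional expectations come out exactly as you computed (using that odd moments of the symmetric disorder vanish), and the remaining work is the combinatorial variance estimate carried out in Lemma~\ref{lem:l2-cross} and Lemma~\ref{lem:loop-ind-cov}. One correction to your heuristic, though: the diagonal $(i,j)=(i',j')$ is \emph{not} the dominant configuration in the loop--path covariance. After splitting $\E(\gD P_N^{\ell'}\gD L_N^\ell\mid\mvW)=\tfrac{1}{N_2}(A_{\gc p}^{\ell\ell'}+2B_{\gc p}^{\ell\ell'})$, where $A$ carries the centered weight $\go_{ij}^2-\gz_N^2$ and $B$ the constant weight $\gz_N^2$, one finds that $\|A_{\gc p}^{\ell\ell'}\|_2^2\lesssim N^{-2}$ is indeed diagonal-dominated (since $\E\xi_{ij}=0$ forces $(i,j)$ to reappear in the other cluster), but the larger term $\|B_{\gc p}^{\ell\ell'}\|_2^2\lesssim N^{-1}$ has its leading configuration at $\{i_1,j_1\}\cap\{i_2,j_2\}=\emptyset$; there one must invoke that the attached loop-half $p_1$ and path-half $p_1'$ are necessarily distinct (they have different endpoint structure) to shave one free vertex, exactly as in the $D_\gc^{\ell}$ estimate in Lemma~\ref{lem:l2norm}. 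Once that extra case is in your enumeration, the bookkeeping closes as you describe and the stated bounds follow.
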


The following lemma gives the mixed third moment control.
\begin{lem}\label{lem:mix3bd}
	For $k_1,k_2,k_3 \in\{0,1,2\}$ such that $k_1+k_2+k_3=3$, we have
	\begin{align*}
		\frac1\gl\E \bigl|(\gD L_N^{\ell})^{k_1}(\gD P_N^{\ell'})^{k_2}(\gD Q_N)^{k_3}\bigr| \le CN^{-1/4},
	\end{align*}
	where $\ell\ge 3$ and $\ell'\ge 1$ are fixed.
\end{lem}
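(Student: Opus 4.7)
The plan is to mimic the pure cases handled in Lemmas~\ref{lem:loop3bd} and~\ref{lem:path3bd}, using the common decomposition
\begin{align*}
	\gD L_N^{\ell}  & = (\go'-\go_{IJ})\,A_{IJ}^{\ell},\quad A_{ij}^{\ell}:=\sum_{p\in\sP^{\ell-1}(i,j)}\go(p), \\
	\gD P_N^{\ell'} & = \hh^{2}(\go'-\go_{IJ})\,B_{IJ}^{\ell'},\quad B_{ij}^{\ell'}:=\sum_{p'\in\sP_{p}^{\ell'-1}(i,j)}\go(p'), \\
	\gD Q_N         & = (\go'-\go_{IJ})(\go'+\go_{IJ}),
\end{align*}
so the mixed product factors as $(\go'-\go_{IJ})^{3}(\go'+\go_{IJ})^{k_{3}}\,\hh^{2k_{2}}\,(A_{IJ}^{\ell})^{k_{1}}(B_{IJ}^{\ell'})^{k_{2}}$.

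First I would invoke the observation that, for $\ell\ge 3$ and $\ell'\ge 1$, every path appearing in $A_{ij}^{\ell}$ and $B_{ij}^{\ell'}$ avoids the edge $(i,j)$, so the pair $(A_{ij}^{\ell},B_{ij}^{\ell'})$ is independent of $(\go',\go_{ij})$. Conditioning on $(I,J)=(i,j)$ and using the symmetry across edges, this yields
\begin{align*}
	\E\bigl|(\gD L_{N}^{\ell})^{k_{1}}(\gD P_{N}^{\ell'})^{k_{2}}(\gD Q_{N})^{k_{3}}\bigr|
	= \hh^{2k_{2}}\,\E\bigl[|\go'-\go_{12}|^{3}|\go'+\go_{12}|^{k_{3}}\bigr]\,\frac{1}{N_{2}}\sum_{i<j}\E\bigl[|A_{ij}^{\ell}|^{k_{1}}|B_{ij}^{\ell'}|^{k_{2}}\bigr].
\end{align*}

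The next step is to bound the three factors separately. Since $\abs{\go_{12}}\le 1$ and $\E|\go_{12}|^{r}\lesssim N^{-r/2}$ for $r\le 6$, the elementary convexity estimate $(|\go'|+|\go_{12}|)^{3+k_{3}}\le 2^{2+k_{3}}(|\go'|^{3+k_{3}}+|\go_{12}|^{3+k_{3}})$ together with $k_{3}\le 2$ gives
\begin{align*}
	\E\bigl[|\go'-\go_{12}|^{3}|\go'+\go_{12}|^{k_{3}}\bigr]\lesssim N^{-(3+k_{3})/2}.
\end{align*}
For the structure factors, the second- and fourth-moment counts $\E(A_{ij}^{\ell})^{2}\lesssim N^{-1}$, $\E(A_{ij}^{\ell})^{4}\lesssim N^{-2}$, $\E(B_{ij}^{\ell'})^{2}\lesssim 1$, $\E(B_{ij}^{\ell'})^{4}\lesssim 1$ follow from the same matching-of-edges argument already used in Lemmas~\ref{lem:loop2bd}--\ref{lem:path3bd}: a nonzero contribution forces every edge in the multi-set to appear an even number of times, which bounds the number of admissible tuples of paths. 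A Cauchy--Schwarz split $\E|A|^{k_{1}}|B|^{k_{2}}\le(\E A^{2k_{1}})^{1/2}(\E B^{2k_{2}})^{1/2}$ with $k_{1},k_{2}\in\{0,1,2\}$ then yields $\E|A_{ij}^{\ell}|^{k_{1}}|B_{ij}^{\ell'}|^{k_{2}}\lesssim N^{-k_{1}/2}$.

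Assembling the three estimates and using $k_{1}+k_{2}+k_{3}=3$,
\begin{align*}
	\E\bigl|(\gD L_{N}^{\ell})^{k_{1}}(\gD P_{N}^{\ell'})^{k_{2}}(\gD Q_{N})^{k_{3}}\bigr|
	\lesssim \hh^{2k_{2}}\cdot N^{-(3+k_{3}+k_{1})/2}
	=\hh^{2k_{2}}\cdot N^{-3+k_{2}/2}.
\end{align*}
Since $1/\gl$ is of order $N^{2}$ regardless of which diagonal entry of $\gL$ is being used, and $\hh^{2}\le CN^{-1/2}$ under the assumption $\ga\ge 1/4$, one concludes $\gl^{-1}\E|\cdots|\lesssim \hh^{2k_{2}}\,N^{-1+k_{2}/2}\le CN^{-1}$, which is well within the advertised $N^{-1/4}$. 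The hard part is not any individual case but the uniform bookkeeping that tracks how each extra path factor costs exactly $\hh^{2}\lesssim N^{-1/2}$, which is the algebraic reason $\ga=1/4$ is the correct threshold; in particular, the uniform $L^{4}$ bound on $B_{ij}^{\ell'}$, which is the only genuinely new combinatorial ingredient beyond Lemmas~\ref{lem:loop3bd} and~\ref{lem:path3bd}, is exactly what allows $k_{2}=2$ without losing the $N^{-1/4}$ margin.
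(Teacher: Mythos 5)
Your proof is correct, and it takes a genuinely different route from the paper. The paper's own proof is a two-line soft argument: by H\"older's inequality,
\[
\E \bigl|(\gD L_N^{\ell})^{k_1}(\gD P_N^{\ell'})^{k_2}(\gD Q_N)^{k_3}\bigr| \le \max\bigl(\E|\gD L_N^{\ell}|^{3},\ \E|\gD P_N^{\ell'}|^{3},\ \E|\gD Q_N|^{3}\bigr),
\]
and then one simply quotes the pure third-moment bounds from Lemmas~\ref{lem:loop3bd}, \ref{lem:path3bd} and the independent-sum lemma, the worst of which is $\lesssim N^{-9/4}$, giving $\gl^{-1}\E|\cdots|\lesssim N^{-1/4}$. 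You instead exploit the structural fact that all three increments share the common factor $(\go'-\go_{IJ})$ and that the structure factors $A_{IJ}^{\ell},B_{IJ}^{\ell'}$ are independent of $(\go',\go_{IJ})$, so the expectation factorizes exactly. That is strictly more information than H\"older delivers, and it is why you land on $\cO(N^{-1})$ rather than $\cO(N^{-1/4})$. The paper leaves that improvement on the table even in the pure cases: its Lemma~\ref{lem:loop3bd} bounds $\E(|\go_{ij}|^{3}|A_{ij}^{\ell}|^{3})$ via H\"older with exponents $(4,4/3)$ rather than by independence, which is where the loose $N^{-1/4}$ originates. So your proof gives a cleaner conceptual picture (each $\gD P$ factor costs $\hh^{2}\lesssim N^{-1/2}$, and each extra $\go$-power costs $N^{-1/2}$, so the exponents always sum to $\le -1$ after multiplying by $\gl^{-1}\approx N^{2}$), at the price of having to record the $L^{2}$ and $L^{4}$ bounds on $A_{ij}^{\ell}$ and $B_{ij}^{\ell'}$; those are in fact already implicit in the paper's proofs of Lemmas~\ref{lem:loop3bd} and \ref{lem:path3bd}, so you are not assuming anything new, and your note that the $L^{4}$ bound on $B_{ij}^{\ell'}$ is the only genuinely combinatorial ingredient is a fair summary.
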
 

Collecting all the above estimates, we now apply Theorem~\ref{thm:rr-mvstein} to prove the Theorem~\ref{thm:stein-mvn}.  

\begin{proof}[Proof of Theorem~\ref{thm:stein-mvn}]
	By all the variance and covariance bound, we can get an estimate of the constant $A$, where all $\gl$'s are of order $N^{2}$. The dominate variance terms are due to the variance of loop and path clusters, which are of order $N^{-1/2}$, thus
	\begin{align*}
		A & \le \sum_{\ell=3}^{m}\sum_{\ell'=1}^m N_2 \sqrt{\var \E(\gD P_N^{\ell'} L_N^{\ell}\mid \mvW)} \\
		 & \qquad + \sum_{\ell'=1}^m N_2 \left( \sqrt{\var \E((\gD P_N^{\ell'})^2\mid \mvW)} + \sqrt{\var \E(\gD P_N^{\ell'} \gD Q_N\mid \mvW)} \right) \\
		 & \quad \quad + \sum_{\ell=3}^{m} N_2 \left(\sqrt{\var \E((\gD L_N^{\ell})^2\mid \mvW)} + \sqrt{\var \E(\gD L_N^{\ell} \gD Q_N\mid \mvW)} \right) \\
		 & \lesssim N^{-1/2}.
	\end{align*}

	Similarly, by collecting all the third moment estimates, the constant $B$ can be bounded as follows,
	\begin{align*}
		B & \le (1+m)^3\cdot \max(\E|(\gD L_N^{\ell})|^{3}, \E|(\gD P_N^{\ell'})|^{3},\E|(\gD Q_N)|^{3})
		\le C_{m,\gb} N^{-1/4}
	\end{align*}
	where $C_{m,\gb}$ is some constant depending on $m$ and $\gb$. By applying Theorem~\ref{thm:rr-mvstein}, we have for any thrice differentiable function $f$,
	\begin{align*}
		\abs{\E f(\mvW) - \E f(\gS^{1/2}\mvZ)} \le C_{m,\gb}\frac{\abs{f}_2}{\sqrt{N}} + C_{m,\gb} \frac{\abs{f}_3}{N^{1/4}}.
	\end{align*}
	This completes the proof.
\end{proof}

\section{Computations for Stein's Method}\label{sec:steincomp}

This section is devoted to the proof details from Section~\ref{sec:stein}, where multivariate Stein's method is applied to establish the normal approximation for the free energy.

\subsection{Proofs for Loop Clusters}

\subsubsection{Proof of Lemma~\ref{lem:loop2bd}}\label{pf:loop2bd}
First we observe that
\begin{align*}
	\E((\gD L_N^\ell)^2 \mid \mvW) - \E(\gD L_N^\ell)^2
	 & = \E\biggl( (\go' - \go_{IJ})^2\cdot \biggl(\sum'_{p} \go(p)\biggr)^2 \;\biggl|\; \mvW\biggr) - \frac{(N)_{\ell}\gz_{N}^{\ell}}{N_2} \\
	 & = \frac{1}{ N_2 } \E\biggl(\sum_{i<j} (\gz_N^2+\go_{ij}^2 )\cdot \biggl(\sum'_{p} \go(p)\biggr)^2 \;\biggl|\; \mvW\biggr) - \frac{(N)_{\ell}\gz_{N}^{\ell}}{N_2} \\
	 & = \frac{1}{ N_2 }\E(A_\gc^{\ell} + 2B_\gc^{\ell} +C_\gc^{\ell} +2D_\gc^{\ell} \mid \mvW),
\end{align*}
where the sum $\gS^{'}$ is over all paths in $\sP^{\ell-1}(i,j)$ and
\begin{alignat*}{3}
	A_{\gc}^{\ell} & := \sum_{i<j}(\go_{ij}^2-\gz_N^2) \sum'_{p} \go(p)^2,
	\qquad & B_{\gc}^{\ell} & :=\gz_N^2 \sum_{i<j}\sum'_{p}(\go(p)^2 - \gz_N^{2(\ell-1)}), \\
	C_{\gc}^{\ell} & := \sum_{i<j}(\go_{ij}^2-\gz_N^2) \sum'_{p_1\neq p_2} \go(p_1)\go(p_2),
	\text{ and }
	 & D_{\gc}^{\ell} & := \gz_N^2 \sum_{i<j}\sum'_{p_1\neq p_2} \go(p_1)\go(p_2).
\end{alignat*}
Here the first equality is by applying the conditional expectation, averaging over the choice of $(I,J)$, using the independence property of $\go_{ij}$ and $\go'$. The second equality is just to break the second moments part into several different cases. Now we prove that all the terms above are in a smaller scale.

\begin{lem}\label{lem:l2norm}
	We have
	\begin{align*}
		\norm{A_{\gc}^{\ell}}_2^2 + \norm{B_\gc^{\ell}}_2^2 + \norm{C_{\gc}^{\ell}}_2^{2} & \lesssim N^{-2}, \text{ and }
		\norm{D_{\gc}^{\ell}}_2^{2} \lesssim N^{-1}.
	\end{align*}
\end{lem}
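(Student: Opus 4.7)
I compute each $L^2$-norm squared by a direct moment expansion and exploit two properties of the disorder: (i) the variables $\{\go_e\}_{e\in\cE_N}$ are independent across distinct edges, and (ii) since $J_e$ has a symmetric distribution, so does $\go_e=\tanh(\gb J_e/\sqrt{N})$; hence $\E\go_e^m=0$ for every odd $m$, while $|\E\go_e^m|\le \E\go_e^2=\gz_N^2\asymp N^{-1}$ for every even $m\ge 2$. In each case, expanding the square and using independence gives a sum $\prod_e \go_e^{m_e}$ with $m_e$ the total multiplicity of edge $e$ across all paths and index-edges appearing; only configurations with every $m_e$ even contribute, and each contributing edge costs a factor $\lesssim\gz_N^2\asymp N^{-1}$.

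Consider first $D_\gc^\ell$. Expanding,
\begin{align*}
	\|D_\gc^\ell\|_2^2 = \gz_N^4 \sum \E\bigl[\go(p_1)\go(p_2)\go(p_1')\go(p_2')\bigr],
\end{align*}
where the sum is over $p_1\neq p_2\in\sP^{\ell-1}(i,j)$ and $p_1'\neq p_2'\in\sP^{\ell-1}(i',j')$. Only parity-even edge configurations survive; since $\sum_e m_e=4(\ell-1)$ with each contributing $m_e\ge 2$, at most $2(\ell-1)$ distinct edges appear, so $\prod_e|\E\go_e^{m_e}|\le\gz_N^{4(\ell-1)}$. The dominant ``pairing'' configurations require $\{i,j\}=\{i',j'\}$ and the four paths to split into two equal pairs (e.g.\ $p_1=p_1'$ and $p_2=p_2'$), yielding $O(N^2)\cdot O(N^{\ell-2})^2=O(N^{2\ell-2})$ quadruples. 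Assembling, the leading contribution is $\gz_N^4\cdot N^{2\ell-2}\cdot \gz_N^{4(\ell-1)}=O(N^{-2})$, which is well within the claimed bound $\lesssim N^{-1}$. Non-paired configurations have some $m_e\ge 4$, hence use strictly fewer vertices, and are strictly subdominant.

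The analysis of $A_\gc^\ell,B_\gc^\ell,C_\gc^\ell$ proceeds along the same parity-expansion principle, with the built-in centering providing extra cancellation. For $B_\gc^\ell=\gz_N^2\sum_{i<j}\sum'_p(\go(p)^2-\gz_N^{2(\ell-1)})$, the centering cancels the otherwise-leading configuration in which each edge has multiplicity exactly $2$, forcing at least one $m_e\ge 4$ and losing a factor $\gz_N^2\asymp N^{-1}$ compared to the naive count; the same cancellation via $\go_{ij}^2-\gz_N^2$ drives the $A_\gc^\ell$ and $C_\gc^\ell$ bounds, where in the $C_\gc^\ell$ case the index edges $(i,j)$ and $(i',j')$ are also forced into the even-multiplicity multiset, tightening the vertex budget further. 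This yields $\|A_\gc^\ell\|_2^2,\|B_\gc^\ell\|_2^2,\|C_\gc^\ell\|_2^2 \lesssim N^{-2}$.

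The main obstacle is the combinatorial bookkeeping of the overlap patterns of path-tuples with the even-multiplicity constraint and index-pair coincidences: one must enumerate the finitely many ``matching schemes'' (indexed by the partition of the $4(\ell-1)$ path-edges into overlap classes and by the coincidences among $(i,j),(i',j')$), count each via falling factorials in $N$, and verify that only the fully-paired scheme saturates the budget for $D_\gc^\ell$ while the cancellation in $A_\gc^\ell,B_\gc^\ell,C_\gc^\ell$ kills the saturating scheme. This is exactly the style of counting that underlies the $h=0$ loop analysis in~\cite{ALR87}, here adapted to paths between prescribed endpoints.
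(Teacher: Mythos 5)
Your general approach — expand the second moment, use symmetry of $J_e$ to kill odd-multiplicity edges, and count vertices against edge weights — is indeed the same strategy as the paper's proof. But as written the argument has a genuine gap in the $D_\gc^\ell$ analysis and is essentially a placeholder for $A_\gc^\ell, B_\gc^\ell, C_\gc^\ell$.

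For $D_\gc^\ell$, your claim that \emph{the} dominant configurations require $\{i,j\}=\{i',j'\}$ with $p_1=p_1'$, $p_2=p_2'$, and that ``non-paired configurations have some $m_e\ge 4$, hence use strictly fewer vertices, and are strictly subdominant'' is false. Take a labeled $2(\ell-1)$-cycle $C$ in $K_N$; pick two distinct antipodal pairs $\{i,j\}$ and $\{i',j'\}$ on $C$, and let $p_1,p_2$ (resp.\ $p_1',p_2'$) be the two halves of $C$ split at $\{i,j\}$ (resp.\ $\{i',j'\}$). Then $e_1\neq e_2$, the four paths are pairwise distinct, and every edge of $C$ has multiplicity exactly $2$. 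The number of such configurations is $\asymp N^{2(\ell-1)}$, the same order as the paired case. So the assertion that non-paired configurations are forced to have a multiplicity-$4$ edge simply does not hold, and your argument never actually excludes the competing $O(N^{2\ell-2})$ configurations. (The correct closing of this gap does not involve multiplicity-$4$ edges at all: since $p_1\neq p_2$ forces $p_1\Delta p_2\neq\emptyset$ and parity forces $p_1\Delta p_2\subseteq p_3\cup p_4$, the union graph $G$ is connected and contains a cycle, hence $|V(G)|\le|E(G)|\le 2(\ell-1)$, which yields $\gz_N^4N^{|V(G)|-2(\ell-1)}\lesssim N^{-2}$ for \emph{every} parity-admissible shape. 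Incidentally this gives $N^{-2}$ rather than the paper's stated $N^{-1}$, but that is a looseness in the paper, not your error.) Note that the paper takes an entirely different route here: it splits on $|\{i_1,j_1\}\cap\{i_2,j_2\}|\in\{0,1,2\}$ and, in the disjoint case, tracks the forced branching edges $e_*,e_\Box$ of $p_1,p_2$ that must reappear in $p_3\cup p_4$.

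For $A_\gc^\ell, B_\gc^\ell, C_\gc^\ell$, what you offer is a narrative rather than a proof: you assert that the centering $\go_{ij}^2-\gz_N^2$ (resp.\ $\go(p)^2-\gz_N^{2(\ell-1)}$) forces a multiplicity-$4$ edge and a loss of $\gz_N^2$, and that this gives $N^{-2}$. Two problems. First, for $B_\gc^\ell$ the centering kills \emph{disjoint} pairs $p_1\cap p_2=\emptyset$, but the saving comes from the $\approx N^{2\ell-2}$ count of intersecting pairs compared to the naive $N^{2\ell}$ — a factor $N^{-2}$ from combinatorics, not a factor $\gz_N^2\approx N^{-1}$ from the edge weight (the shared edge contributes $\E\go_e^4\asymp N^{-2}\asymp\gz_N^4$, which is neutral). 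Second, for $A_\gc^\ell$ and $C_\gc^\ell$ the decisive case in the paper's proof is $e_1=e_2$, and for $e_1\neq e_2$ the constraint $\E\xi_{ij}=0$ forces $e_1\in p_2$ and $e_2\in p_1$ (or $p_3,p_4$), which is the mechanism that tightens the vertex budget; your sketch never identifies these cases or does the counting. You acknowledge that the ``main obstacle is the combinatorial bookkeeping'' of matching schemes, but that bookkeeping \emph{is} the proof — the paper carries it out by an explicit case split on $|\{i_1,j_1\}\cap\{i_2,j_2\}|\in\{0,1,2\}$ for each of $A,B,C,D$, and your proposal leaves it undone.
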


Using Lemma~\ref{lem:l2norm}, we get
\begin{align*}
	\sqrt{\var\E((\gD L_N^\ell)^2 \mid \mvW)}
	 & = \norm{ \E((\gD L_N^\ell)^2 \mid \mvW) - \E(\gD L_N^\ell)^2 }_2^2 \\
	 & \lesssim N^{-2}(\norm{A_{\gc}^{\ell}}_2 + \norm{B_\gc^{\ell}}_2 + \norm{C_{\gc}^{\ell}}_2+\norm{C_{\gc}^{\ell}}_2) \lesssim N^{-5/2}.
\end{align*}
Recall that, $\gl_\gc^{\ell} = \frac{\ell}{N_2} = O(N^{-2})$, the announced results follows.
Now, we provide the proof of Lemma~\ref{lem:l2norm}.
\begin{proof}[Proof of Lemma~\ref{lem:l2norm}]
	Let $\xi_{ij}:=\go_{ij}^2 - \gz_N^2$. It is clear that $\E \xi_{ij} = 0$. For $A_{\gc}^{\ell}$, we have
	\begin{align*}
		\norm{A_{\gc}^{\ell}}_2^2
		 & = \E\sum_{i_1<j_1, i_2<j_2} \xi_{i_1j_1}\xi_{i_2j_2} \sum'_{p_1, p_2} \go(p_1)^2 \go(p_2)^2
	\end{align*}
	where $p_{t}\in \sP^{\ell-1}(i_{t},j_{t}), t=1,2$.
	By the fact that $\E \xi_{ij} = 0$, the contributions to the right hand side sum can be divided into the following two cases.
	\begin{itemize}
		\item Case 1: $e_1 \neq e_2$, but the edge $e_1:=(i_1,j_1)$ must also appear in the path $p_2$, and similarly edge $e_1:=(i_2,j_2)$ must also appear in the path $p_1$.
		\item Case 2: Edge $e_1:=(i_1,j_1)$ is the same as edge $e_2:=(i_2,j_2)$.
	\end{itemize}
	For Case 1, the contribution is
	\begin{align*}
		\E \sum_{\substack{i_1<j_1, \\i_2<j_2}} \xi_{i_1j_1}\go_{i_1,j_1}^2\xi_{i_2j_2}\go_{i_2,j_2}^2 \sum_{\substack{ \{p_1' \cup e_2\} \in\sP_\gc^{\ell-1}(i_1,j_1),\\ \{ p_2' \cup e_1\} \in\sP_\gc^{\ell-1}(i_2,j_2)}} \go(p_1' )^2 \go(p_2')^2.
	\end{align*}

	There is a further subtle difference, $e_1$ and $e_2$ can share at most 1 vertex under the assumption $e_1\neq e_2$. If $\abs{\{i_1,j_1\} \cap \{i_2,j_2 \}}=1$, then the total number of loops is of the order $N^3 \cdot N^{2(\ell-3)}$, and each loop has contribution $N^{-2\ell}$, thus the total contribution for this subcase is $N^{-3}$. While for the other subcase $\abs{\{i_1,j_1\} \cap \{i_2,j_2 \}}=0$, \ie~the 4 vertices $i_1,i_2,j_1,j_2$ are all different. The number of ways to form the loops is of the order $N^4 \cdot N^{2(\ell-4)}$, each loop has contribution $N^{-2\ell}$, the total contribution is $N^{-4}$. We implicitly used the property that $\gz_{N}^{2}=\E \go_{ij}^2 \approx N^{-1}$.

	Now we analyze Case 2, where $e_1=e_2$. The contribution is given by
	\begin{align*}
		\E \sum_{i<j} \xi_{ij}^2 \sum_{p_1,p_{2} \in \sP_\gc^{\ell-1}(i,j)} \go(p_1)^2\go(p_2)^2
		\approx N^2 \cdot N^{2(\ell-2)} \cdot N^{-2\ell} = N^{-2}.
	\end{align*}

	In a similar fashion, we prove the $L^2$ bound for $C_\gc^{\ell}, D_\gc^{\ell}$. First for $C_\gc^{\ell}$,
	\begin{align*}
		\norm{C_{\gc}^{\ell}}_2^{2}
		=
		\E \sum_{\substack{i_1<j_1, \\ i_2 < j_2}} \xi_{i_1j_1} \xi_{i_2,j_2} \sum_{\substack{p_1 \neq p_2 \in\sP_\gc^{\ell-1}(i_1,j_1),\\ p_3 \neq p_4 \in\sP_\gc^{\ell-1}(i_2,j_2)}} \prod_{s=1}^4\go(p_s).
	\end{align*}
	The right hand side can be divided into 2 cases as before. For the case where $\abs{\{i_1,j_1\} \cap \{i_2,j_2 \}}= 0$, due to the fact that $\E \xi_{ij} = 0$, the edge $e_1$ must appear in the paths $p_3,p_4$, and edge $e_2$ must appear in the paths $p_1,p_2$. For the fixed $N^4$ different choices of $i_1,j_1,i_2,j_2$, one has total $N^{4(\ell-4)}$ different paths in the second sum, but the edges in those paths must appear at least twice. This leads to the total combinatorial factor is bounded by $N^4 \cdot N^{2\ell -8}$, then the total contribution is of the order $N^{-4}$. For the subcase $\abs{\{i_1,j_1\} \cap \{i_2,j_2 \}}= 1$, the total contribution is of the order $N^3 \cdot N^{2\ell - 6} \cdot N^{-2\ell} = N^{-3}$.

	When $\abs{\{i_1,j_1\} \cap \{i_2,j_2 \}}= 2$, \ie~ $e_1=e_2$, using similar analysis based on observations that each edge except $(i,j)$ appear at least twice in the second sum and $p_1 \neq p_2, p_3 \neq p_4$.
	\begin{align*}
		\E \sum_{i<j} \xi_{ij}^2 \sum_{\substack{p_1 \neq p_2 \in\sP_\gc^{\ell-1}(i,j), \\ p_3 \neq p_4 \in\sP_\gc^{\ell-1}(i,j)}} \prod_{s=1}^4\go(p_s)
		\lesssim N^2 \cdot N^{2\ell -4} \cdot N^{-2\ell} = N^{-2}.
	\end{align*} 

	For $D_\gc^{\ell}$, we have
	\begin{align*}
		\norm{D_{\gc}^{\ell}}_2^{2}
		= \gz_N^4 \E \sum_{\substack{i_1<j_1, \\i_2<j_2}}\sum_{\substack{p_1 \neq p_2 \in \sP_\gc^{\ell-1}(i_1,j_1),\\
				p_3 \neq p_4 \in \sP_\gc^{\ell-1}(i_2,j_2)}} \prod_{s=1}^4 \go(p_s).
	\end{align*}

	Comparing to the bound for $C_{\gc}^{\ell}$, the difference is that the assumption $e_1 \in p_3,p_4$ and $e_2 \in p_1,p_2$ are no longer needed. For $\abs{\{i_1,j_1\} \cap \{i_2,j_2 \}}\ge 1$, using similar ideas as before, one can still get an upper bound of the total contribution as $N^3 \cdot N^{2\ell-5} \cdot N^{-2\ell} = N^{-2} $. However for $\abs{\{i_1,j_1\} \cap \{i_2,j_2 \}} =0$, the analysis becomes more delicate. Since $p_1 \neq p_2 \in \sP_\gc^{\ell-1}(i_1,j_1)$, and $p_3 \neq p_4 \in \sP_\gc^{\ell-1}(i_2,j_2)$, there exist at least one vertex $i_*$ where the edge $e_*:=(i_*,j_*) \in p_1$ is different from the edge $e_{\Box}:=(i_*,j_{\Box})\in p_2$. Thus the edges $e_*, e_{\Box}$ must also appear in the path $p_3,p_4$, which decrease the number of freedom to choose vertices in path $p_3,p_4$ by 1. Therefore, the total number of ways in the sum will be at most $N^4 \cdot N^{\ell-2} \cdot N^{\ell-3}$, then the overall contribution will be at most $N^{-1}$ in this case.

	Finally, we deal with $B_\gc^{\ell}$,
	\begin{align*}
		\norm{B_{\gc}^{\ell}}_2^{2} & = \gz_N^4 \E \sum_{\substack{i_1<j_1, \\i_2<j_2}} \sum_{\substack{p_1 \in \sP_\gc^{\ell-1}(i_1,j_1),\\p_2 \in \sP_\gc^{\ell-1}(i_2,j_2)}} ( \go(p_1)^2 - \gz_N^{2(\ell-1)})( \go(p_2)^2 - \gz_N^{2(\ell-1)}).
	\end{align*}
	For the summand, we have
	\begin{align*}
		\gz_N^4\E \left(( \go(p_1)^2 - \gz_N^{2(\ell-1)})( \go(p_2)^2 - \gz_N^{2(\ell-1)}) \right)
		 & = \gz_N^4(\E \go(p_1)^2\go(p_2)^2 - \gz_N^{4(\ell-1)}) \\
		 & \le C_{\gb, \alpha} N^{-2-2(\ell-1)} \lesssim N^{-2}\cdot N^{-2(\ell-1)},
	\end{align*}
	using the independence property and sixth moment assumption. Number of pairs of paths with non-empty intersection in the sum for $\norm{B_{\gc}^{\ell}}_2^{2}$ is bounded by $N^{2\ell-2}$. This completes the proof.
\end{proof}   

\subsection{Proofs for Path Clusters}
We first prove the variance bound for the path clusters in Lemma~\ref{lem:path2bd}.

\subsubsection{Proof of Lemma~\ref{lem:path2bd}}\label{pf:path2bd}
Notice that
\begin{align*}
	\E((\gD P_N^{\ell'})^2) & = \hh^4\E (\go'-\go_{ij})^2 \bigl(\sum_{p' \in \sP_{p}^{\ell'-1}(i,j)}\go(p') \bigr)^2
	= \frac{2\hh^4}{ N_2 } \sum_{i<j}\gz_N^2 \sum_{p' \in \sP_{p}^{\ell'-1}(i,j)} \gz_N^{2(\ell'-1)}.
\end{align*}
Then the centered conditional second moment can be split into several terms as follows,
\begin{align*}
	 & \E((\gD P_N^{\ell'})^2\mid \mvW) - \E((\gD P_N^{\ell'})^2) \\
	 & = \E\biggl( (\go_{ij}'-\go_{ij})^2 \biggl( \sum_{p' \in \sP_p^{\ell'-1}(i,j)} \go(p') \biggr)^2 \,\biggl|\, \mvW\biggr) - \E((\gD P_N^{\ell'})^2) \\
	 & = \frac{\hh^4}{ N_2 } \sum_{i<j} \E\biggl( (\gz_N^2+\go_{ij}^2) \biggl( \sum_{p' \in \sP_p^{\ell'-1}(i,j)} \go(p') \biggr)^2 \,\biggl|\, \mvW\biggr)- \E((\gD P_N^{\ell'})^2) \\
	 & := \frac{1}{ N_2 }\E(A_p^{\ell'} + 2B_p^{\ell'} +C_p^{\ell'} +2D_p^{\ell'} \mid \mvW),
\end{align*}
where
\begin{alignat*}{1}
	A_p^{\ell'} & := \sum_{i<j}\hh^4(\go_{ij}^2-\gz_N^2) \sum_{p' \in \sP_p^{\ell'-1}(i,j)} \go(p')^2, \\
	B_p^{\ell'} & := \gz_N^2 \hh^4\sum_{i<j}\sum_{p' \in \sP_p^{\ell'-1}(i,j)} (\go(p')^2 -\gz_N^{2(\ell'-1)}), \\
	C_p^{\ell'} & :=\sum_{i<j}\hh^4(\go_{ij}^2-\gz_N^2) \sum_{p_1'\neq p_2' \in \sP_p^{\ell'-1}(i,j)} \go(p_1')\go(p_2'), \\
	\text{ and } D_p^{\ell'} & := \gz_N^2 \hh^4 \sum_{i<j}\sum_{p_1'\neq p_2' \in \sP_p^{\ell'-1}(i,j)} \go(p_1')\go(p_2').
\end{alignat*} 

\begin{lem}\label{lem:l2-path}
	We have
	\begin{align*}
		\norm{A_p^{\ell'} }_2^2 +\norm{B_p^{\ell'} }_2^2+ \norm{C_p^{\ell'} }_2^2 \lesssim N^{-2}\text{ and } \norm{D_p^{\ell'} }_2^2 \lesssim N^{-1}.
	\end{align*}
\end{lem}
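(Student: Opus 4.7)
The plan is to follow the combinatorial strategy of the loop-cluster proof of Lemma~\ref{lem:l2norm}, adapted to the path setting. Two features distinguish the path estimates from the loop ones. First, each of $A_p^{\ell'}, B_p^{\ell'}, C_p^{\ell'}, D_p^{\ell'}$ carries an additional factor $\hh^4$ (hence $\hh^8$ after squaring), and under Assumption~\ref{ass:h} with $\alpha\ge 1/4$ we have $\hh^4\le \rho^4 N^{-1}$, so this provides an extra $N^{-2}$ in each bound. Second, elements of $\sP_p^{\ell'-1}(i,j)$ are formed from a connected path of length $\ell'$ by removing the edge $(i,j)$, giving $|\sP_p^{\ell'-1}(i,j)|=O(N^{\ell'-1})$; this is one power of $N$ larger than the corresponding cardinality $O(N^{\ell-2})$ in the loop case, and it is precisely this extra power that will be absorbed by $\hh^4\lesssim N^{-1}$.

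For each term I would expand $\|\cdot\|_2^2$, use independence of the $\go_e$'s together with the centerings $\E\go_e=0$ and $\E(\go_{ij}^2-\gz_N^2)=0$ where available, and split the double sum according to the anchor-edge overlap $k:=|\{i_1,j_1\}\cap\{i_2,j_2\}|\in\{0,1,2\}$. Using $\gz_N^2\sim \gb^2/N$ and $\var(\go_{12}^2)=O(N^{-2})$, in the $k=2$ (diagonal) case the bound for $A_p^{\ell'}$ is $\hh^8\cdot N_2\cdot\var(\go_{12}^2)\cdot|\sP_p^{\ell'-1}(i,j)|^2\cdot \gz_N^{4(\ell'-1)} \lesssim \hh^8\lesssim N^{-2}$, while the $k=0,1$ cases are strictly subleading because $\E(\go_{ij}^2-\gz_N^2)=0$ forces each anchor edge to reappear in the opposite path, reducing the vertex freedom. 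This yields the $A_p^{\ell'}$ bound. The terms $B_p^{\ell'}$ and $C_p^{\ell'}$ are handled analogously: for $B_p^{\ell'}$ the centering $\E(\go(p')^2-\gz_N^{2(\ell'-1)})=0$ makes the summand negligible unless $p_1'$ and $p_2'$ share an edge, forcing a matching that drops a further power of $N$; for $C_p^{\ell'}$ both centerings operate, and the combined constraints together with the $\hh^8$ factor yield the $N^{-2}$ bound.

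The main obstacle is $D_p^{\ell'}$, where neither the anchor edge nor a single path weight is centered. Following the loop proof of $\|D_\gc^\ell\|_2^2$, the saving comes solely from the constraint $p_1'\neq p_2'$ (and likewise $p_3'\neq p_4'$): there must exist a vertex at which $p_1'$ and $p_2'$ diverge, producing two distinct edges that must both reappear in $p_3'\cup p_4'$ in order to survive $\E\go_e=0$. In the anchor-disjoint case $k=0$ this reduces the effective vertex freedom by one, yielding (after multiplying by $\gz_N^4\hh^8$) a contribution of order $N^{-1}$; the $k=1,2$ cases are no worse. Hence $\|D_p^{\ell'}\|_2^2\lesssim N^{-1}$, matching the claim. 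The combinatorial bookkeeping around this divergence vertex for $D_p^{\ell'}$ is where most of the technical care goes; the other three bounds then reduce to straightforward transcriptions of the loop-case argument with the $\hh^8$ prefactor compensating the extra vertex freedom of paths over loops.
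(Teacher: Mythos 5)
Your proposal matches the paper's proof: the paper likewise expands each $\|\cdot\|_2^2$, splits by the anchor overlap $|\{i_1,j_1\}\cap\{i_2,j_2\}|\in\{0,1,2\}$, uses the centerings $\E(\go_{ij}^2-\gz_N^2)=0$ and $\E\go_e=0$ to control $A_p^{\ell'},B_p^{\ell'},C_p^{\ell'}$, lets the $\hh^8\lesssim N^{-2}$ prefactor absorb the one extra power of $N$ in path counting ($N^{\ell'-1}$ vs.\ $N^{\ell-2}$), and for $D_p^{\ell'}$ invokes the same branching-vertex argument from $p_1'\neq p_2'$ to get $N^{-1}$. This is essentially the same argument.
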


Now we use Lemma~\ref{lem:l2-path} to prove the variance bound in Lemma~\ref{lem:path2bd}.

Using the lemma of second moment bounds, one can get
\begin{align*}
	\var\left( \E((\gD P_N^{\ell'})^2 \mid \mvW)\right) & = \norm{ \E((\gD P_N^{\ell'})^2 \mid \mvW) - \E (\gD P_N^{\ell'})^2}_2^2 \lesssim N^{-5}
\end{align*}
Recall that $\gl_p = \frac{\ell'}{ N_2 } = O(N^{-2})$, it is clear that $$\frac{1}{\gl_p} \sqrt{ \var\left( \E((\gD P_N^{\ell'})^2\mid \mvW)\right) } \lesssim N^{-1/2}.$$ Thus, we are left with proving Lemma~\ref{lem:l2-path}.

\begin{proof}[Proof of Lemma~\ref{lem:l2-path}]
	Recall that $\xi_{ij}:= \go_{ij}^2 -\gz_N^2$. For $A_p^{\ell'}$, we have
	\begin{align*}
		\norm{A_p^{\ell'}}_2^2
		= \hh^8 \E\sum_{\substack{i_1<j_1, \\i_2<j_2.}} \xi_{i_1,j_1} \xi_{i_2,j_2} \sum_{\substack{p_1' \in \sP_p^{\ell'-1}(i_1,j_1),\\ p_2' \in \sP_p^{\ell'-1}(i_2,j_2)}} \go(p_1')^2 \go(p_2')^2.
	\end{align*}

	The analysis is similar as in Lemma~\ref{lem:l2norm}, but now there are in total $N^{\ell'+1}$ different ways to form a path of length $\ell'$. Basically for all the sub-cases analyzed in Lemma~\ref{lem:l2norm}, the combinatorial factor now will have higher power by 2.

	\begin{itemize}
		\item For the case $\abs{\{i_1,j_1\} \cap \{i_2,j_2 \}}=0$, the contribution is $\hh^8 \cdot N^4 \cdot N^{2(\ell'-3)} \cdot N^{-2\ell'} \approx N^{-4} $.
		\item For the case $\abs{\{i_1,j_1\} \cap \{i_2,j_2 \}}=1$, the contribution is $\hh^8 \cdot N^3 \cdot N^{2(\ell'-2)} \cdot N^{-2\ell'} \approx N^{-3} $.
		\item For the case $\abs{\{i_1,j_1\} \cap \{i_2,j_2 \}}=2$, the contribution is $\hh^8 \cdot N^2 \cdot N^{2(\ell'-1)} \cdot N^{-2\ell'} \approx N^{-2} $.
	\end{itemize}

	Similarly for $C_p^{\ell'}$, we have
	\begin{align*}
		\norm{C_p^{\ell'}}_2^2 = \E \hh^8\sum_{\substack{i_1<j_1, \\i_2<j_2.}} \xi_{i_1,j_1}\xi_{i_2,j_2} \sum_{\substack{p_1'\neq p_2' \in \sP_p^{\ell'-1}(i_1,j_1), \\p_3'\neq p_4' \in \sP_p^{\ell'-1}(i_2,j_2) }} \prod_{s=1}^4 \go(p_i').
	\end{align*}
	\begin{itemize}
		\item For the case $\abs{\{i_1,j_1\} \cap \{i_2,j_2 \}}=0$, the contribution is upper bounded by $\hh^8 \cdot N^4 \cdot N^{2(\ell'-3)} \cdot N^{-2\ell'} \approx N^{-4} $.
		\item For the case $\abs{\{i_1,j_1\} \cap \{i_2,j_2 \}}=1$, the contribution is upper bounded by $\hh^8 \cdot N^3 \cdot N^{2(\ell'-2)} \cdot N^{-2\ell'} \approx N^{-3} $.
		\item For the case $\abs{\{i_1,j_1\} \cap \{i_2,j_2 \}}=2$, the contribution is upper bounded by $\hh^8 \cdot N^2 \cdot N^{2(\ell'-1)} \cdot N^{-2\ell'} \approx N^{-2} $.
	\end{itemize}

	The term $D_p^{\ell'}$ is more delicate in a similar fashion of $D_\gc$.
	\begin{align*}
		\norm{D_p^{\ell'}}_2^2 = \E \gz_N^4 \hh^8 \sum_{\substack{i_1<j_1, \\i_2<j_2.}} \sum_{\substack{p_1'\neq p_2' \in \sP_p^{\ell'-1}(i_1,j_1), \\p_3'\neq p_4' \in \sP_p^{\ell'-1}(i_2,j_2) }} \prod_{s=1}^4 \go(p_i').
	\end{align*}
	For the case $\abs{\{i_1,j_1\} \cap \{i_2,j_2 \}}=0$, we have to use the fact $p_1' \neq p_2'$, $p_3' \neq p_4'$. Again, due to the triangular structures, there must be at least 1 vertex, where the path $p_1'$ and $p_2'$ branch. To get nonzero contribution, that vertex must also be used in paths $p_3',p_4'$. This decrease the number of freedom to choose the paths $p_3',p_4'$. The other cases $\abs{\{i_1,j_1\} \cap \{i_2,j_2 \}}\ge 1$ can be easily controlled as before. Thus
	\begin{itemize}
		\item For the case $\abs{\{i_1,j_1\} \cap \{i_2,j_2 \}}=0$, the contribution is upper bounded by $\hh^8 \cdot N^4 \cdot N^{2(\ell'-3)} \cdot N^{-2\ell'} \approx N^{-1} $.
		\item For the case $\abs{\{i_1,j_1\} \cap \{i_2,j_2 \}}=1$, the contribution is upper bounded by $\hh^8 \cdot N^3 \cdot N^{2(\ell'-2)} \cdot N^{-2\ell'} \approx N^{-2} $.
		\item For the case $\abs{\{i_1,j_1\} \cap \{i_2,j_2 \}}=2$, the contribution is upper bounded by $\hh^8 \cdot N^2 \cdot N^{2(\ell'-1)} \cdot N^{-2\ell'} \approx N^{-2} $.
	\end{itemize}
	For $B_p^{\ell'}$, the proof is almost same as in Lemma~\ref{lem:l2norm} by using the independence property and fourth moment assumption.
\end{proof}

Now we prove the absolute third moment in Lemma~\ref{lem:path3bd}.
\subsubsection{Proof of Lemma~\ref{lem:path3bd}}\label{pf:path3bd}
Recall that, $\gD P_N^{\ell} = \hh^2(\go_{ij}'- \go_{ij})\sum_{p' \in \sP_p^{\ell'-1}(i,j)}\go(p')$. By H\"older's inequality,
\begin{align*}
	\E \abs{\gD P_N^{\ell}}^3
	 & \le 8\hh^{6} (\E \go_{e}^6)^{1/4} \biggl(\E \bigl|\sum_{p'\in \sP_p^{\ell'-1}(i,j)}\go(p')\bigr|^4 \biggr)^{3/4} \\
	 & \approx N^{-3/2-3/4} \cdot (N^{2\ell-2} \cdot N^{-2\ell+2})^{3/4} = N^{-9/4}.
\end{align*}
The quantitative control of the sum over $\sP_p^{\ell'-1}(i,j)$ is based on similar ideas in Lemma~\ref{lem:l2-path}. 

\subsection{Proofs for Mixed Products}

In this part, we include the proof details for Lemma~\ref{lem:mix2bd} and Lemma~\ref{lem:mix3bd}.

\subsubsection{Proof of Lemma~\ref{lem:mix2bd}}\label{pf:mix2bd}

First, for the co-variance bound between loops and paths, we start splitting the conditional second moment as 

\begin{align*}
	\E (\gD L_N^{\ell} P_N^{\ell'}\mid \mvW)
	 & = \E \biggl( (\go' -\go_{ij})^2 \hh^2 \sum_{p\in \sP_\gc^{\ell-1}(i,j)}\sum_{p' \in \sP_p^{\ell'-1}(i,j)} \go(p) \go(p') \,\biggl|\, \mvW\biggr) \\
	 & = \frac{1}{ N_2 } \E \biggl( \sum_{i<j} \hh^2(\gz_N^2 + \go_{ij}^2)\sum_{p\in \sP_\gc^{\ell-1}(i,j)}\sum_{p' \in \sP_p^{\ell'-1}(i,j)} \go(p) \go(p') \,\biggl|\, \mvW\biggr) \\
	 & := \frac{1}{ N_2 }\E(A_{\gc p}^{\ell\ell'} + 2B_{\gc p}^{\ell\ell'} \mid \mvW)
\end{align*}
where
\begin{align*}
	A_{\gc p}^{\ell\ell'} & = \sum_{i<j} \hh^2( \go_{ij}^2 -\gz_N^2) \sum_{p\in \sP_\gc^{\ell-1}(i,j)}\sum_{p' \in \sP_p^{\ell'-1}(i,j)} \go(p) \go(p') \\
	B_{\gc p}^{\ell\ell'} & =\sum_{i<j}\gz_N^2\hh^2 \sum_{p\in \sP_\gc^{\ell-1}(i,j)}\sum_{p' \in \sP_p^{\ell'-1}(i,j)} \go(p) \go(p').
\end{align*}

Since in this case, the structure of $p$ and $p'$ must be different, we split the conditional covariance into two terms. Next we prove that the $L^2$ norm is small.

It is clear that $\E(\gD P_N^{\ell'} \gD L_N^{\ell}) = 0$, since paths and loops have different structure with at least one free edge.

\begin{lem}\label{lem:l2-cross}
	We have
	\begin{align*}
		\norm{A_{\gc p}^{\ell\ell'}}_2^2 \le N^{-2}\text{ and }
		\norm{B_{\gc p}^{\ell\ell'}}_2^2 & \le N^{-1}
	\end{align*}
\end{lem}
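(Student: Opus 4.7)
The plan is to mirror the combinatorial strategy employed in the proofs of Lemma~\ref{lem:l2norm} and Lemma~\ref{lem:l2-path}, with one new input: in the regime $\ga \ge 1/4$ one has $\hh^{4} \lesssim N^{-1}$, which is exactly what is needed to absorb the extra $N$-power produced by a path-gap compared to a loop-gap.

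First I would expand the squared norm as a sum over two anchor edges $e_r = (i_r,j_r)$, two loop-gaps $p_r \in \sP_\gc^{\ell-1}(i_r,j_r)$, and two path-gaps $p_r' \in \sP_p^{\ell'-1}(i_r,j_r)$ for $r = 1,2$:
\begin{align*}
\norm{A_{\gc p}^{\ell\ell'}}_2^2 = \hh^4 \sum_{\substack{i_1<j_1\\ i_2<j_2}} \sum_{p_1, p_1', p_2, p_2'} \E\bigl[\xi_{i_1 j_1}\, \xi_{i_2 j_2}\, \go(p_1)\go(p_1')\go(p_2)\go(p_2')\bigr].
\end{align*}
By independence and symmetry of $(\go_e)_{e\in\cE_N}$, only configurations in which every $\go_e$ appears to an even power survive the expectation. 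Following the template of Lemmas~\ref{lem:l2norm} and~\ref{lem:l2-path}, I would split the sum according to $|\{i_1,j_1\} \cap \{i_2,j_2\}| \in \{0,1,2\}$. In each case, count the free vertex degrees of freedom against the moment factors $\gz_N^{2k}$, using that a path-gap of length $\ell'-1$ contributes one extra free vertex than a loop-gap of length $\ell-1$ (a path of length $\ell'$ has $\ell'+1$ vertices, versus $\ell$ vertices for a loop of length $\ell$). The $\xi$-factors enforce the further constraint that each anchor edge $e_r$ must be covered at least twice elsewhere (or the two anchors must coincide). Combining these, the worst-case contribution is of order $N^{-2}$, giving the first bound.

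For $\norm{B_{\gc p}^{\ell\ell'}}_2^2$ the analysis is identical with $\gz_N^4 \asymp N^{-2}$ playing the role of $\E \xi^2 \asymp N^{-2}$, but crucially \emph{without} the $\xi$-constraint forcing the anchor edges to be covered. The main obstacle, and the only place where the bound is genuinely weaker, is the case $|\{i_1,j_1\} \cap \{i_2,j_2\}| = 0$: now the two anchors may be chosen almost independently, gaining an extra factor of $N^2$ over the corresponding $A$-analysis. Exactly as for the terms $D_\gc^\ell$ in Lemma~\ref{lem:l2norm} and $D_p^{\ell'}$ in Lemma~\ref{lem:l2-path}, this freedom is only partly compensated: the edge-parity condition forces a branching-compatibility between the pairs $(p_1,p_1')$ and $(p_2,p_2')$, which costs at least one vertex degree of freedom (but not two). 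Summing the cases yields $\norm{B_{\gc p}^{\ell\ell'}}_2^2 \lesssim N^{-1}$, matching the statement.
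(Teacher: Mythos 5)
Your proposal is correct and follows essentially the same route as the paper's proof: expand the squared norm as a sum over two anchor edges and their attached loop-gaps and path-gaps, split according to the overlap $\abs{\{i_1,j_1\}\cap\{i_2,j_2\}}$, use the centering of $\xi_{ij}$ to force the anchor edges to be covered in the $A$-term, and for the $B$-term with disjoint anchors invoke the branching-of-$p_1$-from-$p_1'$ argument exactly as in $D_\gc^\ell$ and $D_p^{\ell'}$ to recover one lost vertex degree of freedom, with $\hh^4\lesssim N^{-1}$ absorbing the extra free vertex a path-gap has over a loop-gap. The only cosmetic difference is that the paper states the pairing constraint more explicitly (loop-gaps must pair with loop-gaps and path-gaps with path-gaps, since their boundaries differ), which your even-multiplicity remark subsumes.
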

\begin{proof}[Proof of Lemma~\ref{lem:l2-cross}]
	For $A_{\gc p}^{\ell\ell'}$,
	\begin{align}
		\norm{A_{\gc p}^{\ell\ell'}}_2^2 = \E \sum_{\substack{i_1<j_1, \\i_2<j_2}} \hh^4 \xi_{i_1,j_1} \xi_{i_2,j_2} \sum_{\substack{p_1 \in \sP_{\gc}^{\ell-1}(i_1,j_1),\\ p_2 \in \sP_{\gc}^{\ell-1}(i_2,j_2)}}\sum_{\substack{p_1' \in \sP_p^{\ell'-1}(i_1,j_1),\\ p_2' \in \sP_p^{\ell'-1}(i_2,j_2)}}\go(p_1)\go(p_2)\go(p_1')\go(p_2').
	\end{align}
	Since paths and loops have different structures, one can only pair path (loop) and path (loop) to form a nonzero contributions. Thus for $\abs{\{i_1,j_1\} \cap \{i_2,j_2 \}} \le 1$, $p_1\setminus e_2$ must share same edges as $p_2 \setminus e_1$, and similarly for $p_1',p_2'$. The total contribution in this case is bounded by $N^3 \cdot N^{-1} \cdot N^{-4} \cdot N^{\ell-3} N^{\ell-2} N^{-2(\ell-2)} = N^{-3}$. For $\abs{\{i_1,j_1\} \cap \{i_2,j_2 \}} =2$, the contribution is $N^2 \cdot N^{-3} \cdot N^{(\ell-2)+(\ell-1)} \cdot N^{-2(\ell-1)}= N^{-2}$.

	For $B_{\gc p}^{\ell\ell'}$,
	\begin{align}
		\norm{B_{\gc p}}_2^2 = \E 4 \gz_N^4 \hh^4\sum_{\substack{i_1<j_1, \\ i_2< j_2}}\sum_{\substack{p_1 \in \sP_{\gc}^{\ell-1}(i_1,j_1),\\ p_2 \in \sP_{\gc}^{\ell-1}(i_2,j_2)}}\sum_{\substack{p_1' \in \sP_p^{\ell'-1}(i_1,j_1),\\ p_2' \in \sP_p^{\ell'-1}(i_2,j_2)}}\go(p_1)\go(p_2)\go(p_1')\go(p_2').
	\end{align}

	As we did in the proof of Lemma~\ref{lem:l2norm} for $D_{\gc}^{\ell}$, when $\abs{\{i_1,j_1\} \cap \{i_2,j_2 \}} \ge 1$, the contribution is bounded by $N^{-3} \cdot N^{-2(\ell-1)} \cdot N^3 \cdot N^{(\ell-2)+(\ell-1)} = N^{-1}$. However, for the case $\abs{\{i_1,j_1\} \cap \{i_2,j_2 \}} =0$, where the number of ways to choose $i_1,j_1,i_2,j_2$ is around $N^4$, which leads to a constant bound for $B_{\gc p}$. Therefore, we have to use the fact that $p_1 \neq p_1'$, this implies that there exist at least 1 vertex where $p_1$ and $p_1'$ branches, \ie not sharing edges starting at that point. To obtain nonzero contribution, that vertex must also appear in $p_2,p_2'$. This decreases the total number of freedom at least by 1. Therefore, the overall contribution is still upper bounded by $N^{-1}$.
\end{proof}

Similarly, we expand the term $\E(\gD L_N^\ell \gD Q_N)$ and give the proof of $L^2$ bound. The case for path cluster $P_N^{\ell'}$ with $Q_N$ can be carried out with minor modification.

\begin{align*}
	\E(\gD L_N^\ell \gD Q_N\mid \mvW)
	 & = \E \biggl( (\go' -\go_{IJ}) (x'-x_{IJ}) \sum_{p \in \sP_\gc^{\ell-1}(I,J)} \go(p) \,\biggl|\, \mvW\biggr) \\
	 & = \frac{1}{ N_2 } \E \biggl( \sum_{i<j} \go_{ij}x_{ij} \sum_{p \in \sP_\gc^{\ell-1}(i,j)} \go(p)\,\biggl|\, \mvW\biggr) .
\end{align*}
It is easy to check that $\E (\gD L_N^{\ell} \gD Q_N) = 0$.
Then we derive the covariance bound.
\begin{lem}\label{lem:loop-ind-cov}
	We have
	\begin{align}
		\frac 1 \gl \sqrt{\var \E(\gD L_N^{\ell} \gD Q_N\mid \mvW)} \lesssim N^{-1}.
	\end{align}
\end{lem}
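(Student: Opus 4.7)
The plan is to unfold the exchangeable pair into a sum over edges with mean-zero summands, reduce the claim to an $L^{2}$ estimate on that sum, and finally bound the latter by the same edge/path case-split used for $D_{\gc}^{\ell}$ in Lemma~\ref{lem:l2norm}.

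First I will compute the conditional expectation explicitly. Using $\E\go'=\E x'=\E\go'^{3}=0$ by symmetry of the disorder, the three cross-product terms $\go_{ij}x'$, $\go' x_{ij}$, $\go' x'$ all drop out, leaving
\begin{align*}
\E(\gD L_N^{\ell}\,\gD Q_N\mid \mvW)
=\frac{1}{N_{2}}\sum_{i<j}Y_{ij},\qquad
Y_{ij}:=\go_{ij}(\go_{ij}^{2}-\gz_{N}^{2})\sum_{p\in \sP_{\gc}^{\ell-1}(i,j)}\go(p).
\end{align*}
Since any $p\in \sP_{\gc}^{\ell-1}(i,j)$ with $\ell\ge 3$ is a simple path of length at least two between $i$ and $j$, it does not contain the edge $(i,j)$; hence $\go(p)$ is independent of $\go_{ij}$, and the identity $\E\go_{ij}(\go_{ij}^{2}-\gz_{N}^{2})=\E\go_{ij}^{3}-\gz_{N}^{2}\E\go_{ij}=0$ gives $\E Y_{ij}=0$. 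With $\gl\asymp N^{-2}$, the target inequality reduces to
\begin{align*}
\E\biggl(\sum_{i<j}Y_{ij}\biggr)^{2}\lesssim N^{-2}.
\end{align*}

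I would then split the double sum $\sum_{(i_{1},j_{1}),(i_{2},j_{2})}\E Y_{i_{1}j_{1}}Y_{i_{2}j_{2}}$ into a diagonal piece and an off-diagonal piece, in direct analogy with the treatment of $A_{\gc}^{\ell}$ and $D_{\gc}^{\ell}$ in Lemma~\ref{lem:l2norm}. On the diagonal the finite sixth moment gives $\E\go_{12}^{2}(\go_{12}^{2}-\gz_{N}^{2})^{2}\lesssim N^{-3}$, and the surviving pairings of $(p_{1},p_{2})\in \sP_{\gc}^{\ell-1}(i,j)^{2}$ are dominated by $p_{1}=p_{2}$, contributing $N^{\ell-2}\gz_{N}^{2(\ell-1)}\asymp N^{-1}$; summing over the $\asymp N^{2}$ diagonal edges yields the bound $O(N^{-2})$. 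For the off-diagonal contribution with distinct $e_{t}=(i_{t},j_{t})$, symmetry of the disorder forces $e_{1}\in p_{2}$ and $e_{2}\in p_{1}$ for a non-vanishing term, and each remaining edge of $p_{1}\triangle p_{2}$ must appear with even multiplicity. The dominant configurations correspond to $p_{1}\setminus\{e_{2}\}=p_{2}\setminus\{e_{1}\}$ as edge sets, which determines $p_{2}$ from $p_{1}$ up to $O(1)$ orientation choices. Splitting by $|\{i_{1},j_{1}\}\cap\{i_{2},j_{2}\}|\in\{0,1\}$, each subcase contributes $O(N^{\ell})$ configurations of combined weight $(\var\go_{12}^{2})^{2}\cdot\gz_{N}^{2(\ell-2)}\asymp N^{-\ell-2}$, for a total off-diagonal bound of $O(N^{-2})$.

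The main obstacle is the off-diagonal bookkeeping: one must verify rigorously that once $e_{1}\in p_{2}$ and $e_{2}\in p_{1}$ are enforced, the parity constraint on the remaining edges really does collapse the admissible $(p_{1},p_{2})$ to the near-diagonal family $p_{1}\setminus\{e_{2}\}=p_{2}\setminus\{e_{1}\}$, up to an $O(1)$ number of matchings. This is the same mechanism that controlled $D_{\gc}^{\ell}$ (where the constraint $p_{1}\neq p_{2}$ forced an extra branching vertex and saved one factor of $N$), so I expect that argument to transfer verbatim. Once the case analysis is complete, assembling the diagonal and off-diagonal contributions and dividing by $N_{2}^{2}\asymp N^{4}$ yields $\var\E(\gD L_N^{\ell}\,\gD Q_N\mid \mvW)\lesssim N^{-6}$, which is the claim.
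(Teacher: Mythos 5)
Your proposal is correct and follows the same overall route as the paper: unfold the exchangeable pair to get
\begin{align*}
\E(\gD L_N^{\ell}\gD Q_N\mid \mvW)=\frac{1}{N_2}\sum_{i<j}\go_{ij}x_{ij}\sum_{p\in\sP_\gc^{\ell-1}(i,j)}\go(p),
\end{align*}
observe this is centered, and bound its second moment via a diagonal/off-diagonal edge split. Where you differ is that you are more careful than the paper at the off-diagonal step. The paper asserts that because $\E\go_{ij}x_{ij}=0$, the terms with $(i_1,j_1)\neq(i_2,j_2)$ contribute nothing; that reasoning only kills the sub-case in which $e_1\notin p_2$ and $e_2\notin p_1$. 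When $e_1\in p_2$ and $e_2\in p_1$ (with the parity constraint forcing $p_1\setminus\{e_2\}=p_2\setminus\{e_1\}$, so the data is a single $\ell$-cycle with a marked ordered pair of its edges), the factorization gives $\E[\go_{e_t}^2x_{e_t}]=\var(\go_{12}^2)\asymp N^{-2}$ at each of $e_1,e_2$, and the $\asymp N^\ell$ configurations times weight $\asymp N^{-\ell-2}$ yields an off-diagonal contribution of $\asymp N^{-2}$ before dividing by $N_2^2$ --- i.e., the \emph{same} order $N^{-6}$ as the diagonal. Your proposal identifies and bounds exactly this piece, so it actually repairs a minor gap in the paper's justification while reaching the same final estimate $\var\E(\gD L_N^\ell\gD Q_N\mid\mvW)\lesssim N^{-6}$ and hence $\gl^{-1}\sqrt{\var\E(\cdots)}\lesssim N^{-1}$. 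One small remark: the case split on $|\{i_1,j_1\}\cap\{i_2,j_2\}|$ is not really needed --- once the parity constraint pins $(p_1,p_2)$ to an $\ell$-cycle with two marked edges, a single $O(N^\ell)$ count covers both intersection patterns.
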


\begin{proof}
	We have
	\begin{align*}
		\var \E(\gD L_N^{\ell} \gD Q_N\mid \mvW)
		 & = \frac{1}{ N_2^2} \E \sum_{\substack{i_1<j_1, \\i_2<j_2}} \go_{i_1,j_1} \go_{i_2,j_2} x_{i_1,j_1} x_{i_2,j_2}\sum_{\substack{p_1\in \sP_\gc^{\ell-1}(i_1,j_1), \\ p_2\in \sP_\gc^{\ell-1}(i_2,j_2)}} \go(p_1) \go(p_2) \\
		 & = \frac{1}{ N_2 ^2} \E \sum_{i<j} \go_{ij}^2 x_{ij}^2 \sum_{ p\in \sP_\gc^{\ell-1}(i,j)} \go(p)^2 \approx N^{-6}.
	\end{align*}
	In the third equality, we used the fact that with $\E \go_{ij}x_{ij} = 0$, the case of different edges $(i_1,j_1)$ and $(i_2,j_2)$ will give zero contribution. We also implicitly use the fact $\E(\go_{ij}^2) \approx N^{-1}$ and $\E(x_{ij}^2) \approx N^{-2}$, to complete the proof.
\end{proof}

\begin{rem}
	The analysis of the co-variance $\var \E(\gD P_N^{\ell'} \gD Q_N\mid \mvW)$ is nearly same as the Lemma~\ref{lem:loop-ind-cov}. The only difference is that in the path case, the number of ways to choose a path $p' \in \sP_p^{\ell'-1}(i,j)$ such that $(i,j) \cup p'$ is a path of length $\ell'$ is $N^{\ell'-1}$. In loop case, we have already seen it's $N^{\ell-2}$. However, due to the effect of weak external field $h$, the order of co-variance will still be $N^{-6}$.
\end{rem} 

\subsubsection{Proof of Lemma~\ref{lem:mix3bd}}\label{pf:mix3bd}

Now we prove the mixed third moments bound in Lemma~\ref{lem:mix3bd}.
Using the fact that $k_1+k_2+k_3 =3$ and H\"older's inequality we get
\begin{align*}
	\E \bigl|(\gD L_N^{\ell})^{k_1}(\gD P_N^{\ell'})^{k_2}(\gD Q_N)^{k_3} \bigr|
	\le \max(\E|(\gD L_N^{\ell})|^{3}, \E|(\gD P_N^{\ell'})|^{3},\E|(\gD Q_N)|^{3}).
\end{align*}
Combining the individual bounds we get the result.

\section{Extension of Cluster Expansion to Other Models}\label{sec:extension}

This section discusses how the cluster-based approach can be easily extended to some other spin glass models to obtain similar fluctuation results under weak external fields. We mainly focus on the bipartite SK model and the diluted SK model.

\subsection{Bipartite SK model}\label{sec:bsk}
The bipartite SK model has received much interest in the past ten years, mainly due to some peculiar features displayed in this model, such that most of the classical proofs do not work anymore. For example, rigorously computing the limiting free energy at low temperature is still in mystery, and the classical sub-additive argument for proving the infinite volume limit is broken. For various aspects of this model, we refer readers to~\cites{DW20,Mou20,BSS19,BGG11,BGGPT14} and references therein. We further point out that the bipartite model can be realized as an instance of the MSK model, see~\cite{DW20}.

Let $N=N_1+N_2$ with $N_{1}/N\to p_{1}, N_{2}/N\to p_{2}$ as $N\to \infty$ for some fixed numbers $p_{1},p_{2}>0$ with $p_{1}+p_{2}=1$. For $(\mvgs, \mvgt) \in \{-1,+1\}^{N_1} \times \{-1,+1\}^{N_2}$, the Hamiltonian of the bipartite model is given by
\begin{align}
	H_{N_1,N_2}(\mvgs,\mvgt) := \frac{\gb}{\sqrt{N}} \sum_{i=1}^{N_1} \sum_{j=1}^{N_2} J_{ij} \gs_i \gt_j + h \sum_{i=1}^{N_1} \gs_i +h \sum_{j=1}^{N_2} \gt_i,
\end{align}
where $J_{ij}$ represent disorders. As in the previous section, we can assume that it has a symmetric distribution with $\E J_{ij}=0, \E J_{ij}^2 =1$ and finite $(4+\eps)$--th moment for some $\eps>0$. The fluctuation results were obtained in~\cite{DW20} for $h>0$ and in~\cite{Liu21} for $h=0$. For $h=h_{N}$, our main goal is to illustrate how the cluster based approach can be easily extended to the current setting, which gives the fluctuation results under weak external fields.

The partition function now has the following form
\begin{align*}
	Z_{N_1,N_2} := \sum_{(\mvgs,\mvgt)} \exp\left(\frac{\gb}{\sqrt{N}} \sum_{i=1}^{N_1} \sum_{j=1}^{N_2} J_{ij} \gs_i \gt_j + h \sum_{i=1}^{N_1} \gs_i +h \sum_{j=1}^{N_2} \gt_i\right).
\end{align*}
Similar decomposition of $Z_{N_1,N_2}$ can be derived,
\begin{align*}
	Z_{N_1,N_2}= (2\cosh h)^{N}\cdot \bar{Z}_{N_1,N_2} \cdot \hat{Z}_{N_1,N_2},
\end{align*}
where
\begin{align*}
	\bar{Z}_{N_1,N_2} = \prod_{e \in \cE_{N_1,N_2}} \cosh\left({\gb J_e}/{\sqrt{N}}\right)\text{ and } \hat{Z}_{N_1,N_2}:=\sum_{\gC \subseteq \cE_{N_1,N_2}}\hh^{\abs{\partial \gC}} \cdot \go(\gC).
\end{align*}
Compared to the SK model, the main difference lies in $\hat{Z}_{N_1,N_2}$, where counting cycles and paths in the complete bipartite graph $\cE_{N_1,N_2}$ is different but still tractable.

The edges will be present in an alternating way. The loop lengths are all even and for the path cluster the endpoints could belong to different parts. This counting needs a bit more careful analysis.

Similar to the analysis in Section~\ref{sec:stein} for SK model,
there are three terms contributing to the fluctuation for the log-partition function. Define $\go_{e}:=\tanh(\gb J_{ij}/\sqrt{N})$ for an edge $e=(i,j)\in \cE_{N_1,N_2}$. We define the following.
\begin{itemize}\setlength{\itemsep}{1em}
	\item {\bf Independent Sum:} We define
	 \begin{align*}
		 Q_N:=\sum_{i=1}^{N_1}\sum_{j=1}^{N_2} (\go_{ij}^{2}-\gz_{N}^{2}).
	 \end{align*}
	 where $\gz_{N}^{2} = \E \go_{11}^{2}$.

	\item {\bf Loop Cluster:} For $\ell\ge 2$, we define
	 \begin{align*}
		 L_N^\ell:=\sum_{|\gc|=2\ell} \go(\gc)
	 \end{align*}
	 as the sum of weights for all loops of length $2\ell$ in the complete bipartite graph $K_{N_1,N_2}$. Note that, here all loops are of even size.

	\item {\bf Path Cluster:} For $\ell\ge 1$, we define
	 \begin{align*}
		 P_N^\ell:=\hh^2\sum_{|p|=\ell} \go(p)
	 \end{align*}
	 as the sum of weights for all paths of length $\ell$ in the complete graph $K_N$.
\end{itemize}

It is easy to see that $\E Q_{N}=\E L_{N}^{\ell}=\E P_{N}^{\ell'}=0$ for all $\ell,\ell'$.
We have
\begin{align*}
	\var(Q_N) & = N_{1}N_2\var(\go_{12}^{2}) \to {\gb^4p_1p_2}\var(J_{12}^{2}), \\
	\var(L_{N}^{\ell}) & = \frac{(N_1)_{\ell}(N_2)_{\ell}}{2\ell}\cdot \gz_{N}^{2\ell}\to \frac{\gb^{4\ell} p_1^\ell p_2^\ell}{2\ell} \\
	\var(P_{N}^{2\ell'}) & = \hh^4\cdot \frac{(N_1)_{\ell'}(N_2)_{\ell'+1} +(N_1)_{\ell'+1}(N_2)_{\ell'}}{2}\cdot \gz_{N}^{2\ell} \to \rho^4 \cdot \frac{\gb^{4\ell'} p_1^{\ell'}p_2^{\ell'} }{2} \\
	\text{and }
	\var(P_{N}^{2\ell'-1}) & = \hh^4\cdot (N_1)_{\ell'}(N_2)_{\ell'}\cdot \gz_{N}^{2\ell'-1}
	\to \rho^4\cdot \gb^{2(2\ell'-1)} p_1^{\ell'}p_2^{\ell'}
\end{align*}
for $\ell,\ell'\ge 1$, as $N\to\infty$. Moreover, these random variables are uncorrelated with each other by symmetry of $\go_{ij}$. Summing, we can see that
\begin{align*}
	\text{all the loops contribute } & -\frac12\log(1-\gb^4p_1p_2) - \frac12\gb^4p_1p_2, \\
	\text{all the even length paths contribute } & +\frac12\rho^4\cdot \frac{\gb^4p_1p_2}{1-\gb^4p_1p_2}, \\
	\text{and all the odd length paths contribute } & +\rho^4\cdot \frac{\gb^2p_1p_2}{1-\gb^4p_1p_2}
\end{align*}
to the variance.
With a similar decomposition of the log-partition as in~\eqref{eq:cluster-decomp} obtained (as done to Section~\ref{sec:cluster}), the rest is the same as the proof in Section~\ref{sec:stein}.

\subsection{Diluted SK Model}\label{sec:dsk}

Instead of enabling all interactions among spins, the diluted version of the SK model is now defined on a random graph. The spins interact with a random number of spins. Specifically, we introduce the following SK model defined on a Bernoulli random graph. For a configuration $\mvgs \in \{ -1,+1\}^N$,

\begin{align*}
	H_N(\mvgs)= \sum_{i<j} \eps_{ij}^{(N)} J_{ij} \gs_i \gs_j + h \sum_{i} \gs_i,
\end{align*}
where $\eps_{ij}^{(N)} \sim \text{ Bernoulli}(p/N)$ are i.i.d.~for some $p>0$. It is clear that the average degree of the underlying graph is around $p$, which is fundamentally different from the SK model where the degree is $N$. The disorder $J_{ij}$ has i.i.d.~symmetric distribution as before, and all random variables are independent. In~\cite{Kos06}, H\"olger applied the cluster expansion approach to this model with zero external field and obtained fluctuation results.

Similar decomposition of the partition function $Z_N(\gb,h)$ with $h=0$ gives:
\begin{align*}
	\bar{Z}_{N}(\gb,0) & = \prod_{i<j} \cosh(\gb \eps_{ij}^{(N)}J_{ij})
\end{align*}
and
\begin{align*}
	\hat{Z}_{N}(\gb,0) & = \E_{\mvgs} \prod_{i<j}(1+\gs_i\gs_j \tanh(\gb \eps_{ij}^{(N)} J_{ij})) = \sum_{\substack{\gC \subseteq \cE_N, \abs{\partial \gC}=0}} \prod_{e \in \gC} \tanh(\gb J_e)\cdot \eps_{e}^{(N)}
\end{align*}

The main theorem in~\cite{Kos06} is as follows.
\begin{thm}[\cite{Kos06}*{Theorem 1.1}]\label{thm:dilu-noh}
	For each $k\ge 3$, let $\tilde{Q}_k$ be the distribution of the random variable
	\begin{align*}
		\log \left(1+ \prod_{i=1}^k\tanh(\gb J_{i}) \right),
	\end{align*}
	where $J_i$'s are i.i.d.~random variables with same distribution as the disorder. Let $L_k, k\ge 3$ be independent compound Poisson distributed random variables with Poisson parameter $p^k/2k$ and compounding distribution $\tilde{Q}_k$, respectively. Further assume that
	\begin{align}\label{eq:dilu-temp}
		\hat{p}:=p \E \tanh^2(\gb J) <1.
	\end{align}
	Then
	\begin{align*}
		\log \hat{Z}_{N}(\gb,0)-N\log2 \implies \sum_{k\ge 3}L_{k} \text{ in distribution } \text{ as } N \to \infty.
	\end{align*}
\end{thm}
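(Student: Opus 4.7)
The plan is to run the cluster expansion used earlier in the paper, adapted to the sparse geometry of the Erd\H{o}s--R\'enyi graph $G(N,p/N)$. Start from the identity
\[ \hat Z_N(\gb,0) = \sum_{\gC\subseteq\cE_N,\,\abs{\partial\gC}=0}\prod_{e\in\gC}\eps_e^{(N)}\tanh(\gb J_e), \]
and use the fact that every even--degree subgraph decomposes uniquely into edge--disjoint simple cycles (Euler). Write $\eps(\gc):=\prod_{e\in\gc}\eps_e^{(N)}$ and $J(\gc):=\prod_{e\in\gc}\tanh(\gb J_e)$ for a simple cycle $\gc$, and let $\cC_N^k$ denote the collection of simple $k$--cycles in $K_N$. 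The target approximate identity becomes
\[ \hat Z_N(\gb,0) \;\approx\; \prod_{k=3}^{m}\prod_{\gc\in\cC_N^k}\bigl(1+\eps(\gc)J(\gc)\bigr) \]
up to an $L^2$--negligible error, which on taking logarithms gives, since $\eps(\gc)\in\{0,1\}$,
\[ \log \hat Z_N(\gb,0) \;\approx\; \sum_{k=3}^{m}\sum_{\gc\in\cC_N^k}\eps(\gc)\log\bigl(1+J(\gc)\bigr). \]

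I would then produce the limit in three sub--steps. First, establish the truncation: the $L^2$ tail over $\gC$ with $\abs{\gC}>m$ or $\gC$ of non--simple--cycle type is controlled by $(\hat p)^{m/2}$ times a polynomial combinatorial factor, using $\E[\eps_e(\tanh\gb J_e)^2]=\hat p/N$ and bounding embeddings of a given multigraph into $K_N$ by $N^{\abs{V(\gC)}}$. The subcriticality hypothesis $\hat p<1$ makes both the long--cycle tail and the contribution of cycle unions sharing edges (each shared edge gains an extra $N^{-1}$ factor) sum to something vanishing as $m\to\infty$, uniformly in $N$. Second, for each fixed $k$, the indicators $\{\eps(\gc):\gc\in\cC_N^k\}$ have mean $(p/N)^k$ over a family of size $(N)_k/(2k)$, so the present--cycle count converges to Poisson$(p^k/(2k))$ by Chen--Stein, whose dissociation inequality bounds pair probabilities of edge--intersecting cycles by $N^{-1}$ smaller terms. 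Joint convergence across $k=3,\ldots,m$ to independent Poisson processes follows in the same way. Third, given $\eps(\gc)=1$, the edge disorders $(J_e)_{e\in\gc}$ remain i.i.d.\ copies of $J$, so $\log(1+J(\gc))$ has law $\tilde Q_k$, and the disorders on edge--disjoint present cycles are independent. Combining, $\sum_{\gc\in\cC_N^k}\eps(\gc)\log(1+J(\gc))\Rightarrow L_k$ jointly in $k$, with the $L_k$ independent.

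The main obstacle I anticipate is the truncation/product approximation step: one must give a quantitative $L^2$ bound, uniform in $N$, on contributions from even--degree subgraphs that are either long simple cycles or unions of cycles sharing one or more edges. This is exactly where the assumption $\hat p<1$ enters non--trivially, playing the role of subcriticality of an associated branching process; without it the cluster expansion diverges and no compound Poisson limit can hold, mirroring the failure of the cluster approach in the super--critical weak--field regime discussed earlier in the paper. Once the truncation bound is in place, tightness in $m$ combined with a Slutsky argument lets one pass to $m=\infty$ because $\sum_{k\ge 3}L_k$ converges in distribution: using $\log^2(1+x)\le x^2$ for $x\in[-1,1]$ and $\E\tanh^{2k}(\gb J)\le(\E\tanh^2(\gb J))^k$, the total variance is bounded by $\sum_{k\ge 3}\hat p^k/(2k)<\infty$, and a matching estimate for the mean closes the argument.
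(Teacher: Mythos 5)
The paper does not prove this result; it is quoted verbatim as Theorem~1.1 of~\cite{Kos06} in Section~\ref{sec:dsk} and used as input, so there is no internal proof to compare against. Judged independently, your outline follows the natural cluster-expansion route: expand $\hat Z_N(\gb,0)$ over even subgraphs of $G(N,p/N)$, approximate by a truncated product over short simple cycles, bound the $L^2$ error using $\hat p<1$, obtain Poisson convergence of the short-cycle counts via Chen--Stein, and observe that conditionally the disorder weights on distinct present cycles are i.i.d.~with law $\tilde Q_k$.

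There is, however, a concrete error in your opening ``fact'': an even-degree graph does \emph{not} decompose uniquely into edge-disjoint simple cycles. For instance $K_5$ (all degrees $4$, ten edges) can be split either into two Hamiltonian $5$-cycles or into two triangles meeting at a vertex plus a $4$-cycle. Because of this, the product $\prod_{\gc}\bigl(1+\eps(\gc)J(\gc)\bigr)$ overcounts each even subgraph $\gC$ by the number $d(\gC)$ of its distinct cycle decompositions, and $\sum_{\gC}(d(\gC)-1)\eps(\gC)J(\gC)$ is a \emph{third} discrepancy class, separate from the long-cycle tail and the multigraph terms from edge-sharing cycle families that your truncation step does address. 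This third class is still $L^2$-negligible---non-unique decompositions require a vertex of degree at least $4$, hence $\abs{E(\gC)}\ge \abs{V(\gC)}+1$ and an extra factor $N^{-1}$ in the embedding count---but the argument must name and bound it explicitly rather than assume it away. A secondary gap: to conclude $\sum_{\gc}\eps(\gc)\log(1+J(\gc))\Rightarrow L_k$ with a clean compounding law you should first restrict to the high-probability event that the present $k$-cycles are pairwise edge-disjoint; otherwise the $J(\gc)$ attached to two overlapping present cycles are correlated and the compound Poisson structure holds only after showing this event is negligible. Both repairs are routine in the subcritical regime $\hat p<1$, and with them your sketch is consistent with the K\"osters result.
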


\begin{rem}
	The limiting distribution of $\log Z_N(\gb,0)$ in the high temperature regime~\eqref{eq:dilu-temp} is still Gaussian. Since, in the diluted SK model, the fluctuation order of $\hat{Z}_N(\gb,0)$ are now much smaller than $\bar{Z}_N(\gb,0)$. Thus the above compound Poisson distribution can not be seen in the limiting distribution of $ \log Z_N(\gb,0)$. This is in sharp contrast to the SK model. For details see~\cite{Kos06}*{Theorem 1.2}.
\end{rem}

Based on the analysis in previous sections, we know that the external field contributes to the partition function mainly through the object $\hat{Z}_N$. Therefore, we only need to figure out what are the analogous results of Theorem~\ref{thm:dilu-noh} after adding a weak external field. It turns out that our analysis can be easily adapted to this case. Here is the main theorem.

\begin{thm}\label{thm:dilu-h}
	Suppose~\eqref{eq:dilu-temp} holds, and $hN^{1/4}\to \rho \in[0,\infty)$. Then
	\begin{align*}
		\log (Z_N(\gb,h)) - N \log(2\cosh h) + v^{2}/2 \implies \sum_{k\ge 3}L_k + v\eta
	\end{align*}
	in distribution as $N \to \infty$,
	where $\eta\sim\N(0,1)$ is independent of $L_{k}$'s and $$v^2 := \frac{\rho^{4}\hat{p}}{2(1-\hat{p})}.$$
\end{thm}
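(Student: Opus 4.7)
The plan is to adapt the cluster-expansion framework of Sections~\ref{sec:cluster}--\ref{sec:stein} to the sparse-graph setting. Starting from
\[
Z_N(\gb,h)=(2\cosh h)^{N}\cdot \bar Z_N(\gb)\cdot \hat Z_N(\gb,h),
\]
with $\bar Z_N(\gb)$ independent of $h$, all new behavior is carried by $\hat Z_N(\gb,h)=\sum_{\gC\subseteq\cE_N}\hh^{|\partial\gC|}\go(\gC)$, where $\go(\gC)=\prod_{e\in\gC}\eps_e^{(N)}\tanh(\gb J_e)$ vanishes outside subgraphs of the Erd\H{o}s--R\'enyi graph $G(N,p/N)$. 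An adaptation of Lemmas~\ref{lem:big-err} and~\ref{lem:approx}, replacing the factor $\gb_N^2/N$ by $\gb_N^2\cdot p/N$ so that the effective sub-critical threshold becomes $\hat p = p\E\tanh^2(\gb J) < 1$, reduces $\hat Z_N(\gb,h)$ in $L^2$ to
\[
\prod_{|\gc|\le m}(1+\go(\gc))\cdot \prod_{|p|\le m}(1+\hh^2\go(p))
\]
for $m$ large, with $\gc$ ranging over simple cycles ($|\partial\gc|=0$) and $p$ over simple paths ($|\partial p|=2$).

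Taking logarithms and expanding $\log(1+x)=x-\tfrac12 x^2+O(|x|^3)$ splits the analysis into a cycle part $\sum_{|\gc|\le m}\log(1+\go(\gc))$ and a path part $\sum_{|p|\le m}\log(1+\hh^2\go(p))$. The cycle part, taken together with $\log\bar Z_N(\gb)$, is unaffected by the field and delivers the compound-Poisson limit $\sum_{k\ge 3}L_k$ of Theorem~\ref{thm:dilu-noh}. For the path part, a direct second-moment computation shows that
\[
\var(P_N^\ell)=\hh^4\cdot\tfrac12(N)_{\ell+1}\cdot(p/N)^\ell\cdot(\E\tanh^2(\gb J))^\ell\;\longrightarrow\;\rho^4\hat p^\ell/2,
\]
using $hN^{1/4}\to\rho$. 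Summing over $\ell\ge 1$ gives $v^2=\rho^4\hat p/(2(1-\hat p))$, finite by~\eqref{eq:dilu-temp}, while the quadratic correction $\tfrac12\sum_{|p|\le m}\hh^4\go(p)^2$ concentrates on its mean, which tends to $v^2/2$ as $m\to\infty$ and supplies the deterministic shift in the statement.

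The central limit theorem for the linear part $\sum_{\ell\le m}P_N^\ell$ then follows from an exchangeable-pair Stein argument in the spirit of Theorem~\ref{thm:stein-mvn}, obtained by resampling a uniformly chosen disorder pair $(\eps_e^{(N)},J_e)$. The variance and third-moment bounds from Section~\ref{sec:stein} carry over with each edge summation gaining an additional factor $p/N$ from Bernoulli sparsity, which cancels the extra $N$ in the path count relative to the loop count and yields $\sum_{\ell\le m}P_N^\ell\implies\N(0,v_m^2)$ with $v_m^2\uparrow v^2$.

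The main obstacle is the joint convergence, namely showing that the cycle and path contributions are asymptotically independent so that the compound-Poisson limit $\sum_{k\ge 3}L_k$ combines additively with the Gaussian limit $v\eta-v^2/2$. The cleanest route exploits the fact that, under $\hat p<1$, short cycles and short paths in $G(N,p/N)$ occupy vertex-disjoint subsets with probability tending to one; conditioning on this disjointness the cycle weights and the path weights involve disjoint collections of disorder variables $(\eps_e,J_e)$, so joint characteristic functions factorize and the two marginal limits upgrade to joint convergence of independent random variables. Sending $m\to\infty$ and controlling the truncation error via the Bernoulli-adapted analogue of Lemma~\ref{lem:big-err} then gives the claimed convergence.
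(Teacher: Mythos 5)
Your proposal follows the same overall route as the paper's sketch: decompose $\hat Z_N(\gb,h)$ into loop and path clusters, obtain the compound Poisson limit from the finitely many short loops, and apply a Stein exchangeable-pair argument to the path contribution to produce the Gaussian $v\eta-v^2/2$. The one technical step the paper singles out and you pass over is the centering of the Bernoulli indicators: writing $\prod_{e\in\pi}\eps_e^{(N)}=\sum_{S\subseteq\pi}(p/N)^{|\pi|-|S|}\prod_{e\in S}(\eps_e^{(N)}-p/N)$, showing by an $L^2$ estimate that only the fully centered term $\prod_{e\in\pi}(\eps_e^{(N)}-p/N)$ survives, and then running Stein on the i.i.d.\ mean-zero family $\tilde\go_{ij}=\tanh(\gb J_{ij})(\eps_{ij}^{(N)}-p/N)$ in exact analogy with Section~\ref{sec:stein}. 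Your direct resampling of the pair $(\eps_e,J_e)$ is not wrong (the product $\tanh(\gb J_e)\eps_e^{(N)}$ is itself mean zero with variance $\hat p/N$), but the centering is what makes the $L^2$ and moment bounds line up cleanly with the SK-case computations, so it is worth stating explicitly.

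Two further points need tightening. First, $\log\bar Z_N(\gb)=\sum_{i<j}\eps^{(N)}_{ij}\log\cosh(\gb J_{ij})$ is a sum of $\approx Np/2$ i.i.d.\ contributions and therefore has $\Theta(N)$ mean and $\Theta(\sqrt N)$ fluctuations; it does not combine with the loop cluster to give the compound Poisson. The compound Poisson limit in Theorem~\ref{thm:dilu-noh} is for $\log\hat Z_N$ alone, and the present theorem should be read as a statement about $\log\hat Z_N(\gb,h)$ (matching the paper's surrounding discussion, which emphasizes that the external field acts only through $\hat Z_N$), not about $\log Z_N$ with only $N\log(2\cosh h)$ subtracted. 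Second, your vertex-disjointness argument for asymptotic independence is too strong as stated: the cycles of fixed length are $O_P(1)$ in number, but there are $\approx Np^\ell$ surviving paths of length $\ell$, so paths and cycles certainly intersect. The correct statement is that the $O_P(1)$ cycle edges support an $o_P(1)$ fraction of the total path weight (paths through a fixed edge number $\approx N^{\ell-1}$ out of $\approx N^{\ell+1}$), so after removing those paths the loop and path families involve disjoint disorder and the joint characteristic functions factorize; the paper itself leaves this detail to the reader.
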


The independent sum of compound Poisson and Gaussian comes up for the same reason as before, \ie~the sum of path and loop contributions to $\hat{Z}_N(\gb,h)$. The average degree of loops is $1$, whereas the paths of length $k$ has average degree $k/(k+1)$. Thus the scaling of the loop cluster for a Poisson limit is too strong for the path cluster, which causes an averaging effect on the path contribution. In particular, expected number of paths of length $k$ in Erd\"os-R\'enyi random graph $G(N,p/N)$ is $\frac12(N)_{k+1}\cdot (p/N)^k \approx Np^k/2$, whereas for loops of length $k$ the expected number is $(N)_{k}/2k\cdot (p/N)^k \approx p^k/2k$. With the critical scaling from $\hh^2\approx \rho^2/\sqrt{N}$, this explains why the path contribution is Gaussian. The mean and variance for this part can be computed similarly as in SK model and bipartite SK model,
\begin{align*}
	\sum_{k\ge 1}\sum_{\pi\,:\,\abs{\pi}=k} \log (1+\hh^2 \cdot \go(\pi))\cdot \prod_{e\in \pi} \eps_e^{(N)}
	\implies &
	\N\left( -\frac12\sum_{k\ge 1} \frac{\rho^4\hat{p}^{k}}{2},\sum_{k\ge 1} \frac{\rho^4\hat{p}^{k}}{2}\right) \\
	 &
	=\N\left(-\frac12 v^2, v^2\right).
\end{align*}

To make the proof rigorous, the first step is to replace $\eps_e^{(N)}$ by $(\eps_e^{(N)}-p/N)+p/N$ and expand the product 
\[
\prod_{e\in \pi}\eps_e^{(N)} = \sum_{S\subseteq \pi}(p/N)^{|\pi|-|S|}\prod_{e\in S} (\eps_e^{(N)}-p/N).
\]
A simple variance computation shows that the non-zero contribution comes only from  $\prod_{e\in \pi} (\eps_e^{(N)}-p/N)$, product of i.i.d.~random variables with mean $0$ and variance $\approx p/N$. Now Stein's method can be applied to the random vector
\[
\left( \sum_{\pi\,:\,\abs{\pi}=k} \hh^2 \cdot \go(\pi)\cdot \prod_{e\in \pi} (\eps_e^{(N)}-p/N)\right)_{1\le k\le m}.
\]
Note that, the random variables $\tilde\go_{ij}:=\tanh(\gb J_{ij})(\eps_{ij}^{(N)}-p/N)$ are i.i.d.~with mean $0$, variance $p\E(\tanh^2(\gb J))/N=\hat{p}/N$ and this explains the restriction~\eqref{eq:dilu-temp} on $\hat{p}$. We leave the proof details to the interested readers. 

As far as we know, this is the first fluctuation result for diluted spin glass models in the presence of an external field. Most of the existing results~\cites{PT04, GT04, FL03} concerning diluted models are either restricted to the $h=0$ case or the first-order asymptotic problem. Theorem~\ref{thm:dilu-h} gives a more refined picture on how the external field affect the fluctuation of $\hat{Z}_N(\gb,h)$ when $hN^{1/4} \in [0,\infty)$, but the overall limit is still Gaussian due to the dominance of $\bar{Z}_N(\gb,h)$. It will be very interesting to see if the external field is very strong such that $\hat{Z}_N(\gb,h)$ can really compete with $\bar{Z}_N(\gb,h)$, then what is the limit distribution of $\log Z_N(\gb,h)$.

\section{Further Questions}\label{sec:open}
In this part, we present several open questions for future research.

\subsection{Cluster Expansion in the \supc~regime}
The essential part of this work is generalizing the cluster expansion approach to the spin glass models with a weak external field, where we found a new combinatorial structure contributing to the partition function, \ie~the path cluster. This gives a new perspective on how the external field affects the spin glass system. However, this approach can only work in the \subc~and \crit~ regimes; The central open question will be extending the current machinery to \supc~case. From Lemma~\ref{lem:big-err}, we can see that the difficult part in \supc~regime is the contribution from large graphs. Counting paths and loops in an infinite graph is intractable. Besides that, Stein's method will not work for infinite many paths and loops.   

\subsection{Fluctuations near critical temperature with external field}
The results in this article are mainly focused on the high temperature regime, \ie~ $\gb<\gb_c$. A natural question would be understanding what happens at $\gb = \gb_c$, but this is a significantly challenging question due to the transition to $\gb>\gb_c$ where the structure is very complicated. The correct fluctuation order is yet to be proved, and we invite interested readers to see the most recent progress in~\cite{CL19}. Instead of focusing exactly on $\gb_c$, one can let $\gb=\gb_N<\gb_c$ depending on $N$ and consider the regime where $\gb_N \to \gb_c$ as $N\to \infty$. In~\cite{Tal11b}*{Theorem 11.7.1}, when $h=0$, it is known that the high temperature behavior continues as long as $\gb_c- \gb_N \gg N^{-1/3}$. It will be interesting to investigate the behavior of the model near $\gb_c$ when the external field is present as the weak form in Assumption~\ref{ass:h}.

Besides that, another collection of interesting questions is applying the path and loop analysis to the models that behave quite differently from the classical SK model, such as Ising Perceptron, SK model with ferromagnetic interactions, etc. For those models, we believe that the cluster-based approach could still be adapted to study the weak external field problems, but possibly in a substantially different manner. The following text briefly introduces those models and presents some heuristics.

\subsection{Ising Perceptron Model}
Ising perceptron model~\cite{GD88} is a toy model of the single-layer neural network. There are several different definitions of this model in the literature. We sketch the following version, the so-called symmetric binary perceptron (SBP) model. Let $J$ be an $M \times N$ matrix with being i.i.d.~Bernoulli random variables taking $\pm 1$. For a fixed $\kappa>0$, the random half-spaces are defined by
\begin{align*}
	\textstyle S_{j}(J):= \left\{ \mvgs \in \gS_N: N^{-1/2} \abs{\sum_{i=1}^N J_{j,i}\gs_i} \le \kappa \right\}, \text{ for } j=1,2,\ldots, M.
\end{align*}
and the capacity by
\begin{align*}
	\textstyle \max \left\{M\ge 1: \bigcap_{j=1}^M S_j(J) \neq \emptyset \right\}.
\end{align*}
From a statistical physics perspective, in order to understand the capacity, it is important to first comprehend the partition function,
\begin{align*}
	\textstyle Z_{M,N}(J):= \abs{\bigcap_{j=1}^M S_j(J)} = 2^N\pr_0\left(\bigcap_{j=1}^M S_j(J)\right),
\end{align*}
where $\pr_0$ is the uniform measure on the $N$-dimensional hypercube. In~\cite{ALS21}, the authors used similar cycle counting techniques in~\cites{ALR87,Ban20} to derive a log-normal limit theorem for $Z_{M,N}(G)$. This limit theorem can be used to study many other properties of the model, which helps to confirm several conjectures.

We consider the following variant of the SBP model by adding an external field. Let $h$ be the weak external field as stated in the Assumption~\ref{ass:h}. Consider the following partition function
\begin{align*}
	\textstyle Z_{M,N}(J,h):= (2\cosh h)^N\cdot \pr_{h}\left(\bigcap_{j=1}^M S_j(J)\right),
\end{align*}
the probability measure $\pr_h$ now is tilted by $h$, and $\pr_h(\gs_i)= \tanh(h)$. By adapting the analysis in this article, a log-normal fluctuation of $Z_{M,N}(J,h)$ with different mean and variance under the weak external field is expected.
\subsection{SK Model with Ferromagnetic Interaction}

For $\mvgs \in \{-1,+1 \}^N$, the Hamiltonian of SK model with ferromagnetic interaction (SKFI) is
\begin{align}\label{eq:SKF}
	H_N^{FI}(\mvgs) = \frac{\gb}{\sqrt{N}} \sum_{i<j} J_{ij} \gs_i\gs_j + h \sum_{i=1}^N\gs_i + \frac{K}{N}\sum_{i<j} \gs_i\gs_j,
\end{align}
where $K>0$ is the coupling constant for the ferromagnetic interaction, the rest is the SK Hamiltonian. In the spherical setting with $h=0$, Baik and Lee~\cite{BL17} proved that depending on the values of $\gb, K$, there are three different regimes: spin glass regime, paramagnetic regime, and ferromagnetic regime. They also establish limit theorems in all those regimes. The technique is based on the tools from random matrix theory as in~\cites{BL16, BCLW21, BL18}. The problem becomes more challenging in the Ising spin case since the random matrix tools do not apply anymore. Banerjee~\cite{Ban20} uses a combinatorial approach to prove a CLT for the free energy in the paramagnetic regime. The technique is based on some cycle counting analysis. A collection of random variables named as signed cycles approximates the free energy. This is of the same nature as in~\cite{ALR87}. However, all the above results are for $h=0$; it will be interesting to investigate what happens if some weak external field is present in the SKFI model. Is there some notion of ``path" components contributing to the free energy? How does it affect the phase diagram of the model? 

\subsection{Diluted $p$-spin Model}
Although remarkable progress has been made on the mean-field spin glass theory in the past decades, a complete rigorous treatment of diluted spin glass models is still far less known. Most importantly, computing the low temperature asymptotic is still very challenging. A generalized Parisi Ansatz for the diluted model was proposed in~\cite{MP01}, some progress towards this Ansatz has been made in~\cites{Pan14di, Pan16di}. Besides that, there is nearly nothing known about the fluctuation of free energy in the diluted $p$-spin models at any temperature and any external field.

For $\mvgs \in \{-1,+1\}^N$, the diluted $p$-spin Hamiltonian is defined as
\begin{align*}
	H_N^{DP}(\mvgs)= \sum_{i_1<i_2<\cdots < i_p} \eps_{i_1i_2\cdots i_p}^{(N)} J_{i_1i_2\cdots i_p} \gs_{i_1} \gs_{i_2}\cdots \gs_{i_p},
\end{align*}
where $\eps_{i_1i_2\cdots i_p}^{(N)}$ are i.i.d Bernoulli. Compared to the diluted SK model in Section~\ref{sec:extension}, the difference is that the spin interaction involves $p>2$ spins. Applying the cluster based approach, one needs to count the number of ``cycles" and ``paths" in a random hypergraph, where the hyperedges are formed by $p$ vertices. Due to the scarcity of results, any progress will be interesting along this direction. However, counting random hypergraphs becomes a very different matter. 

\medskip
\noindent\textbf{Acknowledgements.} The authors would like to thank Daesung Kim, Kesav Krishnan and Greg Terlov for many stimulating discussions. 

\bibliography{wsk.bib}

@article {ALR87,
    AUTHOR = {Aizenman, M. and Lebowitz, J. L. and Ruelle, D.},
     TITLE = {Some rigorous results on the {S}herrington-{K}irkpatrick spin
              glass model},
   JOURNAL = {Comm. Math. Phys.},
  FJOURNAL = {Communications in Mathematical Physics},
    VOLUME = {112},
      YEAR = {1987},
    NUMBER = {1},
     PAGES = {3--20},
      ISSN = {0010-3616},
   MRCLASS = {82A57 (82A05)},
  MRNUMBER = {904135},
MRREVIEWER = {Aernout C. D. van Enter},
       URL = {http://projecteuclid.org.proxy2.library.illinois.edu/euclid.cmp/1104159806},
}

@article {CN95,
    AUTHOR = {Comets, F. and Neveu, J.},
     TITLE = {The {S}herrington-{K}irkpatrick model of spin glasses and
              stochastic calculus: the high temperature case},
   JOURNAL = {Comm. Math. Phys.},
  FJOURNAL = {Communications in Mathematical Physics},
    VOLUME = {166},
      YEAR = {1995},
    NUMBER = {3},
     PAGES = {549--564},
      ISSN = {0010-3616},
   MRCLASS = {82B44 (60H30 82D30)},
  MRNUMBER = {1312435},
MRREVIEWER = {Boguslaw Zegarlinski},
       URL = {http://projecteuclid.org.proxy2.library.illinois.edu/euclid.cmp/1104271703},
}

@article {Tin05,
    AUTHOR = {Tindel, Samy},
     TITLE = {On the stochastic calculus method for spins systems},
   JOURNAL = {Ann. Probab.},
  FJOURNAL = {The Annals of Probability},
    VOLUME = {33},
      YEAR = {2005},
    NUMBER = {2},
     PAGES = {561--581},
      ISSN = {0091-1798},
   MRCLASS = {60G15 (60H10)},
  MRNUMBER = {2123201},
MRREVIEWER = {Ingo Fahrner},
       DOI = {10.1214/009117904000000919},
       URL = {https://doi-org.proxy2.library.illinois.edu/10.1214/009117904000000919},
}

@book{Tal11b,
	Author = {Talagrand, Michel},
	Date-Added = {2020-05-13 19:37:04 -0500},
	Date-Modified = {2020-05-13 19:37:04 -0500},
	Isbn = {978-3-642-22252-8; 978-3-642-22253-5},
	Mrclass = {60F10 (60G15 60K35 82B44 82C32 92E10)},
	Mrnumber = {3024566},
	Mrreviewer = {Francis Comets},
	Note = {Advanced replica-symmetry and low temperature},
	Pages = {xii+629},
	Publisher = {Springer, Heidelberg},
	Series = {Ergebnisse der Mathematik und ihrer Grenzgebiete. 3. Folge. A Series of Modern Surveys in Mathematics},
	Title = {Mean field models for spin glasses. {V}olume {II}},
	Volume = {55},
	Year = {2011}}

@article {CDP17,
    AUTHOR = {Chen, Wei-Kuo and Dey, Partha and Panchenko, Dmitry},
     TITLE = {Fluctuations of the free energy in the mixed {$p$}-spin models
              with external field},
   JOURNAL = {Probab. Theory Related Fields},
  FJOURNAL = {Probability Theory and Related Fields},
    VOLUME = {168},
      YEAR = {2017},
    NUMBER = {1-2},
     PAGES = {41--53},
      ISSN = {0178-8051},
   MRCLASS = {60K35 (60F10 60G15 82B44)},
  MRNUMBER = {3651048},
       DOI = {10.1007/s00440-016-0705-5},
       URL = {https://doi-org.proxy2.library.illinois.edu/10.1007/s00440-016-0705-5},
}

@article {BL16,
    AUTHOR = {Baik, Jinho and Lee, Ji Oon},
     TITLE = {Fluctuations of the free energy of the spherical
              {S}herrington-{K}irkpatrick model},
   JOURNAL = {J. Stat. Phys.},
  FJOURNAL = {Journal of Statistical Physics},
    VOLUME = {165},
      YEAR = {2016},
    NUMBER = {2},
     PAGES = {185--224},
      ISSN = {0022-4715},
   MRCLASS = {82B44 (82D30)},
  MRNUMBER = {3554380},
       DOI = {10.1007/s10955-016-1610-0},
       URL = {https://doi-org.proxy2.library.illinois.edu/10.1007/s10955-016-1610-0},
}

@ARTICLE{BCLW21,
       author = {{Baik}, Jinho and {Collins-Woodfin}, Elizabeth and {Le Doussal}, Pierre and {Wu}, Hao},
        title = "{Spherical Spin Glass Model with External Field}",
      journal = {Journal of Statistical Physics},
     keywords = {Spin glass, Spherical Sherrington-Kirkpatrick, SSK, External field, Overlap, Susceptibility, Geometry of Gibbs measure, Condensed Matter - Disordered Systems and Neural Networks, Mathematical Physics, Mathematics - Probability},
         year = 2021,
        month = may,
       volume = {183},
       number = {2},
          eid = {31},
        pages = {31},
          doi = {10.1007/s10955-021-02757-7},
archivePrefix = {arXiv},
       eprint = {2010.06123},
 primaryClass = {cond-mat.dis-nn},
       adsurl = {https://ui.adsabs.harvard.edu/abs/2021JSP...183...31B},
      adsnote = {Provided by the SAO/NASA Astrophysics Data System}
}

@ARTICLE{LS20,
       author = {{Landon}, Benjamin and {Sosoe}, Philippe},
        title = "{Fluctuations of the 2-spin SSK model with magnetic field}",
      journal = {arXiv e-prints},
     keywords = {Mathematics - Probability, Mathematical Physics, 82D30, 60B20},
         year = 2020,
        month = sep,
          eid = {arXiv:2009.12514},
        pages = {arXiv:2009.12514},
archivePrefix = {arXiv},
       eprint = {2009.12514},
 primaryClass = {math.PR},
       adsurl = {https://ui.adsabs.harvard.edu/abs/2020arXiv200912514L},
      adsnote = {Provided by the SAO/NASA Astrophysics Data System}
}

@ARTICLE{BCNS21,
       author = {{Belius}, David and {{\v{C}}ern{\'y}}, Ji{\v{r}}{\'\i} and {Nakajima}, Shuta and {Schmidt}, Marius},
        title = "{Triviality of the geometry of mixed $p$-spin spherical Hamiltonians with external field}",
      journal = {arXiv e-prints},
     keywords = {Mathematics - Probability, 60G15 82B44 82D30},
         year = 2021,
        month = apr,
          eid = {arXiv:2104.06345},
        pages = {arXiv:2104.06345},
archivePrefix = {arXiv},
       eprint = {2104.06345},
 primaryClass = {math.PR},
       adsurl = {https://ui.adsabs.harvard.edu/abs/2021arXiv210406345B},
      adsnote = {Provided by the SAO/NASA Astrophysics Data System}
}

@article {Ch19,
    AUTHOR = {Chatterjee, Sourav},
     TITLE = {A general method for lower bounds on fluctuations of random
              variables},
   JOURNAL = {Ann. Probab.},
  FJOURNAL = {The Annals of Probability},
    VOLUME = {47},
      YEAR = {2019},
    NUMBER = {4},
     PAGES = {2140--2171},
      ISSN = {0091-1798},
   MRCLASS = {60E15 (60B20 60C05 60K35)},
  MRNUMBER = {3980917},
MRREVIEWER = {Fraser Alexander Daly},
       DOI = {10.1214/18-AOP1304},
       URL = {https://doi-org.proxy2.library.illinois.edu/10.1214/18-AOP1304},
}

@article {CL19,
    AUTHOR = {Chen, Wei-Kuo and Lam, Wai-Kit},
     TITLE = {Order of fluctuations of the free energy in the {SK} model at
              critical temperature},
   JOURNAL = {ALEA Lat. Am. J. Probab. Math. Stat.},
  FJOURNAL = {ALEA. Latin American Journal of Probability and Mathematical
              Statistics},
    VOLUME = {16},
      YEAR = {2019},
    NUMBER = {1},
     PAGES = {809--816},
   MRCLASS = {60K35 (82D30)},
  MRNUMBER = {3973704},
MRREVIEWER = {Giuseppe Genovese},
       DOI = {10.30757/alea.v16-29},
       URL = {https://doi-org.proxy2.library.illinois.edu/10.30757/alea.v16-29},
}

@article {NS19,
    AUTHOR = {Nguyen, Vu Lan and Sosoe, Philippe},
     TITLE = {Central limit theorem near the critical temperature for the
              overlap in the 2-spin spherical {SK} model},
   JOURNAL = {J. Math. Phys.},
  FJOURNAL = {Journal of Mathematical Physics},
    VOLUME = {60},
      YEAR = {2019},
    NUMBER = {10},
     PAGES = {103302, 13},
      ISSN = {0022-2488},
   MRCLASS = {60F05 (60F15 82B10)},
  MRNUMBER = {4016884},
       DOI = {10.1063/1.5065525},
       URL = {https://doi-org.proxy2.library.illinois.edu/10.1063/1.5065525},
}

@ARTICLE{LS19,
       author = {{Landon}, Benjamin and {Sosoe}, Philippe},
        title = "{Fluctuations of the overlap at low temperature in the 2-spin spherical SK model}",
      journal = {arXiv e-prints},
     keywords = {Mathematics - Probability, Mathematical Physics, 60B20, 82B44},
         year = 2019,
        month = may,
          eid = {arXiv:1905.03317},
        pages = {arXiv:1905.03317},
archivePrefix = {arXiv},
       eprint = {1905.03317},
 primaryClass = {math.PR},
       adsurl = {https://ui.adsabs.harvard.edu/abs/2019arXiv190503317L},
      adsnote = {Provided by the SAO/NASA Astrophysics Data System}
}

@ARTICLE{Lan20,
       author = {{Landon}, Benjamin},
        title = "{Free energy fluctuations of the $2$-spin spherical SK model at critical temperature}",
      journal = {arXiv e-prints},
     keywords = {Mathematics - Probability, Mathematical Physics},
         year = 2020,
        month = oct,
          eid = {arXiv:2010.06691},
        pages = {arXiv:2010.06691},
archivePrefix = {arXiv},
       eprint = {2010.06691},
 primaryClass = {math.PR},
       adsurl = {https://ui.adsabs.harvard.edu/abs/2020arXiv201006691L},
      adsnote = {Provided by the SAO/NASA Astrophysics Data System}
}

@article {RR09,
    AUTHOR = {Reinert, Gesine and R\"{o}llin, Adrian},
     TITLE = {Multivariate normal approximation with {S}tein's method of
              exchangeable pairs under a general linearity condition},
   JOURNAL = {Ann. Probab.},
  FJOURNAL = {The Annals of Probability},
    VOLUME = {37},
      YEAR = {2009},
    NUMBER = {6},
     PAGES = {2150--2173},
      ISSN = {0091-1798},
   MRCLASS = {60F05 (60G09 62E17)},
  MRNUMBER = {2573554},
MRREVIEWER = {Michael V. Boutsikas},
       DOI = {10.1214/09-AOP467},
       URL = {https://doi-org.proxy2.library.illinois.edu/10.1214/09-AOP467},
}

@article {GT02,
    AUTHOR = {Guerra, Francesco and Toninelli, Fabio Lucio},
     TITLE = {Quadratic replica coupling in the {S}herrington-{K}irkpatrick
              mean field spin glass model},
   JOURNAL = {J. Math. Phys.},
  FJOURNAL = {Journal of Mathematical Physics},
    VOLUME = {43},
      YEAR = {2002},
    NUMBER = {7},
     PAGES = {3704--3716},
      ISSN = {0022-2488},
   MRCLASS = {82B44 (82D30)},
  MRNUMBER = {1908695},
MRREVIEWER = {Klaus Ziegler},
       DOI = {10.1063/1.1483378},
       URL = {https://doi-org.proxy2.library.illinois.edu/10.1063/1.1483378},
}

@article {Kos06,
    AUTHOR = {K\"{o}sters, Holger},
     TITLE = {Fluctuations of the free energy in the diluted {SK}-model},
   JOURNAL = {Stochastic Process. Appl.},
  FJOURNAL = {Stochastic Processes and their Applications},
    VOLUME = {116},
      YEAR = {2006},
    NUMBER = {9},
     PAGES = {1254--1268},
      ISSN = {0304-4149},
   MRCLASS = {82B44 (60K35)},
  MRNUMBER = {2251544},
MRREVIEWER = {Matthias Birkner},
       DOI = {10.1016/j.spa.2006.02.002},
       URL = {https://doi-org.proxy2.library.illinois.edu/10.1016/j.spa.2006.02.002},
}

@article {BGGPT14,
    AUTHOR = {Barra, Adriano and Galluzzi, Andrea and Guerra, Francesco and
              Pizzoferrato, Andrea and Tantari, Daniele},
     TITLE = {Mean field bipartite spin models treated with mechanical
              techniques},
   JOURNAL = {Eur. Phys. J. B},
  FJOURNAL = {The European Physical Journal B. Condensed Matter and Complex
              Systems},
    VOLUME = {87},
      YEAR = {2014},
    NUMBER = {3},
     PAGES = {Art. 74, 13},
      ISSN = {1434-6028},
   MRCLASS = {78A48 (82B10)},
  MRNUMBER = {3180909},
       DOI = {10.1140/epjb/e2014-40952-4},
       URL = {https://doi-org.proxy2.library.illinois.edu/10.1140/epjb/e2014-40952-4},
}

@article {BGG11,
    AUTHOR = {Barra, Adriano and Genovese, Giuseppe and Guerra, Francesco},
     TITLE = {Equilibrium statistical mechanics of bipartite spin systems},
   JOURNAL = {J. Phys. A},
  FJOURNAL = {Journal of Physics. A. Mathematical and Theoretical},
    VOLUME = {44},
      YEAR = {2011},
    NUMBER = {24},
     PAGES = {245002, 22},
      ISSN = {1751-8113},
   MRCLASS = {82B20 (82B44)},
  MRNUMBER = {2800855},
MRREVIEWER = {Mei Yin},
       DOI = {10.1088/1751-8113/44/24/245002},
       URL = {https://doi-org.proxy2.library.illinois.edu/10.1088/1751-8113/44/24/245002},
}

@ARTICLE{Mou20,
       author = {{Mourrat}, Jean-Christophe},
        title = "{Nonconvex interactions in mean-field spin glasses}",
      journal = {arXiv e-prints},
     keywords = {Mathematics - Probability, Condensed Matter - Disordered Systems and Neural Networks, 82B44, 82D30},
         year = 2020,
        month = apr,
          eid = {arXiv:2004.01679},
        pages = {arXiv:2004.01679},
archivePrefix = {arXiv},
       eprint = {2004.01679},
 primaryClass = {math.PR},
       adsurl = {https://ui.adsabs.harvard.edu/abs/2020arXiv200401679M},
      adsnote = {Provided by the SAO/NASA Astrophysics Data System}
}

@article {DW20,
    AUTHOR = {Dey, Partha S. and Wu, Qiang},
     TITLE = {Fluctuation {R}esults for {M}ulti-species
              {S}herrington-{K}irkpatrick {M}odel in the {R}eplica
              {S}ymmetric {R}egime},
   JOURNAL = {J. Stat. Phys.},
  FJOURNAL = {Journal of Statistical Physics},
    VOLUME = {185},
      YEAR = {2021},
    NUMBER = {3},
     PAGES = {Paper No. 22},
      ISSN = {0022-4715},
   MRCLASS = {82B26 (60 82B44)},
  MRNUMBER = {4338695},
       DOI = {10.1007/s10955-021-02835-w},
       URL = {https://doi-org.proxy2.library.illinois.edu/10.1007/s10955-021-02835-w},
}

@article{BSS19,
	Author = {Bates, Erik and Sloman, Leila and Sohn, Youngtak},
	Date-Added = {2020-05-13 19:38:08 -0500},
	Date-Modified = {2020-05-13 19:38:08 -0500},
	Doi = {10.1007/s10955-018-2197-4},
	Fjournal = {Journal of Statistical Physics},
	Issn = {0022-4715},
	Journal = {J. Stat. Phys.},
	Mrclass = {82B10 (60K35 82B26 82B44)},
	Mrnumber = {3910896},
	Number = {2},
	Pages = {333--350},
	Title = {Replica symmetry breaking in multi-species {S}herrington-{K}irkpatrick model},
	Url = {https://doi-org.proxy2.library.illinois.edu/10.1007/s10955-018-2197-4},
	Volume = {174},
	Year = {2019},
	Bdsk-Url-1 = {https://doi-org.proxy2.library.illinois.edu/10.1007/s10955-018-2197-4},
	Bdsk-Url-2 = {https://doi.org/10.1007/s10955-018-2197-4}}

@ARTICLE{Liu21,
       author = {{Liu}, Qun},
        title = "{Fluctuations for the Bipartite Sherrington-Kirkpatrick Model}",
      journal = {Journal of Statistical Physics},
     keywords = {Spin glass model, Sherrington-Kirkpatrick model, Central limit theorems},
         year = 2021,
        month = jul,
       volume = {184},
       number = {1},
          eid = {12},
        pages = {12},
          doi = {10.1007/s10955-021-02801-6},
       adsurl = {https://ui.adsabs.harvard.edu/abs/2021JSP...184...12L},
      adsnote = {Provided by the SAO/NASA Astrophysics Data System}
}

@article{SK72,
  title = {Solvable Model of a Spin-Glass},
  author = {Sherrington, David and Kirkpatrick, Scott},
  journal = {Phys. Rev. Lett.},
  volume = {35},
  issue = {26},
  pages = {1792--1796},
  numpages = {0},
  year = {1975},
  month = {Dec},
  publisher = {American Physical Society},
  doi = {10.1103/PhysRevLett.35.1792},
  url = {https://link.aps.org/doi/10.1103/PhysRevLett.35.1792}
}

@article {BCPMT15,
    AUTHOR = {Barra, Adriano and Contucci, Pierluigi and Mingione, Emanuele
              and Tantari, Daniele},
     TITLE = {Multi-species mean field spin glasses. {R}igorous results},
   JOURNAL = {Ann. Henri Poincar\'{e}},
  FJOURNAL = {Annales Henri Poincar\'{e}. A Journal of Theoretical and
              Mathematical Physics},
    VOLUME = {16},
      YEAR = {2015},
    NUMBER = {3},
     PAGES = {691--708},
      ISSN = {1424-0637},
   MRCLASS = {82D30 (80A17)},
  MRNUMBER = {3311887},
       DOI = {10.1007/s00023-014-0341-5},
       URL = {https://doi-org.proxy2.library.illinois.edu/10.1007/s00023-014-0341-5},
}

@book {Tal03,
    AUTHOR = {Talagrand, Michel},
     TITLE = {Spin glasses: a challenge for mathematicians},
    SERIES = {Ergebnisse der Mathematik und ihrer Grenzgebiete. 3. Folge. A
              Series of Modern Surveys in Mathematics [Results in
              Mathematics and Related Areas. 3rd Series. A Series of Modern
              Surveys in Mathematics]},
    VOLUME = {46},
      NOTE = {Cavity and mean field models},
 PUBLISHER = {Springer-Verlag, Berlin},
      YEAR = {2003},
     PAGES = {x+586},
      ISBN = {3-540-00356-8},
   MRCLASS = {82B44 (60K35 82-02 82B20 82D30)},
  MRNUMBER = {1993891},
MRREVIEWER = {Anton Bovier},
}

@article {Ban20,
    AUTHOR = {Banerjee, Debapratim},
     TITLE = {Fluctuation of the free energy of {S}herrington-{K}irkpatrick
              model with {C}urie-{W}eiss interaction: the paramagnetic
              regime},
   JOURNAL = {J. Stat. Phys.},
  FJOURNAL = {Journal of Statistical Physics},
    VOLUME = {178},
      YEAR = {2020},
    NUMBER = {1},
     PAGES = {211--246},
      ISSN = {0022-4715},
   MRCLASS = {82B44 (60K35)},
  MRNUMBER = {4056655},
MRREVIEWER = {Giuseppe Genovese},
       DOI = {10.1007/s10955-019-02428-8},
       URL = {https://doi-org.proxy2.library.illinois.edu/10.1007/s10955-019-02428-8},
}

@article{ALS21,
  title={Proof of the Contiguity Conjecture and Lognormal Limit for the Symmetric Perceptron},
  author={Abbe, Emmanuel and Li, Shuangping and Sly, Allan},
  journal={arXiv preprint arXiv:2102.13069},
  year={2021}
}

@book{GCS11,
  title={Normal approximation by Stein's method},
  author={Chen, Louis HY and Goldstein, Larry and Shao, Qi-Man},
  volume={2},
  year={2011},
  publisher={Springer}
}

@article{BD21,
  title={Fluctuations of the free energy of the mixed $ p $-spin mean field spin glass model},
  author={Banerjee, Debapratim and Belius, David},
  journal={arXiv preprint arXiv:2108.03109},
  year={2021}
}

@article {KL05,
    AUTHOR = {Kn\"{o}pfel, Holger and L\"{o}we, Matthias},
     TITLE = {Fluctuations in a {$p$}-spin interaction model},
   JOURNAL = {Ann. Inst. H. Poincar\'{e} Probab. Statist.},
  FJOURNAL = {Annales de l'Institut Henri Poincar\'{e}. Probabilit\'{e}s et
              Statistiques},
    VOLUME = {41},
      YEAR = {2005},
    NUMBER = {4},
     PAGES = {807--815},
      ISSN = {0246-0203},
   MRCLASS = {82B44 (60F05 60K35 82D30)},
  MRNUMBER = {2144235},
MRREVIEWER = {Francis Comets},
       DOI = {10.1016/j.anihpb.2004.05.006},
       URL = {https://doi-org.proxy2.library.illinois.edu/10.1016/j.anihpb.2004.05.006},
}

@article {PT04,
    AUTHOR = {Panchenko, Dmitry and Talagrand, Michel},
     TITLE = {Bounds for diluted mean-fields spin glass models},
   JOURNAL = {Probab. Theory Related Fields},
  FJOURNAL = {Probability Theory and Related Fields},
    VOLUME = {130},
      YEAR = {2004},
    NUMBER = {3},
     PAGES = {319--336},
      ISSN = {0178-8051},
   MRCLASS = {82B44 (60E15 60K35 82B20 82D30)},
  MRNUMBER = {2095932},
MRREVIEWER = {Anton Bovier},
       DOI = {10.1007/s00440-004-0342-2},
       URL = {https://doi-org.proxy2.library.illinois.edu/10.1007/s00440-004-0342-2},
}

@article {GT04,
    AUTHOR = {Guerra, Francesco and Toninelli, Fabio Lucio},
     TITLE = {The high temperature region of the {V}iana-{B}ray diluted spin
              glass model},
   JOURNAL = {J. Statist. Phys.},
  FJOURNAL = {Journal of Statistical Physics},
    VOLUME = {115},
      YEAR = {2004},
    NUMBER = {1-2},
     PAGES = {531--555},
      ISSN = {0022-4715},
   MRCLASS = {82B44 (60K35 82D30)},
  MRNUMBER = {2070106},
MRREVIEWER = {Anton Bovier},
       DOI = {10.1023/B:JOSS.0000019815.11115.54},
       URL = {https://doi-org.proxy2.library.illinois.edu/10.1023/B:JOSS.0000019815.11115.54},
}

@article {FL03,
    AUTHOR = {Franz, Silvio and Leone, Michele},
     TITLE = {Replica bounds for optimization problems and diluted spin
              systems},
   JOURNAL = {J. Statist. Phys.},
  FJOURNAL = {Journal of Statistical Physics},
    VOLUME = {111},
      YEAR = {2003},
    NUMBER = {3-4},
     PAGES = {535--564},
      ISSN = {0022-4715},
   MRCLASS = {82B44 (68Q25 82D30 90C35)},
  MRNUMBER = {1972121},
MRREVIEWER = {Menachem Dishon},
       DOI = {10.1023/A:1022885828956},
       URL = {https://doi-org.proxy2.library.illinois.edu/10.1023/A:1022885828956},
}

@book{Tal11a,
	Author = {Talagrand, Michel},
	Date-Added = {2020-05-13 19:37:29 -0500},
	Date-Modified = {2020-05-13 19:37:29 -0500},
	Doi = {10.1007/978-3-642-15202-3},
	Isbn = {978-3-642-15201-6},
	Mrclass = {82B44 (60K35 60K37 82-02 82C32 82C44 82D30 92B20)},
	Mrnumber = {2731561},
	Mrreviewer = {Francis Comets},
	Note = {Basic examples},
	Pages = {xviii+485},
	Publisher = {Springer-Verlag, Berlin},
	Series = {Ergebnisse der Mathematik und ihrer Grenzgebiete. 3. Folge. A Series of Modern Surveys in Mathematics},
	Title = {Mean field models for spin glasses. {V}olume {I}},
	Url = {https://doi-org.proxy2.library.illinois.edu/10.1007/978-3-642-15202-3},
	Volume = {54},
	Year = {2011},
	Bdsk-Url-1 = {https://doi-org.proxy2.library.illinois.edu/10.1007/978-3-642-15202-3},
	Bdsk-Url-2 = {https://doi.org/10.1007/978-3-642-15202-3}}

@article {La02,
    AUTHOR = {Latala, R},
     TITLE = {Exponential inequalities for the SK model of spin glasses, extending Guerras method.},
   JOURNAL = {Unpublished manuscript},
  FJOURNAL = {},
    VOLUME = {},
      YEAR = {2002},
    NUMBER = {},
     PAGES = {},
      ISSN = {},
}

@ARTICLE{Chen21AT,
       author = {{Chen}, Wei-Kuo},
        title = "{On the Almeida-Thouless transition line in the Sherrington-Kirkpatrick model with centered Gaussian external field}",
      journal = {arXiv e-prints},
     keywords = {Condensed Matter - Disordered Systems and Neural Networks, Mathematical Physics, Mathematics - Probability},
         year = 2021,
        month = mar,
          eid = {arXiv:2103.04802},
        pages = {arXiv:2103.04802},
archivePrefix = {arXiv},
       eprint = {2103.04802},
 primaryClass = {cond-mat.dis-nn},
       adsurl = {https://ui.adsabs.harvard.edu/abs/2021arXiv210304802C},
      adsnote = {Provided by the SAO/NASA Astrophysics Data System}
}

@article {BL18,
    AUTHOR = {Baik, Jinho and Lee, Ji Oon and Wu, Hao},
     TITLE = {Ferromagnetic to paramagnetic transition in spherical spin
              glass},
   JOURNAL = {J. Stat. Phys.},
  FJOURNAL = {Journal of Statistical Physics},
    VOLUME = {173},
      YEAR = {2018},
    NUMBER = {5},
     PAGES = {1484--1522},
      ISSN = {0022-4715},
   MRCLASS = {82D30},
  MRNUMBER = {3878351},
       DOI = {10.1007/s10955-018-2150-6},
       URL = {https://doi-org.proxy2.library.illinois.edu/10.1007/s10955-018-2150-6},
}

@article {BL17,
    AUTHOR = {Baik, Jinho and Lee, Ji Oon},
     TITLE = {Fluctuations of the free energy of the spherical
              {S}herrington-{K}irkpatrick model with ferromagnetic
              interaction},
   JOURNAL = {Ann. Henri Poincar\'{e}},
  FJOURNAL = {Annales Henri Poincar\'{e}. A Journal of Theoretical and
              Mathematical Physics},
    VOLUME = {18},
      YEAR = {2017},
    NUMBER = {6},
     PAGES = {1867--1917},
      ISSN = {1424-0637},
   MRCLASS = {60K25 (60B20 60F05 81V10)},
  MRNUMBER = {3649446},
MRREVIEWER = {Jinwen Chen},
       DOI = {10.1007/s00023-017-0562-5},
       URL = {https://doi-org.proxy2.library.illinois.edu/10.1007/s00023-017-0562-5},
}

@article {MP01,
    AUTHOR = {M\'{e}zard, M. and Parisi, G.},
     TITLE = {The {B}ethe lattice spin glass revisited},
   JOURNAL = {Eur. Phys. J. B Condens. Matter Phys.},
  FJOURNAL = {The European Physical Journal. B. Condensed Matter Physics},
    VOLUME = {20},
      YEAR = {2001},
    NUMBER = {2},
     PAGES = {217--233},
      ISSN = {1434-6028},
   MRCLASS = {82B44 (82D30)},
  MRNUMBER = {1832936},
MRREVIEWER = {Francis Comets},
       DOI = {10.1007/PL00011099},
       URL = {https://doi-org.proxy2.library.illinois.edu/10.1007/PL00011099},
}

@article {Pan16di,
    AUTHOR = {Panchenko, Dmitry},
     TITLE = {Structure of finite-{RSB} asymptotic {G}ibbs measures in the
              diluted spin glass models},
   JOURNAL = {J. Stat. Phys.},
  FJOURNAL = {Journal of Statistical Physics},
    VOLUME = {162},
      YEAR = {2016},
    NUMBER = {1},
     PAGES = {1--42},
      ISSN = {0022-4715},
   MRCLASS = {82D30 (60G09 60K35 82B44)},
  MRNUMBER = {3439329},
MRREVIEWER = {Giuseppe Genovese},
       DOI = {10.1007/s10955-015-1385-8},
       URL = {https://doi-org.proxy2.library.illinois.edu/10.1007/s10955-015-1385-8},
}

@article {Pan14di,
    AUTHOR = {Panchenko, Dmitry},
     TITLE = {Structure of 1-{RSB} asymptotic {G}ibbs measures in the
              diluted {$p$}-spin models},
   JOURNAL = {J. Stat. Phys.},
  FJOURNAL = {Journal of Statistical Physics},
    VOLUME = {155},
      YEAR = {2014},
    NUMBER = {1},
     PAGES = {1--22},
      ISSN = {0022-4715},
   MRCLASS = {60K35 (60G09 82B44 82D30)},
  MRNUMBER = {3180967},
       DOI = {10.1007/s10955-014-0955-5},
       URL = {https://doi-org.proxy2.library.illinois.edu/10.1007/s10955-014-0955-5},
}

@article {Tal06,
    AUTHOR = {Talagrand, Michel},
     TITLE = {The {P}arisi formula},
   JOURNAL = {Ann. of Math. (2)},
  FJOURNAL = {Annals of Mathematics. Second Series},
    VOLUME = {163},
      YEAR = {2006},
    NUMBER = {1},
     PAGES = {221--263},
      ISSN = {0003-486X},
   MRCLASS = {82B44 (82D30)},
  MRNUMBER = {2195134},
MRREVIEWER = {Fran\c{c}ois Germinet},
       DOI = {10.4007/annals.2006.163.221},
       URL = {https://doi-org.proxy2.library.illinois.edu/10.4007/annals.2006.163.221},
}

@article{Par79,
  title = {Infinite Number of Order Parameters for Spin-Glasses},
  author = {Parisi, G.},
  journal = {Phys. Rev. Lett.},
  volume = {43},
  issue = {23},
  pages = {1754--1756},
  numpages = {0},
  year = {1979},
  month = {Dec},
  publisher = {American Physical Society},
  doi = {10.1103/PhysRevLett.43.1754},
  url = {https://link.aps.org/doi/10.1103/PhysRevLett.43.1754}
}

@ARTICLE{Par80,
       author = {{Parisi}, G.},
        title = "{A sequence of approximated solutions to the S-K model for spin glasses}",
      journal = {Journal of Physics A Mathematical General},
         year = 1980,
        month = apr,
       volume = {13},
       number = {4},
        pages = {L115-L121},
          doi = {10.1088/0305-4470/13/4/009},
       adsurl = {https://ui.adsabs.harvard.edu/abs/1980JPhA...13L.115P},
      adsnote = {Provided by the SAO/NASA Astrophysics Data System}
}

@article {GD88,
    AUTHOR = {Gardner, E. and Derrida, B.},
     TITLE = {Optimal storage properties of neural network models},
   JOURNAL = {J. Phys. A},
  FJOURNAL = {Journal of Physics. A. Mathematical and General},
    VOLUME = {21},
      YEAR = {1988},
    NUMBER = {1},
     PAGES = {271--284},
      ISSN = {0305-4470},
   MRCLASS = {82A57 (68M99 92A09)},
  MRNUMBER = {939731},
MRREVIEWER = {F. Ventriglia},
       URL = {http://stacks.iop.org.proxy2.library.illinois.edu/0305-4470/21/271},
}

@article{Hub59,
  title = {Calculation of Partition Functions},
  author = {Hubbard, J.},
  journal = {Phys. Rev. Lett.},
  volume = {3},
  issue = {2},
  pages = {77--78},
  numpages = {0},
  year = {1959},
  month = {Jul},
  publisher = {American Physical Society},
  doi = {10.1103/PhysRevLett.3.77},
  url = {https://link.aps.org/doi/10.1103/PhysRevLett.3.77}
}
\end{document}